\newtheorem{teo}{Theorem}[section]
\newtheorem{thm}[teo]{Theorem}
\newtheorem{prop}[teo]{Proposition}
\newtheorem{lemma}[teo]{Lemma}
\newtheorem{cor}[teo]{Corollary}
\newtheorem{conj}[teo]{Conjecture}
\newtheorem{eg}[teo]{Example}
\newtheorem{defn}[teo]{Definition}
\newtheorem{rmk}[teo]{Remark}
\newtheorem{claim}{Claim}
\newtheorem{def-prop}[teo]{Definition-Proposition}
\newtheorem{notation}[teo]{Notation}
\numberwithin{equation}{section}
  \newcommand{\A}{\mathbb{A}}
  \newcommand{\C}{\mathbb{C}}
  \newcommand{\G}{\mathbb{G}}
  \newcommand{\N}{\mathbb{N}}
  \newcommand{\Q}{\mathbb{Q}}
  \newcommand{\R}{\mathbb{R}}  
  \renewcommand{\S}{\mathbb{S}}
  \newcommand{\V}{\mathbb{V}}
  \newcommand{\Z}{\mathbb{Z}}
  \newcommand{\bV}{\mathbf{V}}
  \renewcommand{\epsilon}{\varepsilon}
  \newcommand{\im}{\text{Im}}
  \renewcommand{\cong}{\simeq}
  \renewcommand{\bar}{\overline}
  \renewcommand{\tilde}{\widetilde}
  \providecommand{\frac}[1]{\mathrm{Frac}(#1)}
  \renewcommand{\hom}{\mathrm{Hom}}
  \newcommand{\stab}{\mathrm{Stab}}
  \newcommand{\MT}{\mathrm{MT}}
  \newcommand{\GL}{\mathrm{GL}}
  \newcommand{\Sh}{\mathrm{Sh}}
  \renewcommand{\ker}{\mathrm{Ker}}
  \newcommand{\codim}{\mathrm{codim}}
  \renewcommand{\lim}{\mathrm{lim}}
  \renewcommand{\deg}{\mathrm{deg}}
  \newcommand{\lie}{\mathrm{Lie}}
  \newcommand{\Gr}{\mathrm{Gr}}
  \newcommand{\der}{\mathrm{der}}
  \newcommand{\ad}{\mathrm{ad}}
  \newcommand{\sm}{\mathrm{sm}}
  \newcommand{\Zar}{\mathrm{Zar}}
  \newcommand{\GSp}{\mathrm{GSp}}
  \newcommand{\Sp}{\mathrm{Sp}}
  \newcommand{\res}{\mathrm{Res}}
  \newcommand{\unif}{\mathrm{unif}}
\newcommand{\cA}{\mathcal{A}}
\newcommand{\cD}{\mathcal{D}}
\newcommand{\cE}{\mathcal{E}}
\newcommand{\cF}{\mathcal{F}}
\newcommand{\cH}{\mathcal{H}}
\newcommand{\cM}{\mathcal{M}}
\newcommand{\cO}{\mathcal{O}}
\newcommand{\cR}{\mathcal{R}}
\newcommand{\cS}{\mathcal{S}}
\newcommand{\cV}{\mathcal{V}}
\newcommand{\cW}{\mathcal{W}}
\newcommand{\cX}{\mathcal{X}}
\newcommand{\cY}{\mathcal{Y}}
\newcommand{\cZ}{\mathcal{Z}}
\newcommand\supervisor[1]{\def\@supervisor{#1}}
\newcommand{\pullback}[1][dr]{\save*!/#1-1.2pc/#1:(-1,1)@^{|-}\restore}
\newcommand{\bigpullback}[1][dr]{\save*!/#1-1.7pc/#1:(-1.5,1.5)@^{|-}\restore}
\newcommand{\pushoutcorner}[1][dr]{\save*!/#1+1.7pc/#1:(1.5,-1.5)@^{|-}\restore}
\newcounter{elno}
\renewcommand{\cong}{\simeq}
\begin{document}
\title[EMSV, bi-algebraic system, Ax type transcendental results]{Enlarged mixed Shimura varieties, bi-algebraic system and some Ax type transcendental results}
\author{Ziyang Gao}
\address{CNRS and Institut Math\'{e}matiques de Jussieu-Paris Rive Gauche,
4 place de Jussieu,
75005 Paris,
France}
\email{ziyang.gao@imj-prg.fr}
\subjclass[2000]{11G18, 14G35}

\maketitle

\begin{abstract}
We develop a theory of enlarged mixed Shimura varieties, putting the universal vectorial bi-extension defined by Coleman into this framework to study some functional transcendental results of Ax type. We study their bi-algebraic systems, formulate the Ax-Schanuel conjecture and explain its relation with the logarithmic Ax theorem and the Ax-Lindemann theorem which we shall prove. 
All these bi-algebraic and transcendental results extend their counterparts for mixed Shimura varieties. In the end we briefly discuss about the Andr\'{e}-Oort and Zilber-Pink type problems for enlarged mixed Shimura varieties.
\end{abstract}

\tableofcontents

\section{Introduction}

\subsection{Motivations: From Manin-Mumford and Andr\'{e}-Oort to bi-algebraicity} We start with two famous conjectures in arithmetic geometry: the Manin-Mumford and the Andr\'{e}-Oort conjectures. In the table the base field is $\C$ and $Y$ is assumed to be irreducible.

\begin{tabular}{c|c|c}
& \textbf{Manin-Mumford} & \textbf{Andr\'{e}-Oort}\\ \hline
object & Abelian variety $A$ and a subvariety $Y$ & Shimura variety $S$ and a subvariety $Y$ \\
& $\Sigma$:=set of torsion points of $A$ & $\Sigma$:=set of special points of $S$ \\ \hline
hypothesis & $(Y\cap \Sigma)^{\Zar}=Y$ &  $(Y\cap \Sigma)^{\Zar}=Y$  \\ \hline
conclusion & $Y$ is a torsion coset & $Y$ is a Hodge subvariety\footnotemark \\
\end{tabular}
\footnotetext{In this case means an irreducible component of some Hecke translate of a Shimura subvariety}

When the Shimura variety $S$ is $\cA_g$, a point $s$ is special if and only if the abelian variety parametrized by $s$ is CM. By standard specialization argument it suffices to prove Manin-Mumford for abelian varieties over $\bar{\Q}$. These two conjectures are independent, although similar. The mixed Andr\'{e}-Oort conjecture, replacing Shimura varieties by mixed Shimura varieties, partly unifies the previous two conjectures because it implies Manin-Mumford for CM abelian varieties. Now in the spirit of \cite{UllmoStructures-spec} we make the following conjecture in terms of the bi-algebraic systems associated with abelian varieties and Shimura varieties, which unifies Manin-Mumford and Andr\'{e}-Oort completely.
\begin{conj}\label{ConjectureBiAlgebraicArithmeticAndGeometric}
Let $S$ be an algebraic variety over $\bar{\Q}$ such that the uniformization $\cX$ of $S_{\C}^{\mathrm{an}}$ has a structure of algebraic variety over $\bar{\Q}$, and suppose $Y$ is a subvariety of $S$ containing a Zariski dense subset of arithmetically bi-algebraic points. Then $Y$ is geometrically bi-algebraic.
\end{conj}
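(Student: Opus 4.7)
The plan is to adapt the Pila--Zannier strategy, which underlies both the proof of Manin--Mumford and the known cases of Andr\'{e}--Oort. The strategy rests on three pillars: an Ax--Lindemann type theorem for the uniformization $\pi \colon \cX \to S$; definability of a suitable fundamental set for $\pi$ in an o-minimal structure such as $\R_{\mathrm{an},\exp}$; and a polynomial lower bound on Galois orbits of arithmetically bi-algebraic points.

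First I would fix a fundamental set $\cF \subset \cX$ for the uniformizing group action and consider $Z := \pi^{-1}(Y) \cap \cF$. Assuming $\pi|_\cF$ is definable in an o-minimal structure (for classical mixed Shimura varieties this is due to Klingler--Ullmo--Yafaev building on Peterzil--Starchenko, and an analogous definability must be established in the given context), the set $Z$ becomes definable. By hypothesis $Y$ contains a Zariski dense set of arithmetically bi-algebraic points; for each such $y$, a preimage $\tilde{y} \in Z$ is $\bar{\Q}$-algebraic with respect to the assumed $\bar{\Q}$-structure on $\cX$.

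Next I would apply Galois conjugation over a field of definition of $y$ to produce many $\bar{\Q}$-algebraic conjugates of $\tilde{y}$ in $Z$; combined with a polynomial lower bound for $[\Q(\tilde{y}):\Q]$ in terms of a suitable height, the Pila--Wilkie counting theorem forces $Z$ to contain positive-dimensional connected semi-algebraic blocks. The Ax--Lindemann theorem for the bi-algebraic system then promotes each such block to a geometrically bi-algebraic subvariety $Y' \subseteq Y$. A maximality argument on dimensions, combined with the Zariski density assumption, should yield $Y' = Y$, establishing that $Y$ itself is geometrically bi-algebraic.

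The main obstacle is undoubtedly the Galois orbit lower bound: outside the classical cases (torsion points on abelian varieties, CM points on Shimura varieties) such bounds are deep and rely on isogeny estimates of Masser--W\"ustholz type or the averaged Colmez formula in the spirit of Tsimerman, and no uniform mechanism is available at the generality of Conjecture \ref{ConjectureBiAlgebraicArithmeticAndGeometric}. A secondary but substantial obstacle is proving Ax--Lindemann for the bi-algebraic system attached to $\pi$; the present paper addresses this for enlarged mixed Shimura varieties, but the general abstract setting of the conjecture would require an independent bi-algebraic input. Finally, even the definitions of ``arithmetically bi-algebraic point'' and ``geometrically bi-algebraic subvariety'' depend on a canonical $\bar{\Q}$-structure on $\cX$ compatible with the Galois action, and pinning this down in the abstract setting of the conjecture is itself a nontrivial preliminary step.
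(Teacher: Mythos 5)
The statement you are addressing is a \emph{conjecture} (Conjecture~\ref{ConjectureBiAlgebraicArithmeticAndGeometric}), and the paper offers no proof of it: it is stated purely as motivation, unifying Manin--Mumford and Andr\'{e}--Oort, with the known instances explained via Cohen, Shiga--Wolfart, Ullmo--Yafaev and W\"{u}stholz. So your proposal is not being measured against an existing argument; it is a strategy outline for an open problem, and you acknowledge as much in your final paragraph. What you describe is indeed the Pila--Zannier mechanism that proves the known instances, but it does not constitute a proof, and the obstacles you list are precisely the reasons the statement remains conjectural rather than technical points to be checked.

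Concretely, none of your three pillars is available at the stated generality. The polynomial Galois orbit lower bound for arithmetically bi-algebraic points has no uniform source: for torsion points it is Masser's theorem, for CM points it passes through isogeny estimates or the averaged Colmez formula, and for a general pair $(\cX,S)$ with a $\bar{\Q}$-structure on $\cX$ there is no candidate mechanism whatsoever. The Ax--Lindemann input is established in this paper only for enlarged mixed Shimura varieties (Theorem~\ref{TheoremIntroductionAxLindemann} and the Appendix); for an arbitrary uniformization there is not even a characterization of the geometrically bi-algebraic subvarieties to aim at --- that characterization is itself one of the paper's main theorems (Theorem~\ref{TheoremIntroductionCharacterizationOfBiAlgebraicSubvarieties}) in the enlarged mixed Shimura setting, and its proof there uses the full Hodge-theoretic and automorphic-bundle structure, not just the formal bi-algebraic framework. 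Likewise the definability of $\unif|_{\mathfrak{F}}$ in $\R_{\mathrm{an},\exp}$ is a theorem about Shimura-type uniformizations, not part of the data of the conjecture. Even the hypotheses are not pinned down abstractly: the conjecture only acquires content once a canonical $\bar{\Q}$-structure on $\cX$ compatible with the Galois action on $S$ is specified, which is exactly why the paper replaces $A$ by $A^\natural$ in the first place. So the correct assessment is that your outline is the right strategy in every case where the conjecture is known, but supplying any one of the missing ingredients for a new class of bi-algebraic systems would be a substantial theorem, not a step in a proof.
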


Let us explain this conjecture for $\cA_g$. For the uniformization $\unif\colon\cH_g^+\rightarrow\cA_g$
, we say that $s\in \cA_g(\bar{\Q})$ is \textbf{arithmetically bi-algebraic} if $\unif^{-1}(s)\subset\cH^+_g\cap M_{2g\times 2g}(\bar{\Q})$. A theorem of Cohen \cite{CohenHumbert-surface} and Shiga-Wolfart \cite{ShigaCriteria-for-co} asserts that $s$ is arithmetically bi-algebraic if and only if  $s$ is a special point. Hence the set $\Sigma$ in the Andr\'{e}-Oort conjecture for $\cA_g$ equals the set of arithmetically bi-algebraic points. On the other hand $\cH^+_g$ is an open semi-algebraic subset of $\C^{g(g+1)/2}$. We say that an irreducible subvariety $Y$ of $\cA_g$ is \textbf{geometrically bi-algebraic} if any complex analytic irreducible component of $\unif^{-1}(Y)$ is algebraizable in $\C^{g(g+1)/2}$. By Ullmo-Yafaev \cite{UllmoA-characterisat} the geometrically bi-algebraic subvarieties of $\cA_g$ containing special points are precisely the Shimura subvarieties of $\cA_g$. So Conjecture~\ref{ConjectureBiAlgebraicArithmeticAndGeometric} is equivalent to the Andr\'{e}-Oort conjecture when $S=\cA_g$. In this particular case Conjecture~\ref{ConjectureBiAlgebraicArithmeticAndGeometric} is proven by Tsimerman \cite{TsimermanA-proof-of-the-}, based on a definability result of Peterzil-Starchenko \cite{PeterzilDefinability-of} and a functional transcendental result of Pila-Tsimerman \cite{PilaAxLindemannAg}.

The situation for abelian varieties $A$ over $\bar{\Q}$ is more complicated. We wish to endow a $\bar{\Q}$-structure on $\lie A_{\C}$ so that torsion points of $A$ are precisely arithmetically bi-algebraic points. However using the Schneider-Lang and W\"{u}stholz' analytic subgroup theorems, Ullmo \cite[Proposition~2.6]{UllmoStructures-spec} proved: any torsion point of $A$, except the origin, becomes transcendental in $\lie A_{\C}$ if we endow $\lie A_{\C}$ with the canonical $\bar{\Q}$-structure coming from the given $\bar{\Q}$-structure on $A$. A solution to this problem is proposed by Bost (see \cite[$\mathsection$2.2.2]{UllmoStructures-spec}): instead of $A$ we study its universal vector extension $A^\natural$. We briefly recall some basic facts about $A^\natural$.

By a \textit{vector extension} of $A$, we mean an algebraic group $E$ such that there exist a vector group $W$ and an exact sequence $0\rightarrow W\rightarrow E\rightarrow A\rightarrow 0$. There exists a universal vector extension $A^\natural$ of $A$ such that any vector extension $E$ of $A$ is obtained as (see \cite[Chapter~1, Proposition~1.10]{MazurMessing}):
\[
\xymatrix{
0 \ar[r] & W_A \ar[r] \ar@{.>}[d] & A^\natural \ar[r] \ar[d] & A \ar[d]^{=} \ar[r] &0 \\
0 \ar[r] & W \ar[r]  & E \ar[r] \pushoutcorner  & A \ar[r]  &0}
\]
where the maps $W_A \rightarrow W$ and $A^\natural \rightarrow E$ are unique. 
In fact over $\C$, $A^\natural$ is constructed as follows: Let $\Gamma:=H_1(A(\C),\Z)\subset H_1(A(\C),\C)$ be the period lattice of $A$. For the Hodge decomposition $H_1(A(\C),\C)=H^{0,-1}(A_\C)\bigoplus H^{-1,0}(A_\C)$, the holomorphic part $H^{-1,0}(A_\C)$ equals the tangent space of $A$ at $0$, and the anti-holomorphic part $H^{0,-1}(A_\C)$ equals $\Omega^1_{A_\C^\vee}$. We have
\[
\xymatrix{
0 \ar[r] & H^{0,-1}(A_\C) \ar[r] \ar[d]^{=} & H_1(A(\C),\C) \ar[r] \ar[d]^{\unif^\natural} & H^{-1,0}(A_\C) \ar[r] \ar[d]^{\unif} &0 \\
0 \ar[r] & \Omega^1_{A_\C^\vee} \ar[r] & A^\natural(\C)\cong\Gamma\backslash H_1(A(\C),\C) \ar[r] & A(\C)\cong\Gamma\backslash H^{-1,0}(A_{\C}) \ar[r] &0
}
\]
and the bottom line is nowhere split. Take the $\bar{\Q}$-structure $H_1(A(\C),\Z)\otimes\bar{\Q}$ on $H_1(A(\C),\C)$. As an application of W\"{u}stholz' analytic subgroup theorem \cite[Theorem~1]{WustholzAlgebraic-group}, Ullmo \cite[Th\'{e}or\`{e}me~2.10]{UllmoStructures-spec} proved
\[
z\in H_1(A(\C),\Z)\otimes\bar{\Q}\text{ such that }\unif^\natural(z)\in A^\natural(\bar{\Q})\Leftrightarrow \unif^\natural(z)\text{ is a torsion point of }A^\natural.
\]
Thus we get an arithmetic bi-algebraic description for the torsion points of $A$ because the projection $A^\natural\rightarrow A$ induces a bijection between their torsion points. This suggests that in view of Conjecture~\ref{ConjectureBiAlgebraicArithmeticAndGeometric}, the abelian varieties are not the good objects to study. Instead, one should study their universal vector extensions. We will characterize geometrically bi-algebraic subvarieties of $A^\natural$ in the paper, from which we can see that Conjecture~\ref{ConjectureBiAlgebraicArithmeticAndGeometric} for $A^\natural$ implies the Manin-Mumford conjecture without much effort.

\subsection{Motivation: Vector extensions, from Grothendieck to Laumon} 
The study of $A^\natural$ goes back to Grothendieck ($\natural$-extensions) and he studied the Gau\ss-Manin connection on $\lie(\mathfrak{A}^\natural/B)$ for any abelian scheme $\mathfrak{A}/B$ \cite{GrothendieckNaturalExtension} (see also \cite[Chapter~I, $\mathsection$3]{MazurMessing}). Later on Buium \cite{Buiumdifferential-Al, BuiumGeometry-of-dif} studied arithmetic differential equations on $\mathfrak{A}^\natural$, making $\mathfrak{A}^\natural$ a differential algebraic group (algebraic $D$-group), although the idea was already used by Manin for his work on algebraic curves over function fields. These are extra tools which do not exist for $\mathfrak{A}$, and thus it is often convenient or even necessary to work with $\mathfrak{A}^\natural$ in order to prove properties for $\mathfrak{A}$. Examples of this fact include, but are not limited to: Manin's proof of the geometric Mordell conjecture \cite{ManinAlgebraic-curve, ManinRational-points}, Buium's effective bound for the geometric Lang conjecture \cite{BuiumEffective-bound}, Bertrand-Pillay's relative Lindemann-Weierstra\ss~theorem of  \cite{BertrandA-Lindemann-Wei}, Bertrand-Masser-Pillay-Zannier's proof of the relative Manin-Mumford conjecture for semi-abelian surfaces \cite{BertrandRelative-Manin-}, results on Galois groups and Manin maps of Bertrand-Pillay \cite{BertrandGalois-theory-f}, etc.

A $1$-motive over $\C$, after Deligne, consists of a complex semi-abelian variety $G$ and a group homomorphism $\Z^n\rightarrow G$. However Deligne's $1$-motives do not allow vector extensions, although they appear implicitly in the de Rham realizations. To fix this, Laumon \cite{LaumonTransformation-} extended the notion of $1$-motives by replacing $\Z^n$ by a formal group $F$. Then the identity component $F^\circ$ of $F$ gives rise to vector extensions. We refer to \cite{Barbieri-VialeSharp-de-Rham-r} for more details on (realizations of) Laumon $1$-motives.

The moduli space of Deligne $1$-motives is a mixed Shimura variety. Based on works of Deligne, Milne, Brylinski and others, Pink \cite{PinkThesis} finished the framework for the study of mixed Shimura varieties. However mixed Shimura varieties do NOT allow any vector extension. In particular, let $\mathfrak{A}_g$ be the universal family over $\cA_g$, where $\cA_g$ is the moduli space of principally polarized abelian varieties with level-$4$-structure, and let $\mathfrak{A}_g^\natural$ be the universal vector extension of the abelian scheme $\mathfrak{A}_g/\cA_g$. Then $\mathfrak{A}_g^\natural$ is NOT a mixed Shimura variety. This suggests that in order to parametrize Laumon $1$-motives we must extend this notion. Note that $\mathfrak{A}_g^\natural$ is still not large enough: it cannot give any information on the weight $-2$ part of Laumon $1$-motives, \textit{i.e.}  the toric part of $G$.

\subsection{Goals of this paper} In $\mathsection$\ref{SubsectionModuliSpaceInTheDelignePinkLanguage}-$\mathsection$\ref{SubsectionQuasiLinearSubvarietiesAndGeometricBiAlgebraicity} of the Introduction, we briefly explain the three main goals of the paper, each one occupying a subsection.

\subsection{Moduli space in the Deligne-Pink language: enlarged mixed Shimura varieties}\label{SubsectionModuliSpaceInTheDelignePinkLanguage}
The \textit{\textbf{first main goal}} of this paper is to develop the theory of enlarged mixed Shimura varieties, in order to allow vector extensions and parametrize Laumon $1$-motives. This is done in $\mathsection$\ref{SectionEnlargedMixedShimuraData} and $\mathsection$\ref{SectionEnlargedMixedShimuraVarieties}. We define a pair, called an enlarged mixed Shimura datum, consisting of a $\Q$-group $P$ and a simply-connected complex analytic space $\cX^\natural$ satisfying some properties (see Definition~\ref{DefinitionEnlargedMixedShimuraDatum}), and define an enlarged mixed Shimura variety to be the quotient of $\cX^\natural$ by a congruence subgroup of $P(\Q)$. Then we prove that any such defined enlarged mixed Shimura variety admits a structure of algebraic variety over a number field canonically associated to the enlarged mixed Shimura datum. This is exactly what Deligne and Pink did for (mixed) Shimura varieties, so we call this language of enlarged mixed Shimura data and enlarged mixed Shimura varieties the \textbf{Deligne-Pink language}. The importance of the Deligne-Pink language for the study of mixed Shimura varieties is revealed by almost any article on (mixed) Shimura varieties. As for enlarged mixed Shimura varieties, let me just point out that every result we prove in this paper (characterization of geometrically bi-algebraic subvarieties, the logarithmic Ax theorem, the Ax-Lindemann theorem and the Ax-Schanuel conjecture for the unipotent part) relies heavily on this language.

The $\mathfrak{A}_g^\natural$ defined at the end of last subsection is an example of enlarged mixed Shimura varieties which are not mixed Shimura varieties. Another such example is the universal vectorial bi-extension $\mathfrak{P}^\natural_g$ studied by Coleman \cite{ColemanThe-universal-v} (see Example~\ref{ExampleEMSV}(2)), which is defined as follows in geometric terms: Let $\mathfrak{P}_g$ be the universal Poincar\'{e} biextension, \textit{i.e.}  the $\G_m$-torsor over $\mathfrak{A}_g\times\mathfrak{A}_g^\vee$ whose fiber $(\mathfrak{P}_g)_a$ for any point $a\in\cA_g$ is the Poincar\'{e} biextension over $(\mathfrak{A}_g)_a\times(\mathfrak{A}_g^\vee)_a$. Let $\mathfrak{A}_g^\natural$ be the universal vector extension of the abelian scheme $\mathfrak{A}_g/\cA_g$. Then $\mathfrak{P}_g^\natural$ is the pullback of $\mathfrak{P}_g$ by $\mathfrak{A}_g^\natural\times(\mathfrak{A}_g^\vee)^{\natural} \rightarrow \mathfrak{A}_g\times\mathfrak{A}_g^\vee$. The typical example of enlarged mixed Shimura variety to keep in mind is $(\mathfrak{P}_g^{\natural})^{[n]}$, \textit{i.e.}  the $n$-fiber product of $\mathfrak{P}_g^\natural$ over $\mathfrak{A}_g^\natural\times(\mathfrak{A}_g^\vee)^{\natural}$. The advantage of $(\mathfrak{P}_g^{\natural})^{[n]}$ to $\mathfrak{A}_g^\natural$ is that it also reflects information on the weight $-2$ part of Laumon $1$-motives.

\subsection{Functional transcendental results: Ax-Schanuel}\label{SubsectionFunctionalTranscendentalResults}
Let $S^\natural$ be a connected enlarged mixed Shimura variety and let $\unif^\natural\colon\cX^{\natural+}\rightarrow S^\natural$ be its uniformization. 
As we shall see in $\mathsection$\ref{SubsectionAlgebraicStructureOnXnatural}, there exists a complex algebraic variety $\cX^{\natural,\vee}$ such that $\cX^{\natural+}$ can be embedded as a semi-algebraic open subset (in the usual topology) of $\cX^{\natural,\vee}$. The \textit{\textbf{second main goal}} of this paper is to study some functional transcendental results. We start with (Theorem~\ref{TheoremAxlogarithmic} and $\mathsection$\ref{SectionAxLindemann}):

\begin{thm}[logarithmic Ax]\label{TheoremIntroductionAxlogarithmic}
Let $Z^\natural$ be an irreducible subvariety of $S^\natural$ and let $\tilde{Z}^\natural$ be a complex analytic irreducible component of $(\unif^{\natural})^{-1}(Z^\natural)$. Then the image of $(\tilde{Z}^\natural)^{\mathrm{Zar}}$, the Zariski closure of $\tilde{Z}^\natural$ in $\cX^{\natural+}$,\footnote{It means that $(\tilde{Z}^\natural)^{\mathrm{Zar}}$ is the complex analytic irreducible component of $\cX^{\natural+}\cap$(Zariski closure of $\tilde{Z}^\natural$ in $\cX^{\natural,\vee}$) which contains $\tilde{Z}^\natural$.} under $\unif^\natural$ is quasi-linear. 
\end{thm}

\begin{thm}[Ax-Lindemann]\label{TheoremIntroductionAxLindemann}
Let $\tilde{Z}^\natural$ be a semi-algebraic subset of $\cX^{\natural+}$. Then any irreducible component of $(\unif^\natural(\tilde{Z}^\natural))^{\mathrm{Zar}}$ is quasi-linear.
\end{thm}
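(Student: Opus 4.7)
The plan is to follow the o-minimality / Pila-Wilkie strategy of Pila-Ullmo-Yafaev-Klingler in the pure Shimura case, extended by the author's earlier work to the mixed case, and to adapt it here to the enlarged setting. By standard reductions one replaces $\tilde{Z}^\natural$ by a maximal irreducible semi-algebraic subset $Y \subseteq (\unif^\natural)^{-1}(Z^\natural_0)$ containing $\tilde{Z}^\natural$, where $Z^\natural_0$ is an irreducible component of $\unif^\natural(\tilde{Z}^\natural)^{\mathrm{Zar}}$; after replacing $S^\natural$ by $Z^\natural_0$ one may further assume $Z^\natural_0 = S^\natural$, and the goal becomes to show that $\unif^\natural(Y)$ is quasi-linear.

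The first key ingredient is definability. I would extend the known definability of (mixed) Shimura uniformizations (Klingler-Ullmo-Yafaev, Peterzil-Starchenko, and the author's treatment of the mixed case) to the enlarged setting: restrict $\unif^\natural$ to a suitable fundamental set $\cF \subset \cX^{\natural+}$ and show the restriction is definable in $\R_{\mathrm{an},\mathrm{exp}}$. The vectorial coordinates contributed to $\cX^{\natural+}$ beyond the underlying mixed Shimura domain $\cX^+$ are uniformized by essentially linear or exponential maps on the fibers, so their definability on $\cF$ grafts onto the existing framework without conceptual difficulty.

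Second, one runs the counting machinery. Consider the definable set
\[
\Sigma := \{ g \in P(\R) : \dim(gY \cap Y) = \dim Y \text{ and } g \cF \cap \cF \ne \emptyset \}.
\]
Because $\unif^\natural(\tilde{Z}^\natural)$ is Zariski-dense in $S^\natural = Z^\natural_0$, standard Hwang-To type volume estimates in the Shimura direction, together with polynomial growth in the unipotent and vectorial directions, yield a polynomial lower bound on the number of $\gamma \in \Sigma \cap \Gamma$ of height at most $T$. The Pila-Wilkie counting theorem then produces positive-dimensional semi-algebraic blocks in $\Sigma$ through the identity, which generate a non-trivial real algebraic subgroup $H \subseteq P(\R)$ stabilizing $Y$.

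Finally, one invokes the characterization of quasi-linear subvarieties via their stabilizers established earlier in the paper (the enlarged analogue of the weakly-special / bi-algebraic description); combined with the maximality of $Y$ and the existence of $H$ above, this forces $\unif^\natural(Y)$ to be quasi-linear. The principal obstacle I anticipate is the monodromy / volume estimate in the vectorial direction: unipotent fibers of a mixed Shimura variety quotient to compact tori, so that height growth is polynomial and controllable, whereas the additional vector-group fibers of $\cX^{\natural+} \to \cX^+$ are non-compact real vector spaces; one must carefully choose the fundamental set (exploiting Coleman's description of $\mathfrak{P}_g^\natural$ together with the Deligne-Pink structure set up in the present paper) to guarantee that the Pila-Wilkie count yields a stabilizer large enough to cut out $Y$, rather than degenerating to the trivial algebraic subgroup.
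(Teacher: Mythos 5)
Your overall strategy---reduce to a maximal semi-algebraic subset, establish definability on a fundamental set, run Pila-Wilkie to get a positive-dimensional stabilizer, and conclude via the bi-algebraicity characterization---is the right skeleton, and you correctly flag the non-compactness of the vectorial fibers of $\cX^{\natural+}\rightarrow\varphi(\cX^+)$ as the principal obstacle. But the proposal as written does not actually get around that obstacle, and this is where the paper's proof genuinely diverges from a single-pass counting argument in $P(\R)$.

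The paper splits the argument into two independent halves that are glued together at the end, rather than running one counting argument over $P(\R)$. First, for the \emph{base} direction, the paper proves (Proposition~\ref{PropositionKeyProposition}) that $\tilde{Z}_G=G_{H_{\tilde{Z}^\natural}}(\R)^+\tilde{z}_G$ by projecting to each simple factor $H_i^{\ad}$ of $G_N$, where $N$ is the connected algebraic monodromy group, and performing the Pila-Wilkie count there; here the lower bound comes from the pure Shimura machinery of Klingler-Ullmo-Yafaev applied to $S_{G,i}$, not from a volume estimate in $P(\R)$. Second, for the \emph{fiber} direction (where $\tilde{Z}_G$ is a point), the paper does not use Pila-Wilkie on $P(\R)$ at all: it invokes Theorem~\ref{TheoremAxLindemannForFiber}, which is deduced from the Ax-Schanuel theorem for the unipotent part (Theorem~\ref{TheoremAxSchanuelFiberOfEMSV}), itself proven separately in $\mathsection$\ref{SectionProofOfAxSchanuelWhenRestrictedToAFiberOfStoSG} with a dedicated Hwang-To estimate on $W(\C)$-fibers that handles the extra non-compact $F^0W(\C)$-direction. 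Your proposed single set $\Sigma\subset P(\R)$ would require volume estimates that mix these two regimes; the paper instead decouples them precisely because no choice of fundamental set makes a naive count in $P(\R)$ produce a stabilizer that is large in \emph{both} the reductive and the vectorial directions simultaneously. You should also note that the paper's proof leans essentially on Ax logarithmique (Theorem~\ref{TheoremAxLogarithmique}) to pin down the smallest quasi-linear $\tilde{F}^\natural\supset\tilde{Y}^\natural$ as $N(\R)^+U_N(\C)\tilde{z}$, and on Lemmas~\ref{LemmaStabiliserOfZstabilisesY}--\ref{LemmaStabilizerNormalInN} to see that $H_{\tilde{Z}^\natural}\lhd N$; these structural lemmas are needed before the counting can be localized to the semi-simple factors $H_i$, and they are absent from your sketch. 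In short: correct high-level template, but the gap is that you have not supplied (and a single Pila-Wilkie count in $P(\R)$ cannot supply) the positive-dimensional stabilizer in the vectorial direction---which is exactly what the separate Ax-Schanuel-for-fibers argument provides.
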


Quasi-linear subvarieties of $S^\natural$ will be defined in the next subsection of the Introduction (Definition~\ref{DefinitionQuasiLinearIntroduction}). Theorem~\ref{TheoremIntroductionAxlogarithmic} is proven for pure Shimura varieties by Moonen \cite[3.6,~3.7]{MoonenLinearity-prope} and for mixed Shimura varieties by the author \cite[Theorem~8.1]{GaoTowards-the-And}. Theorem~\ref{TheoremIntroductionAxLindemann} plays an essential role in the proof of the Andr\'{e}-Oort conjecture. It is proven for $Y(1)^n$ by Pila \cite{PilaO-minimality-an}, for projective pure Shimura varieties by Ullmo-Yafaev \cite{UllmoThe-Hyperbolic-}, for $\cA_g$ by Pila-Tsimerman \cite{PilaAxLindemannAg}, for any pure Shimura variety by Klingler-Ullmo-Yafaev \cite{KlinglerThe-Hyperbolic-}, and for any mixed Shimura variety by the author \cite[Theorem~1.2]{GaoTowards-the-And}. 

We make some comparison of the proofs of Theorem~\ref{TheoremIntroductionAxlogarithmic} and Theorem~\ref{TheoremIntroductionAxLindemann} with the author's previous work on mixed Shimura varieties \cite{GaoTowards-the-And}. In both situations it is important to study the geometric bi-algebraic systems associated with the ambient varieties (we will elaborate on this in the next subsection). We need the characterization of geometrically bi-algebraic subvarieties in both situations. For mixed Shimura varieties, logarithmic Ax follows from Andr\'{e}'s result on the algebraic monodromy groups without much effort and the characterization comes as a byproduct. But for enlarged mixed Shimura varieties, Hodge theory itself is not enough. We should study the geometry of $S^\natural$ more carefully and it is better to separate the proofs into two parts: first prove that geometrically bi-algebraic subvarieties of $S^\natural$ are precisely quasi-linear subvarieties, then prove that the target objects in the conclusions are geometrically bi-algebraic. The first part is new compared to \cite{GaoTowards-the-And}, but the second part is only a slight modification of \cite{GaoTowards-the-And}.

A common generalization of Theorem~\ref{TheoremIntroductionAxlogarithmic} and Theorem~\ref{TheoremIntroductionAxLindemann} is the following conjecture.

\begin{conj}[Ax-Schanuel]\label{ConjectureIntroductionAxSchanuel}
Let $\Delta^\natural\subset\cX^{\natural+}\times S^\natural$ be the graph of $\unif^\natural$. Let $\mathbscr{Z}^\natural=\mathrm{graph}(\tilde{Z}^\natural\xrightarrow{\unif^\natural}Z^\natural)$ be a complex analytic irreducible subvariety of $\Delta^\natural$. Let $F^\natural$ be the smallest quasi-linear subvariety of $S^\natural$ which contains $Z^\natural$. 
Then
\begin{enumerate}
\item $\dim(\tilde{Z}^\natural)^{\Zar}+\dim(Z^\natural)^{\Zar} \dim\tilde{Z}^\natural\ge \dim F^\natural$.
\item Let $\mathbscr{B}^\natural:=(\mathbscr{Z}^\natural)^{\Zar}\subset\cX^{\natural+}\times S^\natural$. Then $\dim\mathbf{pr}^{\mathrm{ws}}_{F^\natural}(\mathbscr{B}^\natural)-\dim\mathbf{pr}^{\mathrm{ws}}_{F^\natural}(\mathbscr{Z}^\natural)\ge \dim(F^\natural)^{\mathrm{ws}}$.
\end{enumerate}
\end{conj}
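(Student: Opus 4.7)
Since this is stated as a conjecture, the proposal is necessarily speculative; the plan is to adapt the Pila--Tsimerman--Mok strategy (as refined for mixed Shimura varieties in the author's previous work) to the enlarged setting, exploiting the canonical projection $\pi^\natural\colon S^\natural \to S$ to the underlying mixed Shimura variety, whose fibers are affine spaces arising from vector extensions. A first reduction replaces $S^\natural$ by $F^\natural$ itself: since the paper will establish geometrically bi-algebraic $=$ quasi-linear, $F^\natural$ inherits the structure of an enlarged mixed Shimura variety, and one may assume $\tilde{Z}^\natural$ is Hodge-generic in $F^\natural = S^\natural$, so that $(F^\natural)^{\mathrm{lin}}$ is the full linear part of $S^\natural$ appearing in (2).

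First I would set up the o-minimal framework, choosing a fundamental set on which $\unif^\natural$ is definable in $\R_{\mathrm{an,exp}}$; this requires extending the definability constructions used for mixed Shimura varieties to the vector-extension direction, which should be routine once the Deligne--Pink language of $\mathsection\ref{SectionEnlargedMixedShimuraData}$ is in place. Next I would apply the Pila--Wilkie counting theorem, combined with a volume / height estimate on $\mathbscr{B}^\natural$ intersected with the preimage of a fundamental set, to force a dichotomy: either $\mathbscr{B}^\natural$ projects to small dimension in $S^\natural$, or else its algebraic closure contains many translates by elements of $P(\Q)$. The second alternative, fed through an Andr\'{e}--Deligne theorem of the fixed part adapted to the enlarged setting, would imply that the algebraic monodromy of $\tilde{Z}^\natural$ generates a normal $\Q$-subgroup of the group defining $F^\natural$; by Hodge-genericity this group must be all of $P^\natural_{F}$, giving (1) via the standard fiber-dimension arithmetic. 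Finally, (2) follows from (1) by passing to the pro-linear projections $\mathbf{pr}^{\mathrm{lin}}_{F^\natural}$ once one checks that these projections commute with the reduction, which is an exercise in the structure theory of $\mathsection\ref{SectionEnlargedMixedShimuraData}$.

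The hard part will be the monodromy step: a full Andr\'{e}--Deligne theorem for subvarieties of enlarged mixed Shimura varieties is not in the literature and must be developed, because the variation of mixed Hodge structure on the underlying mixed Shimura base does not by itself control the linear/vector-extension part. The natural way around this is to split the argument: use Andr\'{e}'s theorem for the semisimple quotient, and for the unipotent $W^\natural$-direction appeal to the Ax--Schanuel statement for the unipotent part which the paper proves in full. A secondary but genuine difficulty, already visible in Step 1, is that quasi-linear subvarieties of $S^\natural$ are strictly richer than $\pi^\natural$-pullbacks of weakly special subvarieties of $S$: a fiberwise linear subspace need not descend, so the smallest quasi-linear subvariety containing $Z^\natural$ need not be the preimage of the smallest weakly special subvariety containing $\pi^\natural(Z^\natural)$. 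Tracking this discrepancy in the dimension bookkeeping is what forces one to work on the graph $\Delta^\natural$ rather than on $S^\natural$, and it is here that the formulation of part (2), phrased via $\mathbf{pr}^{\mathrm{lin}}_{F^\natural}$, does most of the work in the reduction.
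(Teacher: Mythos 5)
This statement is a \emph{conjecture} in the paper, not a theorem. The paper does not prove it: it proves only the three special cases collected in Theorem~\ref{TheoremAxSchanuelKnownCases} (equivalence with Ax-Lindemann, equivalence with Ax logarithmique, and the fiber case $S^\natural\to S_G$). So there is no ``paper's own proof'' to compare against, and your proposal is a proof plan for an open problem, which I will assess on its own terms.

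There are several genuine gaps. First, your opening reduction --- ``replace $S^\natural$ by $F^\natural$ itself; since geometrically bi-algebraic $=$ quasi-linear, $F^\natural$ inherits the structure of an enlarged mixed Shimura variety'' --- is false. A quasi-linear subvariety is, by Definition~\ref{DefinitionQuasiLinear}, of the form $Y\times_{Y_{P/U}}\bigl(Y_{P/U}^{\mathrm{univ}}\times_{Y_G}(L^\natural\times Y_G)\times_{Y_G}\bV^\dagger|_{Y_G}\bigr)$ where $L^\natural$ is an \emph{arbitrary} irreducible algebraic subvariety of a fiber of the trivial subbundle $\bV^{(0)}$. Such an $F^\natural$ is not a Shimura subvariety of any kind; it is only a weakly special one when $L^\natural$ is a point. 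The objects that do carry enlarged mixed Shimura structure are the weakly special or special subvarieties (Definitions~\ref{DefinitionWeaklySpecialSubvariety} and~\ref{DefinitionSpecialSubvariety}), which are strictly smaller than the quasi-linear ones. So after your reduction the ambient space is no longer an enlarged mixed Shimura variety and the Pila--Wilkie / monodromy machinery you invoke no longer applies directly.

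Second, you write that ``(2) follows from (1) by passing to the pro-linear projections.'' The paper states explicitly, in the introduction after Conjecture~\ref{ConjectureIntroductionAxSchanuel} and again after Conjecture~\ref{ConjectureAxSchanuel}, that the two parts do \emph{not} imply each other: part (1) implies Ax logarithmique and Ax-Lindemann but not the closure statement, and part (2) implies the mixed Ax-Schanuel conjecture but not (1). The separation is precisely the reason both parts appear. Third, the ``monodromy step'' you correctly identify as the crux is genuinely beyond the techniques you list: even for pure Shimura varieties, Ax-Schanuel was open at the time (known only for $Y(1)^n$ by Pila--Tsimerman), and the eventual proof uses a considerably more involved argument on period maps than Pila--Wilkie plus Andr\'{e}--Deligne. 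Appealing to ``Ax-Schanuel for the unipotent part which the paper proves in full'' cannot fill this gap, because Theorem~\ref{TheoremAxSchanuelFiberOfEMSV} treats only fibers over a point $b\in S_G$, i.e.\ the case where the semisimple direction has been killed; the difficulty you are trying to sidestep lives precisely in the $S_G$-direction. In short the plan names the right ingredients but does not assemble them, and the reduction it opens with is unavailable.
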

Let us emphasize that while $\mathbscr{Z}^\natural$ is complex analytic, there is no reason that $Z^\natural$ is closed in $S^\natural$. So $F^\natural$ contains the complex analytic closure of $Z^\natural$ in $S^\natural$.

We will explain part (2) in the next subsection of the Introduction (below Definition~\ref{DefinitionQuasiLinearIntroduction}). For the moment let me just point out why both parts of Conjecture~\ref{ConjectureIntroductionAxSchanuel} are needed:
\begin{itemize}
\item Conjecture~\ref{ConjectureIntroductionAxSchanuel} is a natural generalization of the Ax-Schanuel theorem \cite{AxOn-Schanuels-co} (see \cite[Introduction]{TsimermanAx-Schanuel-and}). Ax' theorem, concerning $\C^n\rightarrow(\C^*)^n$, implies both logarithmic Ax and Ax-Lindemann for this bi-algebraic system, and describes the Zariski closure of complex analytic subvarieties of the graph. This is still expected to hold for mixed Shimura varieties (Conjecture~\ref{ConjectureAxSchanuelMSV}). If we want to make an Ax-Schanuel conjecture for enlarged mixed Shimura varieties, it should then reflect both aspects. Note that for mixed Shimura varieties, part (2) already implies part (1).
\item Part (1) of Conjecture~\ref{ConjectureIntroductionAxSchanuel} implies both logarithmic Ax (Theorem~\ref{TheoremIntroductionAxlogarithmic}) and Ax-Lindemann (Theorem~\ref{TheoremIntroductionAxLindemann}), while part (2) does not.
\item Part (2) of Conjecture~\ref{ConjectureIntroductionAxSchanuel} describes the Zariski closure of $\mathbscr{Z}^\natural$ in the ambient space $\cX^{\natural+}\times S^\natural$, while part (1) does not.
\end{itemize}

We will prove the following cases for Conjecture~\ref{ConjectureIntroductionAxSchanuel}.
\begin{thm}\label{TheoremAxSchanuelKnownCases}
Conjecture~\ref{ConjectureIntroductionAxSchanuel} holds in the following cases:
\begin{enumerate}
\item When $\tilde{Z}^\natural$ is algebraic (equivalent to Ax-Lindemann by Theorem~\ref{TheoremAxSchanuelAndAxLindemann}).
\item When $Z^\natural$ is algebraic (equivalent to logarithmic Ax by Theorem~\ref{TheoremAxSchanuelAndAxlogarithmic}).
\end{enumerate}
\end{thm}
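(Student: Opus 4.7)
The plan is to handle the three cases separately: cases (1) and (2) will reduce essentially formally to the already proved Ax-Lindemann and Ax logarithmique theorems via the equivalence theorems announced in the statement, whereas case (3) requires a direct verification using the group-theoretic structure of a fiber of $S^\natural\to S_G$.

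For case (1) I invoke Theorem~\ref{TheoremAxSchanuelAndAxLindemann}: when $\tilde{Z}^\natural$ is algebraic, $(\tilde{Z}^\natural)^{\Zar}=\tilde{Z}^\natural$ in $\cX^{\natural+}$, so part (1) of Conjecture~\ref{ConjectureIntroductionAxSchanuel} becomes $\dim(Z^\natural)^{\Zar}\geqslant\dim F^\natural$. By Theorem~\ref{TheoremIntroductionAxLindemann} every irreducible component of $\unif^\natural(\tilde{Z}^\natural)^{\Zar}$, hence of $(Z^\natural)^{\Zar}$, is quasi-linear; since $F^\natural$ is minimal among quasi-linear subvarieties containing $Z^\natural$, we get $F^\natural\subseteq(Z^\natural)^{\Zar}$, giving the required dimension inequality. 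For part (2) of the conjecture, one needs in addition to track the projection $\mathbf{pr}^{\mathrm{lin}}_{F^\natural}$, and this extra tracking is precisely the content of Theorem~\ref{TheoremAxSchanuelAndAxLindemann}. Case (2) is completely parallel with the roles of $Z^\natural$ and $\tilde{Z}^\natural$ exchanged, using Theorem~\ref{TheoremAxSchanuelAndAxLogarithmique} together with Theorem~\ref{TheoremIntroductionAxLogarithmique}.

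For case (3) I first use the Cartesian diagram~\eqref{EquationCartesianDiagramForEMSV}. A fiber of $S^\natural\to S_G$ over a point is a commutative algebraic group obtained as an extension of an abelian variety (times a torus, the weight-$(-2)$ piece) by a vector group (coming from the weight-$(-1)$ enlargement), and its uniformization $\C^N\to(S^\natural)_s$ is the exponential of this group, whose kernel is the period lattice. Once $Z^\natural$ lies in such a fiber, both inequalities of Conjecture~\ref{ConjectureIntroductionAxSchanuel} translate into classical Ax-Schanuel statements for the exponential of a commutative algebraic group. The plan is to decompose this exponential into its toric factor, its abelian factor, and its vector factor, and then invoke Ax's original theorem for the toric part, the known Ax-Schanuel theorem for abelian varieties for the abelian part, and a direct verification for the vector part, where the exponential is the identity and the statement is immediate. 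The smallest quasi-linear subvariety $F^\natural$ in this fiber situation is simply the smallest coset of an algebraic subgroup containing $Z^\natural$, and its linear part matches the kernel-level projection appearing in part (2), so both inequalities combine into the desired conclusion.

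The main obstacle is case (3): one must verify that the general definition of quasi-linear subvarieties of $S^\natural$ and of the projection $\mathbf{pr}^{\mathrm{lin}}_{F^\natural}$ specializes correctly inside a single fiber, so that it matches the classical formulation of Ax-Schanuel for commutative algebraic groups, and one must handle with care the weight-$(-2)$ toric contribution together with the vector extension introduced by the enlargement, both of which are not present in the pure Shimura setting. The Deligne-Pink formalism developed in \S\ref{SectionEnlargedMixedShimuraData}--\S\ref{SectionEnlargedMixedShimuraVarieties} is exactly what is needed to make this translation precise.
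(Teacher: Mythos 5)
For cases (1) and (2) your reduction to Theorems~\ref{TheoremAxSchanuelAndAxLindemann} and \ref{TheoremAxSchanuelAndAxLogarithmique} is essentially the same pointer the paper itself makes for this summary-level statement, and your sketch of how Ax-Lindemann gives part (1) of the conjecture (algebraicity of $\tilde{Z}^\natural$ forces $(Z^\natural)^{\Zar}$ quasi-linear, then minimality of $F^\natural$ forces $(Z^\natural)^{\Zar}=F^\natural$) is correct.

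Your plan for case (3), however, rests on a premise that fails. You treat the fiber $E^\natural$ of $S^\natural\to S_G$ as a commutative algebraic group whose uniformization is an exponential, and propose to split it into toric, abelian, and vector factors, then apply classical Ax-Schanuel to each. But the uniformizing object is $W(\C)$, where $W=\cR_u(P)$ is in general a two-step nilpotent Heisenberg-type group with commutator governed by the alternating form $\Psi$ (cf.\ \S\ref{SubsectionStructureOfUnderlyingGroup}); consequently the lattice $\Gamma_W$ is not normal and the quotient $E^\natural=\Gamma_W\backslash W(\C)$ does not carry a natural group structure — the paper stresses exactly this for the fiber of $\mathfrak{P}_g^\natural\rightarrow\cA_g$. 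Even after fixing a base point, $E^\natural$ is a nonsplit $T$-torsor over the universal vector extension of an abelian variety, so the ``toric $\times$ abelian $\times$ vector'' decomposition you want does not exist; $\Psi$ links the weight $-2$ and weight $-1$ parts precisely so that this splitting fails. Moreover, Ax's theorem for the exponential of a commutative algebraic group only produces part (2) of Conjecture~\ref{ConjectureIntroductionAxSchanuel}; the paper explicitly emphasizes that part (1) for these fibers is new and is \emph{not} implied by part (2). The actual proof (Theorem~\ref{TheoremAxSchanuelFiberOfEMSV}, carried out in \S\ref{SectionProofOfAxSchanuelWhenRestrictedToAFiberOfStoSG}) first proves a fresh Ax-Schanuel statement for the fiber $E$ of the mixed Shimura variety via a Pila--Wilkie counting argument that traces how $\Psi(\Gamma_{V_F},\tilde{z}_V)$ controls the toric directions of $\tilde{X}$ and $Y$ (Steps~I--V of Theorem~\ref{TheoremShapeOfBAfterReduction}), then lifts this to $E^\natural$, invoking Ax's theorem only for the $A^\natural$ piece inside the argument. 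Your outline skips all of this and does not supply any mechanism replacing the Pila--Wilkie counting, so case (3) is not established.
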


Little is known for Conjecture~\ref{ConjectureIntroductionAxSchanuel} beyond Ax-Lindemann and logarithmic Ax: Ax \cite{AxSome-topics-in-} proved part (2) for the universal vector extension of any semi-abelian variety\footnote{Ax' theorem was about any complex algebraic group with its Lie algebra, but this is easily reduced to the case of abelian groups with their Lie algebras, and hence is equivalent to the statement for the universal vector extension of any semi-abelian variety.}, and Pila-Tsimerman \cite{PilaAx-Schanuel-for} proved the conjecture for $Y(1)^n$ (one crucial point is that each simple factor of the group attached to $Y(1)^n$, which is $\mathrm{SL}_2(\Q)$, is small).

\subsection{Quasi-linear subvarieties and geometric bi-algebraicity}\label{SubsectionQuasiLinearSubvarietiesAndGeometricBiAlgebraicity}
Notation: for any abelian scheme $\mathfrak{A}\rightarrow B$ with unit section $\epsilon$, denote by $\omega_{\mathfrak{A}/B}:=\epsilon^*\Omega_{\mathfrak{A}/B}^1$.

A crucial ingredient for Conjecture~\ref{ConjectureIntroductionAxSchanuel} is to understand the geometric bi-algebraic system associated with $S^\natural$. This is the \textit{\textbf{third main goal}} of this paper. We briefly explain this in this subsection. We say that an irreducible subvariety $Y^\natural$ of $S^\natural$ is \textbf{geometrically bi-algebraic} if one (and hence all) complex analytic irreducible component of $(\unif^{\natural})^{-1}(Y^\natural)$ is algebraizable, \textit{i.e.}  its dimension equals the dimension of its Zariski closure in $\cX^{\natural,\vee}$. As we pointed out, the first step to prove any Ax type theorem is to establish the characterization of geometrically bi-algebraic subvarieties of $S^\natural$. The result cannot be as elegant as for mixed Shimura varieties. Take for example $S^\natural=\mathfrak{A}_g^\natural$. Any algebraic subvariety $Y^\natural$ contained in a fiber of $\mathfrak{A}_g^\natural\rightarrow \mathfrak{A}_g$ is geometrically bi-algebraic. However we prove that this is the only problem: for the exact sequence $0\rightarrow \omega_{\mathfrak{A}_g^\vee/\cA_g} \rightarrow \mathfrak{A}_g^\natural \rightarrow \mathfrak{A}_g \rightarrow 0$ of groups over $\cA_g$, the ``non-linear'' part of any geometrically bi-algebraic subvariety $Y^\natural$ of $\mathfrak{A}_g^\natural$ can only lie in the  TRIVIAL subbundle of the vector bundle part, \textit{i.e.}  of $\omega_{\mathfrak{A}_g^\vee/\cA_g}|_{Y_G}$ where $Y_G$ is the image of $Y^\natural$ in $\cA_g$. More precisely we prove (Theorem~\ref{TheoremCharacterizationOfBiAlgebraicSubvarieties}):

\begin{thm}[Characterization of geometrically bi-algebraic subvarieties of enlarged mixed Shimura varieties]\label{TheoremIntroductionCharacterizationOfBiAlgebraicSubvarieties}
An irreducible subvariety $Y^\natural$ of $S^\natural$ is geometrically bi-algebraic if and only if it is quasi-linear.
\end{thm}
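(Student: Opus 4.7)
The plan is to treat the two implications separately, with the main difficulty lying in the ``only if'' direction.

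For the ``if'' direction (quasi-linear implies geometrically bi-algebraic), I would simply unwind the definition of quasi-linearity in the Deligne--Pink language developed in $\mathsection$\ref{SectionEnlargedMixedShimuraData}--$\mathsection$\ref{SectionEnlargedMixedShimuraVarieties}. A quasi-linear subvariety is, by construction, built from the orbit of a suitable $\Q$-subgroup of $P$ acting on $\cX^{\natural+}$, translated by a flat section of the trivial subbundle of the vector part of $\cX^{\natural+}\to\cX^+$. Using the algebraic structure on the embedding $\cX^{\natural+}\hookrightarrow\cX^{\natural,\vee}$ from $\mathsection$\ref{SubsectionAlgebraicStructureOnXnatural}, such a set is visibly algebraic in $\cX^{\natural,\vee}$, so the corresponding $Y^\natural$ is geometrically bi-algebraic. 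This is essentially bookkeeping.

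For the ``only if'' direction, let $Y^\natural\subseteq S^\natural$ be geometrically bi-algebraic with lift $\tilde Y^\natural\subseteq\cX^{\natural+}$. The strategy is to reduce first to the underlying mixed Shimura variety $S$ and then to analyze the vertical direction. Since the projection $\bpi\colon S^\natural\to S$ is algebraic, as is the induced map $\cX^{\natural,\vee}\to\cX^\vee$, the image $Y:=\bpi(Y^\natural)^{\Zar}\subseteq S$ is still geometrically bi-algebraic. By the characterization of bi-algebraic subvarieties of mixed Shimura varieties established in \cite{GaoTowards-the-And}, $Y$ is weakly special. Restricting $S^\natural\to S$ to $Y$ yields a vector extension over a weakly special base, whose uniformization is governed by the Hodge filtration and the Gau\ss--Manin connection recalled in the introduction. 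Over this base, $Y^\natural$ is (locally) the graph of a section, and the task reduces to showing that this section must lie in the trivial subbundle of the vectorial fibre.

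The main obstacle is this vertical step, which is genuinely new compared to \cite{GaoTowards-the-And}. I expect to argue as follows: a local analytic section of the vector extension that is algebraic after passing to $\cX^{\natural,\vee}$ must, by the non-triviality of the period map in the fibre direction, be invariant under the algebraic monodromy group of the underlying variation of Hodge structure on $Y$; by Andr\'{e}'s theorem this monodromy-invariant part is exactly the flat (hence trivial) subbundle, which is the object identified in $\mathsection$\ref{SubsectionQuasiLinearSubvarietiesAndGeometricBiAlgebraicity} in the model case $\mathfrak{A}_g^\natural$. Combining the weak speciality of $Y$ with the ``verticality lies in the trivial subbundle'' result then yields exactly the quasi-linear shape of $Y^\natural$. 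The point where Hodge theory alone no longer suffices, and where the Deligne--Pink framework of $\mathsection$\ref{SectionEnlargedMixedShimuraData} is indispensable, is precisely in organizing the vector-extension data (Laumon $1$-motive realizations) compatibly with the algebraic structure on $\cX^{\natural,\vee}$ so that Andr\'{e}'s theorem can be applied to the correct piece.
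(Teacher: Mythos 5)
Your reduction of the ``only if'' direction to the mixed Shimura case (project to $S$, invoke the characterization of bi-algebraic subvarieties of mixed Shimura varieties to get $Y$ weakly special) matches the first step of the paper's proof of Theorem~\ref{TheoremCharacterizationOfBiAlgebraicSubvarieties}. But your plan for the vertical step has a genuine gap. First, $Y^\natural\rightarrow Y$ is not locally the graph of a section: a quasi-linear $Y^\natural$ contains the whole universal vector extension $Y_{P/U}^{\mathrm{univ}}$, a full sub-bundle $\bV^\dagger|_{Y_G}$, \emph{and} an arbitrary irreducible algebraic subvariety $L^\natural$ of a fiber of the trivial sub-bundle, so the object to be controlled is a family of algebraic subvarieties of the fibers, not a section. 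Second, and more seriously, the transcendence you want to exploit is absent in the fibre direction: the restriction of $\unif^\natural$ to a fiber of $\cX^{\natural+}\rightarrow\varphi(\cX^+)$ (i.e.\ to $\cF^0\cV$) is the \emph{identity} map, which is precisely why arbitrary algebraic $L^\natural$'s in the trivial sub-bundle are bi-algebraic; there is no ``non-triviality of the period map in the fibre direction'' to invoke. Monodromy invariance only yields that the extra vertical part $\tilde{Y}^{\natural,\mathrm{extr}}_{P/U}$ is the $G_N(\R)^+$-orbit of a single fiber $(\tilde{Y}^{\natural,\mathrm{extr}}_{P/U})_{\tilde{y}_{G,0}}$ inside an automorphic bundle on which $G_N(\R)^+$ acts non-trivially; it does \emph{not} give constancy of the fibers, and Andr\'{e}'s theorem (which classifies flat sub-local-systems, not algebraic families of subvarieties of the fibers) does not apply to this object. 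The paper explicitly warns that ``Hodge theory itself is not enough'' here: the missing ingredients are (i) the constancy of the induced holomorphic map $Y_G\rightarrow\mathrm{Chow}(\C^l)$, which requires showing its image lies in an affine chart and, in the exceptional case where the Baily--Borel boundary of $Y_G$ has codimension $1$ (i.e.\ $Y_G$ a modular curve), a separate argument via the vanishing of level-$1$ weight-$2$ modular forms; and (ii) the representation-theoretic Step~II showing that the resulting constant fiber is a $G_{Q,\R}$-submodule $V^{\dagger,\vee}$, using that the complex structure on $V^{\mathrm{nt},\vee}$ comes from the $G_{Q,\R}$-action. Without (i) and (ii) your argument does not close.

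A secondary remark: the ``if'' direction is not pure bookkeeping either. Algebraicity of $\tilde{Y}^\natural$ in $\cX^{\natural,\vee}$ is indeed immediate from the description, but bi-algebraicity also requires that $\unif^\natural(\tilde{Y}^\natural)$ be a \emph{closed algebraic} subvariety of $S^\natural$, and for the extra directions ($\bV^\dagger$ and $L^\natural$) this descent is proved in the paper via definability of $\unif^\natural$ on a fundamental set together with the Peterzil--Starchenko o-minimal GAGA theorem (Lemma~\ref{LemmaCriterionOfBiAlgebraicity}); you should at least cite that mechanism.
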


We define quasi-linear subvarieties of $S^\natural$. We shall see in $\mathsection$\ref{SubsectionNotationEMSD} that there is a commutative diagram for any connected enlarged mixed Shimura variety (on the right for $(\mathfrak{P}^\natural_g)^{[n]}$):
\begin{equation}\label{EquationCartesianDiagramForEMSV}
\xymatrix{
S^\natural \ar[r]^{[\pi^\sharp]} \ar[d]_{[\pi^\natural_{P/U}]} \pullback & S \ar[rd]^{[\pi]} \ar[d]_{[\pi_{P/U}]}   & & & (\mathfrak{P}^\natural_g)^{[n]}  \ar[r] \ar[d] \pullback & \mathfrak{P}_g^{[n]} \ar[rd] \ar[d] \\
S^\natural_{P/U} \ar[r]_{[\pi^\sharp_{P/U}]} & S_{P/U} \ar[r]_{[\pi_G]} & S_G & & \mathfrak{A}_g^\natural \times (\mathfrak{A}_g^\vee)^\natural \ar[r] & \mathfrak{A}_g \times \mathfrak{A}_g^\vee \ar[r] & \cA_g
}
\end{equation}
where
\begin{itemize}
\item all maps in the diagram are projections defined in some natural way;
\item $S$ is a connected mixed Shimura variety, and $S_{P/U}$ is the quotient of $S$ by its weight $-2$ part and $S_G$ is its pure part (and hence $S_{P/U}$ is an abelian scheme over $S_G$);
\item $S^\natural_{P/U}$ is the universal vector extension of the abelian scheme $S_{P/U}\rightarrow S_G$.
\end{itemize}

To define quasi-linear subvarieties of $S^\natural$ we need some preparation. Let $Y_G$ be a subvariety of $S_G$. Let $Y_{P/U}\subset S_{P/U}|_{Y_G}:=[\pi_{P/U}]^{-1}(Y_G)$ be the translate of an abelian subscheme of $S_{P/U}|_{Y_G}\rightarrow Y_G$ by a torsion section and then by a constant section of its isotrivial part. Denote by $Y_{P/U}^{\mathrm{univ}}$ the universal vector extension of the abelian scheme $Y_{P/U}\rightarrow Y_G$. Then by the universal property of the universal vector extension there is a unique embedding $Y_{P/U}^{\mathrm{univ}}\subset S^\natural_{P/U}|_{Y_{P/U}}:=[\pi^\sharp_{P/U}]^{-1}(Y_{P/U})$ compatible with the embedding $Y_{P/U}\subset S_{P/U}|_{Y_G}$ mentioned above. More concretely there is a unique embedding $i$ (left vertical arrow) of vector groups over $Y_G$ inducing the following push-out:
\[
\xymatrix{
0 \ar[r] &\omega_{Y_{P/U}^\vee/Y_G} \ar[r] \ar@{_(->}[d]^{i} & Y_{P/U}^{\mathrm{univ}} \ar[r] \ar@{.>}[d] & Y_{P/U} \ar[r] \ar[d]^{=} & 0 \\
0 \ar[r] &\omega_{[\pi_G]^{-1}(Y_G)^\vee/Y_G} \ar[r] &S^\natural_{P/U}|_{Y_{P/U}} \ar[r] \pushoutcorner &Y_{P/U} \ar[r] &0
}
\]
Then we obtain another vector extension of $Y_{P/U}$
\[
0 \rightarrow \frac{\omega_{[\pi_G]^{-1}(Y_G)^\vee/Y_G}}{\omega_{Y_{P/U}^\vee/Y_G}} \rightarrow \frac{S^\natural_{P/U}|_{Y_{P/U}}}{Y_{P/U}^{\mathrm{univ}}} \rightarrow Y_{P/U} \rightarrow 0,
\]
with the unique map $Y_{P/U}^{\mathrm{univ}} \rightarrow \frac{S^\natural_{P/U}|_{Y_{P/U}}}{Y_{P/U}^{\mathrm{univ}}}$ being $0$. Hence $\frac{S^\natural_{P/U}|_{Y_{P/U}}}{Y_{P/U}^{\mathrm{univ}}} \cong Y_{P/U} \times_{Y_G} \frac{\omega_{[\pi_G]^{-1}(Y_G)^\vee/Y_G}}{\omega_{Y_{P/U}^\vee/Y_G}}$. Thus
\[
S^\natural_{P/U}|_{Y_{P/U}}=Y_{P/U}^{\mathrm{univ}}\times_{Y_G}\frac{\omega_{[\pi_G]^{-1}(Y_G)^\vee/Y_G}}{\omega_{Y_{P/U}^\vee/Y_G}}
\]
Denote by $\bV^{(0)}|_{Y_G}$ the largest trivial subbundle of $\frac{\omega_{[\pi_G]^{-1}(Y_G)^\vee/Y_G}}{\omega_{Y_{P/U}^\vee/Y_G}}$. For simplicity we use $\omega^{\mathrm{extr}}$ to denote $\frac{\omega_{[\pi_G]^{-1}(Y_G)^\vee/Y_G}}{\omega_{Y_{P/U}^\vee/Y_G}}$.

If furthermore $Y_G$ is a weakly special subvariety of $S_G$, then denote by $H$ the connected algebraic monodromy group of $Y_G$. Then the pullback of $\omega^{\mathrm{extr}}$ under the universal cover $\tilde{Y}_G \rightarrow Y_G$, which we call $\tilde{\omega}^{\mathrm{extr}}$, is an $H(\R)$-bundle. We say that a subvariety $\mathbf{K}^\natural$ of $\omega^{\mathrm{extr}}$ is an \textbf{\textit{automorphic subvariety}} if it is the image of $H(\R)^+\tilde{K}^\natural$ under the natural projection $\tilde{\omega}^{\mathrm{extr}} \rightarrow \omega^{\mathrm{extr}}$ for some $\tilde{K}^\natural$ in a fiber of $\tilde{\omega}^{\mathrm{extr}} \rightarrow \tilde{Y}_G$. Note that $\tilde{K}^\natural$ can be chosen to be invariant under a maximal compact subgroup of $H(\R)^+$; see \eqref{EqAutomorphicSubvarietyFiberStableCompact}.

Now we are ready to define
\begin{defn}\label{DefinitionQuasiLinearIntroduction}
An irreducible subvariety $Y^\natural$ of $S^\natural$ is called \textbf{quasi-linear} if the followings hold: under the following notations for $Y^\natural$ compatible with \eqref{EquationCartesianDiagramForEMSV}
\[
\xymatrix{
Y^\natural \ar@{|->}[r] \ar@{|->}[d] & Y \ar@{|->}[rd] \ar@{|->}[d]\\
Y^\natural_{P/U} \ar@{|->}[r] & Y_{P/U} \ar@{|->}[r] & Y_G
}
\]
\begin{enumerate}
\item $Y$ is a weakly special subvariety of $S$. In particular $Y_{P/U}$ is the translate of an abelian subscheme of $S_{P/U}|_{Y_G}\rightarrow Y_G$ by a torsion section and then by a constant section of its isotrivial part.
\item Under the notations above the theorem, $Y^\natural_{P/U}=Y_{P/U}^{\mathrm{univ}}\times_{Y_G}(L^\natural\times Y_G)\times_{Y_G} \mathbf{K}^\natural$, where $L^\natural$ is an irreducible algebraic subvariety of any fiber of $\bV^{(0)}|_{Y_G}\rightarrow Y_G$, and $\mathbf{K}^\natural$ is an irreducible automorphic subvariety of the bundle $\frac{\omega_{[\pi_G]^{-1}(Y_G)^\vee/Y_G}}{\omega_{Y_{P/U}^\vee/Y_G}}$ whose intersection with $\bV^{(0)}|_{Y_G}$ is contained in the zero section.
\item $Y^\natural=Y\times_{Y_{P/U}}Y^\natural_{P/U}$ for the cartesian diagram in \eqref{EquationCartesianDiagramForEMSV}.
\end{enumerate}
\end{defn}

Now we are ready to explain the terminology in part (2) of Conjecture~\ref{ConjectureIntroductionAxSchanuel}. Apply Theorem~\ref{TheoremIntroductionCharacterizationOfBiAlgebraicSubvarieties} to the bi-algebraic subvariety $F^\natural$ of $S^\natural$ (hence we change every letter ``$Y$'' by ``$F$''), then we define
\[
(F^\natural)^{\mathrm{ws}}:=F\times_{F_{P/U}} F_{P/U}^{\mathrm{univ}}
\]
and $[pr]^{\mathrm{ws}}_{F^\natural}$ the natural projection $F^\natural\rightarrow(F^\natural)^{\mathrm{ws}}$. Let $pr^{\mathrm{ws}}_{\tilde{F}^\natural}$ be the natural projection from $\tilde{F}^\natural$ to $(\tilde{F}^\natural)^{\mathrm{ws}}$, the uniformization of $(F^\natural)^{\mathrm{ws}}$. Then we define
\[
\mathbf{pr}^{\mathrm{ws}}_{F^\natural}:=(pr^{\mathrm{ws}}_{\tilde{F}^\natural},[pr]^{\mathrm{ws}}_{F^\natural})\colon\tilde{F}^\natural\times F^\natural\rightarrow(\tilde{F}^\natural)^{\mathrm{ws}}\times(F^\natural)^{\mathrm{ws}}.
\]


\subsection{Andr\'{e}-Oort and Zilber-Pink type problems for enlarged mixed Shimura varieties} In $\mathsection$\ref{SectionSpecialSubvarieties}, we briefly discuss about special points and special subvarieties of enlarged mixed Shimura varieties. We will discuss Conjecture~\ref{ConjectureBiAlgebraicArithmeticAndGeometric}.

\subsection*{Structure of the paper} We review Pink's work on equivariant families of mixed Hodge structures in $\mathsection$\ref{SectionReviewOfPinksWork}. This is the foundation of the Deligne-Pink language for enlarged mixed Shimura varieties. Then we define and prove basic properties of enlarged mixed Shimura varieties in $\mathsection$\ref{SectionEnlargedMixedShimuraData} (for enlarged mixed Shimura data) and $\mathsection$\ref{SectionEnlargedMixedShimuraVarieties} (for enlarged mixed Shimura varieties). In particular we study their relationship with the mixed Shimura varieties, explaining both the categorical comparison and the geometrical comparison. We also prove that any enlarged mixed Shimura variety, a priori defined as a complex analytic space, is algebraic and can be canonically descended to its reflex field. $\mathsection$\ref{SectionBiAlgebraicSettingForEnlargedMixedShimuraVarieties} discusses about the geometric bi-algebraicity and proves the characterization of geometrically bi-algebraic subvarieties of enlarged mixed Shimura varieties. Then we study Ax type transcendental results for enlarged mixed Shimura varieties in the next sections. We prove logarithmic Ax in $\mathsection$\ref{SectionTheAxlogarithmicTheorem} and Ax-Lindemann in $\mathsection$\ref{SectionAxLindemann}. 
In $\mathsection$\ref{SectionAxSchanuel} we formulate the Ax-Schanuel conjecture, explain its meaning and prove its relation with logarithmic Ax and with Ax-Lindemann. Finally we make a small discussion about Andr\'{e}-Oort and Zilber-Pink type problems in $\mathsection$\ref{SectionSpecialSubvarieties}.

\subsection*{Acknowledgements} The paper originated from a discussion with Daniel Bertrand. I would like to thank him for the inspiration. I would also like to thank him for teaching me the theory of universal vector extensions and his suggestions to improve the writing. I would like to thank Gisbert W\"{u}stholz for our discussion on the arithmetic bi-algebraicity. I have benefited from discussions with Richard Pink on the first part of the paper, especially his suggestion to simplify the proof of Theorem~\ref{TheoremEMSVCanonicalModel}. I would like to thank him for the interest in the paper and his help. I would like to thank Jonathan Pila, Jacob Tsimerman and Umberto Zannier for their comments and suggestions to improve the presentation of the introduction. I am grateful to the referee for their detailed reading and valuable comments. Part of the work was done when the author was at the IAS (NJ, USA) and the author would like to thank its hospitality. The author was supported by NSF grant DMS-1128155.

\subsection*{Convention} For any abelian scheme $\mathfrak{A}\rightarrow B$ with unit section $\epsilon$, denote by $\omega_{\mathfrak{A}/B}:=\epsilon^*\Omega_{\mathfrak{A}/B}^1$.

Since we will only talk about geometric bi-algebraicity in the main body of the paper, we abbreviate ``geometrically bi-algebraic'' to ``bi-algebraic''.

When we say ``definable'', we mean definable in the o-minimal structure $\R_{\mathrm{an},\exp}$.

\section{Pink's work on equivariant families of mixed Hodge structures}\label{SectionReviewOfPinksWork}
In this section we review Pink's work on equivariant families of mixed Hodge structures. The reference of the whole section is \cite[Chapter~1]{PinkThesis}.

\subsection{Mixed Hodge structure}\label{SubsectionMixedHodgeStructures}
In this subsection we recall some background knowledge about rational mixed Hodge structures. In this subsection, the ring $R=\Z$ or $\Q$.
\subsubsection{Basic facts about mixed Hodge structures}
We start by collecting some basic notions about Hodge structures.

Let $M$ be a free $R$-module of finite rank. A \textbf{pure Hodge structure of weight $n\in\Z$} on $M$ is a decomposition $M_\C=\bigoplus_{p+q=n}M^{p,q}$ into $\C$-vector spaces such that for all $p,q\in\Z$ with $p+q=n$ one has $\bar{M^{q,p}}=M^{p,q}$. The associated \textbf{Hodge filtration} on $M_\C$ is defined by $F^pM_\C:=\bigoplus_{p^\prime\ge  p}M^{p^\prime,q}$. It determines the Hodge structure uniquely, because $M^{p,q}=F^pM_\C\cap\bar{F^qM_\C}$.

A \textbf{mixed $R$-Hodge structure} on $M$ is a triple $(M,\{W_n M\}_{n\in\Z},\{F^pM_\C\}_{p\in\Z})$ consisting of an ascending exhausting separated filtration $\{W_n M\}_{n\in\Z}$ of $M$ by $R$-modules of finite rank with each $M/W_nM$ free, called \textbf{weight filtration}, together with a descending exhausting separated filtration $\{F^pM_\C\}_{p\in\Z}$ of $M_\C$, called \textbf{Hodge filtration}, such that the Hodge filtration induces a pure Hodge structure of weight $n$ on $\Gr^W_n M:=W_n M/W_{n-1}M$ for all $n\in\Z$. A pure Hodge structure of weight $n$ is then a special case of a mixed Hodge structure by defining the weight filtration as $W_{n^\prime}M=M$ for $n^\prime\ge  n$ and $W_{n^\prime}M=0$ for $n^\prime<n$. The notions \textbf{of weight $\leq  n$} and \textbf{of weight $\ge  n$} are defined in the obvious way.

The \textbf{Hodge numbers} are $h^{p,q}:=\dim_\C(\Gr^W_{p+q}M)^{p,q}$. They satisfy $h^{q,p}=h^{p,q}$, almost all $h^{p,q}$ are zero, and $
\sum h^{p,q}=\dim M$. If $A\subset\Z\oplus\Z$ is an arbitrary subset, then we say that the Hodge structure $(M,\{W_n M\}_{n\in\Z},\{F^pM_\C\}_{p\in\Z})$ is \textbf{of type $A$} if $h^{p,q}=0 \Leftrightarrow (p,q)\notin A$.

A \textbf{morphism of mixed $R$-Hodge structures} is a homomorphism $f\colon M\rightarrow M^\prime$ such that $f(W_n M)\subset W_n M^\prime$ and $f(F^pM_\C)\subset F^pM^\prime_\C$ for all $n,p\in\Z$. The rational mixed Hodge structures form an abelian category with these morphisms. Given mixed $R$-Hodge structures on $M_1$ and $M_2$, there are canonical rational mixed Hodge structures on $M_1\oplus M_2$, on the dual $M_1^\vee$ and on $\hom(M_1,M_2)$.

A mixed Hodge structure on $M$ is said to \textbf{split over $\R$} if there exists a decomposition $M_\C=\bigoplus_{p,q}M^{p,q}$ such that $W_n M_\C=\bigoplus_{p+q\leq  n}M^{p,q}$, $F^pM_\C=\bigoplus_{p^\prime\ge  p}M^{p^\prime,q}$ and $\bar{M^{q,p}}=M^{p,q}$. This decomposition is then uniquely determined by these properties. Every pure Hodge structure splits over $\R$, but not every mixed Hodge structure does. However we still have (see \cite[1.2]{PinkThesis})

\begin{prop}[Deligne]\label{PropositionUniquenessOfDecomposition}
Fix a mixed $R$-Hodge structure on $M$.
\begin{enumerate}
\item There exists a decomposition $M_\C=\bigoplus_{p,q}M^{p,q}$ such that $W_n M_\C=\bigoplus_{p+q\leq  n}M^{p,q}$ and $F^pM_\C=\bigoplus_{p^\prime\ge  p}M^{p^\prime,q}$.
\item The Hodge structure is uniquely determined by any such decomposition.
\item There exists a unique decomposition as in (1) which also satisfies
\[
\bar{M^{q,p}}\equiv M^{p,q}\mod\bigoplus_{p^\prime<p,q^\prime<q}M^{p^\prime,q^\prime}.
\]
\end{enumerate}
\end{prop}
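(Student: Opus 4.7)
The plan is to first dispatch part (2), which is essentially formal, and then to construct a canonical decomposition realizing both (1) and (3) simultaneously via Deligne's explicit splitting. For part (2), suppose $M_\C = \oplus_{p,q} M^{p,q}$ is any decomposition satisfying the compatibility conditions of (1); then the weight and Hodge filtrations can be read off directly from the grading via $W_n M_\C = \oplus_{p+q\leqslant n} M^{p,q}$ and $F^p M_\C = \oplus_{p'\geqslant p} M^{p',q}$, which are the very conditions stipulated. Consequently the whole mixed Hodge structure is recovered from the grading alone.

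For parts (1) and (3) together, I would write down Deligne's explicit formula
\[
M^{p,q} := (F^p \cap W_{p+q}) \cap \Bigl( \bar{F^q} \cap W_{p+q} + \sum_{i\geqslant 1} \bar{F^{q-i}} \cap W_{p+q-i-1} \Bigr),
\]
with all intersections and sums taken in $M_\C$, and verify that these subspaces give a bigrading of $M_\C$ with $W_n M_\C = \oplus_{p+q\leqslant n} M^{p,q}$, $F^p M_\C = \oplus_{p'\geqslant p} M^{p',q}$, and the congruence $\bar{M^{q,p}} \equiv M^{p,q}$ modulo $\oplus_{p'<p,\,q'<q} M^{p',q'}$. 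The verification proceeds by induction on the length of the weight filtration: in the pure case of weight $n$ the second large term collapses to $\bar{F^q}$ and one recovers the standard Hodge decomposition $M^{p,q} = F^p \cap \bar{F^q}$; in the inductive step, one exploits the fact that on each pure weight quotient the Hodge filtration is strictly opposite to its complex conjugate in order to lift the splitting already constructed on $\Gr^W_\bullet M$ back to $M_\C$ via the correction terms in the formula.

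For uniqueness of the canonical splitting in (3), I would compare a second decomposition $M_\C = \oplus \tilde{M}^{p,q}$ satisfying the same congruence with the one above. By (2) both splittings induce the same $W$ and $F$, so on each summand the difference lies in $\oplus_{p'<p,\,q'<q} M^{p',q'}$; applying the congruence simultaneously to both and using complex conjugation symmetry, together with a finite descending induction on the bidegree, forces these differences to vanish. The main obstacle is the verification that the explicit formula really does bigrade both filtrations: this is what does all the work in (1) and (3), and it reduces via the weight induction to the transversality of $F^\bullet$ with its conjugate on each pure graded piece, the subtle point being to propagate this pure transversality upward through the mixed filtration by choosing exactly the right correction terms $\sum_{i\geqslant 1} \bar{F^{q-i}} \cap W_{p+q-i-1}$.
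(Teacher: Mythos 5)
The paper does not actually supply a proof of this proposition; it simply records the statement with a citation to Pink's thesis \cite[1.2]{PinkThesis}, which in turn follows Deligne's \emph{Hodge II}. Your proposal is therefore not an alternative route but a reconstruction of the standard argument from the cited source: part~(2) is formal as you say, and the explicit bigrading
\[
M^{p,q} = (F^p\cap W_{p+q})\cap\Bigl(\bar{F^q}\cap W_{p+q}+\textstyle\sum_{i\geqslant 1}\bar{F^{q-i}}\cap W_{p+q-i-1}\Bigr)
\]
is exactly Deligne's formula, and your weight-induction sketch for why it splits both filtrations and satisfies the congruence is the usual one. So at the level of approach the proposal is on target.

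One point is imprecise enough to flag. In the uniqueness step you assert that, since both splittings induce the same $W$ and $F$, the difference at bidegree $(p,q)$ lies in $\oplus_{p'<p,\,q'<q}M^{p',q'}$. That is not what one gets for free: from $\tilde{M}^{p,q}\subset F^p\cap W_{p+q}$ and from the fact that both splittings project to the canonical Hodge decomposition on $\Gr^W_{p+q}$, one obtains only that the difference lies in $\oplus_{p'\geqslant p,\,q'<q}M^{p',q'}$ (the $p'$-index is bounded \emph{below} by $p$, not above). Forcing the extra constraint $p'<p$ is precisely where the congruence condition and complex conjugation have to be brought in, together with the analogous estimate applied to the conjugate bidegree $(q,p)$; as written, your sketch asserts the conclusion of that interplay rather than deriving it. This is a gap in the write-up rather than a wrong idea, and it is closed in the standard references by the inductive argument you allude to, but it should be spelled out before the argument can be considered complete.
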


\subsubsection{Deligne torus}
Let $\S:=\res_{\C/\R}\G_{m,\C}$. The torus $\S$ is called the \textbf{Deligne torus}. Over $\C$ it is canonically isomorphic to $\G_{m,\C}\times\G_{m,\C}$, but the action of complex conjugation is twisted by the automorphism $c$ that interchanges the two factors. In particular $\S(\R)=\C^*\subset\S(\C)=\C^*\times\C^*$ consists of the points $(z,\bar{z})$ with $z\in\C^*$. While the character group of $\G_{m,\C}$ is $\Z$ in the standard way, we identify the character group of $\S$ with $\Z\oplus\Z$ such that the character $(p,q)$ maps $z\in\S(\R)=\C^*$ to $z^{-p}\bar{z}^{-q}\in\C^*$. Under this identification the complex conjugation operates on $\Z\oplus\Z$ by interchanging the two factors. The following homomorphisms are important:
\begin{itemize}
\item the weight $\omega\colon\G_{m,\R}\hookrightarrow\S$ induced by $\R^*\subset\C^*$;
\item $\mu\colon\G_{m,\C}\rightarrow\S_\C$ sending $z\in\C^*\mapsto(z,1)\in\C^*\times\C^*=\S(\C)$;
\item the norm $N\colon\S\twoheadrightarrow\G_{m,\R}$ sending $z\in\S(\R)=\C^*\mapsto z\bar{z}\in\R^*$. The kernel $\S^1$ of $N$ is anisotropic over $\R$, and we have a short exact sequence $1\rightarrow\S^1\rightarrow\S\rightarrow\G_{m,\R}\rightarrow1$.
\end{itemize}

Let $M$ be a free $R$-module of finite rank. The choice of a representation $k\colon\S_\C\rightarrow\GL(M_\C)$ is equivalent to the choice of a decomposition $M_\C=\bigoplus_{p,q}M^{p,q}$, where $M^{p,q}$ is the eigenspace in $M_\C$ to the character $(p,q)$. Define $W_n M_\C=\bigoplus_{p+q\leq  n}M^{p,q}$ and $F^pM_\C=\bigoplus_{p^\prime\ge  p}M^{p^\prime,q}$. We want to understand when the triple $(M,\{W_n M_\C\},\{F^pM_\C\})$ is a mixed $R$-Hodge structure on $M$ (so in particular $W_n M_\C$ is defined over $R$ for all $n$). The following two propositions of Pink will tell us under which condition on $k$ this is the case for $R=\Q$.
\begin{prop}\label{HodgeStructureProp1}
(\cite[1.4]{PinkThesis}) Let $P$ be a connected $\Q$-linear algebraic group. Let $W:=\cR_u(P)$ be its unipotent radical, let $G:=P/W$ and let $\pi\colon P\rightarrow G$ be the quotient map. Let $h\colon\S_\C\rightarrow P_\C$ be a homomorphism such that the following conditions hold:
\begin{itemize}
\item $\pi\circ h\colon\S_\C\rightarrow G_\C$ is defined over $\R$;
\item $\pi\circ h\circ\omega\colon\G_{m,\R}\rightarrow G_\R$ is a cocharacter of the center of $G$ defined over $\Q$;
\item Under the weight filtration on $(\lie P)_\C$ defined by $\mathrm{Ad}_P\circ h$ we have $W_{-1}(\lie P)=\lie W$.
\end{itemize}
Then
\begin{enumerate}
\item For every ($\Q$-)representation $\rho\colon P\rightarrow\GL(M)$, the homomorphism ${\rho\circ h}\colon$ $\S_\C\rightarrow\GL(M_\C)$ induces a rational mixed Hodge structure on $M$.
\item The weight filtration on $M$ is stable under $P$.
\item For any $p\in P(\R)W(\C)$, the assertions (1) and (2) also hold for ${\mathrm{int}(p)\circ h}$ in place of $h$. The weight filtration and the Hodge numbers in any representation are the same for ${\mathrm{int}(p)\circ h}$ and for $h$.
\end{enumerate}
\end{prop}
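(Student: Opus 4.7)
The plan is to prove (1)-(2) by constructing a $\Q$-rational cocharacter $\chi'$ of $P$ that is $W(\C)$-conjugate to $h\circ\omega$ and induces the very same weight filtration on any representation, and then to deduce (3) from the $P$-stability of that filtration. Since $P$ is a connected $\Q$-group in characteristic zero, fix a Levi decomposition $P=W\rtimes L$ over $\Q$ identifying $L$ with $G$ via $\pi|_L$. The image of $h\circ\omega$ in $P_\C$ is a torus, hence by Mostow's conjugacy theorem there exists $u\in W(\C)$ with $\chi':=\mathrm{int}(u)\circ(h\circ\omega)$ taking values in $L_\C$. Since $\pi(u)=1$, we get $\pi\circ\chi'=\bar\chi:=\pi\circ h\circ\omega$, and via the $\Q$-isomorphism $\pi|_L$ the cocharacter $\chi'$ is identified with $\bar\chi$, so $\chi'$ is $\Q$-rational and central in $L$.

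For any $\Q$-representation $(\rho,M)$ I then claim the weight filtrations on $M_\C$ induced by $\rho\circ h\circ\omega$ and by $\rho\circ\chi'$ coincide. As $h\circ\omega=\mathrm{int}(u^{-1})\circ\chi'$, the two filtrations differ by $\rho(u^{-1})$, so it suffices to show $\rho(W(\C))$ preserves the $\chi'$-filtration. The third hypothesis gives $\lie W=W_{-1}(\lie P)$ under $\mathrm{Ad}\circ h\circ\omega$; since $\lie W$ is $\mathrm{Ad}(P)$-stable by normality of $W$, the same equality holds under $\mathrm{Ad}\circ\chi'$, i.e.\ $\lie W$ has $\chi'$-weights $\leqslant -1$. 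Hence $d\rho(\lie W)$ strictly lowers the $\chi'$-weight grading on $M$, so $\rho(W)=\rho(\exp\lie W)$ preserves $W_\bullet M$ and acts trivially on each $\Gr^W_n M$. The two filtrations therefore agree; being that of the $\Q$-cocharacter $\chi'$, it is $\Q$-rational. The same trivial-action lemma combined with the centrality of $\chi'$ in $L$ gives $P$-stability, proving (2). To finish (1), on $\Gr^W_n M$ the action of $h$ factors through $\pi\circ h$, which is defined over $\R$ by the first hypothesis; complex conjugation swaps the two factors of $\S_\C$, so it sends the $(p,q)$-eigenspace to the $(q,p)$-eigenspace, giving the required conjugate symmetry and making $\Gr^W_n M$ a pure Hodge structure of weight $n$.

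For (3), let $p=rw$ with $r\in P(\R),w\in W(\C)$, and set $h^\dagger:=\mathrm{int}(p)\circ h$. The three hypotheses for $h^\dagger$ follow at once: $\pi\circ h^\dagger=\mathrm{int}(\pi(r))\circ\pi\circ h$ is $\R$-rational; composing with $\omega$ and using centrality of $\bar\chi$ returns $\bar\chi$; and $W_{-1}^{h^\dagger}(\lie P)=\mathrm{Ad}(p)\lie W=\lie W$ by normality of $W$. Thus (1)-(2) apply to $h^\dagger$. Equality of weight filtrations for $h^\dagger$ and $h$ follows from (2) applied to $h$, since $\rho(p)$ preserves $W_\bullet M$. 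Equality of Hodge numbers follows because the $(p,q)$-eigenspaces of $h^\dagger$ on $\Gr^W_n M$ are precisely $\rho(p)$ applied to those of $h$, hence of equal dimensions.

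The main obstacle is the rationality of the weight filtration: $h\circ\omega$ is \emph{a priori} only a $\C$-cocharacter of $P_\C$, and descending its weight filtration to $\Q$ requires both the Levi/Mostow lift and the precise use of the strictly-negative weights of $\lie W$ to guarantee that the $u$-conjugation leaves the filtration untouched.
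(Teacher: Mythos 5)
The paper cites this result directly from Pink's thesis as \cite[1.4]{PinkThesis} and does not reprove it; your argument is a correct and essentially standard reconstruction of Pink's proof. The key devices — conjugating $h\circ\omega$ into a $\Q$-Levi by an element of $W(\C)$ to produce a rational central cocharacter $\chi'$, then using the $\leqslant -1$ $\chi'$-weights of $\lie W$ to show that $W(\C)$-conjugation leaves the weight filtration fixed (hence rationality and $P$-stability), and finally noting that on $\Gr^W$ the Hodge decomposition factors through $\pi\circ h$ — are precisely the ones Pink uses.
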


\begin{prop}\label{HodgeStructureProp2}
Let $M$ be a finite dimensional $\Q$-vector space. A representation $k\colon\S_\C\rightarrow\GL(M_\C)$ defines a rational mixed Hodge structure on $M$ if and only if  there exist a connected $\Q$-linear algebraic group $P$, a representation $\rho\colon P\rightarrow\GL(M)$ and a homomorphism $h\colon\S_\C\rightarrow P_\C$ such that $k=\rho\circ h$ and the conditions in Proposition~\ref{HodgeStructureProp1} are satisfied. Moreover, every rational mixed Hodge structure on $M$ is obtained by a unique representation $k\colon\S_\C\rightarrow\GL(M_\C)$ with the property above.
\begin{proof} This is \cite[1.5]{PinkThesis} except the ``Moreover'' part, where the existence of $k$ has been explained in the paragraph before Proposition~\ref{HodgeStructureProp1} and the uniqueness of $k$ follows from Proposition \ref{PropositionUniquenessOfDecomposition}(3).
\end{proof}
\end{prop}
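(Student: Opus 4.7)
The plan is to establish the biconditional in two directions and then handle the ``Moreover'' uniqueness claim. The ``if'' direction is a direct invocation of Proposition~\ref{HodgeStructureProp1}(1): the composition $k=\rho\circ h$ immediately yields a rational mixed Hodge structure on $M$ with decomposition prescribed by the eigenspaces of $k$.

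For the ``only if'' direction, given a rational mixed Hodge structure $(M,W_\bullet,F^\bullet)$, I would first apply Proposition~\ref{PropositionUniquenessOfDecomposition}(3) to extract Deligne's canonical decomposition $M_\C=\oplus_{p,q}M^{p,q}$, and then use the equivalence recalled in the paragraph preceding Proposition~\ref{HodgeStructureProp1} to read off the corresponding representation $k\colon\S_\C\to\GL(M_\C)$ via the $(p,q)$-eigenspace description. I would then take $P$ to be the identity component of the smallest $\Q$-algebraic subgroup of $\GL(M)$ through which $k$ factors --- the Mumford-Tate group of the mixed Hodge structure --- with $\rho\colon P\hookrightarrow\GL(M)$ the inclusion and $h\colon\S_\C\to P_\C$ the induced factorization. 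It then remains to verify the three conditions of Proposition~\ref{HodgeStructureProp1}: the first follows because $\pi\circ h$ acts on the graded pieces $\Gr^W_\bullet M$, each of which carries a pure Hodge structure that splits over $\R$; the second follows from the $\Q$-rationality of the weight filtration together with the observation that the weight cocharacter lands in the center of $G=P/\cR_u(P)$ by minimality of $P$; and the third is the Tannakian assertion that the unipotent radical of the Mumford-Tate group of a mixed Hodge structure is exactly the subgroup whose Lie algebra is the negative-weight part of $\lie P\subset\End(M)$.

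The ``Moreover'' uniqueness of $k$ follows from Proposition~\ref{PropositionUniquenessOfDecomposition}(3): any $k$ satisfying the stated property induces a decomposition of $M_\C$ into character eigenspaces that refines both the weight and the Hodge filtrations, and the conditions on $h$ in Proposition~\ref{HodgeStructureProp1} --- in particular that $\pi\circ h$ is defined over $\R$, which reflects the $\R$-splitting on each $\Gr^W_n M$ --- force this decomposition to satisfy the congruence $\bar{M^{q,p}}\equiv M^{p,q}\mod\oplus_{p^\prime<p,q^\prime<q}M^{p^\prime,q^\prime}$. Hence the decomposition is Deligne's canonical one, and $k$ is uniquely determined. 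The main obstacle, which I expect to absorb the bulk of the work and is what Pink's proof in \cite[1.5]{PinkThesis} is really about, is verifying the third condition of Proposition~\ref{HodgeStructureProp1}: controlling $\cR_u(P)$ in terms of the induced mixed Hodge structure on $\lie P$. This Tannakian ingredient is where the genuinely mixed (as opposed to purely graded) character of the Hodge structure enters essentially, and is what prevents the argument from reducing formally to the pure Shimura case.
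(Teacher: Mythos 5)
Your overall structure matches the paper's: the biconditional is delegated to Pink's \cite[1.5]{PinkThesis}, existence of $k$ for a given mixed Hodge structure comes from the eigenspace dictionary recalled in the paragraph preceding Proposition~\ref{HodgeStructureProp1}, and uniqueness is attributed to Deligne's Proposition~\ref{PropositionUniquenessOfDecomposition}(3). Your reconstruction of the ``only if'' direction via the Mumford--Tate group is the right strategy, and you are honest in flagging the verification of $W_{-1}(\lie P)=\lie W$ as the genuine content of Pink's 1.5 rather than pretending to derive it; that is exactly what the paper's citation absorbs. (Your checks of conditions (1) and (2) are fine modulo the unstated but easy point that $G=P/\cR_u(P)\to\GL(\Gr^W M)$ is injective, since any $p\in P$ acting trivially on $\Gr^W M$ is unipotent, hence in $\cR_u(P)$.)

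The step you should look at again is the uniqueness of $k$. You assert that the conditions of Proposition~\ref{HodgeStructureProp1} \emph{force} the congruence $\bar{M^{q,p}}\equiv M^{p,q}\bmod\oplus_{p'<p,\,q'<q}M^{p',q'}$, and you point ``in particular'' to the $\R$-rationality of $\pi\circ h$. But that condition only controls the induced decomposition on $\Gr^W M$, yielding $\bar{(\Gr^W_n M)^{q,p}}=(\Gr^W_n M)^{p,q}$; this is strictly weaker than Deligne's congruence, which constrains how the lifts $M^{p,q}\subset M_\C$ sit relative to one another across weights, not merely their images in the associated graded. So if the forced-congruence claim is true --- and it must be, for the paper's one-line appeal to Proposition~\ref{PropositionUniquenessOfDecomposition}(3) to close the argument --- the mechanism has to come from the remaining conditions, most plausibly from condition (3) together with the induced Hodge structure on $\lie P\subset\End(M_\Q)$, and you have not exhibited it. The paper's own proof elides this with a bare citation, so you are not behind the paper in rigor, but you have stated as routine a lemma that you cannot currently prove and that is, in fact, part of what Pink's 1.5 is establishing.
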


\subsubsection{Mumford-Tate group and polarizations}
Let $M$ be a free $R$-module of finite rank equipped with a mixed $R$-Hodge structure $(M,\{W_n M\}_{n\in\Z},\{F^pM_\C\}_{p\in\Z})$. By Proposition~\ref{HodgeStructureProp2}, the corresponding rational mixed Hodge structure on $M_\Q$ gives rises to a representation $k\colon\S_\C\rightarrow\GL(M_\C)$.

\begin{defn} The \textbf{Mumford-Tate group} of this mixed $R$-Hodge structure is defined to be the smallest $\Q$-subgroup $P$ of $\GL(M_\Q)$ such that $k(\S_\C)\subset P_\C$.
\end{defn}

Before defining the polarizations of pure Hodge structures, we introduce the \textbf{Tate Hodge structure}, which is defined to be the free $R$-module of rank $1$ $R(1):=2\pi\sqrt{-1}R$ with the pure $R$-Hodge structure of type $(-1,-1)$. For every $n\in\Z$, we get a pure $R$-Hodge structure of type $(-n,-n)$ on $R(n):=R(1)^{\otimes n}$.

\begin{defn} Suppose that the $R$-Hodge structure on $M$ is pure of weight $n$. A \textbf{polarization} of this Hodge structure is a homomorphism of Hodge structures
\[
Q\colon M\otimes M\rightarrow R(-n)
\]
which is $(-1)^n$-symmetric and such that the real-valued symmetric bilinear form $Q^\prime(u,v):=(2\pi\sqrt{-1})^nQ(Cu,v)$ is positive-definite on $M_\R$, where $C$ acts on $M^{p,q}$ by $C|_{M^{p,q}}=(\sqrt{-1})^{p-q}$.
\end{defn}

\subsubsection{Variation of mixed Hodge structures}\label{SubsubsectionVariationOfMixedHodgeStructures}
\begin{defn}(\cite[Definition~14.44]{PetersMixed-Hodge-Str}) Let $S$ be a complex manifold. A \textbf{variation of mixed $R$-Hodge structures over $S$} is a triple $(\V,W_\cdot,\cF^\cdot)$ with
\begin{enumerate}
\item a local system $\V$ of free $R$-modules of finite rank on $S$;
\item a finite increasing filtration $\{W_m\}$ of the local system $\V$ by local sub-systems with torsion free $\Gr^W_n\V$ for each $n$ (this is called the weight filtration);
\item a finite decreasing filtration $\{\cF^p\}$ of the holomorphic vector bundle $\cV:=\V\otimes_{R_S}\cO_S$, where $R_S$ is the constant sheaf over $S$, by holomorphic subbundles (this is called the Hodge filtration).
\end{enumerate}
such that 
\begin{enumerate}
\item for each $s\in S$, the filtrations $\{\cF^p(s)\}$ and $\{W_m\}$ of $\V(s)\cong\V_s\otimes_R\C$ define a mixed Hodge structure on the $R$-module of finite rank $\V_s$;
\item the connection $\nabla:\cV\rightarrow\cV\otimes_{\cO_S}\Omega_S^1$ whose sheaf of horizontal sections is $\V_\C$ satisfies the Grif and only if iths' transversality condition
\[
\nabla(\cF^p)\subset\cF^{p-1}\otimes\Omega_S^1.
\]
\end{enumerate}
\end{defn}

\begin{defn} A variation of mixed Hodge structures over $S$ is said to be \textbf{graded-polarizable} if the induced variations of pure Hodge structure $\Gr^W_n\V$ are all polarizable, \textit{i.e.}  for each $n$, there exists a flat morphism of variations
\[
Q_n\colon\Gr^W_n\V\otimes\Gr^W_n\V\rightarrow R(-n)_S
\]
which induces on each fibre a polarization of the corresponding Hodge structure of weight $n$.
\end{defn}

\subsection{Equivariant families of Hodge structures}
Now we are ready to discuss equivariant families of Hodge structures, or more precisely homogeneous spaces parametrizing certain rational mixed Hodge structures.

\begin{prop}[Pink {\cite[1.7]{PinkThesis}}]\label{PropositionComplexStructureOfUniformizingSpaceHodgeStructureViewpoint}
Let $P$ be a connected $\Q$-linear group and let $W:=\cR_u(P)$ be its unipotent radical. Let $\cD^\natural$ be a $P(\R)W(\C)$-conjugacy class in $\hom(\S_\C,P_\C)$. Assume that for one (and hence for all by Proposition~\ref{HodgeStructureProp1}(3)) $h\in\cD^\natural$, the conditions in Proposition~\ref{HodgeStructureProp1} holds. Let $M$ be any faithful representation of $P$ and let $\varphi$ be the obvious map
\[
\varphi\colon\cD^\natural\rightarrow\{\text{rational mixed Hodge structures on }M\}
\]
given by Propostion~\ref{HodgeStructureProp1}(1). Then:
\begin{enumerate}
\item There exists a unique structure on $\varphi(\cD^\natural)$ as a complex manifold such that the Hodge filtration on $M_\C$ depends analytically on $\varphi(h)\in\varphi(\cD^\natural)$. This structure is $P(\R)W(\C)$-invariant and $W(\C)$ acts analytically on $\varphi(\cD^\natural)$.
\item For any other representation $M^\prime$ of $P$ the analogous map
\[
\varphi^\prime\colon\cD^\natural\rightarrow\{\text{rational mixed Hodge structures on }M^\prime\}
\]
factors through $\varphi(\cD^\natural)$. The Hodge filtration on $M^\prime$ varies analytically with $\varphi(h)\in\varphi(\cD^\natural)$.
\item If in addition $M^\prime$ is faithful, then $\varphi(\cD^\natural)$ and $\varphi^\prime(\cD^\natural)$ are canonically isomorphic and the isomorphism is compatible with the complex structure.
\end{enumerate}
\end{prop}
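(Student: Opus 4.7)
The plan is to identify $\varphi(\cD^\natural)$ with an open piece of a $P(\C)$-orbit in a flag variety. By Proposition~\ref{HodgeStructureProp1}(3) the weight filtration on $M$ and the Hodge numbers are constant across $\cD^\natural$, so the Hodge filtration $F_h^\bullet M_\C$ of any $h\in\cD^\natural$ defines a point in the flag variety $\cY_M$ of decreasing filtrations of $M_\C$ of the prescribed type. Proposition~\ref{PropositionUniquenessOfDecomposition} ensures that a mixed Hodge structure is recovered from the pair (weight, Hodge) filtrations, so $\varphi(h)\mapsto F_h^\bullet M_\C$ is a well-defined injection $\Phi\colon\varphi(\cD^\natural)\hookrightarrow\cY_M$. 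This already yields the uniqueness part of (1): any complex structure on $\varphi(\cD^\natural)$ for which $F_h^\bullet M_\C$ depends analytically on $\varphi(h)$ must be the pullback via $\Phi$ of the complex structure on $\cY_M$.

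For existence, fix $h_0\in\cD^\natural$. The $P(\C)$-action on $\cY_M$ through $\rho_M$ is algebraic, and the stabilizer of $\Phi(\varphi(h_0))$ has Lie algebra $F^0\lie(P_\C)$ relative to the adjoint mixed Hodge structure coming from $\mathrm{Ad}\circ h_0$, since the embedding $\lie(P_\C)\hookrightarrow\End(M_\C)$ is a morphism of Hodge structures so $F^p\lie(P_\C)=\lie(P_\C)\cap F^p\End(M_\C)$. Hence the $P(\C)$-orbit of $\Phi(\varphi(h_0))$ is a locally closed complex submanifold isomorphic to $P(\C)/F^0P_\C$, and everything reduces to the identity
\begin{equation*}
\lie(P)_\R+\lie(W)_\C+F^0\lie(P_\C)=\lie(P_\C),
\end{equation*}
which will show that the $P(\R)W(\C)$-orbit of $\Phi(\varphi(h_0))$, which is precisely $\Phi(\varphi(\cD^\natural))$, is open in the $P(\C)$-orbit. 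This produces the required complex structure, and since $W(\C)$ acts on the $P(\C)$-orbit as a subgroup of $P(\C)$, its action is holomorphic, giving the last assertion of (1).

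For (2), given another representation $M'$, the Hodge filtration $F_h^\bullet M'_\C$ is recovered from $\Phi(\varphi(h))$: the adjoint filtration $F^\bullet\lie(P_\C)=\lie(P_\C)\cap F^\bullet\End(M_\C)$ is determined by $F_h^\bullet M_\C$ (faithfulness of $M$), and $F_h^\bullet M'_\C$ is in turn determined by this adjoint filtration through $\rho_{M'}$. Thus $\varphi'$ factors through $\varphi$, and the factored map is analytic because it is the restriction of an algebraic projection between the two flag varieties. Part (3) is then symmetric: when $M'$ is also faithful one obtains a holomorphic inverse, yielding the canonical isomorphism of complex manifolds. The main obstacle is the Lie-algebra identity above; using $\lie(W)=W_{-1}\lie(P)$, the quotient $\lie(P_\C)/(\lie(W)_\C+F^0\lie(P_\C))$ is spanned by the $(p,q)$-pieces with $p<0$ and $q\geqslant 0$, and the complex conjugate of such a piece sits in bidegree $(q,p)$ with $q\geqslant 0$, hence in $F^0$, modulo strictly lower bidegrees by Proposition~\ref{PropositionUniquenessOfDecomposition}(3); these corrections lie in $W_{-1}\lie(P)_\C=\lie(W)_\C$, so $\lie(P)_\R$ surjects onto the quotient as required.
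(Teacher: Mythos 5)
Your proof is correct and takes essentially the same route as the proof the paper cites for this statement (the paper only quotes Pink [1.7] and does not reproduce the argument): Remark~\ref{RemarkStructureOfVarphi} records exactly your factorization $\cD^\natural\rightarrow P(\C)/F^0_h P_\C\hookrightarrow\mathrm{Grass}(M)(\C)$, and your identity $\lie(P)_\R+\lie(W)_\C+F^0\lie(P_\C)=\lie(P_\C)$ is the standard tangent-space computation showing that the $P(\R)W(\C)$-orbit is open in the $P(\C)$-orbit. The only point stated more briskly than it deserves is the step in (2) passing from the containment of Lie-algebra stabilizers $F^0\lie(P_\C)\subset F^0\End(M'_\C)$ to the containment of the actual stabilizer subgroups, which is what the closed-embedding assertion recorded in that remark supplies.
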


\begin{rmk}\label{RemarkStructureOfVarphi}
By the proof of \cite[1.7]{PinkThesis}, the map $\varphi$ factors through
\[
\cD^\natural\cong P(\R)W(\C)/\mathrm{Cent}_{P(\R)W(\C)}(h)\rightarrow P(\C)/F^0_h P_\C\hookrightarrow\mathrm{Grass}(M)(\C),
\]
where the last map is a closed embedding.
\end{rmk}

The following lemma will be useful:
\begin{lemma}[Pink {\cite[1.8]{PinkThesis}}]\label{LemmaCentralizerOfh}
Let $P$, $\cD^\natural$, $M$ and $\varphi$ be as in Proposition~\ref{PropositionComplexStructureOfUniformizingSpaceHodgeStructureViewpoint}. Then for any $h\in\cD^\natural$, the projection $\pi\colon P\rightarrow G:=P/\cR_u(P)$ induces an isomorphism
\[
\mathrm{Cent}_{P(\R)W(\C)}(h)\xrightarrow{\sim}\mathrm{Cent}_{G(\R)}(\pi\circ h).
\]
\end{lemma}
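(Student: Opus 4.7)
The asserted map is the restriction of the quotient $\pi\colon P\to G$ to centralizers. The first observation is that $\pi$ fits into a short exact sequence
\[
1 \to W(\C) \to P(\R)W(\C) \xrightarrow{\pi} G(\R) \to 1 :
\]
the surjection onto $G(\R)$ follows from $H^1(\R,W)=0$ for the unipotent $\R$-group $W$ (a smooth affine unipotent group in characteristic zero is cohomologically trivial), and the kernel equals $W(\C)$ since $P(\R)\cap W(\C)=W(\R)\subset W(\C)$. The lemma therefore reduces to two statements: (i) $\mathrm{Cent}_{W(\C)}(h)=\{1\}$, and (ii) every $g\in\mathrm{Cent}_{G(\R)}(\pi\circ h)$ admits a lift in $P(\R)W(\C)$ that centralizes $h$.

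For (i), the exponential $\lie W_\C \to W(\C)$ is bijective because $W$ is unipotent in characteristic zero, so a centralizing $w$ has the form $\exp(X)$ with $\mathrm{Ad}(h(z))X=X$ for every $z\in\S_\C$; equivalently, $X$ lies in the weight-$(0,0)$ component of the bi-grading of $\lie W_\C$ induced by $\mathrm{Ad}\circ h$. The standing hypothesis $W_{-1}(\lie P)=\lie W$ of Proposition~\ref{HodgeStructureProp1} forces every weight $(p,q)$ on $\lie W$ to satisfy $p+q\leqslant -1$, so $(0,0)$ is absent and $X=0$.

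For (ii), pick a lift $p_0\in P(\R)$ of $g$ and consider
\[
c(z):=p_0 h(z)p_0^{-1}h(z)^{-1}\in W(\C).
\]
Since $\pi\circ\mathrm{int}(p_0)\circ h=\pi\circ h$, the map $c$ takes values in $W(\C)$, and a direct computation using that $h$ is a homomorphism shows $c(z_1 z_2) = c(z_1)\cdot\mathrm{int}(h(z_1))(c(z_2))$; that is, $c$ is a non-abelian $1$-cocycle for the action of $\S_\C$ on $W_\C$ through $\mathrm{int}\circ h$. Producing $w\in W(\C)$ with $c(z) = w^{-1}\mathrm{int}(h(z))(w)$ yields $wp_0$ as the desired centralizing lift of $g$, so the task is to show the vanishing of the algebraic cohomology $H^1_{\mathrm{alg}}(\S_\C, W_\C)$ for this action. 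I derive this by dévissage along the descending central series of $W$: each graded piece is a vector group on which $\S_\C$ acts as a rational representation of a torus, and the $(0,0)$ weight is absent on each piece because $W_{-1}(\lie P)=\lie W$ is preserved under Lie brackets, so the negative-weight property descends. Complete reducibility of torus representations then makes every algebraic $1$-cocycle on each abelian graded piece a coboundary, and the standard non-abelian five-term exact sequence assembles these vanishings into one for the full $W$.

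\textbf{Main obstacle.} The structural geometry is entirely controlled by the weight hypothesis in Proposition~\ref{HodgeStructureProp1}, which drives both (i) and (ii). The only genuinely nontrivial ingredient is the cohomological vanishing in (ii); once the dévissage is organized and the stability of the weight filtration under brackets is invoked, everything else is routine bookkeeping.
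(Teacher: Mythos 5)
The paper states this lemma as a citation to Pink [1.8] and does not reproduce a proof, so there is no in-paper argument to compare against directly; the task is therefore just to check your proof on its own merits, and it is correct. Your reduction runs through the exact sequence $1\to W(\C)\to P(\R)W(\C)\to G(\R)\to 1$, surjectivity coming from $H^1(\R,W)=1$ for unipotent $W$, and then splits into (i) triviality of $\mathrm{Cent}_{W(\C)}(h)$, rightly deduced from the absence of a $(0,0)$ component in $\lie W_\C$ (this is exactly where the hypothesis $W_{-1}(\lie P)=\lie W$ enters), and (ii) the coboundary claim for $c(z)=p_0h(z)p_0^{-1}h(z)^{-1}\in W(\C)$. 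The more customary packaging of (ii), and almost certainly the one in Pink's thesis, is group-theoretic rather than cohomological: the $(0,0)$-eigenspace of $\mathrm{Ad}\circ h$ in $\lie P_\C$ is a Levi subalgebra mapping isomorphically onto $\lie G_\C$, so $h$ factors through a Levi subgroup $L_{h,\C}\cong G_\C$ of $P_\C$; for $p_0$ lifting $g$, the Levi $p_0L_{h,\C}p_0^{-1}$ determined by $\mathrm{int}(p_0)\circ h$ is $W(\C)$-conjugate to $L_{h,\C}$, and the conjugating element $w$ is the desired coboundary datum. Your cohomological vanishing of $H^1_{\mathrm{alg}}(\S_\C,W_\C)$ is the exact algebraic content of this conjugacy of Levis, so the two routes are morally identical; yours is a bit more explicit but requires more bookkeeping. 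Two small remarks. First, the absence of the $(0,0)$ weight is indispensable in step (i) but is superfluous in step (ii) once you invoke linear reductivity of $\S_\C$: Hochschild $H^1$ vanishes for \emph{every} rational $\S_\C$-module, whether or not the trivial character occurs (your remark does supply an elementary by-hand coboundary formula when the trivial character is absent, so it is harmless). Second, the phrase ``standard non-abelian five-term exact sequence'' glosses the one genuinely delicate point: exactness of pointed sets gives only trivial kernel, not injectivity, so one must run the dévissage along the lower central series of $W$ precisely so that the twist of the innermost term by a cocycle acts trivially on it (it is central), and then argue term by term; stated this way the step is fine.
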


Two natural questions about this equivariant family of Hodge structures arise: under which condition do we get a variation of rational Hodge structure on $M$ over $\varphi(\cD^\natural)$? Is there a subset $\cD$ of $\cD^\natural$ having the same image under $\varphi$ such that $\varphi|_{\cD}$ is finite? Both questions are answered by Pink. In the following two propositions, we let $P$, $\cD^\natural$, $M$ and $\varphi$ be as in Proposition~\ref{PropositionComplexStructureOfUniformizingSpaceHodgeStructureViewpoint}.

\begin{prop}[Pink {\cite[1.10]{PinkThesis}}]\label{PropositionVariationOnUniformizingSpace}
We have a variation of rational mixed Hodge structures on $M$ over $\varphi(\cD^\natural)$ if and only if  for one (and hence for all) $h\in\cD^\natural$ the Hodge structure on $\lie P$ is of type
\[
\{(-1,1),(0,0),(1,-1),(-1,0),(0,-1),(-1,-1)\}.
\]
\end{prop}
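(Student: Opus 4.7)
My plan is to reduce the proposition to a bidegree check on the Gau\ss--Manin connection. The underlying local system of the prospective VMHS is the constant system $\V=M\times\varphi(\cD^\natural)$, and its weight filtration is a constant sub--local system: Proposition~\ref{HodgeStructureProp1}(2) says $W_\bullet M$ is $P$-stable, and $\cD^\natural$ is a single $P(\R)W(\C)$-orbit, so the weight filtration attached to every $h\in\cD^\natural$ coincides. Graded-polarizability on each $\Gr^W_n\V$ is inherited from a polarization of the pure Hodge structure determined by $\pi\circ h$, which extends globally along the orbit by Proposition~\ref{HodgeStructureProp1}(3). The Hodge filtration varies holomorphically by Proposition~\ref{PropositionComplexStructureOfUniformizingSpaceHodgeStructureViewpoint}(1), producing holomorphic subbundles $\cF^p\subset\cV$. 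Thus the only remaining condition to characterize is Griffiths transversality at one (hence every) point.

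Fix $h\in\cD^\natural$. By Remark~\ref{RemarkStructureOfVarphi}, $\varphi(\cD^\natural)$ embeds as a complex analytic subspace of $P(\C)/F^0_h P_\C$, so its holomorphic tangent space at $\varphi(h)$ sits inside
\[
\lie P_\C/F^0_h\lie P_\C=\bigoplus_{a<0}(\lie P_\C)^{a,b},
\]
and the Gau\ss--Manin connection is induced by the infinitesimal action $\lie P_\C\to\End(M_\C)$. Any $X\in(\lie P_\C)^{a,b}$ shifts bidegrees by $(a,b)$, so it carries $F^pM_\C$ into $F^{p-1}M_\C$ iff $a\geqslant-1$. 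Since the non-trivial tangent directions satisfy $a<0$, Griffiths transversality is therefore equivalent to the non-occurrence in $\lie P$ of a Hodge type $(a,b)$ with $a\leqslant-2$.

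To conclude, I combine this with the structural constraints on $\lie P$. The hypotheses of Proposition~\ref{HodgeStructureProp1} give $W_{-1}\lie P=\lie W$ and $W_0\lie P=\lie P$, so every type $(a,b)$ in $\lie P$ satisfies $a+b\leqslant 0$. On each pure graded quotient $\Gr^W_n\lie P$ one has the conjugation symmetry $\overline{(\lie P)^{b,a}}=(\lie P)^{a,b}$, so $(a,b)$ occurs iff $(b,a)$ occurs; hence the absence of $a\leqslant-2$ is equivalent to the absence of either coordinate being $\leqslant-2$. Intersecting $a,b\geqslant-1$ with $a+b\leqslant 0$ leaves exactly
\[
\{(-1,1),(0,0),(1,-1),(-1,0),(0,-1),(-1,-1)\},
\]
the list in the statement. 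Independence of $h$ follows because Proposition~\ref{HodgeStructureProp1}(3) says conjugation inside $P(\R)W(\C)$ preserves the Hodge numbers.

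The main obstacle will be the tangent-space identification of the second paragraph, i.e.\ verifying that under Remark~\ref{RemarkStructureOfVarphi} the Gau\ss--Manin connection is literally the restriction of the natural $\lie P_\C$-action on $M_\C$ modulo $F^0$; once this is pinned down, the rest is a direct bidegree bookkeeping in $\End(M_\C)$.
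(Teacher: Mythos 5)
Your reduction to Griffiths transversality and the bidegree bookkeeping is the right approach and is exactly Pink's. But the obstacle you flag at the end hides two separate points, and only one of them is about identifying the Gau\ss--Manin connection with the $\lie P_\C$-action. That identification is actually the easy part: on the constant bundle $M\times\varphi(\cD^\natural)$ the connection is $d$, and writing a local section $s$ of $\cF^p$ near $\varphi(h)$ as $s(g\cdot\varphi(h))=g\cdot t(g)$ with $t(g)\in F^p_h$ gives $\nabla_X s\equiv \rho(X)\cdot s(e)\pmod{F^p_h}$, so transversality at $\varphi(h)$ in a direction $X$ is precisely $\rho(X)\cdot F^p_hM_\C\subset F^{p-1}_hM_\C$ for all $p$.

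The genuine gap is in the ``only if'' half of your claimed equivalence ``Griffiths transversality $\Leftrightarrow$ no type $(a,b)$ with $a\leqslant-2$''. You establish the implication ``no bad types $\Rightarrow$ transversality'' because the tangent space sits inside $\bigoplus_{a<0}(\lie P_\C)^{a,b}$, which is then $\bigoplus_{a=-1}(\lie P_\C)^{a,b}$; but for the converse you need the tangent space to actually \emph{realize} every bad type that occurs in $\lie P_\C$, which an inclusion alone does not give. Concretely, you need the differential of the orbit map at $h$ to be surjective, i.e.\ $\lie P_\R + \lie W_\C + F^0_h\lie P_\C = \lie P_\C$. This does hold: in Deligne's bigrading (Proposition~\ref{PropositionUniquenessOfDecomposition}(3)), pieces with $a+b<0$ lie in $\lie W_\C=W_{-1}\lie P_\C$, those with $a\geqslant0$ lie in $F^0_h$, and for $x\in(\lie P_\C)^{a,-a}$ with $a<0$ one has $\bar{x}\in(\lie P_\C)^{-a,a}+W_{-1}\lie P_\C\subset F^0_h+\lie W_\C$, whence $x=(x+\bar{x})-\bar{x}\in\lie P_\R+\lie W_\C+F^0_h$. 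Granting surjectivity, a nonzero $X\in(\lie P_\C)^{a,b}$ with $a\leqslant-2$, together with faithfulness of $M$, gives some $p$ with $\rho(X)\cdot F^p_hM_\C\not\subset F^{p-1}_hM_\C$, finishing the converse. So your approach is correct; what you should name as the key step is the surjectivity of the period-map differential, of which the connection identification is the easy companion, not a substitute.
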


\begin{prop}[Pink {\cite[1.16]{PinkThesis}}]\label{PropositionReplaceByASmallerOrbit}
Let $U<W$ be the unique connected subgroup such that $\lie U=W_{-2}(\lie W)$ (by Proposition~\ref{HodgeStructureProp1}(3), it does not depend on $h\in\cD^\natural$). Let $\pi^\prime$ be the quotient $P\rightarrow P/U$. Let
\[
\cD:=\{h\in\cD^\natural|~\pi^\prime\circ h\colon\S_\C\rightarrow(P/U)_\C\text{ is defined over }\R\}.
\]
Then
\begin{enumerate}
\item $\cD$ is a non-empty $P(\R)U(\C)$-orbit in $\hom(\S_\C,P_\C)$;
\item $\varphi(\cD)=\varphi(\cD^\natural)$;
\item If $F^0(\lie U)_\C=0$, then $\varphi(\cD)\cong\cD$.
\end{enumerate}
\end{prop}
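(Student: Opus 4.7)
The plan is to establish the three assertions in turn, after making one crucial preliminary observation: the quotient $V := W/U$ is a commutative vector group. Indeed, by definition $\lie U = W_{-2}\lie W$, so $\lie V = \Gr^W_{-1}\lie W$, which for any $h \in \cD^\natural$ carries a pure Hodge structure of weight $-1$ (this weight is intrinsic and independent of $h$ by Proposition~\ref{HodgeStructureProp1}(3)). A unipotent $\Q$-group whose Lie algebra is pure of weight $-1$ has trivial commutator (weights would add to $\leqslant -2$), hence $V$ is abelian, i.e.\ a $\Q$-vector group. Moreover $\lie V_\C = V^{-1,0} \oplus V^{0,-1}$ with $\overline{V^{-1,0}} = V^{0,-1}$, and $\exp(V^{0,-1}) \subset F^0_h V(\C) \subset F^0_h P_\C$.

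For (1), start from any $h_0 \in \cD^\natural$ and consider $\pi' \circ h_0$. Since $\pi \circ h_0 \colon \S_\C \to G_\C$ is already defined over $\R$ by the hypothesis of Proposition~\ref{HodgeStructureProp1}, the obstruction for $\pi' \circ h_0$ to descend is a $\gal(\C/\R)$-cocycle valued in the $\R$-vector space $V(\C)$, which is automatically a coboundary. This yields $w \in W(\C)$ lifting the coboundary datum so that $h := \mathrm{int}(w)(h_0) \in \cD$. Any element of the form $\mathrm{int}(ru)h$ with $r \in P(\R)$, $u \in U(\C)$ lies in $\cD$: the $U(\C)$-action is killed by $\pi'$, and conjugation by $r \in P(\R)$ preserves being defined over $\R$. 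Conversely, given $h, h' \in \cD$, the equation $h' = \mathrm{int}(rw)h$ for some $r \in P(\R)$, $w \in W(\C)$ (possible because $\cD \subset \cD^\natural$ is a single $P(\R)W(\C)$-orbit) combined with the reality of $\pi' \circ h$ and $\pi' \circ h'$ forces $\pi'(w) \in V(\R) \cdot \mathrm{Cent}_{V(\C)}(\pi' \circ h)$; after absorbing the real part into $r$ and using the Hodge decomposition of $\lie V$, we arrange $w \in U(\C)$.

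Part (2) follows from the description of $\varphi$ in Remark~\ref{RemarkStructureOfVarphi}: $\varphi(\cD^\natural)$ is the image of $P(\R)W(\C)$ in $P(\C)/F^0_h P_\C$, and it suffices to show this equals the image of $P(\R)U(\C)$. Using the Hodge decomposition above, any $v \in V(\C)$ can be written as $v = v_\R \cdot \exp(Y)$ with $v_\R \in V(\R)$ and $Y \in V^{0,-1}$, hence $v \in V(\R) \cdot F^0_h P_\C$. Lifting via $W(\C) \twoheadrightarrow V(\C)$ with kernel $U(\C)$ gives $W(\C) \subset U(\C) \cdot \tilde{V(\R)} \cdot F^0_h P_\C$, and so $P(\R)W(\C) \cdot F^0_h P_\C = P(\R)U(\C) \cdot F^0_h P_\C$. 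For (3), the hypothesis $F^0(\lie U)_\C = 0$ gives $U(\C) \cap F^0_h P_\C = \{1\}$. By (1) and (2), $\varphi|_\cD$ is a $P(\R)U(\C)$-equivariant surjection, and its fiber over $\varphi(h)$ consists of $\mathrm{int}(g)h$ for $g \in (P(\R)U(\C)) \cap F^0_h P_\C$. Writing $g = ru$ with $r \in P(\R)$, $u \in U(\C)$, the condition $g \in F^0_h P_\C$ combined with Lemma~\ref{LemmaCentralizerOfh} and the vanishing $F^0(\lie U)_\C = 0$ forces $u = 1$ and $r \in \mathrm{Cent}_{P(\R)}(h)$, whence $\mathrm{int}(g)h = h$ and $\varphi|_\cD$ is injective.

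The main obstacle will be the orbit-uniqueness part of (1): while non-emptiness reduces cleanly to the vanishing of a $V(\C)$-valued cocycle, showing $\cD$ is a single $P(\R)U(\C)$-orbit (rather than a union of several) requires care because $V$ is not central in $P/U$, so conjugation by $W(\C)$ acts non-trivially on $\pi' \circ h$ and the adjustment from $w \in W(\C)$ to $w \in U(\C)$ uses both the real structure and the Hodge type information on $\lie V$. Parts (2) and (3) are then clean consequences of the Hodge decomposition of $\lie V$ and the hypothesis on $F^0(\lie U)_\C$ respectively.
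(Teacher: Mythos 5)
The paper itself does not reproduce the proof of this proposition; it is cited directly from Pink's thesis (1.16), so there is no in-paper argument to compare against line by line. Your proposal is, nonetheless, a sound reconstruction in spirit, and I'll comment on it on its own merits.

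Parts (1) and (2) are correct. The key observation that the set of lifts of $\pi\circ h_0$ inside $\cD^\natural_{P/U}$ forms a torsor under $V(\C)$ (because $\mathrm{Cent}_{V(\C)}(\pi'\circ h) = 1$, which follows from Lemma~\ref{LemmaCentralizerOfh} applied to $P/U$), followed by the additive Hilbert 90 vanishing for the semilinear Galois action on $V(\C)$, does give non-emptiness. For orbit uniqueness, the reality of $\pi'\circ h$ and $\pi'\circ h'$ together with $\mathrm{Cent}_{V(\C)}(\pi'\circ h)=1$ forces $\pi'(w)\in V(\R)$, and one then lifts this to $W(\R)$ and absorbs it into $r$, leaving $w\in U(\C)$; this works, though the invocation of the Hodge decomposition of $\lie V$ is not really where the triviality of the centralizer comes from (it comes from the weight $-1$ condition). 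For (2), the decomposition $V(\C)=V(\R)\oplus V^{0,-1}$ and its lift $W(\C)=U(\C)\cdot W(\R)\cdot F^0_hW(\C)$ is exactly the right calculation.

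Part (3) is where the argument is sketchy and the stated conclusion is not quite correct. You claim that $g=ru\in F^0_hP_\C$ together with $F^0\lie U_\C=0$ and Lemma~\ref{LemmaCentralizerOfh} forces $u=1$ and $r\in\mathrm{Cent}_{P(\R)}(h)$; in fact there is no reason for $u$ to vanish or for $r$ alone to centralize $h$. What the argument should produce is $g\in\mathrm{Cent}_{P(\R)W(\C)}(h)$, which suffices. More importantly, you do not make explicit where the reality of $\pi'\circ h$ (i.e.\ $h\in\cD$ rather than merely $h\in\cD^\natural$) enters, and it is essential: one first checks that $\pi'(g)=\pi'(r)$ lies in $(P/U)(\R)\cap F^0_{\pi'\circ h}(P/U)_\C$, and it is precisely because $\pi'\circ h$ is defined over $\R$ that a real element preserving $F^0$ automatically preserves $\bar{F}^0$ and hence the whole Hodge structure, forcing $\pi'(g)\in\mathrm{Cent}_{(P/U)(\R)}(\pi'\circ h)$. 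One then applies Lemma~\ref{LemmaCentralizerOfh} to both $P$ and $P/U$ to see that $\pi'$ restricts to an isomorphism $\mathrm{Cent}_{P(\R)W(\C)}(h)\xrightarrow{\sim}\mathrm{Cent}_{(P/U)(\R)V(\C)}(\pi'\circ h)$, lifts $\pi'(g)$ to the unique $\tilde g$ in the former centralizer, and finally uses $U(\C)\cap F^0_hP_\C=\exp(F^0\lie U_\C)=\{1\}$ to conclude $\tilde g^{-1}g=1$, hence $g=\tilde g\in\mathrm{Cent}(h)$. Without the reality of $\pi'\circ h$ the claim $(P/U)(\R)\cap F^0=\mathrm{Cent}$ simply fails, so this hypothesis must appear explicitly. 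Tightening (3) along these lines would complete the proof.
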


Before proceeding to the next section, we make the following remark: in the study of mixed Shimura varieties one often identifies $\varphi(\cD)$ and $\cD$ by Proposition~\ref{PropositionReplaceByASmallerOrbit}(3), however to study enlarged mixed Shimura varieties it is important to distinguish $\varphi(\cD)$ and $\cD$. We will be careful about this in the whole paper.

\section{Enlarged mixed Shimura data}\label{SectionEnlargedMixedShimuraData}

\begin{defn}\label{DefinitionEnlargedMixedShimuraDatum}
An \textbf{enlarged mixed Shimura datum $(P,\cX^\natural,h)$} is a triple where
\begin{itemize}
\item $P$ is a connected linear algebraic group over $\Q$. We denote by $W$ its unipotent radical and by $U \subset W$ be the connected normal subgroup of $P$ uniquely determined by condition \eqref{definition of U} below;
\item $\cX^\natural$ is a left homogeneous space under the subgroup $P(\R)W(\C)\subset P(\C)$, and $\cX^\natural\xrightarrow{h}\hom(\S_\C,P_\C)$ is a $P(\R)W(\C)$-equivariant map\footnote{See Remark~\ref{RemarkAfterDefinitionOfEMSD}\eqref{RemarkMSD} for some discussion.} such that every fibre of $h$ consists of at most finitely many points,
\end{itemize}
such that for some (equivalently for all) $x\in\cX^\natural$,
\begin{enumerate}
\item the composite homomorphism $\S_\C\xrightarrow{h_x}P_\C\rightarrow(P/W)_\C$ is defined over $\R$,
\item the adjoint representation induces on $\lie P$ a rational mixed Hodge structure of type
\[
\{(-1,1),(0,0),(1,-1)\}\cup\{(-1,0),(0,-1)\}\cup\{(-1,-1)\},
\]
\item\label{definition of U}
the weight filtration on $\lie P$ is given by
\[
W_n(\lie P)=\begin{cases}
0 & \text{if }n<-2 \\
\lie U &\text{if }n=-2 \\
\lie W &\text{if }n=-1 \\
\lie P &\text{if }n\ge  0
\end{cases},
\]
\item the conjugation by $h_x(\sqrt{-1})$ induces a Cartan involution on $G_\R^\ad$ where $G:=P/W$, and $G^\ad$ possesses no $\Q$-factor $H$ such that $H(\R)$ is compact,
\item $P/P^\der=Z(G)$ acts on $U$ and on $V:=W/U$ through a torus which is an almost direct product of a $\Q$-split torus with a torus of compact type over $\Q$.
\end{enumerate}
If in addition $U$ is trivial, then $(P,\cX^\natural,h)$ is said to be \textbf{of Kuga type}. For simplicity we mostly write $(P,\cX^\natural)$ since we always consider exactly one map $h$ for every pair $(P,\cX^\natural)$.
\end{defn}

\begin{rmk}\label{RemarkAfterDefinitionOfEMSD}
\begin{enumerate}
\item Conditions (2) and (3) together imply that the composite homomorphism $\G_{m,\C}\xrightarrow{\omega}\S_\C\xrightarrow{h_x}P_\C\rightarrow(P/W)_\C$ is a cocharacter of the center of $P/W$ defined over $\R$. This map is called the weight.
\item\label{RemarkMSD} We shall compare Definition~\ref{DefinitionEnlargedMixedShimuraDatum} with that of mixed Shimura data in \cite[2.1]{PinkThesis} in $\mathsection$\ref{SubsectionSomeConstructionsOfEMSD} (categorical comparison) and $\mathsection$\ref{SubsectionStructureOfUnderlyingSpace} (geometric comparison). According to Pink \cite[2.1]{PinkThesis}, a mixed Shimura datum is a triple $(P,\cX,h)$ where $P$ is as in Definition~\ref{DefinitionEnlargedMixedShimuraDatum}, $\cX$ is a left homogeneous space under $P(\R)U(\C)$ and $h\colon\cX\rightarrow\hom(\S_\C,P_\C)$ is $P(\R)U(\C)$-equivariant such that all the conditions in Definition~\ref{DefinitionEnlargedMixedShimuraDatum} are satisfied for any $x\in\cX$, with $P/W$ replaced by $P/U$ in condition~(1). For those who are familiar with mixed Shimura data, the action of $P(\R)U(\C)$ on $\cX$ can be extended to an action of $P(\R)W(\C)$.\footnote{Let $M$ and $\varphi$ be as in Proposition~\ref{PropositionComplexStructureOfUniformizingSpaceHodgeStructureViewpoint}, then the action of $P(\R)U(\C)$ on $\varphi\circ h(\cX)$ extends to an action of $P(\R)W(\C)$ by Proposition~\ref{PropositionReplaceByASmallerOrbit}. Hence the action of $P(\R)U(\C)$ on $\cX$ extends to an action of $P(\R)W(\C)$ because $\varphi(h(\cX))\cong h(\cX)$ and every fiber of $h$ is at most finite.} But the map $h\colon\cX\rightarrow\hom(\S_\C,P_\C)$ is NOT $P(\R)W(\C)$-equivariant because $\mathrm{Cent}_{P(\R)W(\C)}(h_x)$ is reductive by Lemma~\ref{LemmaCentralizerOfh}.
\item Any pure Shimura datum or any mixed Shimura datum with trivial weight $-1$ part is by definition an enlarged mixed Shimura datum.
\end{enumerate}
\end{rmk}

In view of Remark~\ref{RemarkAfterDefinitionOfEMSD}(2) and Proposition~\ref{PropositionReplaceByASmallerOrbit}, enlarged mixed Shimura data and mixed Shimura data encrypt the same information from the Hodge theory. Therefore in order to define morphisms between enlarged mixed Shimura data, other aspects should be taken into consideration. We will postpone it to later subsections. For the moment let us define:
\begin{defn}
A \textbf{coarse morphism} $(P_1,\cX^\natural_1,h_1)\rightarrow(P_2,\cX^\natural_2,h_2)$ between enlarged mixed Shimura data consists of a homomorphism $f\colon P_1\rightarrow P_2$ and a $P_1(\R)W_1(\C)$-equivariant map $f_s\colon\cX^\natural_1\rightarrow\cX^\natural_2$ such that the following diagram commutes:
\[
\xymatrix{ 
    \cX^\natural_1 \ar[r]^{f_s} \ar[d]^{h_1}  & \cX^\natural_2 \ar[d]^{h_2} \\
    \hom(\S_\C,P_{1,\C}) \ar[r]^{h\mapsto f\circ h} & \hom(\S_\C,P_{2,\C}) }
\]
\end{defn}

\subsection{Categorical comparison and some constructions}\label{SubsectionSomeConstructionsOfEMSD}
We define morphisms between enlarged mixed Shimura data in the current subsection. Its geometric aspect (beyond Hodge theory) will be discussed in the next subsection.

We need the following preparation.

Given an enlarged mixed Shimura datum $(P,\cX^\natural,h)$, let $\cD^\natural:=h(\cX^\natural)$ and let $\cD$ be the subset of $\cD^\natural$ defined in Proposition~\ref{PropositionReplaceByASmallerOrbit}. Let $\cX:=h^{-1}(\cD)\subset\cX^\natural$. Then it is easy to check that $(P,\cX,h|_{\cX})$ is a mixed Shimura datum. We say that $(P,\cX,h|_{\cX})$ is the \textbf{mixed Shimura datum associated with} $(P,\cX^\natural,h)$.

\begin{defn}\label{DefinitionMorphismsInTheCategoryOfEMSD}
\begin{enumerate}
\item A \textbf{morphism} $f \colon (P_1,\cX^\natural_1, h_1) \rightarrow (P_2, \cX^\natural,h_2)$ between enlarged mixed Shimura data is a coarse morphism satisfying the following property. For the associated mixed Shimura data $(P_1,\cX_1,h_1|_{\cX_1})$ and $(P_2,\cX_2,h_2|_{\cX_2})$ defined as above, we have $f(\cX_1) \subset \cX_2$.
\item The \textbf{category of enlarged mixed Shimura data} \underline{$\mathbf{\cE\cM\cS\cD}$} is defined as follows: its objects are the collection of enlarged mixed Shimura data, and its morphisms are the collection of morphisms between enlarged mixed Shimura data.
\end{enumerate}
\end{defn}

We have the following \textit{categorical comparison}.
\begin{lemma}[Categorical Comparison Lemma between \underline{$\mathbf{\cE\cM\cS\cD}$} and MSD]\label{LemmaCategoricalComparisonBetweenEMSDandMSD}
The category \underline{$\mathbf{\cE\cM\cS\cD}$} and the category of mixed Shimura data \underline{$\mathbf{\cM\cS\cD}$} are equivalent.
\begin{proof} By definition of \underline{$\mathbf{\cE\cM\cS\cD}$}, we have a functor $\mathbf{F} \colon \underline{\mathbf{\cE\cM\cS\cD}} \rightarrow \underline{\mathbf{\cM\cS\cD}}$.

Given a mixed Shimura datum $(P,\cX,h)$, let $\cD:=h(\cX)$ and let $\cD^\natural$ be the $P(\R)W(\C)$-conjugacy class in $\hom(\S_\C,P_\C)$ of any element of $\cD$. Let $M$ and $\varphi$ be as in Proposition~\ref{PropositionComplexStructureOfUniformizingSpaceHodgeStructureViewpoint}. Then we have $\varphi(\cD^\natural)=\varphi(\cD)\cong\cD$ by Proposition~\ref{PropositionReplaceByASmallerOrbit}. Now take the fiber product $\cX^\natural:=\cX\times_{\varphi(\cD^\natural)}\cD^\natural$ and let $h^\natural$ be the projection of $\cX^\natural$ to the second factor in the fiber product. Recall that $\varphi$ is $P(\R)W(\C)$-equivariant. Then it is not hard to check that $(P,\cX^\natural,h^\natural)$ is an enlarged mixed Shimura datum. We say that $(P,\cX^\natural,h^\natural)$ is the \textbf{enlarged mixed Shimura datum associated with} $(P,\cX,h)$. The upshot is that $\mathbf{F}$ is essentially surjective.

Now given a morphism $f \colon (P_1,\cX_1,h_1) \rightarrow (P_2,\cX_2,h_2)$ of mixed Shimura data, let us construct a morphism of the associated enlarged mixed Shimura data $f^\natural \colon (P_1,\cX_1^\natural,h_1^\natural) \rightarrow (P_2,\cX_2^\natural,h_2^\natural)$. Now $f$ gives a $P_1(\R)U_1(\C)$-invariant map $\cD_1 \rightarrow \cD_2$, which naturally extends to a $P_1(\R)W_1(\C)$-invariant map $\cD_1^\natural \rightarrow \cD_2^\natural$. Hence by construction of $\cX^\natural$ we obtain a coarse morphism $f^\natural \colon (P_1,\cX_1^\natural,h_1^\natural) \rightarrow (P_2,\cX_2^\natural,h_2^\natural)$ of enlarged mixed Shimura data. It is not hard to check that $f^\natural$ is a morphism between enlarged mixed Shimura data. Hence $\mathbf{F}$ is full.

It remains to show that $\mathbf{F}$ is faithful. Suppose $f,f'$ are morphisms $(P_1,\cX_1^\natural,h_1) \rightarrow (P_2,\cX_2^\natural,h_2)$ such that $f|_{(P_1,\cX_1,h_1|_{\cX_1})} = f'|_{(P_1,\cX_1,h_1|_{\cX_1})}$ where $(P_1,\cX_1,h_1|_{\cX_1})$ is the associated mixed Shimura datum. Since $\cX_1$ is defined to be a subset of $\cX^\natural_1$ and $f,f'$ are both $P_1(\R)W_1(\C)$-equivariant, we have $f = f'$. Hence we are done.
\end{proof}
\end{lemma}

With this lemma and known results of mixed Shimura data, we can do some construction for enlarged mixed Shimura data.
\begin{prop}[Quotient]\label{PropositionQuotientOfEMSD}
Let $(P,\cX^\natural)$ be an enlarged mixed Shimura datum and let $P_0$ be a normal subgroup of $P$. Then there exists a quotient enlarged mixed Shimura datum $(P,\cX^\natural)/P_0$ and a morphism $(P,\cX^\natural)\rightarrow(P,\cX^\natural)/P_0$, unique up to isomorphism, such that every morphism $(P,\cX^\natural)\rightarrow(Q,\cY^\natural)$, where the homomorphism $P\rightarrow Q$ factors through $P/P_0$, factors in a unique way through $(P,\cX^\natural)/P_0$. Moreover let $(P,\cX)$ be the mixed Shimura datum associated with $(P,\cX^\natural)$, then $(P,\cX^\natural)/P_0$ is the enlarged mixed Shimura datum associated with $(P,\cX)/P_0$.
\begin{proof} Pink \cite[2.9]{PinkThesis} proved the existence of the quotient mixed Shimura datum $(P,\cX)/P_0$. Then the proposition follows from the categorical comparison lemma (Lemma~\ref{LemmaCategoricalComparisonBetweenEMSDandMSD}) above.
\end{proof}
\end{prop}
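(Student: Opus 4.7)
The plan is to reduce everything to Pink's existence result for quotients of mixed Shimura data \cite[2.9]{PinkThesis} via the Categorical Comparison Lemma (Lemma~\ref{LemmaCategoricalComparisonBetweenEMSDandMSD}). First, I would apply Pink's theorem to the mixed Shimura datum $(P,\cX)$ associated with $(P,\cX^\natural)$ (obtained as in the proof of Lemma~\ref{LemmaCategoricalComparisonBetweenEMSDandMSD} via $\cX=h^{-1}(\cD)$). This yields a mixed Shimura datum $(P,\cX)/P_0$ together with a quotient morphism $q\colon(P,\cX)\to(P,\cX)/P_0$ satisfying the universal property: any morphism $(P,\cX)\to(Q,\cY)$ of mixed Shimura data whose underlying group homomorphism $P\to Q$ annihilates $P_0$ factors uniquely through $q$.

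Next, I would define $(P,\cX^\natural)/P_0$ to be the enlarged mixed Shimura datum associated with $(P,\cX)/P_0$ under the equivalence of categories provided by Lemma~\ref{LemmaCategoricalComparisonBetweenEMSDandMSD}, and take as quotient morphism the image of $q$ under this equivalence. This automatically produces the claimed ``moreover'' statement. For the universal property, suppose we are given a morphism $f\colon(P,\cX^\natural)\to(Q,\cY^\natural)$ in $\underline{\mathbf{\cE\cM\cS\cD}}$ whose group homomorphism factors through $P/P_0$. Applying the inverse equivalence of Lemma~\ref{LemmaCategoricalComparisonBetweenEMSDandMSD}, which sends $(Q,\cY^\natural)$ to its associated mixed Shimura datum $(Q,\cY)$, we obtain a morphism $(P,\cX)\to(Q,\cY)$ of mixed Shimura data with the same property on the group level. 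By Pink's universal property this factors uniquely through $(P,\cX)/P_0$, and applying the equivalence in the other direction gives the desired unique factorization of $f$ through $(P,\cX^\natural)/P_0$. Uniqueness of $(P,\cX^\natural)/P_0$ up to isomorphism follows formally from the universal property.

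The only place where one has to be a little careful is the compatibility check that the functor of Lemma~\ref{LemmaCategoricalComparisonBetweenEMSDandMSD} transports morphisms correctly: namely, that a morphism between enlarged mixed Shimura data indeed descends to a morphism between the associated mixed Shimura data and vice versa (and that these operations are inverse to each other on morphisms). This is essentially built into the statement of Lemma~\ref{LemmaCategoricalComparisonBetweenEMSDandMSD} (full faithfulness), so there is no genuine obstacle: the entire proof is a formal consequence of the equivalence of categories combined with Pink's existence statement. I do not expect any independent analytic or group-theoretic input beyond what Pink has already provided.
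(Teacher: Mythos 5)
Your proposal is correct and follows exactly the paper's own route: invoke Pink's existence of the quotient mixed Shimura datum $(P,\cX)/P_0$ and then transport everything through the equivalence of categories in Lemma~\ref{LemmaCategoricalComparisonBetweenEMSDandMSD}. You merely spell out the formal details that the paper leaves implicit, so there is nothing to add.
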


\begin{prop}[Unipotent extension]\label{PropositionUnipotentExtensionOfEMSD}
Let $(P,\cX^\natural)$ be an enlarged mixed Shimura datum and let $1\rightarrow W_0\rightarrow Q\rightarrow P\rightarrow 1$ be an extension of $P$ by a unipotent group $W_0$. Assume
\begin{itemize}
\item the adjoint action of $Q$ on every abelian subquotient of $W_0$ factors through $P$;
\item every irreducible subquotient of $\lie W_0$ is of type $\{(-1,0),(0,-1),(-1,-1)\}$ as a representation of $G:=P/\cR_u(P)$;
\item the center of $G$ acts on every irreducible subquotient of $\lie W_0$ through a torus that is an almost direct product of a $\Q$-split torus with a torus of compact type over $\Q$.
\end{itemize}
Then we have
\begin{enumerate}
\item There exists an enlarged mixed Shimura datum $(Q,\cY^\natural)$ and a morphism $(Q,\cY^\natural)\rightarrow(P,\cX^\natural)$, both unique up to isomorphism, extending the homomorphism $Q\rightarrow P$ such that $(Q,\cY^\natural)/W_0\cong(P,\cX^\natural)$.
\item For every morphism $(P_1,\cX^\natural_1)\rightarrow(P,\cX^\natural)$ and every factorization $P_1\rightarrow Q\rightarrow P$, there exists a unique extension $(P_1,\cX^\natural_1)\rightarrow(Q,\cY^\natural)\rightarrow(P,\cX^\natural)$.
\item Let $(P,\cX)$ be the mixed Shimura datum associated with $(P,\cX^\natural)$ and let $(Q,\cY)$ be the unipotent extension of $(P,\cX)$ be $W_0$ defined by Pink \cite[2.17]{PinkThesis}. Then $(Q,\cY^\natural)$ is the enlarged mixed Shimura datum associated with $(Q,\cY)$.
\end{enumerate}
\begin{proof} In fact Conclusion~(3) gives the construction and the categorical comparison lemma (Lemma~\ref{LemmaCategoricalComparisonBetweenEMSDandMSD}) proves that this is what we desire.
\end{proof}
\end{prop}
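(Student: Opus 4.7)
The plan is to reduce the proposition to Pink's corresponding statement for mixed Shimura data \cite[2.17]{PinkThesis} via the categorical comparison (Lemma~\ref{LemmaCategoricalComparisonBetweenEMSDandMSD}). First, let $(P,\cX)$ denote the mixed Shimura datum associated with $(P,\cX^\natural)$. Since the three bulleted hypotheses on $W_0$ (the factorization of the adjoint action through $P$, the Hodge type of $\lie W_0$, and the torus condition on the action of the center of $G$) are exactly those required by Pink, we may apply \cite[2.17]{PinkThesis} to obtain a mixed Shimura datum $(Q,\cY)$ together with a morphism $(Q,\cY)\rightarrow(P,\cX)$ extending $Q\rightarrow P$, satisfying $(Q,\cY)/W_0\cong(P,\cX)$, and having the universal property analogous to (2) for mixed Shimura data.

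Next, I would set $(Q,\cY^\natural)$ to be the enlarged mixed Shimura datum associated with $(Q,\cY)$ via the construction in the proof of Lemma~\ref{LemmaCategoricalComparisonBetweenEMSDandMSD}, namely $\cY^\natural:=\cY\times_{\varphi(\cD_\cY^\natural)}\cD_\cY^\natural$ where $\cD_\cY^\natural$ is the $Q(\R)W_Q(\C)$-conjugacy class in $\hom(\S_\C,Q_\C)$ of any element of $h_\cY(\cY)$. The morphism $(Q,\cY^\natural)\rightarrow(P,\cX^\natural)$ is then obtained by applying the fully faithful functor of Lemma~\ref{LemmaCategoricalComparisonBetweenEMSDandMSD} to the morphism $(Q,\cY)\rightarrow(P,\cX)$. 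This immediately gives conclusion (3), and by functoriality one verifies $(Q,\cY^\natural)/W_0\cong(P,\cX^\natural)$ using Proposition~\ref{PropositionQuotientOfEMSD}, which tells us that quotients in \underline{$\mathbf{\cE\cM\cS\cD}$} correspond to quotients of the associated mixed Shimura data.

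For the universal properties in (1) and (2), I would use that the equivalence of categories in Lemma~\ref{LemmaCategoricalComparisonBetweenEMSDandMSD} preserves and reflects morphisms. Concretely, given another extension $(Q,\cY^{\natural,\prime})$ satisfying the properties in (1), its associated mixed Shimura datum $(Q,\cY^\prime)$ also satisfies the corresponding mixed Shimura universal property; by the uniqueness (up to isomorphism) in Pink's result, $(Q,\cY^\prime)\cong(Q,\cY)$, and applying the equivalence in the reverse direction yields $(Q,\cY^{\natural,\prime})\cong(Q,\cY^\natural)$. The same argument proves (2): given $(P_1,\cX_1^\natural)\rightarrow(P,\cX^\natural)$ and a factorization $P_1\rightarrow Q\rightarrow P$, the functor produces a morphism $(P_1,\cX_1)\rightarrow(P,\cX)$ to which Pink's universal property attaches a unique lift $(P_1,\cX_1)\rightarrow(Q,\cY)$, and reapplying the equivalence provides the required unique lift $(P_1,\cX_1^\natural)\rightarrow(Q,\cY^\natural)$.

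The only genuinely nontrivial point is to confirm that Pink's hypotheses in \cite[2.17]{PinkThesis} are literally the ones imposed here — a matter of bookkeeping, since both formulations describe unipotent extensions whose Hodge-theoretic data on $W_0$ are controlled by $G=P/\cR_u(P)$. Everything else is a formal consequence of Lemma~\ref{LemmaCategoricalComparisonBetweenEMSDandMSD} together with Proposition~\ref{PropositionQuotientOfEMSD}, so no new Hodge-theoretic input is needed; the main content of the proposition is already contained in Pink's theorem.
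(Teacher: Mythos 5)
Your proof is correct and takes essentially the same approach as the paper's own (very terse) proof: construct $(Q,\cY^\natural)$ as the enlarged mixed Shimura datum associated with Pink's unipotent extension $(Q,\cY)$, then transfer the universal properties in (1) and (2) through the categorical equivalence of Lemma~\ref{LemmaCategoricalComparisonBetweenEMSDandMSD}. You have simply unfolded what the paper compresses into one sentence.
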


\subsection{Structure of the underlying space and the geometric comparison}\label{SubsectionStructureOfUnderlyingSpace}
In this subsection we study the underlying space $\cX^\natural$ of an enlarged mixed Shimura datum. In particular we will make a \textit{geometric comparison} between enlarged mixed Shimura data and mixed Shimura data. This explains the geometric aspect of the definition of morphisms of enlarged mixed Shimura data.

Given an enlarged mixed Shimura datum $(P,\cX^\natural,h)$, let $\cD^\natural:=h(\cX^\natural)$ and let $\cD$ be the subset of $\cD^\natural$ defined in Proposition~\ref{PropositionReplaceByASmallerOrbit}. Let $M$ and $\varphi$ be as in Proposition~\ref{PropositionComplexStructureOfUniformizingSpaceHodgeStructureViewpoint}. Then $\varphi(\cD^\natural)=\varphi(\cD)\cong\cD$ by Proposition~\ref{PropositionReplaceByASmallerOrbit}. Hence the triple $(P,\varphi(\cD),\varphi|_{\cD}^{-1})$ is a mixed Shimura datum. Consider the following projection map, which is the \textbf{geometric comparison} We use:
\begin{equation}\label{EquationEnlargedMSDToMSD}
\pi^\sharp\colon(P,\cX^\natural,h)\rightarrow(P,\varphi(\cD),\varphi|_{\cD}^{-1}),\quad(p,x)\mapsto(p,\varphi(h_x)).
\end{equation}
We write $(P,\cX^\natural)$ and $(P,\varphi(\cD))$ for simplicity. Note that any morphism of enlarged mixed Shimura data $f \colon (P_1,\cX^\natural_1) \rightarrow (P_2,\cX^\natural_2)$ induces a natural morphism of mixed Shimura data $\bar{f} \colon (P_1,\varphi_1(\cD_1)) \rightarrow (P_2,\varphi_2(\cD_2))$ such that the following diagram commutes
\[
\xymatrix{
(P_1,\cX^\natural_1) \ar[r]^-{\pi^\sharp_1} \ar[d]_{f} & (P_1,\varphi_1(\cD_1)) \ar[d]^{\bar{f}} \\
(P_2,\cX^\natural_2) \ar[r]^-{\pi^\sharp_2} & (P_2,\varphi_2(\cD_2))
}
\]
in the following way: let $(P_i,\cX_i)$ be the associated mixed Shimura data as defined above Definition~\ref{DefinitionMorphismsInTheCategoryOfEMSD}, then by definition of morphisms of enlarged mixed Shimura data we have $f(\cX_1) \subset \cX_2$. Thus $f$ induces a map from $\cD_1 = h(\cX_1)$ to $\cD_2 = h(\cX_2)$. As $\varphi_i|_{\cD_i} \colon \cD_i \cong \varphi_i(\cD_i)$, we get a map $\bar{f} \colon \varphi_1(\cD_1) \rightarrow \varphi_2(\cD_2)$, which is certainly $P_1(\R)U_1(\C)$-equivariant by construction.

Let $\pi^\natural$ be the composition
\[
(P,\cX^\natural)\xrightarrow{\pi^\sharp}(P,\varphi(\cD))\xrightarrow{\pi}(P,\varphi(\cD))/W=:(G,\varphi(\cD)_G).
\]
We have $\cD^\natural=h(\cX^\natural)\cong P(\R)W(\C)/\mathrm{Cent}(h_x)$ for any $h_x\in\cD^\natural$, and hence by Lemma~\ref{LemmaCentralizerOfh} $\cD^\natural$ is a $W(\C)$-torsor over the complex manifold $\varphi(\cD)_G$. This endows $\cD^{\natural}$ with a natural complex structure. More precisely, any Levi-decomposition $P = W \rtimes G$ gives a section $\varphi(\cD)_G \rightarrow \cD^\natural$, thus trivializing the $W(\C)$-torsor $\cD^\natural \cong W(\C) \times \varphi(\cD)_G$. This endows $\cD^\natural$ with a complex structure. Since two Levi-decompositions of $P$ differ from the conjugation of an element in $W(\Q)$, the complex structure thus defined does not depend on the choice the Levi decomposition. Note that this complex structure is $P(\R)W(\C)$-invariant.

Since $h\colon\cX^\natural\rightarrow\cD^\natural$ is a local diffeomorphism, we have a complex structure on $\cX^\natural$, invariant under $P(\R)W(\C)$. With this complex structure, the underlying map of every coarse morphism $(P_1,\cX^\natural_1,h_1)\rightarrow(P_2,\cX^\natural_2,h_2)$ on the underlying spaces $\cX^\natural_1\rightarrow\cX^\natural_2$ is holomorphic. Note that $\pi^\sharp\colon\cX^\natural\rightarrow\varphi(\cD)$ is holomorphic and $P(\R)W(\C)$-equivariant by Proposition~\ref{PropositionComplexStructureOfUniformizingSpaceHodgeStructureViewpoint}(1).

Next we define a left $P(\R)V(\C)$-homogeneous space $\cD^\natural_{P/U}\subset\hom(\S_\C,(P/U)_\C)$ to be the image of $\cD^\natural$ under the obvious morphism $\hom(\S_\C,P_\C)\rightarrow\hom(\S_\C,(P/U)_\C)$. Then $(P/U,\cD^\natural_{P/U},\mathrm{id})$ is an enlarged mixed Shimura datum. Apply the geometric comparison discussed above to this enlarged mixed Shimura datum we get
\[
\pi^\sharp_{P/U}\colon(P/U,\cD^\natural_{P/U})\rightarrow(P/U,\varphi(\cD)_{P/U}).
\]

We give a better description of $\pi^\sharp_{P/U}$. The homologous description for $\pi^\sharp$ is given in $\mathsection$\ref{SubsectionNotationEMSD}. By reason of level (condition~\eqref{definition of U} of Definition~\ref{DefinitionEnlargedMixedShimuraDatum}), we know that $\lie V$ is commutative, and hence the exponential morphism $\lie V \rightarrow V$ is an isomorphism as algebraic groups. So $V$ is commutative, and hence gives rise to a $\Q$-representation of $G$ and furthermore a variation of rational Hodge structures over $\varphi(\cD)_G$ of weight $-1$ and type $\{(-1,0),(0,-1)\}$. Let $\cV:=V(\C)\times\varphi(\cD)_G$ be the corresponding holomorphic bundle over $\varphi(\cD)_G$ and let $\cF^0\cV$ be the holomorphic subbundle of $\cV$ which induces the Hodge filtration in each fiber.

Back to $(P/U,\cD^\natural_{P/U})$. The space $\cD^\natural_{P/U}$ is a $(P/U)(\R)V(\C)$-orbit, and there exists a $(P/U)(\R)$-orbit $\cD_{P/U} \subset \cD^\natural_{P/U}$ given by Proposition~\ref{PropositionReplaceByASmallerOrbit}. Now any Levi-decomposition $P = W\rtimes G$ gives rise to a pair of identifications $\cD^\natural_{P/U} = \cV$ and $\cD_{P/U} = V(\R) \times \varphi(\cD)_G$. Under this identification we have that
\begin{equation}\label{EquationKugaTypeUniformizingSpace}
\pi^\sharp_{P/U}\colon\cD^\natural_{P/U}\rightarrow\varphi(\cD)_{P/U}\text{ is the natural projection }\cV\rightarrow\cV/\cF^0\cV.
\end{equation}
Hence $\pi^\sharp_{P/U}$ is holomorphic.


The following discussion will be useful to understand the morphisms of enlarged mixed Shimura varieties. For simplicity, let us temporarily assume that all enlarged mixed Shimura data in the rest of this subsection satisfy $U = 1$. Let $f \colon (P_1,\cD_1^\natural) \rightarrow (P_2,\cD^\natural_2)$ be a morphism of enlarged mixed Shimura data. Let $f_G$ be the induced morphism $(G_1,\varphi_1(\cD_1)_{G_1}) \rightarrow (G_2,\varphi_2(\cD_2)_{G_2})$. Denote by $\cV_i = V_i(\C) \times \varphi_i(\cD_i)_{G_i}$. Fix two Levi-decompositions $P_1 = V_1 \rtimes G_1$ and $P_2 = V_2 \rtimes G_2$, which give rise to identifications $\cD_i^\natural = \cV_i$ and $\cD_i = V_i(\R) \times \varphi_i(\cD_i)_{G_i}$ for $i = 1,2$. Consider the group $P_*$ of $P_2$ generated by $V_2$ and $f(0\rtimes G_1)$. Then we have two Levi-decompositions of $P_*$, induced by $f(P_1) < P_*$ and by $P_2 = V_2 \rtimes G_2$ respectively. They differ from the conjugation of an element $v_2$ in $\cR_u(P_*)(\Q) = V_2(\Q)$. Thus we have
\begin{equation}\label{EqMorEMSVUnivSpace}
f(\cV_1) = (v_2 + f(V_1)(\C)) \times f_G(\varphi_1(\cD_1)_{G_1}) \subset \cV_2\text{ and } f(\cD_1) = (v_2 + f(V_1)(\R)) \times f_G(\varphi_1(\cD_1)_{G_1}) \subset \cD_2.
\end{equation}

\subsection{Structure of the underlying group}\label{SubsectionStructureOfUnderlyingGroup}
One direct corollary of (the proof of) Lemma~\ref{LemmaCategoricalComparisonBetweenEMSDandMSD} is that the underlying group remains the same for an enlarged mixed Shimura datum and its associated mixed Shimura datum. Therefore for any enlarged mixed Shimura datum, to study its underlying group it suffices to look at its associated mixed Shimura datum. Now by Pink \cite[2.15]{PinkThesis}, we have the following result:

Given an enlarged mixed Shimura datum $(P,\cX^\natural)$, we can associate to $P$ a 4-tuple $(G,V,U,\Psi)$ which is defined as follows:
\begin{itemize}
\item $G:=P/\cR_u(P)$ is the reductive part of $P$;
\item $U$ is the normal subgroup of $P$ as in Definition~\ref{DefinitionEnlargedMixedShimuraDatum} (hence the weight $-2$ part) and $V:=\cR_u(P)/U$ (hence the weight $-1$ part). Both of them are vector groups with an action of $G$ induced by conjugation in $P$ (which factors through $G$ for reason of weight);
\item The commutator on $W:=\cR_u(P)$ induces a $G$-equivariant alternating form $\Psi\colon V\times V\rightarrow U$ by reason of weight. Moreover, $\Psi$ is given by a polynomial with coefficients in $\Q$.
\end{itemize}

On the other hand, $P$ is uniquely determined up to isomorphism by this 4-tuple:
\begin{itemize}
\item let $W$ be the central extension of $V$ by $U$ defined by $\Psi$. More concretely, $W=U\times V$ as a $\Q$-variety and the group law on $W$ is $(u,v)(u^\prime,v^\prime)=(u+u^\prime+\frac{1}{2}\Psi(v,v^\prime),v+v^\prime)$;
\item define the action of $G$ on $W$ by $g((u,v)):=(gu,gv)$;
\item define $P:=W\rtimes G$.
\end{itemize}

\subsection{Notations for EMSD}\label{SubsectionNotationEMSD}
We fix some notations for the rest of the paper. Let $(P,\cX^\natural,h)$ be an enlarged mixed Shimura datum, which for simplicity is often denoted by $(P,\cX^\natural)$.
\begin{center}
  \begin{tabular}{@{} rl @{}}
    \hline
    notation & meaning \\ 
    \hline
    $(P,\cX)$ & the associated mixed Shimura datum under Lemma~\ref{LemmaCategoricalComparisonBetweenEMSDandMSD} \\ 
    $(P/U,\cX_{P/U})$ & the quotient mixed Shimura datum $(P,\cX)/U$ \\
    $(P/U,\cX^\natural_{P/U})$ & $(P,\cX^\natural)/U$=the enlarged mixed Shimura datum associated with $(P/U,\cX_{P/U})$ \\
    $G$, $W$, $U$, $V$ & as in $\mathsection$\ref{SubsectionStructureOfUnderlyingGroup} \\ 
    $\cD^\natural$ & $h(\cX^\natural)\subset\hom(\S_\C,P_\C)$ \\
    $\cD$ & the subset of $\cD^\natural$ defined in Proposition~\ref{PropositionReplaceByASmallerOrbit} \\
    $\varphi$ & the map $\cD^\natural\rightarrow\{\text{rational mixed Hodge structures on }M\}$ for a faithful \\
    & representation $M$ of $P$ as in Proposition~\ref{PropositionComplexStructureOfUniformizingSpaceHodgeStructureViewpoint} \\
    $(P/U,\varphi(\cD)_{P/U})$ & the mixed Shimura datum $(P,\varphi(\cD))/U$ \\
    $(G,\varphi(\cD)_G)$ & the mixed Shimura datum $(P,\varphi(\cD))/W$ \\
    \hline
  \end{tabular}
\end{center}
The notations we introduced in $\mathsection$\ref{SubsectionStructureOfUnderlyingSpace} are summarized in the diagram
\begin{equation}\label{EquationCartesianDiagramUniformizingSpaceProjectToMSD}
\xymatrix{
    (P,\cX^\natural) \ar[r]^{\pi^\sharp} \ar[d]_{\pi_{P/U}^\natural} \bigpullback \ar@/_5pc/[dd]_{\pi^\natural} & (P,\varphi(\cD))  \ar[d]^{\pi_{P/U}} \ar@/^5pc/[dd]^{\pi}\\
    (P/U,\cX^\natural_{P/U}) \ar[r]^{\pi_{P/U}^\sharp} \ar[d]_{\pi_G^\natural} & (P/U,\varphi(\cD)_{P/U}) \ar[d]^{\pi_G} \\
   (G,\varphi(\cD)_G) \ar[r]^{\mathrm{id}} & (G,\varphi(\cD)_G)
  }
\end{equation}
We explain why the first square is a pullback. Pink proved in \cite[2.18, 2.19]{PinkThesis} that $\varphi(\cD)$ can be identified with a holomorphic complex vector bundle over $\varphi(\cD)_G$. In fact let $L\cW:=\lie W_\C\times\varphi(\cD)_G$ and let $\cF^0L\cW$ be the holomorphic subbundle of $L\cW$ whose fiber over $x_G$ is $F^0_{x_G}\lie W_\C$, then $\varphi(\cD)=L\cW/\cF^0L\cW$. But $F^0_{x_G}V_\C=F^0_{x_G}\lie W_\C\cong\exp(F^0_{x_G}\lie W_\C)$ as algebraic varieties and $\exp(F^0_{x_G}\lie W_\C)$ is a subgroup of $W(\C)$ by reason of weight and type for every $x_G\in\varphi(\cD)_G$. So $\cF^0L\cW$ is a subgroup of $W(\C)\times\varphi(\cD)_G$ over $\varphi(\cD)_G$ and we denote by $\cF^0\cV:=\cF^0L\cW$. Therefore
\begin{equation}\label{EquationUniformizingSpaceProjectToMSD}
\cF^0\cV\subset\cD^\natural=W(\C)\times\varphi(\cD)_G\xrightarrow{\pi^\sharp}\varphi(\cD).
\end{equation}
So the first square of the diagram above is a pullback by \eqref{EquationKugaTypeUniformizingSpace}.

Before ending this subsection, we remark that it is often enough to consider the case where $\cX^\natural=\cD^\natural$ (hence $\cX=\cD$) because of the following lemma:
\begin{lemma}\label{LemmaFromXnaturalToDnatural}
For any enlarged mixed Shimura datum $(P,\cX^\natural,h)$, the canonical morphism
\[
(P,\cX^\natural)\rightarrow(P,\cX^\natural)/P^{\der}\times(P,h(\cX^\natural))
\]
is injective for both the underlying group and the underlying space. We call such a morphism a \textbf{Shimura embedding}.
\begin{proof} This follows from the categorical comparison Lemma~\ref{LemmaCategoricalComparisonBetweenEMSDandMSD} and the parallel result for mixed Shimura data \cite[2.11]{PinkThesis}.
\end{proof}
\end{lemma}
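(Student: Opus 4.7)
The plan is to transfer the statement through the equivalence of categories established in Lemma~\ref{LemmaCategoricalComparisonBetweenEMSDandMSD} and invoke the parallel result \cite[2.11]{PinkThesis} for mixed Shimura data, which says that for any mixed Shimura datum $(P,\cX,h)$ the canonical morphism
\[
(P,\cX)\rightarrow (P,\cX)/P^{\der}\times (P,h(\cX))
\]
is an embedding.

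The first step is to identify the relevant associated mixed Shimura data. Let $(P,\cX)$ be the mixed Shimura datum associated with $(P,\cX^\natural,h)$ under Lemma~\ref{LemmaCategoricalComparisonBetweenEMSDandMSD}, so that $\cX=h^{-1}(\cD)$ with $\cD\subset\cD^\natural=h(\cX^\natural)$ as in Proposition~\ref{PropositionReplaceByASmallerOrbit}. By Proposition~\ref{PropositionQuotientOfEMSD} the quotient $(P,\cX^\natural)/P^{\der}$ is the enlarged companion of $(P,\cX)/P^{\der}$. Moreover, directly from the construction in the proof of Lemma~\ref{LemmaCategoricalComparisonBetweenEMSDandMSD}, the mixed Shimura datum associated with $(P,h(\cX^\natural))=(P,\cD^\natural)$ is precisely $(P,\cD)=(P,h(\cX))$. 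Hence the target in the statement corresponds under the equivalence to $(P,\cX)/P^{\der}\times (P,\cD)$, which is exactly Pink's target.

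The second step is to check that the morphism in the statement is indeed the image, under the associating construction, of Pink's canonical morphism. Both are determined by the universal property of a product together with the canonical quotient map to $P/P^{\der}$ on the first factor and the identity on the second factor; since the equivalence of Lemma~\ref{LemmaCategoricalComparisonBetweenEMSDandMSD} preserves morphisms and products, the two match. By \cite[2.11]{PinkThesis} the morphism is an embedding of mixed Shimura data.

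The final step is to transfer this embedding back to the enlarged side. Using the explicit construction in the proof of Lemma~\ref{LemmaCategoricalComparisonBetweenEMSDandMSD}, we have $\cX^\natural\cong \cX\times_{\varphi(\cD)}\cD^\natural$ as $P(\R)W(\C)$-spaces, and similarly on the target. Since fibre products preserve embeddings, Pink's embedding $\cX\hookrightarrow (\cX/P^{\der})\times\cD$ together with the identity on $\cD^\natural$ yields an embedding
\[
\cX^\natural\hookrightarrow \bigl((\cX/P^{\der})\times\cD\bigr)\times_{\varphi(\cD)}\cD^\natural=(\cX^\natural/P^{\der})\times\cD^\natural,
\]
which is the canonical morphism of the lemma; the underlying group homomorphism $P\rightarrow P/P^{\der}\times P$ is obviously injective. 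The only part requiring genuine care, and the place where I would spend the most effort, is this last compatibility: verifying that passage to the associated enlarged datum commutes with the relevant fibre product over $\varphi(\cD)$, so that embeddings of mixed Shimura data really do induce embeddings of the associated enlarged mixed Shimura data.
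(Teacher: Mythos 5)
Your proposal is correct and follows exactly the route of the paper's (one-line) proof: transfer the statement through the equivalence of Lemma~\ref{LemmaCategoricalComparisonBetweenEMSDandMSD}, apply Pink's result \cite[2.11]{PinkThesis} on the mixed Shimura side, and pull the embedding back via the fibre-product construction $\cX^\natural\cong\cX\times_{\varphi(\cD^\natural)}\cD^\natural$. The compatibility check you flag at the end is precisely the routine verification the paper leaves implicit, and your handling of it is sound.
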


\subsection{Enlarged mixed Shimura data of Siegel type}\label{SubsectionEMSDOfSiegelType}
In this subsection we focus on some important examples. Let $g\in\N_{>0}$. Let $V_{2g}$ be a $\Q$-vector space of dimension $2g$ and let
\begin{equation}\label{EquationAlternatingForm}
\Psi\colon V_{2g}\times V_{2g}\rightarrow U_{2g}:=\G_{a,\Q}
\end{equation}
be a non-degenerate alternating form. Define
\[
\GSp_{2g}:=\{g\in\GL(V_{2g})|\Psi(gv,gv^\prime)=\nu(g)\Psi(v,v^\prime)\text{ for some }\nu(g)\in\G_m\},
\]
and $\cH_g\subset\hom(\S,\GSp_{2g,\R})$ to be the set of all homomorphisms inducing a pure Hodge structure of type $\{(-1,0),(0,-1)\}$ on $V_{2g}$ and for which $\Psi$ or $-\Psi$ defines a polarization. It is well-known that $\GSp_{2g}$ is a reductive group and $\GSp_{2g}(\R)$ acts transitively on $\cH_g$.

\subsubsection{}\label{SubsubsectionEMSDUniversalVectorExtension}
We start with the enlarged mixed Shimura datum $(P_{2g,\mathrm{a}},\cX^\natural_{2g,\mathrm{a}})$,\footnote{The index ``a'' is short for ``abelian''.} which corresponds to the universal vector extension of the universal abelian variety (over a fine moduli space). See $\mathsection$\ref{SubSubsectionEMSVUniversalVectorExtensionOfUniversalAbelianVariety} for further details.

Let
\[
\cX_{2g,\mathrm{a}}:=V_{2g}(\R)\rtimes\cH_g\subset\hom(\S,V_{2g,\R}\rtimes\GSp_{2g,\R})
\]
denote the conjugacy class under $V_{2g}(\R)\rtimes\GSp_{2g}(\R)$ generated by any element of $\cH_g$. Then let
\[
\cX^\natural_{2g,\mathrm{a}}:=V_{2g}(\C)\rtimes\cH_g\subset\hom(\S_\C,V_{2g,\C}\rtimes\GSp_{2g,\C})
\]
be the conjugacy class under $V_{2g}(\C)\rtimes\GSp_{2g}(\R)$ generated by any element of $\cX_{2g,\mathrm{a}}$. The notation $V_{2g}(R)\rtimes\cH_g$ ($R=\R,~\C$) is justified by the natural bijection
\begin{equation}\label{EquationUniformizingSpaceUniversalAbelianVariety}
V_{2g}(R)\times\cH_g\xrightarrow{\sim}V_{2g}(R)\rtimes\cH_g,~~(v^\prime,x)\mapsto\mathrm{int}(v^\prime)\circ x.
\end{equation}
Under this bijection the action of $(v,t)\in V_{2g}(\C)\rtimes\GSp_{2g}(\R)$ on $V_{2g}(\C)\rtimes\cH_g$ is given by $(v,t)\cdot(v^\prime,x):=(v+tv^\prime,tx)$.

Denote by $(P_{2g,\mathrm{a}},\cX^\natural_{2g,\mathrm{a}}):=(V_{2g}\rtimes\GSp_{2g},V_{2g}(\C)\rtimes\cH_g)$. Then $(P_{2g,\mathrm{a}},\cX_{2g,\mathrm{a}})$ is a mixed Shimura datum (\cite[2.25]{PinkThesis}) and $(P_{2g,\mathrm{a}},\cX^\natural_{2g,\mathrm{a}})$ is the enlarged mixed Shimura datum associated with $(P_{2g,\mathrm{a}},\cX_{2g,\mathrm{a}})$.

Now let us turn to the geometric comparison considered in $\mathsection$\ref{SubsectionStructureOfUnderlyingSpace}. Let $M$ and $\varphi$ be as in Proposition~\ref{PropositionComplexStructureOfUniformizingSpaceHodgeStructureViewpoint}. In fact we can take $M$ to be a $\Q$-vector space of dimension $2g+1$ because
\[
P_{2g,\mathrm{a}}=\left(\begin{array}{cc} \GSp_{2g} & V_{2g} \\ 0 & 1 \end{array}\right)<\GL_{2g+1}
\]
and then $\varphi$ factors through $\{\text{rational mixed Hodge structures on }M\text{ of type }\{(-1,0),(0,-1),(0,0)\}$. We shall work with the mixed Shimura datum $(P_{2g,\mathrm{a}},\varphi(\cX_{2g,\mathrm{a}}))$ instead of $(P_{2g,\mathrm{a}},\cX_{2g,\mathrm{a}})$. Now denote by $\cV_{2g}:=V_{2g}(\C)\times\cH_g$ and let $\cF^0\cV_{2g}$ be the holomorphic subbundle of $\cV_{2g}$ such that its fiber over $x\in\cH_g$ is $(V_{2g})_x^{0,-1}$. Then by \eqref{EquationKugaTypeUniformizingSpace}, we have that on the underlying spaces
\begin{equation}\label{EquationPiSharpForUniversalAbelianVariety}
(P_{2g,\mathrm{a}},\cX_{2g,\mathrm{a}}^\natural)\xrightarrow{\pi^\sharp}(P_{2g,\mathrm{a}},\varphi(\cX_{2g,\mathrm{a}}))\text{ is the natural projection }\cV_{2g}\rightarrow\cV_{2g}/\cF^0\cV_{2g}.
\end{equation}

\subsubsection{}\label{SubsubsectionEMSDPullOfTheCanonicalAmpleGmTorsorToUniversalVectorExtension}
Next we define the enlarged mixed Shimura datum $(P_{2g},\cX^\natural_{2g})$, which corresponds to the pullback of the canonical relatively ample $\G_m$-torsor over the universal abelian variety (over a fine moduli space) to its universal vector extension. See Example~\ref{ExampleEMSV}(1) for further details.

Let $W_{2g}$ be the central extension of $V_{2g}$ by $U_{2g}$ defined by $\Psi$ and let $P_{2g}$ be the group associated to the 4-tuple $(\GSp_{2g},V_{2g},U_{2g},\Psi)$ as in $\mathsection$\ref{SubsectionStructureOfUnderlyingGroup}. Then $P_{2g}=W_{2g}\rtimes\GSp_{2g}$. The action of $\GSp_{2g}$ on $W_{2g}$ induces a Hodge structure of type $\{(-1,0),(0,-1),(-1,-1)\}$ on $\lie W_{2g,\C}$. Let
\[
\cX_{2g}\subset\hom(\S_\C,P_{2g,\C})
\]
be the conjugacy class under $P_{2g}(\R)U_{2g}(\C)$ generated by any element of $\cH_g$. Then $(P_{2g},\cX_{2g})$ defines a mixed Shimura datum (see \cite[2.25]{PinkThesis}). Pink \cite[10.10]{PinkThesis} proved that the mixed Shimura datum $(P_{2g},\cX_{2g})$ corresponds to the canonical ample $\G_m$-torsor over the universal abelian variety (over a fine moduli space). Finally we define $(P_{2g},\cX^\natural_{2g})$ to be the enlarged mixed Shimura datum associated with $(P_{2g},\cX_{2g})$ (see Lemma~\ref{LemmaCategoricalComparisonBetweenEMSDandMSD}).

\subsubsection{}\label{SubsubsectionEMSDPullOfThePoincareBiextensionToUniversalVectorExtension}
Now define the enlarged mixed Shimura datum $(P_{2g,\mathrm{b}},\cX^\natural_{2g,\mathrm{b}})$\footnote{The index ``b'' is short for ``biextension''.} which corresponds to the universal vectorial bi-extension defined by Coleman \cite{ColemanThe-universal-v}. See Example~\ref{ExampleEMSV}(2) for further details.

Define the group $P_{2g,\mathrm{b}}$ to be the unipotent extension of $P_{2g,\mathrm{a}}=V_{2g}\rtimes\GSp_{2g}$ by $V_{2g}\bigoplus U_{2g}$ for the action $P_{2g,\mathrm{a}}\curvearrowright V_{2g}\bigoplus U_{2g}$ defined by $(v,g)(v^\prime,u):=(gv^\prime,gu+\Psi(v,v^\prime))$. Let $(P_{2g,\mathrm{b}},\cX^\natural_{2g,\mathrm{b}})$ be the unipotent extension of $(P_{2g,\mathrm{a}},\cX^\natural_{2g,\mathrm{a}})$ by $V_{2g} \oplus U_{2g}$ given by Proposition~\ref{PropositionUnipotentExtensionOfEMSD}. Then its associated mixed Shimura datum $(P_{2g,\mathrm{b}},\cX_{2g,\mathrm{b}})$ is the unipotent extension of $(P_{2g,\mathrm{a}},\cX_{2g,\mathrm{a}})$ by $V_{2g}\bigoplus U_{2g}$ in the sense of Pink \cite[2.17]{PinkThesis}. Moreover by \cite[Remark~2.13]{PinkA-Combination-o}, the mixed Shimura datum $(P_{2g,\mathrm{b}},\cX_{2g,\mathrm{b}})$ corresponds to the Poincar\'{e} biextension over the product of the universal abelian variety with its dual (over a fine moduli space). We denote by $W_{2g,\mathrm{b}}:=\cR_u(P_{2g,\mathrm{b}})$.

\subsection{Reduction Theorem for enlarged Shimura data}
\begin{defn}\label{DefinitionEMSDWithGenericMTGroup}
An enlarged mixed Shimura datum $(P,\cX^\natural,h)$ (resp. mixed Shimura datum $(P,\cX,h)$) is said to \textbf{have generic Mumford-Tate group} if $P$ possesses no proper normal subgroup $Q$ such that for one (equivalently all) $x\in\cX^\natural$ (resp. $x\in\cX$), $h_x$ factors through $Q_\C\subset P_\C$. We shall denote this case by $P=\MT(\cX^\natural)$ (resp. $P=\MT(\cX)$).
\end{defn}
\begin{rmk}
\begin{enumerate}
\item Pink \cite[2.13]{PinkThesis} used the term ``irreducible'' for this definition in the case of mixed Shimura data.
\item Let $M$ and $\varphi$ be as in Proposition~\ref{PropositionComplexStructureOfUniformizingSpaceHodgeStructureViewpoint}. Then an enlarged mixed Shimura datum $(P,\cX^\natural,h)$ has generic Mumford-Tate group if and only if  $P=\MT(\varphi(h(\cX^\natural)))$ (resp. a mixed Shimura datum $(P,\cX,h)$ has generic Mumford-Tate group if and only if  $P=\MT(\varphi(h(\cX)))$). But $\varphi(h(\cX^\natural))=\varphi(h(\cX))$ by Proposition~\ref{PropositionReplaceByASmallerOrbit}, so an enlarged mixed Shimura datum has generic Mumford-Tate group if and only if  its associated mixed Shimura datum has generic Mumford-Tate group.
\end{enumerate}
\end{rmk}

\begin{prop}\label{PropositionEMSDWithGenericMTGroupAlwaysExists}
Let $(P,\cX^\natural)$ be an enlarged mixed Shimura datum, then
\begin{enumerate}
\item there exists an enlarged mixed Shimura datum $(Q,\cY^\natural)$ having generic Mumford-Tate group such that $(Q,\cY^\natural)\hookrightarrow(P,\cX^\natural)$ and a connected component $\cY^{\natural+}$ of $\cY^\natural$ is sent isomorphically to a connected component $\cX^{\natural+}$ of $\cX^\natural$ under this embedding;
\item if $(P,\cX^\natural)$ has generic Mumford-Tate group, then $P$ acts on $U$ via a scalar. In particular, any subgroup of $U$ is normal in $P$.
\end{enumerate}
\begin{proof}
This is true for mixed Shimura data by Pink \cite[2.13, 2.14]{PinkThesis}. Then it suffices to apply Lemma~\ref{LemmaCategoricalComparisonBetweenEMSDandMSD}.
\end{proof}
\end{prop}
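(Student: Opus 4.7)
The plan is to reduce both statements to the corresponding results for mixed Shimura data via the categorical comparison Lemma~\ref{LemmaCategoricalComparisonBetweenEMSDandMSD}, and then invoke Pink \cite[2.13, 2.14]{PinkThesis}. The key bridge is the remark immediately preceding the proposition: since $\varphi(h(\cX^\natural)) = \varphi(h(\cX))$ by Proposition~\ref{PropositionReplaceByASmallerOrbit}, an enlarged mixed Shimura datum has generic Mumford-Tate group if and only if its associated mixed Shimura datum does. This remark is what makes the reduction meaningful; I would state and justify it carefully at the outset, as it is the one place where the two formulations could a priori diverge.

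For part (1), I would first pass to the associated mixed Shimura datum $(P,\cX)$ given by Lemma~\ref{LemmaCategoricalComparisonBetweenEMSDandMSD}. Pink's result \cite[2.13]{PinkThesis} yields an embedding of mixed Shimura data $(Q,\cY) \hookrightarrow (P,\cX)$ with $Q = \MT(\cY)$ and such that the underlying connected components agree, i.e.\ $\cY^+ \cong \cX^+$. Then I would take $(Q,\cY^\natural)$ to be the enlarged mixed Shimura datum associated with $(Q,\cY)$ via Lemma~\ref{LemmaCategoricalComparisonBetweenEMSDandMSD}. Functoriality of this association upgrades the embedding $(Q,\cY) \hookrightarrow (P,\cX)$ to an embedding $(Q,\cY^\natural) \hookrightarrow (P,\cX^\natural)$. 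By the remark above, $(Q,\cY^\natural)$ still has generic Mumford-Tate group. Finally, using the explicit description of $\cX^\natural$ as a $W(\C)$-torsor over $\varphi(\cD)_G$ (cf.\ the diagram \eqref{EquationCartesianDiagramUniformizingSpaceProjectToMSD}), the isomorphism $\cY^+ \cong \cX^+$ lifts to $\cY^{\natural+} \cong \cX^{\natural+}$ because both sides are built from the same $W$ and the same pure base.

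For part (2), assume $(P,\cX^\natural)$ has generic Mumford-Tate group. By the remark, the associated mixed Shimura datum $(P,\cX)$ also has generic Mumford-Tate group. Pink's \cite[2.14]{PinkThesis} then asserts that $P$ acts on $U$ through a scalar, which is purely a statement about the underlying $\Q$-group $P$ and its weight filtration on $\lie P$. Since the underlying group of an enlarged mixed Shimura datum equals the underlying group of its associated mixed Shimura datum (as already noted in $\mathsection$\ref{SubsectionStructureOfUnderlyingGroup}), the conclusion transports back to $(P,\cX^\natural)$. The ``in particular'' claim is immediate: if $P$ acts on $U$ by scalars then every linear subspace of $U$ is $P$-stable, hence normal in $P$.

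The only genuine obstacle is making sure the categorical comparison is robust enough to carry subobjects and the generic Mumford-Tate condition through in both directions; once the remark before the proposition is pinned down as an iff statement (which I would spell out using Proposition~\ref{PropositionReplaceByASmallerOrbit} and faithfulness of $\varphi$ on $\cD$), the rest is essentially formal and reduces to citing Pink.
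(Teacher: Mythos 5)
Your proof is correct and takes essentially the same approach as the paper: reduce to the associated mixed Shimura datum via Lemma~\ref{LemmaCategoricalComparisonBetweenEMSDandMSD}, invoke Pink \cite[2.13, 2.14]{PinkThesis}, and transport the conclusions back. You have simply spelled out the intermediate steps (the iff characterization of generic Mumford-Tate group, functoriality of the association, agreement of underlying groups) that the paper leaves implicit.
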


We close this section by the following Reduction Theorem for enlarged mixed Shimura data.
\begin{thm}[Reduction Theorem]\label{ReductionTheorem}
Let $(P,\cX^\natural)$ be an enlarged mixed Shimura datum with generic Mumford-Tate group. Assume $\dim V=2g>0$ (otherwise $V=0$ and $(P,\cX^\natural)$ is a mixed Shimura datum), and let $r = \dim U + 1$. Then there exist pure Shimura data $(T,\cY)$ and $(G_0,\cD_0)$, where $T$ is a torus and $\cD_0\subset\hom(\S,G_{0,\R})$ such that the following hold:
there exists an enlarged mixed Shimura datum $(P_*,\cX^\natural_*)$ with morphisms
\begin{align*}
(P_*,\cX^\natural_*)\rightarrow(P,\cX^\natural)\text{ which is a }\left((P_0,\cX_0)\rightarrow(\G_m,\cH_0)\right)\text{-torsor}\\
\text{and }(P_*,\cX^\natural_*)\stackrel{\lambda}{\hookrightarrow}(T,\cY)\times(G_0,\cD_0)\times\prod_{i=1}^r(P_{2g},\cX^\natural_{2g})
\end{align*}
such that $\lambda|_V\colon V\cong V_{2g}\rightarrow\bigoplus_{i=1}^r V_{2g}$ is the diagonal map, $\lambda|_{U_*}\colon U_*\cong\bigoplus_{i=1}^r U_{2g}$ and $G\xrightarrow{\bar{\lambda}}T\times G_0\times\prod_{i=1}^r\GSp_{2g}\rightarrow\GSp_{2g}$ is non-trivial for each projection.
\begin{proof} The first part is \cite[2.26(a)]{PinkThesis}. For the second part, let $(P,\cX)$ be mixed Shimura datum associated with $(P,\cX^\natural)$. Then by \cite[2.26(b)]{PinkThesis} and \cite[Lemma~2.12]{GaoTowards-the-And}, there exists a mixed Shimura datum $(P_*,\cX_*)$ with Shimura morphisms
\[
(P_*,\cX_*)\twoheadrightarrow(P,\cX)\text{ and }(P_*,\cX_*)\hookrightarrow(T,\cY)\times(G_0,\cD_0)\times\prod_{i=1}^r(P_{2g},\cX_{2g})
\]
with the desired properties. Now it suffices to take $(P_*,\cX^\natural_*)$ to be the enlarged mixed Shimura datum associated with $(P_*,\cX_*)$.
\end{proof}
\end{thm}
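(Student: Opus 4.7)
The plan is to reduce everything to the analogous Reduction Theorem for mixed Shimura data, established by Pink~\cite[2.26]{PinkThesis} and supplemented by \cite[Lemma~2.12]{GaoTowards-the-And}, and then to transport the output back through the Categorical Comparison Lemma~\ref{LemmaCategoricalComparisonBetweenEMSDandMSD}. This mirrors exactly the pattern used in Propositions~\ref{PropositionQuotientOfEMSD} and~\ref{PropositionUnipotentExtensionOfEMSD}.

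First, I pass to the associated mixed Shimura datum $(P,\cX)$. By the remark after Definition~\ref{DefinitionEMSDWithGenericMTGroup}, having generic Mumford--Tate group is preserved under the equivalence, and since the underlying algebraic group is unchanged ($\mathsection$\ref{SubsectionStructureOfUnderlyingGroup}), the vector group $V$ and the hypothesis $\dim V=2g>0$ are unchanged. Pink's theorem then produces pure Shimura data $(T,\cY)$ (with $T$ a torus) and $(G_0,\cD_0)$, together with a mixed Shimura datum $(P_*,\cX_*)$ and morphisms
\[
(P_*,\cX_*)\twoheadrightarrow(P,\cX) \quad\text{and}\quad (P_*,\cX_*)\hookrightarrow(T,\cY)\times(G_0,\cD_0)\times\prod_{i=1}^r(P_{2g},\cX_{2g}),
\]
the first being a $((P_0,\cX_0)\to(\G_m,\cH_0))$-torsor and the induced homomorphisms on $V$, $U_*$ and $G$ satisfying the required diagonality, isomorphism and non-triviality conditions.

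Next, define $(P_*,\cX^\natural_*)$ to be the enlarged mixed Shimura datum associated with $(P_*,\cX_*)$ via Lemma~\ref{LemmaCategoricalComparisonBetweenEMSDandMSD}. For pure data one has $W=0$, hence $\cX^\natural=\cX$, so $(T,\cY)$ and $(G_0,\cD_0)$ serve as their own enlargements. For the Siegel-type factor, $(P_{2g},\cX^\natural_{2g})$ was precisely defined in $\mathsection$\ref{SubsubsectionEMSDPullOfTheCanonicalAmpleGmTorsorToUniversalVectorExtension} as the enlargement of $(P_{2g},\cX_{2g})$. Since the equivalence of Lemma~\ref{LemmaCategoricalComparisonBetweenEMSDandMSD} commutes with products (immediate from the fiber-product construction $\cX^\natural=\cX\times_{\varphi(\cD^\natural)}\cD^\natural$ used in its proof, which is functorial in $P$), applying the functor to the two morphisms above yields
\[
(P_*,\cX^\natural_*)\twoheadrightarrow(P,\cX^\natural) \quad\text{and}\quad (P_*,\cX^\natural_*)\hookrightarrow(T,\cY)\times(G_0,\cD_0)\times\prod_{i=1}^r(P_{2g},\cX^\natural_{2g}).
\]

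Finally, the conditions on $\lambda|_V$, $\lambda|_{U_*}$ and $\bar{\lambda}|_G$ are conditions on the underlying group homomorphism alone, and the categorical equivalence leaves the underlying groups untouched, so they transfer for free. The only step requiring any genuine verification is that the torsor structure on $(P_*,\cX_*)\to(P,\cX)$ lifts to one on the enlarged spaces; but since $(P_0,\cX_0)\to(\G_m,\cH_0)$ is a morphism to a pure Shimura datum, its enlargement equals itself, and the functoriality of the fiber-product construction $\cX\mapsto\cX^\natural$ in $P$ guarantees that the torsor relation
\[
\cX^\natural_*\cong \cX^\natural\times(P_0,\cX^\natural_0)\text{ modulo the relevant identifications}
\]
is preserved. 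This compatibility of the torsor structure with the enlargement is the one place I expect to have to check by hand, and it is handled precisely by the built-in functoriality of the categorical comparison; no further obstacle is expected.
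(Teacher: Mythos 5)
Your proposal is correct and follows essentially the same route as the paper's own proof: pass to the associated mixed Shimura datum via the Categorical Comparison Lemma~\ref{LemmaCategoricalComparisonBetweenEMSDandMSD}, invoke Pink~\cite[2.26]{PinkThesis} together with~\cite[Lemma~2.12]{GaoTowards-the-And}, and transport the result back. The extra verifications you flag (compatibility with products, preservation of the torsor structure, invariance of the group-theoretic conditions) are exactly what the paper leaves implicit in the phrase ``it suffices to take $(P_*,\cX^\natural_*)$ to be the associated enlarged datum,'' so your write-up is a more explicit version of the same argument rather than a different one.
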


\section{Enlarged mixed Shimura varieties}\label{SectionEnlargedMixedShimuraVarieties}
\subsection{Basic definition and complex space structure}
\begin{defn}\label{DefinitionEnlargedMixedShimuraVarieties}
\begin{enumerate}
\item
Let $(P,\cX^\natural)$ be an enlarged mixed Shimura datum and let $K$ be an open compact subgroup of $P(\A_f)$. Define the corresponding \textbf{enlarged mixed Shimura variety} as
\[
\mathrm{EM}_K(P,\cX^\natural)_{\C}:=P(\Q)\backslash\cX^\natural\times P(\A_f)/K,
\]
where $P(\Q)$ acts diagonally on both factors from the left. This enlarged mixed Shimura variety is called \textbf{of Kuga type} if $(P,\cX^\natural)$ is of Kuga type. We shall see that $\mathrm{EM}_K(P,\cX^\natural)_{\C}$ is a complex analytic variety (Proposition~\ref{PropositionEMSVConnectedDecomposition}).
\item Under the notation of (1) and $\mathsection$\ref{SubsectionNotationEMSD}, we say that the mixed Shimura variety
\[
\mathrm{M}_K(P,\cX)(\C)=P(\Q)\backslash\cX\times P(\A_f)/K
\]
is the \textbf{mixed Shimura variety associated with} $\mathrm{EM}_K(P,\cX^\natural)_{\C}$ and $\mathrm{EM}_K(P,\cX^\natural)_{\C}$ is the \textbf{enlarged mixed Shimura variety associated with} $\mathrm{M}_K(P,\cX)(\C)$. Here $\mathrm{M}_K(P,\cX)(\C)$ means the $\C$-points of the algebraic variety $\mathrm{M}_K(P,\cX)$ over the number field $E(P,\cX)$ on which it is defined (this is proven by Pink).
\end{enumerate}
\end{defn}

\begin{rmk}\label{RemarkEMSVConnectedDecomposition}
Let $\cX^{\natural+}$ be a connected component of $\cX^\natural$, and thus $\cX^{\natural+}$ is a $P(\R)^+W(\C)$-homogeneous space. Then as for mixed Shimura varieties, we have
\[
\mathrm{EM}_K(P,\cX^\natural)_{\C}=\bigcup_{[p]\in P(\Q)_+\backslash P(\A_f)/K}\Gamma(p)\backslash\cX^{\natural+}
\]
where $\Gamma(p):=P(\Q)_+\cap pKp^{-1}$.
\end{rmk}

\begin{prop}\label{PropositionEMSVConnectedDecomposition}
The action of $\Gamma(p)$ on $\cX^{\natural+}$ factors through a quotient $\bar{\Gamma(p)}$ which acts properly discontinuously on $\cX^{\natural+}$. There is a canonical structure of a normal complex space on $\mathrm{EM}_K(P,\cX^\natural)_{\C}$, whose singularities are quotient singularities by finite groups. Moreover $\mathrm{EM}_K(P,\cX^\natural)_{\C}$ is a complex manifold if $K$ is neat (\textit{cf.} \cite[0.6]{PinkThesis} for neatness).
\begin{proof}(Compare with \cite[3.3]{PinkThesis}) The image of $\Gamma(p)$ in $(P/P^{\der}Z(P))(\Q)$ is finite by Definition~\ref{DefinitionEnlargedMixedShimuraDatum}(5), so it suffices to prove the assertation for $\Gamma:=\Gamma(p)\cap(P^{\der}Z(P))(\Q)$. Note that $\Gamma=\Gamma(p)$ when $K$ is neat. Now $Z(P)(\R)$ acts trivially on $\cX^{\natural+}$, so we may replace $(P,\cX^\natural)$ by $(P,\cX^\natural)/Z(P)$. Then $\Gamma$ is an arithmetic subgroup of $P^{\der}(\Q)$, torsion free if $K$ is neat, and therefore is a discrete subgroup of $P^{\der}(\R)$. So $\Gamma$ acts properly discontinuously on $P_\infty:=P^{\der}(\R)^+W(\C)$. But $\cX^{\natural+}\cong P_\infty/\mathrm{Stab}_{P_\infty}(x)$ for any $x\in\cX^{\natural+}$ and $\mathrm{Stab}_{P_\infty}(x)$ is compact by Lemma~\ref{LemmaCentralizerOfh} and Definition~\ref{DefinitionEnlargedMixedShimuraDatum}(4). So $\Gamma$ acts properly discontinuously on $\cX^{\natural+}$. Note that $\Gamma\cap\mathrm{Stab}_{P_\infty}(x)$ is finite, and hence trivial if $K$ is neat. The complex structure of $\Gamma(p)\backslash\cX^{\natural+}$ comes from the $P(\R)W(\C)$-invariant complex structure on $\cX^\natural$, and $\Gamma(p)\backslash\cX^{\natural+}$ is a complex manifold when $K$ is neat.
\end{proof}
\end{prop}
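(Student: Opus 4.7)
The plan is to mimic the classical proof for mixed Shimura varieties (Pink, 3.3 of his thesis), adjusted to account for the fact that $\cX^{\natural+}$ is homogeneous under the larger group $P(\R)^+W(\C)$ rather than $P(\R)^+U(\C)$. The outline has three steps: reduce to an arithmetic subgroup of the derived group, exhibit $\cX^{\natural+}$ as a quotient of that group by a compact stabilizer to get proper discontinuity, and then transport the $P(\R)W(\C)$-invariant complex structure of $\cX^{\natural+}$ down to the double quotient.

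For the reduction, I observe that by condition (5) of Definition~\ref{DefinitionEnlargedMixedShimuraDatum} the quotient $P/P^{\der}Z(P)$ is (up to isogeny) an almost direct product of a $\Q$-split torus with a $\Q$-torus of compact type. A standard fact about arithmetic groups in such a quotient implies that the image of $\Gamma(p)$ in $(P/P^{\der}Z(P))(\Q)$ is finite; this is what makes the action factor through a quotient $\bar{\Gamma(p)}$. Since $Z(P)(\R)$ acts trivially on $\cX^{\natural+}$, I may replace $(P,\cX^\natural)$ by $(P,\cX^\natural)/Z(P)$ and reduce to proving proper discontinuity for $\Gamma := \Gamma(p)\cap(P^{\der}Z(P))(\Q)$, which is arithmetic in $P^{\der}(\Q)$, coincides with $\Gamma(p)$ in the neat case, and is torsion-free when $K$ is neat. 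In particular $\Gamma$ is discrete in $P^{\der}(\R)$, hence acts properly discontinuously on $P_\infty := P^{\der}(\R)^+W(\C)$.

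Next, fix $x\in\cX^{\natural+}$; transitivity of $P(\R)^+W(\C)$ on $\cX^{\natural+}$ (combined with triviality of the $Z(P)(\R)$-action) gives $\cX^{\natural+}\simeq P_\infty/\mathrm{Stab}_{P_\infty}(x)$. The crucial input is the Centralizer Lemma (Lemma~\ref{LemmaCentralizerOfh}): since $\mathrm{Stab}_{P_\infty}(x)=\mathrm{Cent}_{P_\infty}(h_x)$, the projection $\pi\colon P\to G$ induces an isomorphism $\mathrm{Stab}_{P_\infty}(x)\xrightarrow{\sim}\mathrm{Cent}_{G(\R)}(\pi\circ h_x)$, and condition (4) of Definition~\ref{DefinitionEnlargedMixedShimuraDatum} (Cartan involution on $G^{\ad}_\R$, no compact $\Q$-factors of $G^{\ad}$) forces this centralizer to be compact. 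Proper discontinuity of $\Gamma$ on $P_\infty$ together with compactness of $\mathrm{Stab}_{P_\infty}(x)$ yields proper discontinuity on the quotient $\cX^{\natural+}$, and the isotropy $\Gamma\cap\mathrm{Stab}_{P_\infty}(x)$ is discrete in a compact group, hence finite, and trivial if $K$ is neat. Finally, the $P(\R)W(\C)$-invariant complex structure on $\cX^\natural$ (coming from the identification $\cD^\natural\simeq W(\C)\times\varphi(\cD)_G$ in $\mathsection$\ref{SubsectionStructureOfUnderlyingSpace}) descends to $\Gamma(p)\backslash\cX^{\natural+}$; at each point one has a quotient of a complex manifold by a finite group of biholomorphisms, so by Cartan's theorem the quotient is a normal complex space whose singularities are (at worst) finite quotient singularities, and a complex manifold when $K$ is neat.

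The step I expect to be the main obstacle is showing that the stabilizer in the \emph{enlarged} group $P_\infty = P^{\der}(\R)^+W(\C)$ is compact; a priori one could worry that the extra $W(\C)/U(\C) = V(\C)$ factor contributes a non-compact piece to $\mathrm{Stab}_{P_\infty}(x)$. The Centralizer Lemma is precisely what eliminates this fear, by showing the stabilizer in $P_\infty$ maps isomorphically onto a centralizer in the reductive quotient $G(\R)$. After this point the remainder of the argument is routine bookkeeping.
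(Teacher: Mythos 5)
Your proposal follows the same route as the paper's proof: reduction via Definition~\ref{DefinitionEnlargedMixedShimuraDatum}(5) to an arithmetic subgroup of $P^{\der}(\Q)$ after killing $Z(P)$, identification $\cX^{\natural+}\cong P_\infty/\mathrm{Stab}_{P_\infty}(x)$ with compact stabilizer via Lemma~\ref{LemmaCentralizerOfh} and condition (4), and descent of the $P(\R)W(\C)$-invariant complex structure. The argument is correct, and your explicit remark that the centralizer lemma is exactly what rules out a non-compact contribution from the extra $V(\C)$ factor is the right point to emphasize.
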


\subsection{Algebraic structure and canonical model}
The goal of this section is to prove:
\begin{thm}\label{TheoremEMSVCanonicalModel}
Every enlarged mixed Shimura variety $\mathrm{EM}_K(P,\cX^\natural)_{\C}$ admits a structure of algebraic variety with the following properties:
\begin{enumerate}
\item It has a model $\mathrm{EM}_K(P,\cX^\natural)$ over $E(P,\cX):=$the reflex field of $(P,\cX)$\footnote{See \cite[11.1]{PinkThesis} for definition.}, called the \textbf{canonical model}, such that the map induced by the geometric comparison \eqref{EquationEnlargedMSDToMSD}
\[
[\pi^\sharp]\colon\mathrm{EM}_K(P,\cX^\natural)_{\C}\rightarrow\mathrm{M}_K(P,\varphi(\cD))(\C)
\]
descends to a map over the field $E(P,\cX)=E(P,\varphi(\cD))$
\[
\mathrm{EM}_K(P,\cX^\natural)\rightarrow\mathrm{M}_K(P,\varphi(\cD))
\]
where $\mathrm{M}_K(P,\varphi(\cD))$ is the canonical model of $\mathrm{M}_K(P,\varphi(\cD))_{\C}$.
\item Let $f\colon(P_1,\cX^\natural_1)\rightarrow(P_2,\cX^\natural_2)$ be a morphism of enlarged mixed Shimura data, and let $K_1\subset P_1(\A_f)$ and $K_2\subset P_2(\A_f)$ be open compact subgroups such that $f(K_1)\subset K_2$. Then the canonical map induced by $f$
\[
[f]\colon\mathrm{EM}_{K_1}(P_1,\cX^\natural_1)_{\C}\rightarrow\mathrm{EM}_{K_2}(P_2,\cX^\natural_2)_{\C},\quad[(x,p)]\mapsto[(f(x),f(p))]
\]
is algebraic.
\end{enumerate}
\end{thm}
In this subsection we only give the construction of the structure of algebraic variety (with property (1)). We will prove property (2) in the next subsection.

An important tool to prove this theorem is the geometric comparison considered in $\mathsection$\ref{SubsectionStructureOfUnderlyingSpace}.

\subsubsection{Universal vector extension of the universal abelian variety}\label{SubSubsectionEMSVUniversalVectorExtensionOfUniversalAbelianVariety}
Let $(P_{2g,\mathrm{a}},\cX^\natural_{2g,\mathrm{a}})$ and $(P_{2g,\mathrm{a}},\cX_{2g,\mathrm{a}})$ as in $\mathsection$\ref{SubsubsectionEMSDUniversalVectorExtension}. Let $N\ge  3$ be an integer. Define
\[
K_{\GSp}(N):=\{h\in\GSp_{2g}(\Z)|h\equiv1\bmod N\}\text{ and }K_{2g,\mathrm{a}}(N):=V_{2g}(\Z)\rtimes K_{\GSp}(N).
\]
Consider $\mathrm{EM}_{K_{2g,\mathrm{a}}(N)}(P_{2g,\mathrm{a}},\cX^\natural_{2g,\mathrm{a}})_{\C}$ and the geometric comparison \eqref{EquationPiSharpForUniversalAbelianVariety}. Denote by $\cA_g(N):=\Sh_{K_{\GSp}(N)}(\GSp_{2g},\cH_g)$ and $\mathfrak{A}_g(N):=M_{K_{2g,\mathrm{a}}(N)}(P_{2g,\mathrm{a}},\varphi(\cX_{2g,\mathrm{a}}))$.

Pink proved \cite[10.10]{PinkThesis} that $\mathrm{M}_{K_{2g,\mathrm{a}}(N)}(P_{2g,\mathrm{a}},\varphi(\cX_{2g,\mathrm{a}}))\cong\mathfrak{A}_g(N)$ is the universal abelian variety over the fine moduli space $\Sh_{K_{\GSp}(N)}(\GSp_{2g},\varphi(\cH_g))\cong\cA_g(N)$ and therefore is an algebraic variety. To better illustrate the moduli interpretation of $\mathrm{EM}_{K_{2g,\mathrm{a}}(N)}(P_{2g,\mathrm{a}},\cX^\natural_{2g,\mathrm{a}})_{\C}$, we consider its connected components given by Remark~\ref{RemarkEMSVConnectedDecomposition}.

Denote by $\cV_{2g}^+:=V_{2g}(\C)\times\cH_g^+$. Let $\Gamma_{\GSp}(N):=\Sp_{2g}(\Z)\cap K_{\GSp}(N)$ and
\[
\Gamma_{2g,\mathrm{a}}(N):=P_{2g,\mathrm{a}}^{\der}(\Z)\cap K_{2g,\mathrm{a}}(N)=V_{2g}(\Z)\rtimes\Gamma_{\GSp}(N). 
\]
Denote by $\mathfrak{A}_g(N)^+$ (resp. $\cA_g(N)^+$) the connected component $\Gamma_{2g,\mathrm{a}}(N)\backslash\varphi(\cX_{2g,\mathrm{a}}^+)$ (resp. $\Gamma_{\GSp}(N)\backslash\varphi(\cH_g^+)$) of $\mathfrak{A}_g(N)$ (resp. $\cA_g(N)$). From the geometric comparison \eqref{EquationPiSharpForUniversalAbelianVariety} we have
\begin{align*}
& 0\rightarrow\cF^0\cV_{2g}^+\rightarrow\cX_{2g,\mathrm{a}}^{\natural+}=\cV_{2g}^+\rightarrow\varphi(\cX_{2g,\mathrm{a}}^+)=\cV_{2g}^+/\cF^0\cV_{2g}^+\rightarrow 0\text{ as vector bundles over }\cH_g^+ \\
\Rightarrow & 0\rightarrow\Gamma_{\GSp}(N)\backslash\cF^0\cV_{2g}^+\rightarrow\Gamma_{\GSp}(N)\backslash\cV_{2g}^+\rightarrow\Gamma_{\GSp}(N)\backslash\varphi(\cX_{2g,\mathrm{a}}^+)\rightarrow 0 \text{ as vector bundles\footnotemark over }\cA_g(N)^+ \\
\Rightarrow & 0\rightarrow\Gamma_{\GSp}(N)\backslash\cF^0\cV_{2g}^+\rightarrow V_{2g}(\Z)\backslash\left(\Gamma_{\GSp}(N)\backslash\cV_{2g}^+\right)\rightarrow V_{2g}(\Z)\backslash\left(\Gamma_{\GSp}(N)\backslash\varphi(\cX_{2g,\mathrm{a}}^+)\right)\rightarrow 0, \\
&\quad\quad\quad\quad\quad\text{\textit{i.e.}  }0\rightarrow\Gamma_{\GSp}(N)\backslash\cF^0\cV_{2g}^+\rightarrow\Gamma_{2g,\mathrm{a}}(N)\backslash\cV_{2g}^+\rightarrow\mathfrak{A}_g(N)^+\rightarrow 0 \text{ as Lie groups over }\cA_g(N)^+ \\
\Rightarrow & 0\rightarrow\omega_{\mathfrak{A}_g(N)^{+\vee}/\cA_g(N)^+}\rightarrow\Gamma_{2g,\mathrm{a}}(N)\backslash\cV_{2g}^+\rightarrow\mathfrak{A}_g(N)^+\rightarrow 0\text{ as Lie groups over }\cA_g(N)^+
\end{align*}
\footnotetext{In fact these vector bundles are automorphic bundles over $\cA_g(N)^+$. This is because the variation of pure Hodge structures $(\cV_{2g}^+,\cF^0)$ over $\cH_g^+$ extends naturally to $\cH_g^\vee$: the fibers over $\cH_g^+$ define polarizable Hodge structures of weight $-1$, whereas a general fiber over $\cH_g^\vee$ defines Hodge structure of weight $-1$ not necessarily polarizable.}
Here the last implication is the canonical isomorphism $\omega_{\mathfrak{A}_g(N)^{+\vee}/\cA_g(N)^+}\cong\Gamma_{\GSp}(N)\backslash\cF^0\cV_{2g}^+$ implied by the definition of $\cF^0\cV_{2g}^+$, and $\mathfrak{A}_g(N)^{\natural+}:=\Gamma(N)\backslash\cV_{2g}^+$ is the universal vector extension of the universal abelian variety $\mathfrak{A}_g(N)^+$ over $\cA_g(N)^+$. In particular $\mathfrak{A}_g(N)^{\natural+}$ is an algebraic variety and the maps in the last line above are algebraic.

The above discussion holds for any $pK_{2g,\mathrm{a}}(N)p^{-1}\cap P_{2g,\mathrm{a}}(\Q)_+$. So by Remark~\ref{RemarkEMSVConnectedDecomposition} $\mathfrak{A}_g(N)^\natural:=\mathrm{EM}_{K_{2g,\mathrm{a}}(N)}(P_{2g,\mathrm{a}},\cX^\natural_{2g,\mathrm{a}})_{\C}$ is the universal vector extension of the universal abelian variety $\mathfrak{A}_g(N)$ over the fine moduli space $\cA_g(N)$, and in particular is an algebraic variety. But $\mathfrak{A}_g(N)$ has a canonical model over $\Q$, so $\mathfrak{A}_g(N)^\natural$ has a model over $\Q$ and the projection $\mathfrak{A}_g(N)^\natural\rightarrow\mathfrak{A}_g(N)$ is defined over $\Q$.

\subsubsection{General Case}\label{SubsubsectionEMSVAlgebraicityOverNFGeneralCase}
It suffices to consider the case $\dim V=2g>0$ since otherwise $\mathrm{EM}_K(P,\cX^\natural)_{\C}$ is a mixed Shimura variety and the result is known by Pink \cite{PinkThesis}. By Proposition~\ref{PropositionEMSDWithGenericMTGroupAlwaysExists}(1) and Remark~\ref{RemarkEMSVConnectedDecomposition}, we may assume that $(P,\cX^\natural)$ has generic Mumford-Tate group. By Lemma~\ref{LemmaFromXnaturalToDnatural} we may assume that $\cX^\natural\subset\hom(\S_\C,P_{\C})$. Denote by $K_{P/U}:=\pi_{P/U}(K)<P/U$, $K_W:=K\cap W(\A_f)$ and $K_V:=K_W/(K\cap U(\A_f))$. Up to replacing $K$ be a subgroup of finite index we may assume $K=K_W\rtimes K_G$ and $K_V\subset K_{2g,\mathrm{a}}(N)$ for some $N\ge  4$ even.

We start with $\mathrm{EM}_{K_{P/U}}(P/U,\cX^\natural_{P/U})_{\C}$. Use the notation of $\mathsection$\ref{SubsectionNotationEMSD} (note that we are in the case $\cX^\natural=\cD^\natural$). Apply Theorem~\ref{ReductionTheorem} to $(P/U,\cX^\natural_{P/U})$ and we get an inclusion
\[
(P/U,\cX^\natural_{P/U})\hookrightarrow(T,\cY)\times(G_0,\cD_0)\times(P_{2g,\mathrm{a}},\cX^\natural_{2g,\mathrm{a}})
\]
under which $V \cong V_{2g}$. If we by abuse of notation denote by $\mathrm{Sh}_{K_G}(G,\varphi(\cX)_G)$ its image in $\Sh_{K_T}(T,\cY)(\C)\times\Sh_{K_{G_0}}(G_0,\cD_0)(\C)\times\mathcal{A}_g(N)$, then we have that $\mathrm{EM}_{K_{P/U}}(P/U,\cX^\natural_{P/U})_{\C}$ is the restriction of
\[
\Sh_{K_T}(T,\cY)(\C)\times\Sh_{K_{G_0}}(G_0,\cD_0)(\C)\times\mathfrak{A}_g(N)^\natural \rightarrow \Sh_{K_T}(T,\cY)(\C)\times\Sh_{K_{G_0}}(G_0,\cD_0)(\C)\times \cA_g(N)
\]
to $\mathrm{Sh}_{K_G}(G,\varphi(\cX)_G)$ because $V \cong V_{2g}$. In particular we have the following Cartesian diagram in the category of complex varieties, where all objects and morphisms are algebraic except those concerning $\mathrm{EM}_{K_{P/U}}(P/U,\cX^\natural_{P/U})_{\C}$:
\[
\xymatrix{
\mathrm{EM}_{K_{P/U}}(P/U,\cX^\natural_{P/U})_{\C} \ar[r] \ar[d]_{[\pi_{P/U}^\sharp]} \bigpullback& \Sh_{K_T}(T,\cY)(\C)\times\Sh_{K_{G_0}}(G_0,\cD_0)(\C)\times\mathfrak{A}_g(N)^\natural \ar[d] \\
\mathrm{M}_{K_{P/U}}(P/U,\varphi(\cX)_{P/U})(\C) \ar[r] & \Sh_{K_T}(T,\cY)(\C)\times\Sh_{K_{G_0}}(G_0,\cD_0)(\C)\times\mathfrak{A}_g(N) \\
}.
\]
But $\mathfrak{A}_g(N)^\natural$ is the universal vector extension of the universal abelian scheme $\mathfrak{A}_g(N)$ over $\cA_g(N)$, so $\mathrm{EM}_{K_{P/U}}(P/U,\cX^\natural_{P/U})_{\C}$ is the universal vector extension of the abelian scheme $\mathrm{M}_{K_{P/U}}(P/U,\varphi(\cX)_{P/U})(\C)\rightarrow\Sh_{K_G}(G,\varphi(\cX)_G)(\C)$. So Theorem~\ref{TheoremEMSVCanonicalModel} holds for $\mathrm{EM}_{K_{P/U}}(P/U,\cX^\natural_{P/U})_{\C}$.

Now we turn to $\mathrm{EM}_K(P,\cX^\natural)(\C)$. The Cartesian diagram in \eqref{EquationCartesianDiagramUniformizingSpaceProjectToMSD} induces a Cartesian diagram in the category of complex varieties, where all objects and morphism are algebraic over $E(P,\cX)$ except those concerning $\mathrm{EM}_K(P,\cX^\natural)_{\C}$:
\begin{equation}\label{EquationPullBackDiagramEMSV}
\xymatrix{
\mathrm{EM}_K(P,\cX^\natural)_{\C} \ar[r] \ar[d] \bigpullback & \mathrm{EM}_{K_{P/U}}(P/U,\cX^\natural_{P/U})(\C)   \ar[d] \\
\mathrm{M}_K(P,\varphi(\cX))(\C) \ar[r] & \mathrm{M}_{K_{P/U}}(P/U,\varphi(\cX)_{P/U})(\C)
}
\end{equation}
But $\mathrm{M}_K(P,\varphi(\cX))\rightarrow  \mathrm{M}_{K_{P/U}}(P/U,\varphi(\cX)_{P/U})$ is a $T$-torsor for some algebraic group $T$ over $E(P,\cX)$ by Pink \cite[3.12, 3.18]{PinkThesis}. So Theorem~\ref{TheoremEMSVCanonicalModel} holds for $\mathrm{EM}_K(P,\cX^\natural)_{\C}$.

\begin{eg}\label{ExampleEMSV}
\begin{enumerate}
\item  Let $(P_{2g},\cX^\natural_{2g})$ be as in $\mathsection$\ref{SubsubsectionEMSDPullOfTheCanonicalAmpleGmTorsorToUniversalVectorExtension}. Keep the notation of $\mathsection$\ref{SubSubsectionEMSVUniversalVectorExtensionOfUniversalAbelianVariety}. Let $N\ge  4$ be an even integer. Define $K_{2g}(N):=W_{2g}(\Z)\rtimes K_{\GSp}(N)$. Denote by $\mathfrak{L}_g(N):=\mathrm{M}_{K_{2g}(N)}(P_{2g},\cX_{2g})$. Pink \cite[10.10]{PinkThesis} proved that $\mathfrak{L}_g(N)$ is (the total space of) the canonical symmetric relatively ample $\G_m$-torsor over the universal abelian variety $\mathfrak{A}_g(N)\rightarrow\cA_g(N)$. The proof of Theorem~\ref{TheoremEMSVCanonicalModel} shows that $\mathrm{EM}_K(P_{2g},\cX_{2g})$, which we denote by $\mathfrak{L}_g^\natural(N)$, is defined over $\Q$ and is the pullback of the $\G_m$-torsor $\mathfrak{L}_g(N)$ over the universal abelian variety $\mathfrak{A}_g(N)\rightarrow\cA_g(N)$ to the universal vector extension $\mathfrak{A}_g(N)^{\natural}$ of $\mathfrak{A}_g(N)$.
\item  Let $(P_{2g,\mathrm{b}},\cX_{2g,\mathrm{b}})$ be as in $\mathsection$\ref{SubsubsectionEMSDPullOfThePoincareBiextensionToUniversalVectorExtension}. Keep the notation of $\mathsection$\ref{SubSubsectionEMSVUniversalVectorExtensionOfUniversalAbelianVariety}. Let $N\ge  4$ be an even integer. Define $
K_{2g,\mathrm{b}}(N):=W_{2g,\mathrm{b}}(\Z)\rtimes K_{\GSp}(N)$. Pink proved in \cite[Remark~2.13]{PinkA-Combination-o} that the mixed Shimura variety $\mathfrak{P}_g(N):=\mathrm{M}_{K_{2g,\mathrm{b}}(N)}(P_{2g,\mathrm{b}},\cX_{2g,\mathrm{b}})$ is (the total space of) the Poincar\'{e} biextension over $\mathfrak{A}_g(N)\times_{\cA_g(N)}\mathfrak{A}_g(N)^\vee$. The proof of Theorem~\ref{TheoremEMSVCanonicalModel} shows that $\mathrm{EM}_{K_{2g,\mathrm{b}}(N)}(P_{2g,\mathrm{b}},\cX^\natural_{2g,\mathrm{b}})$, which we denote by $\mathfrak{P}_g^\natural(N)$, is defined over $\Q$ and is the pullback the Poincar\'{e} biextension $\mathfrak{P}_g(N)$ to $\mathfrak{A}_g(N)^\natural\times_{\cA_g(N)}\mathfrak{A}_g(N)^{\vee\natural}$. This is the ``universal vectorial bi-extension'' considered by Coleman in \cite{ColemanThe-universal-v}.
\end{enumerate}
\end{eg}

\subsection{Morphisms between enlarged mixed Shimura varieties}
\begin{prop}\label{PropositionMorphismsBetweenEMSV}
Let $f\colon(P_1,\cX^\natural_1)\rightarrow(P_2,\cX^\natural_2)$ be a morphism of enlarged mixed Shimura data, and let $K_1\subset P_1(\A_f)$ and $K_2\subset P_2(\A_f)$ be open compact subgroups such that $f(K_1)\subset K_2$. Then we have:
\begin{enumerate}
\item The canonical map induced by $f$
\[
[f]\colon\mathrm{EM}_{K_1}(P_1,\cX^\natural_1)_{\C}\rightarrow\mathrm{EM}_{K_2}(P_2,\cX^\natural_2)_{\C},\quad[(x,p)]\mapsto[(f(x),f(p))]
\]
is algebraic, closed and descends to $E(P_1,\cX_1)$.
\item If furthermore $f$ is injective both on the underlying groups and the underlying spaces such that $K_1=f^{-1}(K_2)$, then $[f]$ is finite.
\item If $f$ is injective both on the underlying groups and the underlying spaces, then for every $K_1$ there exists a $K_2$ such that $[f]$ is a closed embedding.
\end{enumerate}
\begin{proof} This proposition can be proven using the geometric interpretations in $\mathsection$\ref{SubsubsectionEMSVAlgebraicityOverNFGeneralCase}. By Proposition~\ref{PropositionEMSDWithGenericMTGroupAlwaysExists}(1) and Remark~\ref{RemarkEMSVConnectedDecomposition}, we may assume that $(P_i,\cX^\natural_i)$ has generic Mumford-Tate group for $i=1,2$. By Lemma~\ref{LemmaFromXnaturalToDnatural} we may assume that $\cX^\natural_i\subset\hom(\S_\C,P_{i,\C})$ for $i=1,2$. By Pink \cite[3.8,~9.24]{PinkThesis} the proposition is true for
\[
[\bar{f}] \colon \mathrm{M}_{K_1}(P_1,\varphi_1(\cX_1))\rightarrow\mathrm{M}_{K_2}(P_2,\varphi_2(\cX_2))
\]
and
\[
[\bar{f}_{P/U}] \colon \mathrm{M}_{K_{1,P/U}}(P_1/U_1,\varphi_1(\cX_1)_{P/U})\rightarrow\mathrm{M}_{K_{2,P/U}}(P_2/U_2,\varphi_2(\cX_2)_{P/U})
\]
Hence by \eqref{EquationPullBackDiagramEMSV} it suffices to prove the proposition for
\[
[f_{P/U}]\colon\mathrm{EM}_{K_{1,P/U}}(P_1/U_1,\cX^\natural_{1,P/U})_{\C}\rightarrow\mathrm{EM}_{K_{2,P/U}}(P_2/U_2,\cX^\natural_{2,P/U})_{\C}.
\]

By \eqref{EqMorEMSVUnivSpace} $\im([f_{P/U}])$ is a group subscheme of $\mathrm{EM}_{K_{2,P/U}}(P_2/U_2,\cX^\natural_{2,P/U})_{\C}|_{\im([f_G])} \rightarrow \im([f_G])$ translated by a torsion section. Since $\mathrm{EM}_{K_{i,P/U}}(P_i/U_i,\cX^\natural_{i,P/U})$ is the universal vector extension of the abelian scheme $\mathrm{M}_{K_{i,P/U}}(P_i/U_i,\varphi(\cX)_{i,P/U}) \rightarrow \Sh_{K_{i,G}}(G_i,\varphi(\cX_i)_{G_i})$, the morphism
\[
\mathrm{EM}_{K_{2,P/U}}(P_2/U_2,\cX^\natural_{2,P/U})_{\C}|_{\im([f_G])} \rightarrow \mathrm{M}_{K_{2,P/U}}(P_2/U_2,\varphi_2(\cX_2)_{P/U})_{\C}|_{\im([f_G])}
\]
gives a bijection on the torsion sections, we conclude that: $[f_{P/U}]$ is the extension of $[\bar{f}_{P/U}]$ by the natural morphism $\omega_1 \rightarrow \omega_2$ induced by $[\bar{f}_{P/U}]$, with $\omega_i:=\omega_{\mathrm{M}_{K_{i,P/U}}(P_i/U_i,\varphi(\cX)_{i,P/U})^\vee / \Sh_{K_{i,G}}(G_i,\varphi(\cX_i)_{G_i})}$. Hence we are done.
\end{proof}
\end{prop}

\section{Bi-algebraic setting for enlarged mixed Shimura varieties}\label{SectionBiAlgebraicSettingForEnlargedMixedShimuraVarieties}

From this section, we will only consider connected enlarged mixed Shimura data and connected enlarged mixed Shimura varieties.

\begin{defn}
\begin{enumerate}
\item A \textbf{connected enlarged mixed Shimura datum} is a triple $(P,\cX^{\natural+},h)$ such that $(P,\cX^{\natural+},h)\subset(P,\cX^\natural,h)$ for some enlarged mixed Shimura datum $(P,\cX^\natural,h)$ with $h$ injective and that $\cX^{\natural+}$ is a left $P(\R)^+W(\C)$-homogeneous space. It is said to \textbf{have generic Mumford-Tate group} if $(P,\cX^\natural,h)$ has generic Mumford-Tate group.
\item A \textbf{connected enlarged mixed Shimura variety} $S^\natural$ associated with the connected enlarged mixed Shimura datum $(P,\cX^{\natural+},h)$ is a quotient $\Gamma\backslash\cX^{\natural+}$ for some congruence subgroup $\Gamma$ of $P(\Q)_+:=P(\Q)\cap P(\R)_+$, where $P(\R)_+$ is the stabilizer in $P(\R)$ of $h(\cX^{\natural+})\subset\hom(\S_\C,P_\C)$.
\end{enumerate}
\end{defn}

\begin{rmk}\label{RemarkConnectedEMSDV}
\begin{enumerate}
\item For any enlarged mixed Shimura datum $(P,\cX^\natural,h)$, let $\cX^{\natural+}$ be a connected component of $\cX^\natural$. Then $(P,\cX^{\natural+},h)$ is a connected enlarged mixed Shimura datum. Conversely every connected enlarged mixed Shimura datum arises in this way.
\item By Remark~\ref{RemarkEMSVConnectedDecomposition}, connected enlarged mixed Shimura varieties are precisely connected components of enlarged mixed Shimura varieties. So by Theorem~\ref{TheoremEMSVCanonicalModel} and Proposition~\ref{PropositionMorphismsBetweenEMSV}, any connected enlarged mixed Shimura variety is algebraic and has a model over $\bar{\Q}$ and any Shimura morphism between connected enlarged mixed Shimura varieties is algebraic and descends to $\bar{\Q}$.
\item In practice we often consider the case $Z(P)=1$. This can be achieved by replacing $(P,\cX^{\natural+})$ by $(P,\cX^{\natural+})/Z(P)$. Under this hypothesis every neat congruence subgroup is contained in $P^{\der}(\Q)$. Conversely fix a Levi decomposition $P=W\rtimes G$, then $P^\der=W\rtimes G^\der$. Hence any congruence subgroup $\Gamma<P^\der(\Q)$ is Zariski dense in $P^\der$ by Definition~\ref{DefinitionEnlargedMixedShimuraDatum}(4) and \cite[Theorem~4.10]{AlgebraicGroupBible}.
\end{enumerate}
\end{rmk}

We are interested in the uniformization $\unif^\natural\colon\cX^{\natural+}\rightarrow S^\natural=\Gamma\backslash\cX^{\natural+}$. Use the notation of $\mathsection$\ref{SubsectionStructureOfUnderlyingSpace} (note that $\cD^{\natural+}\cong\cX^{\natural+}$ and $\cD^+\cong\cX^+$ and so we do not distinguish $\cX$ and $\cD$). Let $\Gamma_{P/U}:=\pi_{P/U}(\Gamma)$ and let $\Gamma_G:=\pi(\Gamma)$. We use indices to distinguish different uniformizations:
\begin{align*}
\cX^{\natural+}_{P/U}\xrightarrow{\unif^\natural_{P/U}}S^\natural_{P/U}:=\Gamma_{P/U}\backslash\cX^{\natural+}_{P/U} & &
\varphi(\cX^+)\xrightarrow{\unif}S:=\Gamma\backslash\varphi(\cX^+) \\
\varphi(\cX^+)_{P/U}\xrightarrow{\unif_{P/U}}S_{P/U}:=\Gamma_{P/U}\backslash\varphi(\cX^+)_{P/U} & & \varphi(\cX^+)_G\xrightarrow{\unif_G}S_G:=\Gamma_G\backslash\varphi(\cX^+)_G
\end{align*}
\begin{notation}\label{NotationSubsetsOfEMSD}
For any subset $\tilde{Y}^\natural$ of $\cX^{\natural+}$, the diagram in $\mathsection$\ref{SubsectionNotationEMSD} gives rise to
\[
\xymatrix{
    \cX^{\natural+} \ar[r]^{\pi^\sharp} \ar[d]_{\pi_{P/U}^\natural} \bigpullback \ar@/_5pc/[dd]^{\pi^\natural} & \varphi(\cX^+)  \ar[d]^{\pi_{P/U}} \ar@/^5pc/[dd]_{\pi} & & \tilde{Y}^\natural \ar@{|->}[r] \ar@{|->}[d] & \tilde{Y} \ar@{|->}[d] \\
    \cX^{\natural+}_{P/U} \ar[r]^{\pi_{P/U}^\sharp} \ar[d]_{\pi_G^\natural} & \varphi(\cX^+)_{P/U} \ar[d]^{\pi_G} & & \tilde{Y}^\natural_{P/U} \ar@{|->}[r] \ar@{|->}[d] & \tilde{Y}_{P/U} \ar@{|->}[d]\\
   \varphi(\cX^+)_G \ar[r]^{\mathrm{id}} & \varphi(\cX^+)_G & & \tilde{Y}_G \ar@{=}[r] & \tilde{Y}_G
  }
\]
For any subset $Y^\natural$ of $S^\natural$, the diagram in $\mathsection$\ref{SubsectionNotationEMSD} gives rise to
\[
\xymatrix{
    S^\natural \ar[r]^{[\pi^\sharp]} \ar[d]_{[\pi_{P/U}^\natural]} \bigpullback \ar@/_5pc/[dd]^{[\pi^\natural]} & S  \ar[d]^{[\pi_{P/U}]} \ar@/^5pc/[dd]_{[\pi]} & & Y^\natural \ar@{|->}[r] \ar@{|->}[d] & Y \ar@{|->}[d] \\
    S^\natural_{P/U} \ar[r]^{[\pi_{P/U}^\sharp]} \ar[d]_{[\pi_G^\natural]} & S_{P/U} \ar[d]^{[\pi_G]} & & Y^\natural_{P/U} \ar@{|->}[r] \ar@{|->}[d] & Y_{P/U} \ar@{|->}[d]\\
   S_G \ar[r]^{\mathrm{id}} & S_G & & Y_G \ar@{=}[r] & Y_G
  }
\]
We emphasize that the diagrams on the right are purely set theoretic.
\end{notation}

\subsection{Algebraic structure on $\cX^{\natural+}$ and bi-algebraic subsets}\label{SubsectionAlgebraicStructureOnXnatural}
Recall that the complex structure on $\cX^{\natural+}$ is given by $\cX^{\natural+}\cong W(\C)\times\varphi(\cX^+)_G$ and $\varphi(\cX^+)_G\hookrightarrow\cX_G^\vee=G(\C)/F_{x_G}G_\C$ (for any $x_G\in\cX_G$) is a complex algebraic variety and $\varphi(\cX^+)_G$ is a semi-algebraic subset of $\cX_G^\vee$, open in the usual topology.

\begin{defn}\label{DefinitionAlgebraicOnTheTop}
Let $\tilde{Y}^\natural$ be a complex analytic subset of $\cX^{\natural+}$, then
\begin{enumerate}
\item $\tilde{Y}^\natural$ is called an \textbf{irreducible algebraic subset} of $\cX^{\natural+}$ if $\tilde{Y}^\natural$ is a complex analytic irreducible component of the intersection of its Zariski closure in $W(\C)\times\cX_G^\vee$ with $\cX^{\natural+}$.
\item $\tilde{Y}^\natural$ is called \textbf{algebraic} if it is a finite union of irreducible algebraic subsets of $\cX^{\natural+}$.
\end{enumerate}
\end{defn}

The inclusion $\varphi(\cX^+)\cong\cX^+\hookrightarrow\cX^\vee$ defines an algebraic structure on $\varphi(\cX^+)$ (see \cite[$\mathsection$6]{GaoTowards-the-And}). All maps in the diagrams in Notation \ref{NotationSubsetsOfEMSD} are algebraic. To see this, it suffices to prove the algebraicity of $\pi^\sharp\colon\cX^{\natural+}\rightarrow\varphi(\cX^+)$, which fits into the commutative diagram
\[
\xymatrix{
\cX^{\natural+}=P(\R)^+W(\C)/\mathrm{Cent}(h_x)\cong W(\C)\times\varphi(\cX^+)_G \ar@{^{(}->}[r] \ar[d]^{\pi^\sharp} & P(\C)/F^0_{x_G}G_\C\cong W(\C)\times\cX^\vee_G \ar[d] \\
\varphi(\cX^+)=P(\R)^+W(\C)/\mathrm{Stab}_{P(\R)^+W(\C)}(\varphi(h_x)) \ar@{^{(}->}[r] & \cX^\vee=P(\C)/F^0_{x_G}P_\C
}
\]
and hence is algebraic.

Now we are in the following situation: under $\unif^\natural\colon\cX^{\natural+}\rightarrow S^\natural$, the two spaces $\cX^{\natural+}$ and $S^\natural$ are algebraic but the map $\unif^\natural$ is transcendental. We want to understand the bi-algebraic objects in this context.

\begin{defn}
\begin{enumerate}
\item A subset $\tilde{Y}^\natural$ is called \textbf{bi-algebraic} if $\tilde{Y}^\natural$ is an irreducible algebraic subset of $\cX^{\natural+}$ and $Y^\natural:=\unif^\natural(\tilde{Y}^\natural)$ is a closed algebraic subvariety of $S^\natural$.
\item A closed subvariety $Y^\natural$ of $S^\natural$ is said to be \textbf{bi-algebraic} if it is the image of some bi-algebraic subset of $\cX^{\natural+}$ under $\unif^\natural$.
\end{enumerate}
\end{defn}

We will give a more concrete description of be-algebraic subvarieties. In order to do this we introduce ``quasi-linear subvarieties''.

\subsection{Quasi-linear subvarieties and characterization of bi-algebraic subvarieties}
Let $\tilde{Y}^\natural$ be a subset of $\cX^{\natural+}$. We use Notation~\ref{NotationSubsetsOfEMSD}. Let $(Q,\cY^+)$ be the smallest connected mixed Shimura subdatum of $(P,\varphi(\cX^+))$ such that $\tilde{Y}_{P/U}\subset\cY^+_{Q/U_Q}$. Assume that $\tilde{Y}_{P/U}$ is weakly special, namely $\tilde{Y}_{P/U}=(N/U_N)(\R)^+\tilde{y}_{P/U}$ for some normal subgroup $N$ of $Q$ (with $U_N:=U\cap N$) and a point $\tilde{y}_{P/U}\in\tilde{Y}_{P/U}$. By \eqref{EquationKugaTypeUniformizingSpace} we have for $(P/U,\varphi(\cX^+)_{P/U})$
\begin{equation}\label{EquationBiAlgebraicModificationPartOriginalEquation}
0\rightarrow\cF^0\cV\rightarrow\cX^{\natural+}_{P/U}=\cV\xrightarrow{\pi^\sharp_{P/U}}\varphi(\cX^+)_{P/U}=\cV/\cF^0\cV\rightarrow 0.
\end{equation}

Let us introduce the following notation. For any $G_{Q,R}$-submodule $V'$ of $V_{R}$ (with $R = \Q,\R$), denote by $\cF^0\cV'$ the intersection $\cF^0\cV \cap (V'(\C) \times \varphi(\cY^+)_G)$. It is a holomorphic bundle over $\varphi(\cY^+)_G$.

Let $V^{(0)}$ be the largest $G_{Q,\R}$-submodule of $V_\R$ on which $G_N(\R)^+$ acts trivially. Then we have a unique inclusion
\begin{equation}\label{EquationDecompositionXNaturalIntoProductOfVectorExtension}
\tilde{Y}_{P/U}^{\mathrm{univ}}\times_{\tilde{Y}_G} \cF^0\cV^{(0)}|_{\tilde{Y}_G} \subset\cX^{\natural+}_{P/U}|_{\tilde{Y}_{P/U}}:=\pi_{P/U}^{\sharp-1}(\tilde{Y}_{P/U})
\end{equation}
where $\tilde{Y}_{P/U}^{\mathrm{univ}}$ is defined by (compatible with \eqref{EquationBiAlgebraicModificationPartOriginalEquation})
\[
0\rightarrow\cF^0\cV_N|_{\tilde{Y}_G}\rightarrow\tilde{Y}_{P/U}^{\mathrm{univ}}\rightarrow\tilde{Y}_{P/U}\rightarrow0.
\]

\begin{defn}\label{DefinitionQuasiLinear}
\begin{enumerate}
\item A subset $\tilde{Y}^\natural$ of $\cX^{\natural+}$ is called \textbf{quasi-linear} if $\tilde{Y}$ is weakly special and
\[
\tilde{Y}^\natural=\tilde{Y}\times_{\tilde{Y}_{P/U}}\left(\tilde{Y}_{P/U}^{\mathrm{univ}}\times_{\tilde{Y}_G}(\tilde{L}^\natural\times\tilde{Y}_G)\times_{\tilde{Y}_G} G_N(\R)^+\tilde{K}^\natural \right)
\]
where $\tilde{L}^\natural$ is an irreducible algebraic subvariety of some fiber of $\cF^0\cV^{(0)}|_{\tilde{Y}_G} \rightarrow\tilde{Y}_G$ (which is isomorphic to $\C^k$ for some $k$), and $\tilde{K}^\natural$ is an irreducible subvariety of some fiber of $\cF^0\cV_N|_{\tilde{Y}_G} \rightarrow \tilde{Y}_G$.
\item A subset $Y^\natural$ of $S^\natural$ is said to be \textbf{quasi-linear} if it is the image of some quasi-linear subset of $\cX^{\natural+}$ under $\unif^\natural$.
\end{enumerate}
\end{defn}

It is clear that we can take furthermore assume $\tilde{K}^\natural$ to satisfy the following property: $\tilde{K}^\natural \subset \cF^0\cV^\dagger|_{\tilde{Y}_G}$ for some $G_{Q,\R}$-submodule $V^\dagger$ of $V_{\R}$ with $V^\dagger \cap V^{(0)} = 0$. Then $G_N(\R)^+\tilde{K}^\natural \cap \cF^0\cV^{(0)}$ is contained in the zero section of $\cF^0\cV_N|_{\tilde{Y}_G} \rightarrow \tilde{Y}_G$.

Before moving on, let us point out the following fact. If $\tilde{K}^\natural$ is contained in the fiber of $\tilde{y}_G \in \tilde{Y}_G$, then
\begin{equation}\label{EqAutomorphicSubvarietyFiberStableCompact}
\left( G_N(\R)^+\tilde{K}^\natural \right)_{\tilde{y}_G} = \mathrm{Stab}_{G_N(\R)^+}(\tilde{y}_G)\tilde{K}^\natural,
\end{equation}
which may be larger than $\tilde{K}^\natural$.

Before moving on, let us point out that any quasi-linear subset $\tilde{Y}^\natural$ of $\cX^{\natural+}$ is algebraic. To see this, it suffices to show that $G_N(\R)^+\tilde{K}^\natural$ is algebraic. This is true since, by theory of automorphic vector bundles, $G_N(\R)^+\tilde{K}^\natural = G_N(\C)\tilde{K}^\natural \cap \cF^0\cV_N|_{\tilde{Y}_G}$.

We are now ready to give the characterization of bi-algebraic subvarieties.

\begin{thm}\label{TheoremCharacterizationOfBiAlgebraicSubvarieties}
A subset $\tilde{Y}^\natural$ of $\cX^{\natural+}$ (resp. a subset $Y^\natural$ of $S^\natural$) is bi-algebraic if and only if  it is quasi-linear.
\begin{proof} The implication $\Leftarrow$ follows from Lemma~\ref{LemmaCriterionOfBiAlgebraicity}. In this section, we prove that a bi-algebraic subset $\tilde{Y}^\natural$ of $\cX^{\natural+}$ is quasi-linear. Up to replacing $\Gamma$ by a finite subgroup we assume that $\Gamma=\Gamma_W\rtimes\Gamma_G$. Then $S^\natural_{P/U}$ is the universal vector extension of the abelian scheme $S_{P/U}$ over $S_G$. Use the notation of Notation~\ref{NotationSubsetsOfEMSD}.

Denote by $\Gamma_{Y^\natural}:=\im(\pi_1(Y^{\natural,\mathrm{sm}})\rightarrow\pi_1(S^\natural)=\Gamma)$ and by $N:=(\Gamma_{Y^\natural}^{\Zar})^{\circ}$.
Then for any point $\tilde{y}^\natural\in\tilde{Y}^\natural$, we have $\Gamma_{Y^\natural}\cdot\tilde{y}^\natural\subset\tilde{Y}^\natural$. So the bi-algebraicity of $Y^\natural$ implies
\begin{equation}\label{EquationTildeYNaturalContainsEveryNOrbitBiQL}
N(\R)^+W_N(\C)\tilde{y}^\natural\subset\tilde{Y}^{\natural}\text{ for any point }\tilde{y}^\natural\in\tilde{Y}^{\natural}.
\end{equation}

Since $\tilde{Y}^\natural$ is bi-algeraic, we have that $\tilde{Y}$ is bi-algebraic for $\varphi(\cX^+) \rightarrow S$. 
By the characterization of bi-algebraic subsets for connected mixed Shimura varieties (\cite[Corollary~8.3]{GaoTowards-the-And}), $\tilde{Y}$ is a weakly special subset of $\varphi(\cX^+)$, namely $\tilde{Y}=N(\R)^+U_N(\C)\tilde{y}$ for some $\tilde{y}\in\tilde{Y}$ and $N$ is a normal subgroup of $Q$, where $(Q,\cY^+)$ is the smallest connected mixed Shimura subdatum of $(P,\varphi(\cX^+))$ such that $\tilde{Y}\subset\cY^+$. Then $Y_{P/U}\rightarrow Y_G$ is an abelian scheme and $S^\natural_{P/U}|_{Y_{P/U}}:=[\pi^\sharp_{P/U}]^{-1}(Y_{P/U})$ is a vector extension of $Y_{P/U}$ over $Y_G$.

We claim
\[
\tilde{Y}^\natural=\tilde{Y}\times_{\tilde{Y}_{P/U}}\tilde{Y}^\natural_{P/U} \subset\varphi(\cX^+)\times_{\varphi(\cX^+)_{P/U}}\cX^{\natural+}_{P/U} = \cX^{\natural+}.
\]
This is not hard to see: For any $\tilde{y}^\natural_{P/U} \in \tilde{Y}^\natural_{P/U}$, the fiber $\pi^{\sharp-1}(\tilde{y}^\natural_{P/U}) \cap (\tilde{Y}\times_{\tilde{Y}_{P/U}}\tilde{Y}^\natural_{P/U})$ is a $U(\C)$-orbit by the form of $\tilde{Y}$. But by \eqref{EquationTildeYNaturalContainsEveryNOrbitBiQL}, the fiber $\pi^{\sharp-1}(\tilde{y}^\natural_{P/U}) \cap \tilde{Y}^\natural$ is also a $U(\C)$-orbit. Thus we are done because $\tilde{Y}^\natural \subset \tilde{Y}\times_{\tilde{Y}_{P/U}}\tilde{Y}^\natural_{P/U}$.

Now it remains to study $\tilde{Y}^\natural_{P/U}$ by definition of quasi-linear subsets. Use the notation in \eqref{EquationDecompositionXNaturalIntoProductOfVectorExtension}. Let $V^{\mathrm{nt}}:=V_\R/(V_{N,\R}\bigoplus V^{(0)})$. Consider the vector bundle
\[
\cF^0\cV^{\mathrm{nt}}|_{\tilde{Y}_G}=\cX^{\natural+}_{P/U}|_{\tilde{Y}_{P/U}}/(\tilde{Y}_{P/U}^{\mathrm{univ}}\times_{\tilde{Y}_G}\cF^0\cV^{(0)}|_{\tilde{Y}_G})
\]
and the automorphic quotient-bundle
\[
\bV^{\mathrm{nt}}|_{Y_G}:=\Gamma_{G_N}\backslash(\cF^0\cV^{\mathrm{nt}}|_{\tilde{Y}_G})
\]
of $\omega_{[\pi_G]^{-1}(Y_G)^\vee/Y_G}\big/\omega_{Y_{P/U}^\vee/Y_G}$. 

Let $\tilde{Y}_{P/U}^{\mathrm{univ}}$ be as in \eqref{EquationDecompositionXNaturalIntoProductOfVectorExtension}. We are left the prove:
\begin{equation}\label{EquationGrossDescriptionOfTildeXNatural2}
\tilde{Y}^\natural_{P/U} = \tilde{Y}_{P/U}^{\mathrm{univ}}\times_{\tilde{Y}_G} (\tilde{L}^\natural\times\tilde{Y}_G)\times_{\tilde{Y}_G}\tilde{Y}^{\natural,\mathrm{extr}}_{P/U}
\end{equation}
for some splitting $V_\R/V_{N,\R}=V^{(0)}\bigoplus V^{\mathrm{nt}}$ as $G_Q$-modules, where $\tilde{L}^\natural$ is an irreducible algebraic subvariety of some fiber of $\cF^0\cV^{(0)}|_{\tilde{Y}_G} \rightarrow\tilde{Y}_G$ (which is isomorphic to $\C^k$ for some $k$), and $\tilde{Y}_{P/U}^{\natural,\mathrm{extr}}\subset\cF^0\cV^{\mathrm{nt}}|_{\tilde{Y}_G}$ which surjects to $\tilde{Y}_G$ satisfying
\begin{equation}\label{EquationGrossDescriptionOfTildeXNatural3}
\tilde{Y}^{\natural,\mathrm{extr}}_{P/U}=G_N(\R)^+(\tilde{Y}_{P/U}^{\natural,\mathrm{extr}})_{\tilde{y}_{G,0}}
\end{equation}
for some fixed $\tilde{y}_{G,0} \in \tilde{Y}_G$.

Let us prove \eqref{EquationGrossDescriptionOfTildeXNatural2} and \eqref{EquationGrossDescriptionOfTildeXNatural3}. First note that $\tilde{Y}^\natural_{P/U} \subset \cX^{\natural+}_{P/U}|_{\tilde{Y}_{P/U}} = \pi_{P/U}^{\sharp-1}(\tilde{Y}_{P/U}) = \tilde{Y}_{P/U}^{\mathrm{univ}} \times_{\tilde{Y}_G} \cF^0\cV^{(0)}|_{\tilde{Y}_G}  \times_{\tilde{Y}_G} \cF^0\cV^{\mathrm{nt}}|_{\tilde{Y}_G}$. Since each fiber of the projection $p \colon \cX^{\natural+}_{P/U}|_{\tilde{Y}_{P/U}}  \rightarrow \cF^0\cV^{(0)}|_{\tilde{Y}_G} \times_{\tilde{Y}_G} \cF^0\cV^{\mathrm{nt}}|_{\tilde{Y}_G}$ is isomorphic to $V_N(\C)$, we have that each non-empty fiber of $p|_{\tilde{Y}^\natural_{P/U}}$ is the whole fiber of $p$ by \eqref{EquationTildeYNaturalContainsEveryNOrbitBiQL}. Thus we have
\[
\tilde{Y}^\natural_{P/U} = \tilde{Y}_{P/U}^{\mathrm{univ}} \times_{\tilde{Y}_G} \tilde{Y}_{P/U}^{\natural,\mathrm{s}}
\]
for some $\tilde{Y}_{P/U}^{\natural,\mathrm{s}}$ in $\cF^0\cV^{(0)}|_{\tilde{Y}_G}  \times_{\tilde{Y}_G} \cF^0\cV^{\mathrm{nt}}|_{\tilde{Y}_G}$. Next take the fiber $V_N(\C) + \tilde{L}^\natural + \tilde{K}^\natural$ of $\tilde{Y}_{P/U}^\natural \rightarrow \tilde{Y}_G$ over $\tilde{y}_{G} \in \tilde{Y}_G$, with $\tilde{L}^\natural \subset \cF^0\cV^{(0)}$ and $\tilde{K}^\natural \subset \cF^0\cV^{\mathrm{nt}}$. By \eqref{EquationTildeYNaturalContainsEveryNOrbitBiQL}, we then have $V_N(\C) + \tilde{L}^\natural + g\tilde{K}^\natural \subset (\tilde{Y}_{P/U}^\natural)_{g\tilde{y}_{G}}$ for any $g \in G_N(\R)^+$. By furthermore considering the fibers over $g\tilde{y}_{G}$ and over $\tilde{y}_G = g^{-1} g \tilde{y}_{G}$, we obtain both \eqref{EquationGrossDescriptionOfTildeXNatural2} and \eqref{EquationGrossDescriptionOfTildeXNatural3}. 
\end{proof}
\end{thm}

\subsection{Weakly special subvarieties}
We close this section by defining weakly special subvarieties of $S^\natural$ and explain its relation with quasi-linear subvarieties. Note that for mixed Shimura varieties they coincide (\cite[Corollary~8.3]{GaoTowards-the-And}).
\begin{defn}\label{DefinitionWeaklySpecialSubvariety}
\begin{enumerate}
\item A subset $\tilde{Y}^\natural$ of $\cX^{\natural+}$ is said to be \textbf{weakly special} if there exists a diagram in the category \underline{$\mathbf{\cE\cM\cS\cD}$}
\[
(R,\cZ^{\natural+})\xleftarrow{\varphi}(Q,\cY^{\natural+})\xrightarrow{i}(P,\cX^{\natural+})
\]
and a point $\tilde{z}^\natural\in\cZ^{\natural+}$ such that $\tilde{Y}^\natural$ is a connected component of $i(\varphi^{-1}(\tilde{z}^\natural))$.
\item A subvariety $Y^\natural$ of $S^\natural$ is said to be \textbf{weakly special} if it is the image of some weakly special subset of $\cX^{\natural+}$ under $\unif^{\natural}$.
\end{enumerate}
\end{defn}
The geometric meaning of weakly special subvarieties is very clear: a subvariety $Y^\natural$ is weakly special in $S^\natural$ if and only if:
\begin{itemize}
\item $Y^\natural=Y\times_{Y_{P/U}}Y_{P/U}^\natural$.
\item $Y$ is weakly special. In particular $Y_{P/U}/Y_G$ is an abelian scheme.
\item $Y_{P/U}^\natural=Y_{P/U}^{\mathrm{univ}}$ where $Y_{P/U}^{\mathrm{univ}}$ is the universal vector extension of $Y_{P/U}/Y_G$.
\end{itemize}
Now it is clear that every weakly special subvariety of $S^\natural$ is quasi-linear. On the other hand for any quasi-linear
\[
\tilde{Y}^\natural=\tilde{Y}\times_{\tilde{Y}_{P/U}}\left(\tilde{Y}_{P/U}^{\mathrm{univ}}\times_{\tilde{Y}_G}(\tilde{L}^\natural\times\tilde{Y}_G)\times_{\tilde{Y}_G}\cF^0\cV^\dagger|_{\tilde{Y}_G}\right),~Y^\natural=Y\times_{Y_{P/U}}\left(Y_{P/U}^{\mathrm{univ}}\times_{Y_G}L^\natural_{Y_G}\times_{Y_G}\bV^\dagger|_{Y_G}\right),
\]
we denote by $(\tilde{Y}^\natural)^{\mathrm{ws}}=\tilde{Y}\times_{\tilde{Y}_{P/U}}(\tilde{Y}_{P/U}^{\mathrm{univ}}\times_{\tilde{Y}_G}\underline{0})$, $(Y^\natural)^{\mathrm{ws}}:=Y\times_{Y_{P/U}}(Y_{P/U}^{\mathrm{univ}}\times_{Y_G}\underline{0})$ and by
\begin{equation}\label{EquationWeaklySpecialAndQuasiLinear}
pr^{\mathrm{ws}}_{\tilde{Y}^\natural}\colon \tilde{Y}^\natural\rightarrow (\tilde{Y}^\natural)^{\mathrm{ws}}\quad\text{ and }\quad[pr]^{\mathrm{ws}}_{Y^\natural}\colon Y^\natural\rightarrow (Y^\natural)^{\mathrm{ws}}
\end{equation}
the natural projections (here we identify $\tilde{Y}_{P/U}^{\mathrm{univ}}\times_{\tilde{Y}_G}\underline{0}$ with $\tilde{Y}_{P/U}^{\mathrm{univ}}$ and $Y_{P/U}^{\mathrm{univ}}\times_{Y_G}\underline{0}$ with $Y_{P/U}^{\mathrm{univ}}$).


\section{The logarithmic Ax theorem}\label{SectionTheAxlogarithmicTheorem}
We prove in this section a transcendence theorem. We start with a criterion of bi-algebraicity, which will also be used in the next section.

Let $S^\natural$ be a connected enlarged mixed Shimura variety associated with $(P,\cX^{\natural+})$ and let $\unif^\natural\colon\cX^{\natural+}\rightarrow S^\natural$ be the uniformization. Use Notation~\ref{NotationSubsetsOfEMSD}. Let $\tilde{Y}^\natural$ be a subset of $\cX^{\natural+}$ such that $\tilde{Y}$ is weakly special, and hence $\tilde{Y}=N(\R)^+U_N(\C)\tilde{y}$ for some subgroup $N$ of $P$ and some point $\tilde{y}\in\varphi(\cX^+)$. Let $\tilde{Y}_{P/U}^{\mathrm{univ}}$ be as in \eqref{EquationDecompositionXNaturalIntoProductOfVectorExtension}. 

\begin{lemma}\label{LemmaCriterionOfBiAlgebraicity}
Let $\tilde{Y}^\natural$ be an irreducible algebraic subset of $\cX^{\natural+}$ such that
\begin{enumerate}
\item $\tilde{Y}^\natural=\tilde{Y}\times_{\tilde{Y}_{P/U}}\tilde{Y}^\natural_{P/U}\subset\varphi(\cX^+)\times_{\varphi(\cX^+)_{P/U}}\tilde{\cX}^{\natural+}_{P/U}$;
\item $\tilde{Y}$ is weakly special (and we use the notations above);
\item $\tilde{Y}^{\mathrm{univ}}_{P/U}\subset\tilde{Y}^\natural_{P/U}$;
\item $\tilde{Y}^{\natural}_{P/U}=G_N(\R)^+(\tilde{Y}^{\natural}_{P/U})_{\tilde{y}_G}$ for one (and hence any) point $\tilde{y}_G\in\tilde{Y}_G$.
\end{enumerate}
Then $\tilde{Y}^\natural$ is bi-algebraic.
\begin{proof}
It suffices to prove that $\tilde{Y}^\natural_{P/U}$ is bi-algebraic by condition (1) and (2). Recall \eqref{EquationKugaTypeUniformizingSpace}
\begin{equation}\label{EquationExtensionForPoverUInBiAlgebraicAxlogarithmic}
0\rightarrow\cF^0\cV\rightarrow\cX^{\natural+}_{P/U}=\cV\xrightarrow{\pi^\sharp_{P/U}}\varphi(\cX^+)_{P/U}=\cV/\cF^0\cV\rightarrow 0.
\end{equation}
We have
\begin{equation}\label{EquationDecompositionOfSNaturalVersionX}
S^\natural_{P/U}|_{Y_{P/U}}=Y_{P/U}^{\mathrm{univ}}\times_{Y_G}\left(\omega_{[\pi_G]^{-1}(X_Y)^\vee/Y_G}\big/\omega_{Y_{P/U}^\vee/Y_G}\right),
\end{equation}
where $Y_{P/U}^{\mathrm{univ}}$ is the universal vector extension of the abelian scheme $Y_{P/U}/Y_G$. Let $\tilde{Y}_{P/U}^{\mathrm{univ}}$ be defined by (compatible with \eqref{EquationExtensionForPoverUInBiAlgebraicAxlogarithmic})
\[
0\rightarrow\cF^0\cV_N|_{\tilde{Y}_G}\rightarrow\tilde{Y}_{P/U}^{\mathrm{univ}}\rightarrow\tilde{Y}_{P/U}\rightarrow0.
\]
Then the uniformization of \eqref{EquationDecompositionOfSNaturalVersionX} is
\begin{equation}\label{EquationDecompositionOfSNaturalVersionTildeX}
\tilde{\cX}_{P/U}^{\natural+}|_{\tilde{Y}_{P/U}}(=\pi_{P/U}^{\sharp-1}(\tilde{Y}_{P/U}))=\tilde{Y}_{P/U}^{\mathrm{univ}}\times_{\tilde{Y}_G}\cF^0\cV^{\mathrm{s}}|_{\tilde{Y}_G}
\end{equation}
for some $G_Q$-submodule $V^{\mathrm{s}}$ of $V$ such that $V=V_N\bigoplus V^{\mathrm{s}}$.

Now condition (3) implies
\[
\tilde{Y}_{P/U}^\natural=\tilde{Y}_{P/U}^{\mathrm{univ}}\times_{\tilde{Y}_G}\tilde{Y}_{P/U}^{\natural,\mathrm{s}}
\]
and hence
\[
Y_{P/U}^\natural=Y_{P/U}^{\mathrm{univ}}\times_{Y_G}\unif^{\natural,\mathrm{s}}_{P/U}(\tilde{Y}_{P/U}^{\natural,\mathrm{s}}).
\]
Hence it suffices to prove that $\unif^{\natural,\mathrm{s}}_{P/U}(\tilde{Y}_{P/U}^{\natural,\mathrm{s}})$ is a closed algebraic subvariety of $\omega_{[\pi_G]^{-1}(Y_G)^\vee/Y_G}\big/\omega_{Y_{P/U}^\vee/Y_G}$.

Consider the diagram
\[
\xymatrix{
\cF^0\cV^{\mathrm{s}}|_{\tilde{Y}_G} \ar[r]^-{\unif^{\natural,\mathrm{s}}_{P/U}} \ar[d] & \omega_{[\pi_G]^{-1}(Y_G)^\vee/Y_G}\big/\omega_{Y_{P/U}^\vee/Y_G} \ar[d] \\
\tilde{Y}_G \ar[r]^{\unif_G} & Y_G
}
\]

The restriction of $\unif^{\natural,\mathrm{s}}_{P/U}$ to any fiber
\[
\left(\cF^0\cV^{\mathrm{s}}|_{\tilde{Y}_G}\right)_{\tilde{y}_G}\rightarrow\left(\omega_{[\pi_G]^{-1}(Y_G)^\vee/Y_G}\big/\omega_{Y_{P/U}^\vee/Y_G}\right)_{y_G},
\]
where $\tilde{y}_G\in\tilde{Y}_G$ and $y_G=\unif_G(\tilde{y}_G)$, is the identity map. So by \cite[Theorem~1.9]{KlinglerThe-Hyperbolic-}, there exists a definable fundamental domain $\mathfrak{F}$ of $\unif_G\colon\tilde{Y}_G\rightarrow Y_G$ such that
\[
\unif^{\natural,\mathrm{s}}_{P/U}|_{\cF^0\cV^{\mathrm{s}}|_{\mathfrak{F}}}\text{ is definable in the o-minimal structure }\R_{\text{an},\exp}.
\]
Therefore Condition (4) implies that $\unif^{\natural,\mathrm{s}}_{P/U}(\tilde{Y}_{P/U}^{\mathrm{s}})$ is closed in the usual topology (and hence a complex analytic subvariety of $\omega_{[\pi_G]^{-1}(Y_G)^\vee/Y_G}\big/\omega_{Y_{P/U}^\vee/Y_G}$) and is definable in the o-minimal structure $\R_{\text{an},\exp}$ (because $\unif^{\natural,\mathrm{s}}_{P/U}(\tilde{Y}_{P/U}^{\mathrm{s}})=\unif^{\natural,\mathrm{s}}_{P/U}(\tilde{Y}_{P/U}^{\mathrm{s}}\cap\cF^0\cV^{\mathrm{s}}|_{\mathfrak{F}})$ and $\tilde{Y}_{P/U}^{\mathrm{s}}$ is algebraic). Hence $Y_{P/U}^{\natural,\mathrm{s}}:=\unif^{\natural,\mathrm{s}}_{P/U}(\tilde{Y}_{P/U}^{\mathrm{s}})$ is a closed algebraic subvariety of $\omega_{[\pi_G]^{-1}(Y_G)^\vee/Y_G}\big/\omega_{Y_{P/U}^\vee/Y_G}$ by the Peterzil-Starchenko o-minimal GAGA theorem \cite[Corollary~4.5]{PeterzilComplex-analyti}.
\end{proof}
\end{lemma}

\begin{cor}\label{CorollaryCriterionOfBiAlgebraicity}
Let $\tilde{Y}^\natural$ be an irreducible algebraic subset of $\cX^{\natural+}$ such that $\tilde{Y}$ is weakly special (and hence we can use the notation above Lemma~\ref{LemmaCriterionOfBiAlgebraicity}). Assume $\tilde{Y}^\natural=N(\R)^+W_N(\C)(\tilde{Y}^\natural)_{\tilde{y}_G}$ for one (and hence any) point $\tilde{y}_G\in\tilde{Y}_G$. Then $\tilde{Y}^\natural$ is bi-algebraic.
\begin{proof} It is not hard to see that all the conditions of Lemma~\ref{LemmaCriterionOfBiAlgebraicity} are satisfied.
\end{proof}
\end{cor}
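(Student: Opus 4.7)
The plan is straightforward: verify each of the four hypotheses of Lemma~\ref{LemmaCriterionOfBiAlgebraicity} from the orbit assumption $\tilde{Y}^\natural = N(\R)^+ W_N(\C)(\tilde{Y}^\natural)_{\tilde{y}_G}$ together with the weak speciality of $\tilde{Y}$. Condition~(2) is hypothesized directly, so three items remain.

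For condition~(4), I would project the orbit identity along $\pi^\natural_{P/U}$. Since $\pi^\natural_{P/U}$ kills $U_N$, the image of $N(\R)^+ W_N(\C)$ is $(N/U_N)(\R)^+ V_N(\C) = G_N(\R)^+ \cdot V_N(\C)$, so
\[
\tilde{Y}^\natural_{P/U} = G_N(\R)^+ \cdot V_N(\C)(\tilde{Y}^\natural_{P/U})_{\tilde{y}_G}.
\]
The factor $V_N(\C)$ acts purely vertically in $\cX^{\natural+}_{P/U} \to \varphi(\cX^+)_G$, hence it preserves the fiber over $\tilde{y}_G$ and is absorbed into $(\tilde{Y}^\natural_{P/U})_{\tilde{y}_G}$, leaving the required $G_N(\R)^+(\tilde{Y}^\natural_{P/U})_{\tilde{y}_G}$. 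For condition~(3), the defining extension
\[
0 \to \cF^0\cV_N|_{\tilde{Y}_G} \to \tilde{Y}^{\mathrm{univ}}_{P/U} \to \tilde{Y}_{P/U} \to 0
\]
characterizes $\tilde{Y}^{\mathrm{univ}}_{P/U}$ as the $V_N(\C)$-orbit of the Hodge filtration above $\tilde{Y}_{P/U}$, and this coincides with the image under $\pi^\natural_{P/U}$ of the $W_N(\C)$-translates of $(\tilde{Y}^\natural)_{\tilde{y}_G}$, so $\tilde{Y}^{\mathrm{univ}}_{P/U} \subset \tilde{Y}^\natural_{P/U}$.

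Finally, for condition~(1), the top square of \eqref{EquationCartesianDiagramUniformizingSpaceProjectToMSD} is Cartesian, so the inclusion $\tilde{Y}^\natural \subset \tilde{Y}\times_{\tilde{Y}_{P/U}}\tilde{Y}^\natural_{P/U}$ is automatic; the reverse follows because the orbit hypothesis saturates $\tilde{Y}^\natural$ along the $U(\C)$-fibers of $\pi^\sharp$ over each point of $\tilde{Y}^\natural_{P/U}$ (the inclusion $W_N(\C) \supset U_N(\C)$, combined with the $N(\R)^+$-action that recovers the remaining $U(\C)$-directions transversally to $\tilde{Y}$). The only bookkeeping subtlety is the $V_N(\C)$-absorption in (4), which is exactly the vertical nature of this action; given how cleanly the Cartesian structure of \eqref{EquationCartesianDiagramUniformizingSpaceProjectToMSD} fits with the unipotent orbit, none of the four verifications presents a genuine obstacle, and Lemma~\ref{LemmaCriterionOfBiAlgebraicity} then concludes that $\tilde{Y}^\natural$ is bi-algebraic.
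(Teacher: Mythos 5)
Your strategy---checking the hypotheses of Lemma~\ref{LemmaCriterionOfBiAlgebraicity} one by one---is what the paper intends (the paper's proof cites Lemma~\ref{LemmaCategoricalComparisonBetweenEMSDandMSD}, but that must be a typo for Lemma~\ref{LemmaCriterionOfBiAlgebraicity}; the Categorical Comparison Lemma has no hypotheses to verify and says nothing about bi-algebraicity). Conditions (2), (3) and (4) are handled correctly: (2) is a hypothesis, and the $V_N(\C)$-absorption for (4) and the $V_N(\C)$-stability for (3) are the right mechanisms.

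Your paragraph on condition~(1), however, contains genuine geometric slips. The fibers of $\pi^\sharp\colon\cX^{\natural+}\rightarrow\varphi(\cX^+)$ are not $U(\C)$: by \eqref{EquationUniformizingSpaceProjectToMSD} they are isomorphic to $\cF^0\cV$, i.e.\ to $F^0_{\tilde{y}_G}V(\C)$ over $\tilde{y}_G$. The $U(\C)$-fibers belong to $\pi^\natural_{P/U}\colon\cX^{\natural+}\rightarrow\cX^{\natural+}_{P/U}$ (equivalently $\pi_{P/U}$ on the mixed Shimura side). You also claim the $N(\R)^+$-action ``recovers the remaining $U(\C)$-directions,'' but this is both false and unnecessary: since $\tilde{Y}=N(\R)^+U_N(\C)\tilde{y}$ is weakly special, the fiber of $\tilde{Y}\rightarrow\tilde{Y}_{P/U}$ is only a coset of $U_N(\C)$, so the fiber product $\tilde{Y}\times_{\tilde{Y}_{P/U}}\tilde{Y}^\natural_{P/U}$ involves $U_N(\C)$-cosets, not $U(\C)$-fibers, in the $U$-direction. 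What actually gives~(1) is the following: trivialize the fiber of $\cX^{\natural+}\rightarrow\varphi(\cX^+)_G$ over $\tilde{y}_G$ as $W(\C)\cong U(\C)\times V(\R)\times F^0_{\tilde{y}_G}V(\C)$ compatibly with the Cartesian square \eqref{EquationCartesianDiagramUniformizingSpaceProjectToMSD}; the orbit hypothesis makes the fiber of $\tilde{Y}^\natural$ there a single $W_N(\C)$-orbit in $W(\C)$; unwinding the group law of $\mathsection$\ref{SubsectionStructureOfUnderlyingGroup} shows that over each point of $\tilde{Y}_{P/U}$ (the $V(\R)$-coordinate) this orbit splits as a product of a $U_N(\C)$-coset (the fiber of $\tilde{Y}$) with an $F^0_{\tilde{y}_G}V_N(\C)$-coset (the fiber of $\tilde{Y}^\natural_{P/U}$), which is exactly condition~(1). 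Your conclusion is right, but the misidentification of which map has $U(\C)$-fibers should be fixed; please rewrite the verification of~(1) along these lines.
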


\begin{thm}[logarithmic Ax]\label{TheoremAxlogarithmic}
Let $S^\natural$ be a connected enlarged mixed Shimura variety associated with $(P,\cX^{\natural+})$ with $\unif^\natural\colon\cX^{\natural+}\rightarrow S^\natural$. Let $Z^\natural$ be an irreducible subvariety of $S^\natural$. Let $\tilde{Z}^\natural$ be a complex analytic irreducible component of $\unif^{\natural-1}(Z^\natural)$ and let $\tilde{Z}^{\natural,\mathrm{Zar}}$ be the Zariski closure of $\tilde{Z}^\natural$ in $\cX^{\natural+}$. Then $\tilde{Z}^{\natural,\mathrm{Zar}}$ is quasi-linear.

\begin{proof} Up to replacing $\Gamma$ by a finite subgroup we assume that $\Gamma=\Gamma_W\rtimes\Gamma_G$. Then $S^\natural_{P/U}$ is the universal vector extension of the abelian scheme $S_{P/U}$ over $S_G$. Use the notation of Notation~\ref{NotationSubsetsOfEMSD}.

Denote by $\Gamma_{Z^\natural}:=\im(\pi_1(Z^{\natural,\mathrm{sm}})\rightarrow\pi_1(S^\natural)=\Gamma)$ and by $N:=(\Gamma_{Z^\natural}^{\Zar})^{\circ}$. Then for any point $\tilde{y}^\natural\in\tilde{Z}^\natural$, we have $\Gamma_{Z^\natural}\cdot\tilde{y}^\natural\subset\tilde{Z}^\natural$. So
\begin{equation}\label{EquationTildeYNaturalContainsEveryNOrbit}
N(\R)^+W_N(\C)\tilde{z}^\natural\subset\tilde{Y}^{\natural,\mathrm{Zar}}\text{ for any point }\tilde{z}^\natural\in\tilde{Z}^{\natural,\mathrm{Zar}}.
\end{equation}

Denote for simplicity $\tilde{Y}^\natural:=\tilde{Z}^{\natural,\mathrm{Zar}}$ and use the notation of Notation~\ref{NotationSubsetsOfEMSD}. By logarithmic Ax for connected mixed Shimura varieties (\cite[Theorem~8.1]{GaoTowards-the-And}), $\tilde{Y}$ is a weakly special subset of $\varphi(\cX^+)$: $\tilde{Y}=N(\R)^+U_N(\C)\tilde{z}$ for some $\tilde{z}\in\tilde{Y}$ and $N$ is a normal subgroup of $Q$, where $(Q,\cY^+)$ is the smallest connected mixed Shimura subdatum of $(P,\varphi(\cX^+))$ such that $\tilde{Z}\subset\cY^+$. But we have $\tilde{Y}^\natural=N(\R)^+W_N(\C)(\tilde{Y}^\natural)_{\tilde{y}_G}$ for one (and hence any) point $\tilde{y}_G\in\tilde{Y}_G$. Now it suffices to apply Corollary~\ref{CorollaryCriterionOfBiAlgebraicity}.
\end{proof}
\end{thm}

\section{The Ax-Lindemann theorem}\label{SectionAxLindemann}
Let $S^\natural=\Gamma\backslash\cX^{\natural+}$ be a connected enlarged mixed Shimura variety associated with $(P,\cX^{\natural+})$ with $\unif^\natural\colon\cX^{\natural+}\rightarrow S^\natural$.

\subsection{Statement of Ax-Lindemann}
We start by giving four equivalent statements for the Ax-Lindemann(-Weierstra\ss)~theorem and explain their equivalence.

\begin{thm}\label{AxLindemann}
Let $Y^\natural$ be an irreducible subvariety of $S^\natural$ and let $\tilde{Z}^\natural$ be an irreducible algebraic subset of $\cX^{\natural+}$ contained in $\unif^{\natural-1}(Y^\natural)$, maximal for these properties. Then $\tilde{Z}^\natural$ is quasi-linear.
\end{thm}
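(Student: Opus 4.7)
The plan is to adapt the Pila--Wilkie strategy used for the mixed Shimura case in \cite{GaoTowards-the-And}, threading the argument through the vector-extension directions of $\cX^{\natural+}$. The end goal is to produce a $\Q$-algebraic subgroup $H\leqslant P$ stabilizing $\tilde{Z}^{\natural}$, large enough that $\tilde{Z}^{\natural}$ satisfies the hypotheses of Corollary~\ref{CorollaryCriterionOfBiAlgebraicity}; once bi-algebraicity is established, Theorem~\ref{TheoremCharacterizationOfBiAlgebraicSubvarieties} immediately gives quasi-linearity.

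The first step is to construct a semi-algebraic fundamental set $\mathfrak{F}$ for the action of $\Gamma$ on $\cX^{\natural+}$ such that $\unif^{\natural}|_{\mathfrak{F}}$ is definable in $\R_{\mathrm{an},\exp}$. Using the product description $\cX^{\natural+}\cong W(\C)\times\varphi(\cX^{+})_{G}$ of $\mathsection$\ref{SubsectionStructureOfUnderlyingSpace}, I would take $\mathfrak{F}=\mathfrak{F}_{W}\times\mathfrak{F}_{G}$ with $\mathfrak{F}_{G}$ the Klingler--Ullmo--Yafaev fundamental domain on $\varphi(\cX^{+})_{G}$ \cite{KlinglerThe-Hyperbolic-} and $\mathfrak{F}_{W}$ a standard fundamental set for $\Gamma_{W}$ acting on $W(\C)$; the definability of $\unif^{\natural}|_{\mathfrak{F}}$ extends that of \cite{GaoTowards-the-And}, with extra care for the $U(\C)$- and $V(\C)$-directions coming from the $\natural$-enhancement. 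The definable set
\[
\Theta:=\{\,g\in P(\R)W(\C):\dim(g\tilde{Z}^{\natural}\cap\mathfrak{F})=\dim\tilde{Z}^{\natural},\ \unif^{\natural}(g\tilde{Z}^{\natural}\cap\mathfrak{F})\subseteq Y^{\natural}\,\}
\]
contains $\{\gamma\in\Gamma:\gamma\tilde{Z}^{\natural}\cap\mathfrak{F}\neq\emptyset\}$ by maximality of $\tilde{Z}^{\natural}$, and by Pila--Wilkie combined with standard height estimates must contain a positive-dimensional real semi-algebraic block. The $\Q$-Zariski closure then yields a positive-dimensional $\Q$-algebraic subgroup $H\leqslant P$ with $H(\R)^{+}W_{H}(\C)\tilde{Z}^{\natural}\subseteq\unif^{\natural-1}(Y^{\natural})$, and maximality of $\tilde{Z}^{\natural}$ upgrades this to $H(\R)^{+}W_{H}(\C)\tilde{Z}^{\natural}=\tilde{Z}^{\natural}$.

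Projecting via $\pi^{\sharp}$, the mixed Shimura Ax--Lindemann of \cite[Theorem~1.2]{GaoTowards-the-And} forces $\tilde{Z}:=(\pi^{\sharp}(\tilde{Z}^{\natural}))^{\mathrm{Zar}}$ to be weakly special: $\tilde{Z}=N(\R)^{+}U_{N}(\C)\tilde{z}$ with $N$ normal in the smallest mixed Shimura subdatum $(Q,\cY^{+})\subset(P,\varphi(\cX^{+}))$ containing $\tilde{Z}$, and $\pi(H)\supseteq N$. Finally, repeating the fiberwise analysis of Step~II in the proof of Theorem~\ref{TheoremCharacterizationOfBiAlgebraicSubvarieties}, $H$-invariance relative to the decomposition~\eqref{EquationDecompositionXNaturalIntoProductOfVectorExtension} forces the non-trivial component of $\tilde{Z}^{\natural}_{P/U}$ along $\cF^{0}\cV^{\mathrm{nt}}|_{\tilde{Z}_{G}}$ to be a full automorphic subbundle $\cF^{0}\cV^{\dagger}|_{\tilde{Z}_{G}}$, and its component along the trivial subbundle $\tilde{\cC}^{\natural}$ to be $\tilde{L}^{\natural}\times\tilde{Z}_{G}$ for an irreducible algebraic $\tilde{L}^{\natural}$ in a single fiber. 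Lemma~\ref{LemmaCriterionOfBiAlgebraicity} then gives bi-algebraicity of $\tilde{Z}^{\natural}$. The main obstacle is the Pila--Wilkie/definability step: one must construct $\mathfrak{F}$ compatibly with the $W(\C)$-direction, and then separate two qualitatively different behaviours in the conclusion --- the trivial subbundle $\tilde{\cC}^{\natural}$ permits an \emph{arbitrary} irreducible algebraic $\tilde{L}^{\natural}$ (so $H$ need not act transitively on it), while the non-trivial direction demands a \emph{full} automorphic subbundle (so $H$ must act with full orbit there). Disentangling these two regimes inside the same Pila--Wilkie argument is the genuine new ingredient compared to the mixed Shimura case of \cite{GaoTowards-the-And}.
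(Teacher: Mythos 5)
There is a genuine gap, and it is precisely the one you yourself flag at the end. You propose a single Pila--Wilkie argument on $P(\R)W(\C)$ producing a $\Q$-group $H$ with $H(\R)^{+}W_{H}(\C)$-stabilization of $\tilde{Z}^{\natural}$, and then acknowledge that ``disentangling these two regimes inside the same Pila--Wilkie argument is the genuine new ingredient'' --- but you do not explain how to do it, and in fact it cannot be done in this form. The difficulty is not merely technical: over the $F^{0}W(\C)$-direction (equivalently, the $\tilde{\cC}^{\natural}$-direction responsible for $\tilde{L}^{\natural}$), the uniformization map $\unif^{\natural}$ restricted to a fiber is the identity on $F^{0}W(\C)$, so there are no $\Gamma$-translates to count there. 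A point-counting argument on $P(\R)W(\C)$ therefore produces no information at all in that direction, and the resulting stabilizer $H$ is structurally incapable of detecting the $\tilde{L}^{\natural}$-component; you cannot ``upgrade'' this afterwards by running ``Step~II of the proof of Theorem~\ref{TheoremCharacterizationOfBiAlgebraicSubvarieties}'' because that step already presupposes bi-algebraicity of $\tilde{Y}^{\natural}$ (it is part of the characterization of bi-algebraic subsets), whereas at the point you want to use it you only have $H$-invariance.

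The paper's proof resolves this by a structurally different split that you do not propose. It first proves, as a free-standing theorem, the full Ax--Schanuel for the unipotent fiber $W(\C)\rightarrow E^{\natural}$ (Theorem~\ref{TheoremAxSchanuelFiberOfEMSV}, $\mathsection$\ref{SectionProofOfAxSchanuelWhenRestrictedToAFiberOfStoSG}), whose volume estimates (via Hwang--To, Claim~\ref{ClaimThetaULargeEnough}, Lemma~\ref{LemmaEstimatesOnThetaVNatural}) are tailored to the $W$-direction and are quite different from the KUY estimates in the hyperbolic $G$-direction. This yields Ax--Lindemann in a fiber (Theorem~\ref{TheoremAxLindemannForFiber}), which completely describes $\tilde{Z}^{\natural}_{\tilde{z}_{G}}$ including the $\tilde{L}^{\natural}$-piece. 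Pila--Wilkie is then applied \emph{only} on the $G$-direction, via the projections $\pi^{\natural}_{i}$ to each simple factor $H_{i}^{\mathrm{ad}}$ and the estimates of \cite[Theorem~1.3]{KlinglerThe-Hyperbolic-}, to prove Proposition~\ref{PropositionKeyProposition}; this is why the definable sets $\Sigma^{(i)}$ in $\mathsection$\ref{SubsectionAxLindemannPart2} live in $Q_{i}(\R)$ or $H_{i}^{\mathrm{ad}}(\R)$, not in $P(\R)W(\C)$. The two ingredients are then glued in Step~I of the proof (taking a fiber $\tilde{K}^{\natural,\prime}$ that is quasi-linear by the fiber theorem and spreading it by $H_{\tilde{Z}^{\natural}}(\R)^{+}W_{H_{\tilde{Z}^{\natural}}}(\C)$), followed by Steps~II--III to upgrade $H_{\tilde{Z}^{\natural}}$ to $N$, and only then does Corollary~\ref{CorollaryCriterionOfBiAlgebraicity} apply. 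So your overall target (bi-algebraicity via Corollary~\ref{CorollaryCriterionOfBiAlgebraicity} plus Theorem~\ref{TheoremCharacterizationOfBiAlgebraicSubvarieties}) is the right one, and your use of mixed Ax--Lindemann to handle the $\pi^{\sharp}$-projection is correct, but the central engine --- a prior Ax--Schanuel theorem for the unipotent fiber, combined with Pila--Wilkie restricted to the $G$-direction --- is missing from your plan, and without it the blocks produced by a single Pila--Wilkie pass will not recover the $\tilde{L}^{\natural}$ and $\cF^{0}\cV^{\dagger}$ data you need.
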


The next statement we give shall be called the \textit{semi-algebraic form of Ax-Lindemann}. Recall that a connected semi-algebraic subset of $\cX^{\natural+}$ is called \textbf{irreducible} if its $\R$-Zariski closure in $W(\C)\times\cX_G^\vee$ is an irreducible real algebraic variety. Note that any connected semi-algebraic subset of $\cX^{\natural+}$ has only finitely many irreducible components.

\begin{thm}\label{AxLindemannSemiAlgebraicForm}
Let $Y^\natural$ be an irreducible subvariety of $S^\natural$ and let $\tilde{Z}^\natural$ be a connected irreducible semi-algebraic subset of $\cX^{\natural+}$ contained in $\unif^{\natural-1}(Y^\natural)$, maximal for these properties. Then $\tilde{Z}^\natural$ is complex analytic and each complex analytic irreducible component of $\tilde{Z}^\natural$ is quasi-linear.
\end{thm}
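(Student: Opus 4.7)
The plan is to reduce Theorem \ref{AxLindemannSemiAlgebraicForm} to Theorem \ref{AxLindemann} via the standard maximality-and-complexification argument. Every irreducible algebraic subset of $\cX^{\natural+}$ (in the sense of Definition \ref{DefinitionAlgebraicOnTheTop}) is in particular semi-algebraic, so the crux is to show that the maximal connected irreducible semi-algebraic $\tilde{Z}^\natural \subset \unif^{\natural-1}(Y^\natural)$ is automatically complex analytic. Once that is done, each of its complex analytic irreducible components is an irreducible algebraic subset of $\cX^{\natural+}$ contained in $\unif^{\natural-1}(Y^\natural)$, maximal for these properties (the maximality being inherited from $\tilde{Z}^\natural$, since enlarging the component algebraically would enlarge $\tilde{Z}^\natural$ semi-algebraically). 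Theorem \ref{AxLindemann} then applies component by component to yield quasi-linearity.

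To prove the complex analyticity, let $\tilde{Z}^{\natural,\C\text{-}\Zar}$ denote the Zariski closure of $\tilde{Z}^\natural$ in $W(\C)\times\cX_G^\vee$, and let $C$ be a complex analytic irreducible component of $\tilde{Z}^{\natural,\C\text{-}\Zar}\cap\cX^{\natural+}$ meeting the smooth locus of $\tilde{Z}^\natural$. I would first establish the key inclusion $C\subset\unif^{\natural-1}(Y^\natural)$. Granting this, $C$ is a semi-algebraic subset of $\cX^{\natural+}$ containing (an open piece of) $\tilde{Z}^\natural$ and contained in $\unif^{\natural-1}(Y^\natural)$, so connectedness and maximality of $\tilde{Z}^\natural$ force $C\subset\tilde{Z}^\natural$. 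Applying this to every such component and using that the $\R$-Zariski closure of $\tilde{Z}^\natural$ is irreducible of the same real dimension as the union of the components, we conclude $\tilde{Z}^\natural$ coincides with the union of the relevant complex analytic components of $\tilde{Z}^{\natural,\C\text{-}\Zar}\cap\cX^{\natural+}$ and is therefore complex analytic.

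The main obstacle is establishing the inclusion $C\subset\unif^{\natural-1}(Y^\natural)$, which is the content of the Appendix and mirrors the strategy executed for mixed Shimura varieties in \cite{GaoTowards-the-And}. The idea is to choose a fundamental set $\mathfrak{F}$ for the $\Gamma$-action on $\cX^{\natural+}$ that is definable in $\R_{\mathrm{an},\exp}$. Such an $\mathfrak{F}$ may be built by combining an $\R_{\mathrm{an},\exp}$-definable Siegel set for $\Gamma_G$ on $\varphi(\cX^+)_G$ with $\R$-definable fundamental domains along the $W(\C)$-fibers of $\cX^{\natural+}\to\varphi(\cX^+)_G$; the existence of the $\varphi(\cX^+)_G$-part is the mixed Shimura case and the $W(\C)$-part is straightforward because the action is by translations. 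With such $\mathfrak{F}$ the set $\unif^{\natural-1}(Y^\natural)\cap\mathfrak{F}$ is definable in $\R_{\mathrm{an},\exp}$, and analytic continuation of $\unif^\natural$ (together with Peterzil-Starchenko o-minimal GAGA, as used in Lemma \ref{LemmaCriterionOfBiAlgebraicity}) transports the containment from $\tilde{Z}^\natural\cap\mathfrak{F}$ to the complex algebraic set $C$.

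The extra subtlety compared to \cite{GaoTowards-the-And} is that $\cX^{\natural+}$ has a $W(\C)$-direction rather than just a $U(\C)$-direction; the factorization
\[
\unif^\natural\colon\cX^{\natural+}\xrightarrow{\pi^\sharp}\varphi(\cX^+)\xrightarrow{\unif}S\to S^\natural
\]
allows one to reduce the analytic-continuation problem in the $V(\C)$-direction to the mixed case, exactly as in the proof of Lemma \ref{LemmaCriterionOfBiAlgebraicity}, since along the fibers of $\pi^\sharp_{P/U}$ the uniformization is simply the identity once trivialized. Modulo this ingredient, the reduction of Theorem \ref{AxLindemannSemiAlgebraicForm} to Theorem \ref{AxLindemann} is then purely formal.
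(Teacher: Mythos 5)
Your overall plan is sound: reduce to Theorem~\ref{AxLindemann} by showing that a maximal connected irreducible semi-algebraic subset of $\cX^{\natural+}$ contained in $\unif^{\natural-1}(Y^\natural)$ is automatically complex analytic, hence algebraic in the sense of Definition~\ref{DefinitionAlgebraicOnTheTop}. The paper does exactly this, but disposes of the key step by a one-line citation of \cite[Lemma~4.1]{PilaAbelianSurfaces}. Your attempt to \emph{reprove} that key step is where the argument goes wrong.

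The substantive gap: you propose to take the $\C$-Zariski closure $\tilde{Z}^{\natural,\C\text{-}\mathrm{Zar}}$, pick a complex analytic component $C$ of $\tilde{Z}^{\natural,\C\text{-}\mathrm{Zar}}\cap\cX^{\natural+}$ meeting $\tilde{Z}^{\natural,\mathrm{sm}}$, and establish $C\subset\unif^{\natural-1}(Y^\natural)$. This inclusion is not something you can get for free and is not what the Appendix proves; the Appendix proves Theorem~\ref{AxLindemann} itself (quasi-linearity), and neither analytic continuation nor Peterzil--Starchenko o-minimal GAGA (which are used in Lemma~\ref{LemmaCriterionOfBiAlgebraicity} for an unrelated purpose) gives the containment $C\subset\unif^{\natural-1}(Y^\natural)$. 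When $\dim_{\R}\tilde{Z}^\natural$ is odd, $C$ has strictly larger real dimension than $\tilde{Z}^\natural$ and there is no reason for $C$ to remain inside the preimage. In effect you have assumed the conclusion (that $\tilde{Z}^\natural$ has a full complex direction) in order to derive it.

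What is actually needed, and what \cite[Lemma~4.1]{PilaAbelianSurfaces} supplies, is a local tangent-space argument: at a generic point of the smooth semi-algebraic locus of $\tilde{Z}^\natural$, the real tangent space must be $\C$-linear, because otherwise one could use the complex analytic structure of $\unif^{\natural-1}(Y^\natural)$ to thicken $\tilde{Z}^\natural$ semi-algebraically in a missing complex direction while staying inside $\unif^{\natural-1}(Y^\natural)$, contradicting maximality. Once the tangent spaces are $\C$-linear on a dense open semi-algebraic locus, $\tilde{Z}^\natural$ is complex analytic there, and its semi-algebraicity then makes it algebraic in the sense of Definition~\ref{DefinitionAlgebraicOnTheTop}. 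Your second half (once $\tilde{Z}^\natural$ is algebraic, each complex analytic irreducible component inherits maximality and Theorem~\ref{AxLindemann} applies) is correct and matches the paper; it is the justification of complex analyticity that needs to be replaced by the Pila--Tsimerman argument.
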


The equivalence of Theorem~\ref{AxLindemann} and Theorem~\ref{AxLindemannSemiAlgebraicForm} follows easily from \cite[Lemma~4.1]{PilaAbelianSurfaces}, which claims that maximal connected irreducible semi-algebraic subsets of $\cX^{\natural+}$ which are contained in $\unif^{\natural-1}(Y^\natural)$ are all algebraic in the sense of Definition~\ref{DefinitionAlgebraicOnTheTop}
(there is a typo in the proof of \cite[Lemma~4.1]{PilaAbelianSurfaces}: $\C^{2n}$ should be $\C^n$).

The next two forms of Ax-Lindemann have more ``equidistributional'' taste.
\begin{thm}\label{AxLindemannEquiDistribution}
Let $\tilde{Z}^\natural$ be an irreducible algebraic subset of $\cX^{\natural+}$. Then $\unif^\natural(\tilde{Z}^\natural)^{\mathrm{Zar}}$ is quasi-linear.
\end{thm}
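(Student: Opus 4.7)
I would derive Theorem~\ref{AxLindemannEquiDistribution} directly from Theorem~\ref{AxLindemann} together with the characterization of bi-algebraic subvarieties (Theorem~\ref{TheoremCharacterizationOfBiAlgebraicSubvarieties}). Set $Y^\natural := \unif^\natural(\tilde{Z}^\natural)^{\mathrm{Zar}}$. Since $\tilde{Z}^\natural$ is irreducible and $\unif^\natural$ is continuous, $\unif^\natural(\tilde{Z}^\natural)$ is irreducible, and hence so is its Zariski closure $Y^\natural$. Thus $Y^\natural$ qualifies as the ``irreducible subvariety'' needed to invoke Theorem~\ref{AxLindemann}.

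Next, I would produce a maximal irreducible algebraic subset $\tilde{W}^\natural$ of $\cX^{\natural+}$ containing $\tilde{Z}^\natural$ and still contained in $\unif^{\natural-1}(Y^\natural)$. Existence of such a maximal $\tilde{W}^\natural$ follows from the fact that irreducible algebraic subsets of $\cX^{\natural+}$ in the sense of Definition~\ref{DefinitionAlgebraicOnTheTop} are cut out from the ambient quasi-projective variety $W(\C)\times\cX_G^\vee$, whose subvarieties have bounded dimension and satisfy DCC on irreducible closed subsets; take $\tilde{W}^\natural$ of maximal dimension among irreducible algebraic subsets containing $\tilde{Z}^\natural$ and lying in $\unif^{\natural-1}(Y^\natural)$. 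Applying Theorem~\ref{AxLindemann} to the pair $(Y^\natural,\tilde{W}^\natural)$, we conclude that $\tilde{W}^\natural$ is quasi-linear.

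By Theorem~\ref{TheoremCharacterizationOfBiAlgebraicSubvarieties}, every quasi-linear subset of $\cX^{\natural+}$ is bi-algebraic, so $\unif^\natural(\tilde{W}^\natural)$ is a closed algebraic (and quasi-linear) subvariety of $S^\natural$. By construction we have the chain
\[
\unif^\natural(\tilde{Z}^\natural) \subset \unif^\natural(\tilde{W}^\natural) \subset Y^\natural.
\]
The middle term is a closed algebraic subvariety of $S^\natural$ that contains $\unif^\natural(\tilde{Z}^\natural)$, so by definition of $Y^\natural$ as the Zariski closure of $\unif^\natural(\tilde{Z}^\natural)$ we obtain $Y^\natural\subset\unif^\natural(\tilde{W}^\natural)$, and hence equality. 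Therefore $Y^\natural=\unif^\natural(\tilde{W}^\natural)$ is quasi-linear, as required.

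The only point requiring slight care is the existence of the maximal algebraic enlargement $\tilde{W}^\natural$; everything else is a direct assembly of the already-established Theorem~\ref{AxLindemann} and Theorem~\ref{TheoremCharacterizationOfBiAlgebraicSubvarieties}. In particular, no new transcendence input is needed: Theorem~\ref{AxLindemannEquiDistribution} is really a repackaging of Theorem~\ref{AxLindemann} once one knows that maximal quasi-linear sets descend to closed algebraic subvarieties.
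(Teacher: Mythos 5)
Your proof is correct and takes essentially the same route as the paper: set $Y^\natural := \unif^\natural(\tilde{Z}^\natural)^{\mathrm{Zar}}$, enlarge $\tilde{Z}^\natural$ to a maximal irreducible algebraic $\tilde{W}^\natural\subset\unif^{\natural-1}(Y^\natural)$ (existence by a dimension bound), invoke Theorem~\ref{AxLindemann} to get $\tilde{W}^\natural$ quasi-linear, then use bi-algebraicity to conclude $\unif^\natural(\tilde{W}^\natural)$ is closed and equal to $Y^\natural$. Your version is slightly more explicit about why the maximal $\tilde{W}^\natural$ exists and why closedness of $\unif^\natural(\tilde{W}^\natural)$ forces $Y^\natural=\unif^\natural(\tilde{W}^\natural)$, but the structure and all key ingredients coincide with the paper's argument.
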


\begin{thm}\label{AxLindemannEquiDistributionSemiAlgebraic}
Let $\tilde{Z}^\natural$ be a semi-algebraic subset of $\cX^{\natural+}$. Then every irreducible component of $\unif^\natural(\tilde{Z}^\natural)^{\mathrm{Zar}}$ is quasi-linear.
\end{thm}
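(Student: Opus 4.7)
The plan is a purely formal reduction to Theorem~\ref{AxLindemannEquiDistribution} (whose proof is the real content of the Appendix), entirely analogous to the reduction of Theorem~\ref{AxLindemannSemiAlgebraicForm} to Theorem~\ref{AxLindemann} sketched in the excerpt, and using the very same instance of Pila's Lemma~4.1 from \cite{PilaAbelianSurfaces}.

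First I would reduce to the case where $\tilde{Z}^\natural$ is a connected irreducible semi-algebraic subset. A general semi-algebraic subset of $\cX^{\natural+}$ has finitely many connected components; decomposing the $\R$-Zariski closure of each into irreducible real algebraic varieties and intersecting back yields a finite decomposition $\tilde{Z}^\natural = \bigcup_\ell \tilde{Z}^\natural_\ell$ into connected irreducible semi-algebraic pieces. Since Zariski closure in $S^\natural$ commutes with finite unions,
\[
\unif^\natural(\tilde{Z}^\natural)^{\mathrm{Zar}} = \bigcup_\ell \unif^\natural(\tilde{Z}^\natural_\ell)^{\mathrm{Zar}},
\]
and it is enough to show that the irreducible components of each $\unif^\natural(\tilde{Z}^\natural_\ell)^{\mathrm{Zar}}$ are quasi-linear.

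Fix such an irreducible $\tilde{Z}^\natural_\ell$ and set $Y^\natural := \unif^\natural(\tilde{Z}^\natural_\ell)^{\mathrm{Zar}}$, so that $\tilde{Z}^\natural_\ell \subset \unif^{\natural-1}(Y^\natural)$. Enlarge $\tilde{Z}^\natural_\ell$ to a connected irreducible semi-algebraic subset $\tilde{Z}^{\natural,\max}$ of $\unif^{\natural-1}(Y^\natural)$ that is maximal among such. By Pila's Lemma~4.1 the set $\tilde{Z}^{\natural,\max}$ is algebraic in the sense of Definition~\ref{DefinitionAlgebraicOnTheTop}, so it decomposes as a finite union $\tilde{Z}^{\natural,\max} = \bigcup_m \tilde{W}^\natural_m$ of irreducible algebraic subsets of $\cX^{\natural+}$. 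Applying Theorem~\ref{AxLindemannEquiDistribution} (the Appendix result) to each $\tilde{W}^\natural_m$, I obtain that $Y^\natural_m := \unif^\natural(\tilde{W}^\natural_m)^{\mathrm{Zar}}$ is a quasi-linear (hence irreducible and closed algebraic) subvariety of $S^\natural$.

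To finish, two containments: on the one hand $\tilde{W}^\natural_m \subset \unif^{\natural-1}(Y^\natural)$ gives $Y^\natural_m \subset Y^\natural$; on the other hand $\tilde{Z}^\natural_\ell \subset \bigcup_m \tilde{W}^\natural_m$ gives $\unif^\natural(\tilde{Z}^\natural_\ell) \subset \bigcup_m Y^\natural_m$, and the latter is a finite union of closed algebraic subvarieties of $S^\natural$, so $Y^\natural \subset \bigcup_m Y^\natural_m$. Hence $Y^\natural = \bigcup_m Y^\natural_m$, and the irreducible components of $Y^\natural$ are the maximal elements of $\{Y^\natural_m\}$, each of which is quasi-linear. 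The genuine difficulty is therefore wholly absorbed into Theorem~\ref{AxLindemannEquiDistribution}, whose proof requires the Pila--Wilkie counting theorem, the volume/fibration arguments of \cite{GaoTowards-the-And}, and Ax logarithmique (Theorem~\ref{TheoremAxLogarithmique}) as the transcendence input; the passage from algebraic to semi-algebraic input carried out above is purely formal.
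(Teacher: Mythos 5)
Your reduction is correct and follows essentially the route the paper itself indicates for Theorem~\ref{AxLindemannEquiDistributionSemiAlgebraic}: decompose the semi-algebraic set into finitely many connected irreducible pieces, enlarge each to a maximal semi-algebraic subset of $\unif^{\natural-1}(Y^\natural)$, invoke Lemma~4.1 of Pila--Tsimerman to pass to the algebraic situation, and apply the already-established Ax-Lindemann statement. The only (cosmetic) difference is that the paper, in the sentence ``Theorem~\ref{AxLindemannSemiAlgebraicForm} implies Theorem~\ref{AxLindemannEquiDistributionSemiAlgebraic} by a similar argument$\ldots$'', routes the final step through the semi-algebraic form (Theorem~\ref{AxLindemannSemiAlgebraicForm}) applied to the maximal piece directly, whereas you first decompose the maximal piece into irreducible algebraic components via Pila's lemma and apply Theorem~\ref{AxLindemannEquiDistribution} to each; given that the paper has already shown that these four statements are pairwise equivalent, the two routes are interchangeable and neither buys anything the other does not. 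The one bookkeeping point you handled correctly but which is worth keeping in mind is the double containment showing $Y^\natural=\bigcup_m Y^\natural_m$: the containment $Y^\natural\subset\bigcup_m Y^\natural_m$ only works because after applying Theorem~\ref{AxLindemannEquiDistribution} each $Y^\natural_m$ is known to be \emph{closed} algebraic (quasi-linear), so the finite union is Zariski closed; and the extraction of irreducible components from the finite union relies on the standard fact that any irreducible component of a finite union of closed sets is an irreducible component of one of them.
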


Let us explain now why Theorem~\ref{AxLindemann} implies Theorem~\ref{AxLindemannEquiDistribution}. Let $\tilde{Z}^\natural$ be as in Theorem~\ref{AxLindemannEquiDistribution}. Let $Y^\natural:=\unif^\natural(\tilde{Z}^\natural)^{\mathrm{Zar}}$ and let $\tilde{W}^\natural$ be an irreducible algebraic subset of $\cX^{\natural+}$ which contains $\tilde{Z}^\natural$ and is contained in $\unif^{\natural-1}(Y^\natural)$, maximal for these properties. Such a $\tilde{W}^\natural$ exists by, for example, dimension reason. Then $Y^\natural=\unif^\natural(\tilde{W}^\natural)^{\Zar}$ and $\tilde{W}^\natural$ is an irreducible algebraic subset of $\cX^{\natural+}$ which is contained in $\unif^{\natural-1}(Y^\natural)$, maximal for these properties. Theorem~\ref{AxLindemann} then implies that $\tilde{W}^\natural$ is irreducible bi-algebraic. So $Y^\natural=\unif^\natural(\tilde{W}^\natural)^{\mathrm{Zar}}=\unif^\natural(\tilde{W}^\natural)$ is quasi-linear. Theorem~\ref{AxLindemannSemiAlgebraicForm} implies Theorem~\ref{AxLindemannEquiDistributionSemiAlgebraic} by a similar argument because any semi-algebraic subset of $\cX^{\natural+}$ has only finitely many connected irreducible components.

Let us explain now why Theorem~\ref{AxLindemannEquiDistribution} implies Theorem~\ref{AxLindemann}. Let $Y^\natural$ and $\tilde{Z}^\natural$ be as in Theorem~\ref{AxLindemann}. Then Theorem~\ref{AxLindemannEquiDistribution} shows that $\unif^\natural(\tilde{Z}^\natural)^{\mathrm{Zar}}$ is an irreducible bi-algebraic subvariety of $S$, which we shall call $Y_0^\natural$. It is clear that $Y_0^\natural$ is a subvariety of $Y^\natural$. Let $\tilde{Y}_0^\natural$ be the complex analytic irreducible component of $\unif^{\natural-1}(Y_0^\natural)$ containing $\tilde{Z}^\natural$. Then $\tilde{Y}_0^\natural$ is irreducible algebraic. But then the maximality assumption on $\tilde{Z}^\natural$ implies $\tilde{Z}^\natural=\tilde{Y}_0^\natural$. Hence $\tilde{Z}^\natural$ is quasi-linear. Theorem~\ref{AxLindemannEquiDistributionSemiAlgebraic} implies Theorem~\ref{AxLindemannSemiAlgebraicForm} by a similar argument.

\subsection{Outline of the proof}\label{SubsectionAxLindemannPart1} We prove the Ax-Lindemann theorem (in the form Theorem~\ref{AxLindemann}) in the rest of the section. The proof is a modification of the author's previous work on the Ax-Lindemann theorem for mixed Shimura varieties \cite[$\mathsection$9-11]{GaoTowards-the-And}. Consider the diagram
\[
\xymatrix{
(P,\cX^{\natural+}) \ar[r]_{\pi^\sharp} \ar[d]^{\unif^\natural} \ar@/^1pc/[rr]|{\pi^\natural} & (P,\varphi(\cX^+)) \ar[r]_{\pi} \ar[d]^{\unif} & (G,\varphi(\cX^+)_G) \ar[d]^{\unif_G}, &\tilde{Z}^\natural \ar@{|->}[d] \ar@{|->}[r] & \tilde{Z} \ar@{|->}[d] \ar@{|->}[r] & \tilde{Z}_G \ar@{|->}[d] \\
S^\natural \ar[r]^{[\pi^\sharp]} \ar@/_1pc/[rr]|{[\pi^\natural]}& S \ar[r]^{[\pi]} & S_G, & Z^\natural \ar@{|->}[r] & Z \ar@{|->}[r] & Z_G
}
\]
and use Notation~\ref{NotationSubsetsOfEMSD}. First of all we do the following reduction:
\begin{itemize}
\item The conclusion is obviously true if $\tilde{Z}$ is a point. Hence we may assume $\dim\tilde{Z}>0$.
\item Let $(P^\prime,\cX^{\prime,\natural+})$ be the smallest connected enlarged mixed Shimura subdatum of $(P,\cX^{\natural+})$ such that $\tilde{Z}^\natural\subset\cX^{\prime,\natural+}$ and let $S^{\prime,\natural}$ be the corresponding connected enlarged mixed Shimura subvariety of $S^\natural$. Then $(P^\prime,\cX^{\prime,\natural+})$ has generic Mumford-Tate group by Proposition~\ref{PropositionEMSDWithGenericMTGroupAlwaysExists}. Replace $(P,\cX^{\natural+})$ by $(P^\prime,\cX^{\prime,\natural+})$, $S^\natural$ by $S^{\prime,\natural}$, $\unif^\natural$ by $\unif^{\prime,\natural}\colon\cX^{\prime,\natural+}\rightarrow S^{\prime,\natural}$ and $Y^\natural$ by the irreducible component $Y^{\prime,\natural}$ of $Y^\natural\cap S^{\prime,\natural}$ such that $\tilde{Z}^\natural\subset(\unif^{\prime,\natural})^{-1}(Y^{\prime,\natural})$, then $\tilde{Z}^\natural$ is an irreducible algebraic subset of $\cX^{\prime,\natural+}$ contained in $(\unif^{\prime,\natural})^{-1}(Y^{\prime,\natural})$ and is maximal for these properties. By definition, $\tilde{Z}^\natural$ is bi-algebraic in $\cX^{\prime,\natural+}$ if and only if it is bi-algebraic in $\cX^{\natural+}$. So making the replacement above does not change the assumption or the conclusion of the theorem.
\item Replace $(P,\cX^{\natural+})$ by $(P,\cX^{\natural+})/Z(P)=(P/Z(P),\cX^{\natural+})$ and other objects accordingly.
\item Let $Y_0^\natural:=\unif^\natural(\tilde{Z}^\natural)^{\mathrm{Zar}}$, then $\tilde{Z}^\natural$ is an irreducible algebraic subset of $\cX^{\natural+}$ contained in $\unif^{\natural-1}(Y_0^\natural)$ and is maximal for these properties. Hence we may assume $Y^\natural=Y_0^\natural$.
\item After the previous reduction, there is a unique complex analytic irreducible component of $\unif^{\natural-1}(Y^\natural)$ containing $\tilde{Z}^\natural$, which we denote by $\tilde{Y}^\natural$.
\item Replace $\Gamma$ by a neat subgroup contained in $P^{\der}(\Q)$ (Remark~\ref{RemarkConnectedEMSDV}(3)).
\end{itemize}

Let $\tilde{F}^\natural$ be the smallest quasi-linear subset of $\cX^{\natural+}$ containing $\tilde{Y}^\natural$. Let $N$ be the connected algebraic monodromy group associated with $Y^{\natural,\mathrm{sm}}$ and let $W_N:=W\cap N$ ($U_N:=U\cap N$). Denote by $\tilde{F}:=\pi^\sharp(\tilde{F}^\natural)$ and $\tilde{F}_G:=\pi(\tilde{F})$. Then by Theorem~\ref{TheoremAxlogarithmic}, we have
\[
\tilde{F}=N(\R)^+U_N(\C)\tilde{z}\text{ and }\tilde{F}_G=G_N(\R)^+\tilde{z}_G
\]
where $\tilde{z}\in\tilde{Z}$ and $\tilde{z}_G=\pi(\tilde{z})$.

Let $(Q,\cY^+)$ the the smallest connected mixed Shimura subdatum of $(P,\varphi(\cX^+))$ such that $\tilde{Z}\subset\cY^+$. Let $\Gamma_Q:=\Gamma\cap Q(\Q)$. Then $\tilde{F}$ is Hodge generic in $\cY^+$, $N\lhd Q$ and $N\lhd Q^{\mathrm{der}}$. Define
\[
\Gamma_{\tilde{Z}^\natural}:=\{\gamma\in\Gamma_Q|\gamma\cdot\tilde{Z}^\natural=\tilde{Z}^\natural\}
\text{ and }\Gamma_{G_Q,\tilde{Z}_G}:=\{\gamma\in\Gamma_{G_Q}|\gamma\cdot\tilde{Z}_G=\tilde{Z}_G\}
\]
and $H_{\tilde{Z}^\natural}:=(\Gamma_{\tilde{Z}^\natural}^{\Zar})^{\circ}$, $H_{\tilde{Z}_Q}:=(\Gamma_{G_Q,\tilde{Z}_Q}^{\Zar})^{\circ}$. Then $H_{\tilde{Z}^\natural}$ (resp. $H_{\tilde{Z}_G}$) is the largest connected subgroup of $Q^{\der}$ (resp. $G^{\der}$) such that $H_{\tilde{Z}^\natural}(\R)^+W_{H_{\tilde{Z}^\natural}}(\C)$ (resp. 
 $H_{\tilde{Z}_G}(\R)^+$) stabilizes $\tilde{Z}^\natural$ (resp. 
 $\tilde{Z}_G$).

Define $U_{H_{\tilde{Z}^\natural}}:=U\cap H_{\tilde{Z}^\natural}$ and $W_{H_{\tilde{Z}^\natural}}:=W\cap H_{\tilde{Z}^\natural}$
. Both of them are normal in $H_{\tilde{Z}^\natural}$
. Define $V_{H_{\tilde{Z}^\natural}}:=W_{H_{\tilde{Z}^\natural}}/U_{H_{\tilde{Z}^\natural}}$
 and $G_{H_{\tilde{Z}^\natural}}:=H_{\tilde{Z}^\natural}/W_{H_{\tilde{Z}^\natural}}$.

\begin{rmk}\label{RmkEMSVSMVAL} 
We know that $P(\R)W(\C)$ acts on $\varphi(\cX^+)$ but not on $\cX^+ \subset \cX^{\natural+}$. However its subgroup $P(\R)U(\C)$ (as a Lie group) acts on both, and the isomorphism $\varphi \colon \cX^+ \rightarrow \varphi(\cX^+)$ is $P(\R)U(\C)$-equivariant. One should not be confused with the different formulae for the action of $P(\R)U(\C)$ on $\varphi(\cX^+) \cong \cX^+$ in different coordinates. For example $\tilde{Z}^\natural$ being $W_{H_{\tilde{Z}^\natural}}(\C)$-stable implies that $\tilde{Z} = \pi^\sharp(\tilde{Z}^\natural)$ is stable under $W_{H_{\tilde{Z}^\natural}}(\C)$, and hence $(\varphi|_{\cX^+})^{-1}(\tilde{Z})$ is stable under $W_{H_{\tilde{Z}^\natural}}(\R)U_{H_{\tilde{Z}^\natural}}(\C)$.
\end{rmk}

\begin{lemma}\label{LemmaStabiliserOfZstabilisesY}
The set $\tilde{Y}^\natural$ is stable under $H_{\tilde{Z}^\natural}(\R)^+W_{H_{\tilde{Z}^\natural}}(\C)$.
\begin{proof} Every fiber of $\cX^{\natural+}\rightarrow\varphi(\cX^+)_G$ can be canonically identified with $W(\C)$. So it is enough to prove that $\tilde{Y}$ is stable under $H_{\tilde{Z}}(\R)^+$: If $W_{H_{\tilde{Z}^\natural}}(\R)\tilde{y}\subset\tilde{Y}^\natural$ for $\tilde{y}^\natural\in\tilde{Y}^\natural$, then $W_{H_{\tilde{Z}^\natural}}(\C)\tilde{y}^\natural\subset\tilde{Y}^\natural$ because $\tilde{Y}^\natural$ is complex analytic and $W_{H_{\tilde{Z}^\natural}}(\C)\tilde{y}^\natural$ is the smallest complex analytic subset of $\cX^{\natural+}$ containing $W_{H_{\tilde{Z}^\natural}}(\R)\tilde{y}^\natural$.

If not, then since $H_{\tilde{Z}^\natural}(\Q)$ is dense (in the usual topology) in $H_{\tilde{Z}^\natural}(\R)^+$, there exists $h\in H_{\tilde{Z}^\natural}(\Q)$ such that $h\tilde{Y}^\natural\neq\tilde{Y}^\natural$. Then $\tilde{Z}^\natural\subset\tilde{Y}^\natural\cap h\tilde{Y}^\natural$, and hence contained in a complex analytic irreducible component $\tilde{Y}^{\prime,\natural}$ of it. Consider the Hecke correspondence $T_h$
\[
\Gamma\backslash\cX^{\natural+}\xleftarrow{[h]}(\Gamma\cap h^{-1}\Gamma h)\backslash\cX^{\natural+}\xrightarrow{[1]}\Gamma\backslash\cX^{\natural+}
\]
Then $T_h(Y^\natural)=\unif^\natural(h\cdot \unif^{\natural-1}(Y^\natural))$. 
Hence
\[
Y^\natural\cap T_h(Y^\natural)=\unif^\natural(\unif^{\natural-1}(Y^\natural)\cap(h\cdot \unif^{\natural-1}(Y^\natural))).
\]
On the other hand, $T_h(Y^\natural)$ is equidimensional of the same dimension as $Y^\natural$, hence by reason of dimension, $h\tilde{Y}^\natural$ is an irreducible component of $\unif^{\natural-1}(T_h(Y^\natural))=\Gamma h\Gamma\tilde{Y}^\natural$. So $\unif(h\tilde{Y}^\natural)$ is an irreducible component of $T_h(Y^\natural)$.

Since $\tilde{Y}^{\natural,\prime}$ is a complex analytic irreducible component of $\tilde{Y}^\natural\cap h\tilde{Y}^\natural$, it is also a complex analytic irreducible component of $\unif^{^\natural-1}(Y^\natural)\cap(h\tilde{Y}^\natural)=\Gamma\tilde{Y}^\natural\cap h\tilde{Y}^\natural$. So $Y^{\natural,\prime}:=\unif^\natural(\tilde{Y}^{\natural,\prime})$ is a complex analytic irreducible component of $Y^\natural\cap \unif(h\tilde{Y}^\natural)$. So $Y^{\natural,\prime}$ is a complex analytic irreducible component of $Y^\natural\cap T_h(Y^\natural)$, and hence is algebraic since $Y^\natural\cap T_h(Y^\natural)$ is.

Since $h\tilde{Y}^\natural\neq\tilde{Y}^\natural$ and $Y^\natural$ is irreducible, $\dim(Y^{\natural,\prime})<\dim(Y^\natural)$. But $\tilde{Z}^\natural\subset\tilde{Y}^\natural\cap h\tilde{Y}^\natural\subset \unif^{^\natural-1}(Y^{\natural,\prime})$. This contradicts the minimality of $Y^\natural$.
\end{proof}
\end{lemma}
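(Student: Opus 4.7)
\textbf{Proof proposal for Lemma~\ref{LemmaStabiliserOfZstabilisesY}.} The plan is to first reduce the statement to the assertion that $\tilde{Y}^\natural$ is stable under the real group $H_{\tilde{Z}^\natural}(\R)^+$, and then extract this stability from the definition of $H_{\tilde{Z}^\natural}$ together with the minimality built into the choice of $Y^\natural$. For the reduction, I would argue as follows: since $\tilde{Y}^\natural$ is complex analytic, if $W_{H_{\tilde{Z}^\natural}}(\R)\tilde{y}^\natural\subset\tilde{Y}^\natural$ for some $\tilde{y}^\natural\in\tilde{Y}^\natural$, then the smallest complex analytic subset containing this real orbit is $W_{H_{\tilde{Z}^\natural}}(\C)\tilde{y}^\natural$, which is therefore contained in $\tilde{Y}^\natural$. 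Thus the $W_{H_{\tilde{Z}^\natural}}(\C)$-stability is automatic once we prove the $H_{\tilde{Z}^\natural}(\R)^+$-stability.

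For the $H_{\tilde{Z}^\natural}(\R)^+$-stability, I would use a density-plus-Hecke argument. Since $H_{\tilde{Z}^\natural}$ is a connected $\Q$-group, $H_{\tilde{Z}^\natural}(\Q)$ is dense in $H_{\tilde{Z}^\natural}(\R)^+$ in the analytic topology (by the real approximation theorem, using that we have replaced $\Gamma$ by a subgroup of $P^{\mathrm{der}}(\Q)$); it therefore suffices to show $h\tilde{Y}^\natural=\tilde{Y}^\natural$ for every $h\in H_{\tilde{Z}^\natural}(\Q)$. Suppose for contradiction that $h\tilde{Y}^\natural\neq\tilde{Y}^\natural$. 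By construction $h$ stabilizes $\tilde{Z}^\natural$, so $\tilde{Z}^\natural\subset\tilde{Y}^\natural\cap h\tilde{Y}^\natural$, and $\tilde{Z}^\natural$ is contained in a complex analytic irreducible component $\tilde{Y}^{\natural,\prime}$ of this intersection that is strictly smaller than $\tilde{Y}^\natural$ (since $Y^\natural$ and hence $\tilde{Y}^\natural$ is irreducible).

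I would now turn this into a statement about algebraic subvarieties of $S^\natural$ via the Hecke correspondence $T_h$ attached to $h$. By Proposition~\ref{PropositionMorphismsBetweenEMSV}, $T_h(Y^\natural)$ is a closed algebraic subvariety of $S^\natural$ of the same dimension as $Y^\natural$, and $[h]\tilde{Y}^\natural$ maps onto an irreducible component of $T_h(Y^\natural)$. Hence $\unif^\natural(\tilde{Y}^{\natural,\prime})$ is a complex analytic irreducible component of $Y^\natural\cap T_h(Y^\natural)$, and in particular is algebraic and of strictly smaller dimension than $Y^\natural$. But $\tilde{Z}^\natural$ is contained in the preimage of this smaller algebraic subvariety, contradicting the minimality assumption $Y^\natural=\unif^\natural(\tilde{Z}^\natural)^{\Zar}$ made in the reduction step of $\mathsection$\ref{SubsectionAxLindemannPart1}. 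This contradiction forces $h\tilde{Y}^\natural=\tilde{Y}^\natural$ for every $h\in H_{\tilde{Z}^\natural}(\Q)$, and hence for every $h\in H_{\tilde{Z}^\natural}(\R)^+$ by density.

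The only step requiring genuine care is the Hecke correspondence argument, where one must verify that $T_h(Y^\natural)$ is equidimensional of the right dimension and that $[h]\tilde{Y}^\natural$ does give rise to an irreducible component of $T_h(Y^\natural)$; this is where one uses Proposition~\ref{PropositionMorphismsBetweenEMSV}(2)-(3) to know that $[h]$ is finite on an appropriate cover. The rest of the argument is formal, combining density of rational points, complex analyticity of $\tilde{Y}^\natural$, and the minimality built into the choice of $Y^\natural$.
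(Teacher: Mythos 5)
Your proposal is correct and follows essentially the same route as the paper's own proof: the same reduction of $W_{H_{\tilde{Z}^\natural}}(\C)$-stability to $H_{\tilde{Z}^\natural}(\R)^+$-stability via complex analyticity, the same density argument for $H_{\tilde{Z}^\natural}(\Q)$, and the same Hecke-correspondence contradiction with the minimality of $Y^\natural=\unif^\natural(\tilde{Z}^\natural)^{\Zar}$. No substantive differences to report.
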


\begin{lemma}\label{LemmaStabilizerNormalInN}
$H_{\tilde{Z}^\natural}\lhd N$.
\begin{proof} We have $\tilde{Z}^\natural\subset\tilde{F}^\natural$. Consider $pr_{\tilde{F}^\natural}^{\mathrm{ws}}$, the projection of $\tilde{F}^\natural$ to its weakly special part $(\tilde{F}^\natural)^{\mathrm{ws}}$ (see \eqref{EquationWeaklySpecialAndQuasiLinear}). Let $(Q,\cY^{\natural+})$ be the enlarged mixed Shimura subdatum of $(P,\cX^{\natural+})$ associated to $(Q,\cY^+)$. Then the image of $pr_{\tilde{F}^\natural}^{\mathrm{ws}}(\tilde{Z}^\natural)$ under $(Q,\cY^{\natural+})\rightarrow(Q,\cY^{\natural+})/N$ is a Hodge generic point (since $\tilde{F}$ is Hodge generic in $\cY^+$), which is stable under $H_{\tilde{Z}^\natural}/(H_{\tilde{Z}^\natural}\cap N)$. So $H_{\tilde{Z}^\natural}<N$.

Let $H^\prime$ be the algebraic group generated by $\gamma^{-1}H_{\tilde{Z}^\natural}\gamma$ for all $\gamma\in\Gamma_{Y^{\natural,\sm}}$, where $\Gamma_{Y^{\natural,\sm}}$ is the monodromy group of $Y^{\natural,\sm}$. Since $H^\prime$ is invariant under conjugation by $\Gamma_{Y^{\natural,\sm}}$, it is invariant under $(\Gamma_{Y^{\natural,\sm}})^{\Zar}$, therefore invariant under conjugation by $N$.

By Lemma~\ref{LemmaStabiliserOfZstabilisesY}, $\tilde{Y}^\natural$ is invariant under $H_{\tilde{Z}^\natural}(\R)^+W_{H_{\tilde{Z}^\natural}}(\C)$. On the other hand, 
$\tilde{Y}^\natural$ is also invariant under $\Gamma_{Y^{\natural,\sm}}$. So $\tilde{Y}^\natural$ is invariant under the action of $H^\prime(\R)^+W_{H^\prime}(\C)$ where $W_{H^\prime}:=W\cap H^\prime$. Since $H^\prime(\R)^+W_{H^\prime}(\C)\tilde{Z}^\natural$ is semi-algebraic, there exists an irreducible algebraic subset of $\cX^{\natural+}$, say $\tilde{E}^\natural$, which contains $H^\prime(\R)^+W_{H^\prime}(\C)\tilde{Z}^\natural$ and is contained in $\tilde{Y}^\natural$ by \cite[Lemma~4.1]{PilaAbelianSurfaces}.
Now $\tilde{Z}^\natural\subset\tilde{E}^\natural\subset\tilde{Y}^\natural$, so 
$\tilde{Z}^\natural=\tilde{E}^\natural=H^\prime(\R)^+W_{H^\prime}(\C)\tilde{Z}^\natural$ by maximality of $\tilde{Z}^\natural$, and therefore 
$H^\prime=H_{\tilde{Z}^\natural}$ by the maximality of $H_{\tilde{Z}^\natural}$. So $H_{\tilde{Z}^\natural}$ is invariant under conjugation by $N$. But $H_{\tilde{Z}^\natural}<N$, so $H_{\tilde{Z}^\natural}$ is normal in $N$.
\end{proof}
\end{lemma}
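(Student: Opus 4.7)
My plan is to mimic the strategy used for mixed Shimura varieties in \cite{GaoTowards-the-And}, splitting the argument into two parts: (i) $H_{\tilde{Z}^\natural}\subseteq N$, and (ii) $H_{\tilde{Z}^\natural}$ is normal in $N$.

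For part (i), I would exploit that $\tilde{F}=\pi^\sharp(\tilde{F}^\natural)$ is Hodge generic in $\cY^+$ by the first reduction step in $\mathsection$\ref{SubsectionAxLindemannPart1} (after replacing $(P,\cX^{\natural+})$ by the smallest enlarged mixed Shimura subdatum containing $\tilde{Z}^\natural$, and then replacing $(Q,\cY^+)$ similarly on the level of the associated mixed Shimura datum). Form the quotient $\varphi\colon(Q,\cY^{\natural+})\to(Q,\cY^{\natural+})/N$ obtained from Proposition~\ref{PropositionQuotientOfEMSD}. Since $N$ is the monodromy group of $Y^{\natural,\mathrm{sm}}$, the image of any such irreducible component under $\varphi$ is a single Hodge generic point; in particular the image of $pr_{\tilde{F}^\natural}^{\mathrm{ws}}(\tilde{Z}^\natural)$ is a Hodge generic point. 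Because we have already replaced $P$ by $P/Z(P)$, any element stabilizing a Hodge generic point must be trivial, which forces $H_{\tilde{Z}^\natural}$ to map to the trivial subgroup under $\varphi$, i.e.\ $H_{\tilde{Z}^\natural}\subseteq N$.

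For part (ii), I would use the standard monodromy conjugation trick. Define
\[
H':=\langle \gamma^{-1}H_{\tilde{Z}^\natural}\gamma : \gamma\in\Gamma_{Y^{\natural,\mathrm{sm}}}\rangle^{\mathrm{Zar}}.
\]
By construction $H'$ is invariant under $\Gamma_{Y^{\natural,\mathrm{sm}}}$-conjugation, hence under $(\Gamma_{Y^{\natural,\mathrm{sm}}})^{\mathrm{Zar}}=N$-conjugation. The remaining task is to show $H'=H_{\tilde{Z}^\natural}$. For this I would verify that $\tilde{Y}^\natural$ is stable under $H'(\mathbb{R})^+W_{H'}(\mathbb{C})$: each generator $\gamma^{-1}H_{\tilde{Z}^\natural}\gamma$ preserves $\tilde{Y}^\natural$ because $\gamma\in\Gamma_{Y^{\natural,\mathrm{sm}}}$ preserves $\tilde{Y}^\natural$ (it being a complex analytic irreducible component of $\unif^{\natural-1}(Y^\natural)$) and $H_{\tilde{Z}^\natural}(\mathbb{R})^+W_{H_{\tilde{Z}^\natural}}(\mathbb{C})$ preserves $\tilde{Y}^\natural$ by Lemma~\ref{LemmaStabiliserOfZstabilisesY}.

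The final step is to descend this stability to $\tilde{Z}^\natural$. The orbit $H'(\mathbb{R})^+W_{H'}(\mathbb{C})\cdot\tilde{Z}^\natural$ is semi-algebraic in $\cX^{\natural+}$ and contained in $\tilde{Y}^\natural$; by \cite[Lemma~4.1]{PilaAbelianSurfaces}, it is contained in some irreducible algebraic subset $\tilde{E}^\natural\subseteq\tilde{Y}^\natural$. Since $\tilde{Z}^\natural\subseteq\tilde{E}^\natural\subseteq\unif^{\natural-1}(Y^\natural)$, maximality of $\tilde{Z}^\natural$ forces $\tilde{Z}^\natural=\tilde{E}^\natural$, so $H'(\mathbb{R})^+W_{H'}(\mathbb{C})$ stabilizes $\tilde{Z}^\natural$, and maximality of $H_{\tilde{Z}^\natural}$ gives $H'\subseteq H_{\tilde{Z}^\natural}$. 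The reverse inclusion $H_{\tilde{Z}^\natural}\subseteq H'$ is immediate from the definition, so $H_{\tilde{Z}^\natural}=H'$ is normalized by $N$; combined with (i), this shows $H_{\tilde{Z}^\natural}\lhd N$. The main technical obstacle is keeping careful track that the enlarged uniformization $\unif^\natural$ (as opposed to the mixed one $\unif$) still allows the Pila-type semi-algebraic containment argument to apply without losing the complex analytic structure, but the point is that $\tilde{Y}^\natural$ itself is complex analytic and the algebraic structure of $\cX^{\natural+}$ from $\mathsection$\ref{SubsectionAlgebraicStructureOnXnatural} makes \cite[Lemma~4.1]{PilaAbelianSurfaces} directly applicable.
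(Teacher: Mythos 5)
Your proof follows essentially the same route as the paper: first establish $H_{\tilde{Z}^\natural}\subseteq N$ by passing to the quotient $(Q,\cY^{\natural+})/N$ and using Hodge genericity, then form $H'$ from the $\Gamma_{Y^{\natural,\mathrm{sm}}}$-conjugates, use Lemma~\ref{LemmaStabiliserOfZstabilisesY} plus the semi-algebraic containment argument from \cite[Lemma~4.1]{PilaAbelianSurfaces} to force $H'=H_{\tilde{Z}^\natural}$, and conclude normality. One small caveat in part (i): the assertion that ``any element stabilizing a Hodge generic point must be trivial'' is not literally true (the pointwise stabilizer is a nontrivial compact subgroup); what is true, and what you actually need, is that a \emph{connected $\Q$-subgroup of $Q^{\der}$} whose real-plus-unipotent points stabilize a Hodge generic point must centralize the Mumford--Tate group $Q/N$ and hence lie in $Z(Q/N)\cap(Q/N)^{\der}$, which is finite, so the connected group is trivial.
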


\begin{cor}\label{CorollaryNormalSubgroups}
$G_{H_{\tilde{Z}^\natural}}\lhd G_Q^\der$.
\begin{proof} We have $G_{H_{\tilde{Z}^\natural}}\lhd G_N\lhd G_Q^\der$, and so $G_{H_{\tilde{Z}^\natural}}\lhd G_Q^\der$ since all the three groups are reductive. 
\end{proof}
\end{cor}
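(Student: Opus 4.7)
The plan is to show that normality is transitive along the chain $G_{H_{\tilde{Z}^\natural}} \lhd G_N \lhd G_Q^\der$, once we verify that all three groups are semisimple. For semisimple connected algebraic groups, connected normal subgroups correspond bijectively to Lie-algebra ideals, and these ideals are precisely sums of the simple factors of the ambient Lie algebra. Transitivity then reduces to the elementary observation that a simple ideal of an ideal is again a simple ideal of the whole.

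First I would establish the two normality relations. From Lemma~\ref{LemmaStabilizerNormalInN} we have $H_{\tilde{Z}^\natural}\lhd N$, and passing to the reductive quotient $(\cdot)/\mathcal{R}_u$ gives $G_{H_{\tilde{Z}^\natural}}\lhd G_N$. From the reduction in $\mathsection$\ref{SubsectionAxLindemannPart1} we have $N\lhd Q^\der$, so again passing to reductive quotients yields $G_N\lhd G_Q^\der$.

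Next I would verify semisimplicity. Since $G_Q$ is reductive, $G_Q^\der$ is semisimple. Any connected normal subgroup of a semisimple group is semisimple (its Lie algebra is an ideal in a semisimple Lie algebra, and ideals of semisimple Lie algebras are semisimple). Applying this to $G_N\lhd G_Q^\der$ gives that $G_N$ is semisimple, and applying it to $G_{H_{\tilde{Z}^\natural}}\lhd G_N$ gives that $G_{H_{\tilde{Z}^\natural}}$ is semisimple as well.

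Finally I would conclude as follows. Write $\mathrm{Lie}(G_Q^\der)=\bigoplus_i \mathfrak{s}_i$ as a direct sum of simple ideals. Since $G_N\lhd G_Q^\der$, the ideal $\mathrm{Lie}(G_N)$ equals $\bigoplus_{i\in I}\mathfrak{s}_i$ for some subset $I$, and the simple factors of $\mathrm{Lie}(G_N)$ are exactly the $\mathfrak{s}_i$ with $i\in I$. Since $G_{H_{\tilde{Z}^\natural}}\lhd G_N$, in the same way $\mathrm{Lie}(G_{H_{\tilde{Z}^\natural}})=\bigoplus_{i\in J}\mathfrak{s}_i$ for some $J\subseteq I$. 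This exhibits $\mathrm{Lie}(G_{H_{\tilde{Z}^\natural}})$ as an ideal of $\mathrm{Lie}(G_Q^\der)$, and hence $G_{H_{\tilde{Z}^\natural}}$, being connected, is normal in $G_Q^\der$. There is no serious obstacle here: once semisimplicity is in place, the ideal-structure argument is automatic, and the only subtlety worth flagging is that transitivity of normality would fail without the semisimplicity input (it is in general false for arbitrary connected algebraic groups).
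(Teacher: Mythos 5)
Your proof is correct and follows the same route as the paper's one-line argument: establish $G_{H_{\tilde{Z}^\natural}}\lhd G_N\lhd G_Q^\der$ and then observe that transitivity of normality holds because the groups are reductive (the paper) or, more precisely as you note, semisimple (since $G_Q^\der$ is the derived group of a reductive group and connected normal subgroups of a semisimple group are semisimple). Your spelled-out Lie-algebra argument is a useful expansion of the paper's terse justification, and the sharper observation that semisimplicity rather than mere reductivity is what cleanly drives the transitivity of normality is well taken, but there is no essential difference in approach.
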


\begin{lemma}\label{LemmaProjectionOfMaximalStillMaximalAfterTakingClosure}
\begin{enumerate}
\item The variety $Y_G$ is weakly special. Hence $\tilde{Y}_G=\tilde{F}_G=G_N(\R)^+\tilde{y}_G$ for any $\tilde{y}_G\in\tilde{Y}_G$.
\item We have $\unif(\tilde{Z}_G)^{\Zar}=Y_G$.
\end{enumerate}
\begin{proof}
We have $\unif(\tilde{Z}_G)^{\Zar}=Y_G$ because $Y^\natural=\unif^\natural(\tilde{Z}^\natural)^{\Zar}$. The Ax-Lindemann Theorem for Shimura varieties \cite{KlinglerThe-Hyperbolic-} implies that $Y_G=Z_G^{\Zar}$ is weakly special.
\end{proof}
\end{lemma}

\begin{prop}\label{PropositionKeyProposition}
We have $\tilde{Z}_G=G_{H_{\tilde{Z}^\natural}}(\R)^+\tilde{z}_G$. Hence $\tilde{Z}_G$ is weakly special.
\end{prop}
The proof of Proposition~\ref{PropositionKeyProposition} will occupy $\mathsection$\ref{SubsectionAxLindemannPart2}. Here we show how to prove the Ax-Lindemann theorem assuming this proposition. We shall also use the following theorem, which we will prove as Theorem~\ref{ThmALFiber}.

\begin{thm}\label{TheoremAxLindemannForFiber}
The Ax-Lindemann theorem (Theorems~\ref{AxLindemann}$\sim$\ref{AxLindemannEquiDistributionSemiAlgebraic}) is true if $\tilde{Z}_G$ is a point.
\end{thm}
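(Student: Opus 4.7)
The plan is to observe that the hypothesis places us directly in the setting of Theorem~\ref{TheoremAxSchanuelFiberOfEMSV}, so Ax-Lindemann is extracted from the fiberwise Ax-Schanuel by the formal argument used in Theorem~\ref{TheoremAxSchanuelAndAxLindemann}(1). More concretely, let $\tilde{Z}_G = \{\tilde{z}_G\}$ and set $z_G := \unif_G(\tilde{z}_G) \in S_G$. Then $\tilde{Z}^\natural$ lies in the fiber $(\pi^\natural)^{-1}(\tilde{z}_G) = W(\C)$ of $\cX^{\natural+} \to \varphi(\cX^+)_G$, and $Z^\natural := \unif^\natural(\tilde{Z}^\natural)$ lies in the closed fiber $E^\natural := [\pi^\natural]^{-1}(z_G)$ of $S^\natural \to S_G$. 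Since after the reductions at the start of $\mathsection$\ref{SubsectionAxLindemannPart1} we have $Y^\natural = \unif^\natural(\tilde{Z}^\natural)^{\Zar}$, it follows that $Y^\natural \subset E^\natural$ and that the Zariski closure taken in $S^\natural$ agrees with that taken in $E^\natural$; thus we are exactly in the framework of Theorem~\ref{TheoremAxSchanuelFiberOfEMSV}.

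Next I would invoke Theorem~\ref{TheoremAxSchanuelFiberOfEMSV}(1) applied to $\tilde{Z}^\natural \subset W(\C)$ and $Z^\natural \subset E^\natural$, letting $F^\natural$ be the smallest quasi-linear subvariety of $E^\natural$ containing $Z^\natural$ and $\tilde{F}^\natural$ the component of $\unif^{\natural-1}(F^\natural)$ containing $\tilde{Z}^\natural$. Because $\tilde{Z}^\natural$ is algebraic we have $\tilde{X}^\natural := (\tilde{Z}^\natural)^{\Zar} = \tilde{Z}^\natural$, so the fiberwise Ax-Schanuel reads
\[
\dim Y^\natural \;=\; \dim\tilde{Z}^\natural + \dim Y^\natural - \dim\tilde{Z}^\natural \;\geqslant\; \dim F^\natural .
\]
Combined with $Y^\natural \subset F^\natural$ and irreducibility, this forces $Y^\natural = F^\natural$, so $Y^\natural$ is quasi-linear. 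In particular $\tilde{F}^\natural$ is an irreducible algebraic subset of $\cX^{\natural+}$ contained in $\unif^{\natural-1}(Y^\natural)$, and the maximality of $\tilde{Z}^\natural$ among such subsets gives $\tilde{Z}^\natural = \tilde{F}^\natural$. Hence $\tilde{Z}^\natural$ is quasi-linear, which is Theorem~\ref{AxLindemann}; the three equivalent formulations then follow from the implications proved in $\mathsection$\ref{SectionAxLindemann}.

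The only mildly delicate point — and the main thing I would need to verify carefully — is that ``quasi-linear in the fiber $E^\natural$'' coincides with ``quasi-linear in $S^\natural$ and contained in $E^\natural$'', and analogously for $\tilde{F}^\natural$ inside $\cX^{\natural+}$ versus inside $W(\C)$. This is essentially a tautological check against Definition~\ref{DefinitionQuasiLinear}: when $Y_G$ is the single point $\{z_G\}$, the data $(Y, Y_{P/U}, L^\natural, \bV^\dagger|_{Y_G})$ defining a quasi-linear subvariety degenerates to the corresponding linear/translate data inside the single fiber (torsion translate of an abelian subvariety of $(S_{P/U})_{z_G}$, an algebraic subvariety of a trivial vector bundle fiber, and a linear subspace of the remaining vector fiber), and this is exactly the restriction of the global quasi-linear data that cuts out the same subvariety inside $S^\natural$. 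Once this identification is recorded, no further argument is required.
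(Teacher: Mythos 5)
Your proposal is correct and follows exactly the route the paper takes: the paper's own proof is the one-line observation that the statement is ``a direct corollary of Theorem~\ref{TheoremAxSchanuelFiberOfEMSV} and Theorem~\ref{TheoremAxSchanuelAndAxLindemann}'', and you have simply unwound that, applying Theorem~\ref{TheoremAxSchanuelFiberOfEMSV}(1) with $\tilde{X}^\natural=\tilde{Z}^\natural$ and then running the formal dimension-count from Theorem~\ref{TheoremAxSchanuelAndAxLindemann}(1) to conclude $Y^\natural=F^\natural$ and $\tilde{Z}^\natural=\tilde{F}^\natural$. Your closing remark on matching fiberwise quasi-linearity against Definition~\ref{DefinitionQuasiLinear} is a reasonable verification that the paper leaves implicit.
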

\begin{proof}[Proof of Theorem~\ref{AxLindemann}] We divide the proof into the following steps.

\noindent\textit{\underline{\textbf{Step I}}} Prove $\tilde{Z}^\natural=H_{\tilde{Z}^\natural}(\R)^+W_{H_{\tilde{Z}^\natural}}(\C)(\tilde{Z}^\natural)_{\tilde{z}_G}$ for some $\tilde{z}_G\in\tilde{Z}_G$.

Consider a fibre of $\tilde{Z}^\natural$ over a Hodge-generic point $\tilde{z}_G\in \tilde{Z}_G$ such that $\pi^\natural|_{\tilde{Z}^\natural}$ is flat at $\tilde{z}_G$ (such a point exists by \cite[$\mathsection 4$, Lemma~1.4]{AndreMumford-Tate-gr} and generic flatness). Suppose that $\tilde{K}^\natural$ is an irreducible algebraic component of $\tilde{Z}^\natural_{\tilde{z}_G}$ such that $\dim(\tilde{Z}^\natural_{\tilde{z}_G})=\dim(\tilde{K}^\natural)$, then the flatness of  $\pi^\natural|_{\tilde{Z}^\natural}$ at $\tilde{z}_G$ implies $\dim(\tilde{Z}^\natural)=\dim(\tilde{Z}_G)+\dim(\tilde{Z}^\natural_{\tilde{z}_G})=\dim(\tilde{Z}_G)+\dim(\tilde{K}^\natural)$.

Consider the set $\tilde{E}^\natural:=H_{\tilde{Z}^\natural}(\R)^+W_{H_{\tilde{Z}^\natural}}(\C)\tilde{K}^\natural$. It is semi-algebraic (since $\tilde{K}$ is algebraic and the action of $P(\R)^+W(\C)$ on $\cX^{\natural+}$ is algebraic). The fact $\tilde{K}^\natural\subset\tilde{Z}^\natural$ implies that $\tilde{E}^\natural\subset\tilde{Z}^\natural$. Now Proposition~\ref{PropositionKeyProposition} implies that $\pi^\natural(\tilde{E}^\natural)=G_{H_{\tilde{Z}^\natural}}(\R)^+\tilde{z}_G=\tilde{Z}_G$ and that the $\R$-dimension of every fiber of $\pi^\natural|_{\tilde{E}^\natural}$ is at least $\dim_\R(\tilde{K}^\natural)$. So
\[
\dim(\tilde{Z}^\natural)\ge \dim(\tilde{E}^\natural)\ge \dim(\pi^\natural(\tilde{E}^\natural))+\dim(\tilde{K}^\natural)=\dim(\tilde{Z}_G)+\dim(\tilde{K}^\natural)=\dim(\tilde{Z}^\natural).
\]
So $\tilde{E}^\natural=\tilde{Z}^\natural$ since $\tilde{Z}^\natural$ is irreducible.

Next let $\tilde{K}^{\natural,\prime}$ be an irreducible algebraic subset which contains $\tilde{Z}^\natural_{\tilde{z}_G}$ and is contained in $\unif^{\natural-1}(Y^\natural)_{\tilde{z}_G}$, maximal for these properties. Then $\tilde{K}^{\natural,\prime}$ is quasi-linear by Theorem~\ref{TheoremAxLindemannForFiber}. We have $\tilde{K}^{\natural,\prime}\subset\tilde{Y}^\natural$ since $\tilde{Y}^\natural$ is an irreducible component of $\pi^{^\natural-1}(Y^\natural)$. Consider $\tilde{E}^{\natural,\prime}:=H_{\tilde{Z}^\natural}(\R)^+W_{H_{\tilde{Z}^\natural}}(\C)\tilde{K}^{\natural,\prime}$. Then $\tilde{E}^{\natural,\prime}\subset\tilde{Y}^\natural$ 
by Lemma~\ref{LemmaStabiliserOfZstabilisesY}. But $\tilde{E}^{\natural,\prime}$ is semi-algebraic, so by \cite[Lemma~4.1]{PilaAbelianSurfaces}, there exists an irreducible algebraic subset of $\cX^{\natural,+}$, say $\tilde{E}^{\natural,\prime}_{\mathrm{alg}}$ which contains $\tilde{E}^{\natural,\prime}$ and is contained in $\tilde{Y}^\natural$. So $\tilde{Z}^\natural=\tilde{E}^\natural\subset\tilde{E}^{\natural,\prime}_{\mathrm{alg}}\subset\tilde{Y}^\natural$. Now the maximality of $\tilde{Z}^\natural$ implies that
\begin{equation}\label{EquationTildeZNaturalWriting}
\tilde{Z}^\natural=\tilde{E}^{\natural,\prime}_{\mathrm{alg}}=\tilde{E}^{\natural,\prime}=H_{\tilde{Z}^\natural}(\R)^+W_{H_{\tilde{Z}^\natural}}(\C)\tilde{K}^{\natural,\prime}\text{ and }\tilde{K}^{\natural,\prime}=\tilde{Z}^\natural_{\tilde{z}_G}.
\end{equation}

\noindent\textit{\underline{\textbf{Step II}}} Prove $\tilde{Z}^{\natural}=H_{\tilde{Z}^\natural}(\R)^+U_{H_{\tilde{Z}^\natural}}(\C)\tilde{z}^{\natural}$.

Since $\tilde{K}^{\natural,\prime}$ is quasi-linear, we can write $\tilde{K}^\prime(:=\pi^\sharp(\tilde{K}^{\natural,\prime}))=W^\prime(\R)U^\prime(\C)\tilde{z}$ with $W^\prime<W$, $U^\prime=U\cap W^\prime$ and $\tilde{z}\in\tilde{Z}_{\tilde{z}_G}$. Then $W_{H_{\tilde{Z}^\natural}}<W^\prime$ and $\tilde{K}^{\natural,\prime}$ is stable under $W^\prime(\C)$. The complex structure of $\pi^{-1}(\tilde{z}_G)$ comes from $W(\R)U(\C)\cong W(\C)/F^0_{\tilde{z}_G}W(\C)$, where $F^0_{\tilde{z}_G}W(\C)=\exp(F^0_{\tilde{z}_G}\lie W_{\C})$. So the fact that $\tilde{Z}_{\tilde{z}_G}$ is a complex subspace of $\pi^{-1}(\tilde{z}_G)$ implies that $W^\prime/U^\prime$ is a $\MT(\tilde{z}_G)=G_Q$-module. Hence $W^\prime$ is a $G_Q$-group. 

Define $Q^\prime:=W^\prime H_{\tilde{Z}^\natural}$, then $Q^\prime$ is a subgroup of $Q$ since $W^\prime>W_{H_{\tilde{Z}^\natural}}$ and $G_{H_{\tilde{Z}^\natural}}W^\prime=W^\prime$. Now we have that $\tilde{Z}^\natural=H_{\tilde{Z}^\natural}(\R)^+W_{H_{\tilde{Z}^\natural}}(\C)\tilde{K}^{\natural,\prime}$ is stable under $Q^\prime$. So $H_{\tilde{Z}^\natural}=Q^\prime$ because $H_{\tilde{Z}^\natural}$ is the largest subgroup of $Q^{\der}$ such that $H_{\tilde{Z}^\natural}(\R)^+W_{H_{\tilde{Z}^\natural}}(\C)$ stabilizes $\tilde{Z}^\natural$. So $W_{H_{\tilde{Z}^\natural}}=W^\prime$ is a $G_Q$-group. Therefore $\tilde{Z}=H_{\tilde{Z}^\natural}(\R)^+U_{H_{\tilde{Z}^\natural}}(\C)\tilde{z}$.

\noindent\textit{\underline{\textbf{Step III}}} Prove $H_{\tilde{Z}^\natural}\lhd Q$, and hence $\tilde{Z}$ is weakly special and $H_{\tilde{Z}^\natural}=N$.

First of all, $U_{H_{\tilde{Z}^\natural}}\lhd Q$ by Proposition~\ref{PropositionEMSDWithGenericMTGroupAlwaysExists}(2).

Next consider the complex structure of $\pi^{-1}(\tilde{z}_G)$ which comes from $W(\R)U(\C)$ $\cong W(\C)/F^0_{\tilde{z}_G}W(\C)$. So the fact that $\tilde{Z}_{\tilde{z}_G}$ is a complex subspace of $\pi^{-1}(\tilde{z}_G)$ implies that $V_{H_{\tilde{Z}^\natural}}$ is a $\MT(\tilde{z}_G)=G_Q$-module. Hence $W_{H_{\tilde{Z}^\natural}}$ is a $G_Q$-group. Besides, $G_{H_{\tilde{Z}^\natural}}\lhd G_Q$ by Proposition~\ref{PropositionKeyProposition}. In particular, $G_{H_{\tilde{Z}^\natural}}$ is reductive.

Then let us prove $W_{H_{\tilde{Z}^\natural}}\lhd Q$. It suffices to prove $W_{H_{\tilde{Z}^\natural}}\lhd W_Q$. For any $\tilde{z}\in\tilde{Z}$, we have proved in Step~I that $\tilde{Z}_{\tilde{z}_G}=W_{H_{\tilde{Z}^\natural}}(\R)U_{H_{\tilde{Z}^\natural}}(\C)\tilde{z}$ is weakly special. Hence there is a connected mixed Shimura subdatum $(R,\cZ^+)\hookrightarrow (Q,\cY^+)$ such that $\tilde{z}\in\cZ^+$ and $W_{H_{\tilde{Z}^\natural}}\lhd R$. Define $W^*$ to be the $G_Q$-subgroup (of $W_Q$) generated by $W_R:=\cR_u(R)$, then $W_{H_{\tilde{Z}^\natural}}\lhd W^*$ since $W_{H_{\tilde{Z}^\natural}}$ is a $G_Q$-group.

Fix a Levi decomposition $H_{\tilde{Z}^\natural}=W_{H_{\tilde{Z}^\natural}}\rtimes G_{H_{\tilde{Z}^\natural}}$ and choose a compatible Levi decomposition $Q=W_Q\rtimes G_Q$ (as is shown in \cite[Lemma~9.7]{GaoTowards-the-And}). Let $Q^*$ be the group generated by $G_QR$, then $\cR_u(Q^*)=W^*$ and $Q^*/Q^*=G_Q$. The group $Q^*$ defines a connected mixed Shimura datum $(Q^*,\cY^{*+})$ with $\cY^{*+}=Q^*(\R)^+U_Q^*(\C)\tilde{z}$. Now $\tilde{Z}=H_{\tilde{Z}^\natural}(\R)^+U_{H_{\tilde{Z}^\natural}}(\C)\tilde{z}\subset\cY^{*+}$. But $\tilde{Z}$ is Hodge generic in $\cY^+$ by assumption, hence $Q=Q^*$ and $W_Q=W^*$. So $W_{H_{\tilde{Z}^\natural}}\lhd W_Q$ and hence $W_{H_{\tilde{Z}^\natural}}\lhd Q$.

Use the notation in $\mathsection$\ref{SubsectionStructureOfUnderlyingGroup}. We are done if we can prove:
\[
(u,v,1)(0,0,g)(-u,-v,1)\in H_{\tilde{Z}^\natural}\text{ for any }u\in U_Q,~ v\in V_Q\text{ \textit{and} } g\in G_{H_{\tilde{Z}^\natural}}.
\]
By \cite[Corollary~2.14]{GaoTowards-the-And}, there exist decompositions $U_Q=U_N\bigoplus U_N^\bot$ and $V_Q=V_N\bigoplus V_N^\bot$ as $G_Q$-modules such that $G_N$ acts trivially on $U_N^\bot$ and $V_N^\bot$. Now 
\begin{align*}
(u,v,1)(0,0,g)(-u,-v,1) &
=(u,v,g)(-u,-v,1) \\
&=(u-g\cdot u,v-g\cdot v,g) \\
&=((u_N+u_N^\bot)-g\cdot(u_N+u_N^\bot),(v_N+v_N^\bot)-g\cdot(v_N+v_N^\bot),g) \\
&=(u_N-g\cdot u_N,v_N-g\cdot v_N,g) \\
&=(u_N,v_N,1)(0,0,g)(-u_N,-v_N,1)\in H_{\tilde{Z}^\natural},
\end{align*}
where the last inclusion follows from Lemma~\ref{LemmaStabilizerNormalInN}.

\noindent\textit{\underline{\textbf{Conclusion}}} In view of Step~I and Step~III, we can apply Corollary~\ref{CorollaryCriterionOfBiAlgebraicity} to $\tilde{Z}^\natural$. So $\tilde{Z}^\natural$ is bi-algebraic, and hence is quasi-linear by Theorem~\ref{TheoremCharacterizationOfBiAlgebraicSubvarieties}.
\end{proof}

\subsection{Estimate}\label{SubsectionAxLindemannPart2}
The goal of this subsection is to prove Proposition~\ref{PropositionKeyProposition}.

Keep notation and assumptions as in the previous subsection. Consider $\cX^{\natural+}|_{\cY^+}:=\pi^{\sharp-1}(\cY^+)$. For the uniformization $\unif^\natural\colon\cX^{\natural+}|_{\cY^+}\rightarrow S^\natural|_{S_Q}:=[\pi^\sharp]^{-1}(S_Q)$, there exists a fundamental set $\mathfrak{F}^\natural$ such that $\unif^\natural|_{\mathfrak{F}^\natural}$ is definable in the o-minimal structure $\R_{\mathrm{an},\exp}$: by \cite[$\mathsection$10.1]{GaoTowards-the-And} there exists a fundamental set $\mathfrak{F}$ for $\unif_Q\colon\cY^+\rightarrow S_Q$ such that $\unif_Q|_{\mathfrak{F}}$ is definable in $\R_{\mathrm{an},\exp}$. It suffices to take
\[
\mathfrak{F}^\natural:=\pi_{P/U}^{\sharp-1}(\mathfrak{F}_{Q/U_Q})\times_{\mathfrak{F}_{Q/U_Q}}\mathfrak{F}\subset\pi_{P/U}^{\sharp-1}(\cY^+_{Q/U_Q})\times_{\cY^+_{Q/U_Q}}\cY^+=\cX^{\natural+}|_{\cY^+}.
\]

Recall that $G_{H_{\tilde{Z}^{\natural}}} \lhd G_N \lhd G_Q^{\der}$ by Corollary~\ref{CorollaryNormalSubgroups}. Since $G_Q^{\der}$ is semisimple, there exists a semisimple subgroup $G_0$ of $G_N$ and a semisimple subgroup $G'$ of $G_Q$ such that $G_N = G_{H_{\tilde{Z}^\natural}} G_0$ and $G_Q = G_N G'$ as almost direct products. 

We wish to prove $G_0=1$. The almost direct product $G_Q = G_{H_{\tilde{Z}^{\natural}}} G_0 G'$ induces a decomposition of Shimura data
\[
(G_Q^{\mathrm{ad}} ,\cY^+_{G_Q}) \cong (G_{H_{\tilde{Z}^{\natural}}}^{\mathrm{ad}}, \cY^+_{G_{H_{\tilde{Z}^{\natural}}}}) \times  (G_0, \cY^+_{G_0}) \times (G',\cY^+_{G'}).
\]
Under this decomposition, $\tilde{Z}_G = \cY^+_{G_{H_{\tilde{Z}^{\natural}}}} \times \tilde{Z}_{G_0} \times \{\text{point}\}$ for some $\tilde{Z}_{G_0} \subset \cY^+_{G_0}$. 

Assume that $G_0$ is non-trivial. Then Lemma~\ref{LemmaProjectionOfMaximalStillMaximalAfterTakingClosure} implies that $\dim \tilde{Z}_{G_0}>0$.

Fix a point $\tilde{z}^{\natural}\in\mathfrak{F}^\natural\cap\tilde{Z}^{\natural}$. Take an algebraic curve $C_G\subset \tilde{Z}_G$ passing through $\pi^\natural(\tilde{z}^{\natural})$ such that 
\begin{equation}\label{EqConditionCG}
\text{the image of }C_G\text{ under the projection }\cY^+_{G_Q} \rightarrow \cY^+_{G_{H_{\tilde{Z}^{\natural}}}}\text{ is a point.}
\end{equation}
Now $\pi^\natural(\tilde{Z}^{\natural}\cap\pi^{\natural-1}(C_G))=\tilde{Z}_G\cap C_G=C_G$, and hence there exists an algebraic curve $C^\natural$ in $\tilde{Z}^{\natural}\cap\pi^{\natural-1}(C_G)$ passing through $\tilde{z}^{\natural}$ such that $\dim(\pi^{\natural}(C^\natural))=1$.

Recall that the set $\mathfrak{F}_G$  is a fundamental set of $\unif_{G_Q}$ and $\unif_{G_Q}|_{\mathfrak{F}_G}$ is definable. We define for any irreducible semi-algebraic subvariety $A^\natural$ (resp. $A_G$) of $\unif^{\natural-1}(Y^\natural)$ (resp. $\unif_G^{-1}(Y_G)$) the following sets:
define
\small
\[
\begin{array}{cc}
\Sigma(A^\natural):=\{g\in Q^{\der}(\R) = (W_Q G_Q^{\der})(\R)|\dim(gA^\natural\cap \unif^{\natural-1}(Y^\natural)\cap\mathfrak{F}^\natural)=\dim(A^\natural)\} \\
(\text{resp. }\Sigma_G(A_G):=\{g\in G_Q^{\der}(\R)|\dim(gA_G\cap \unif_{G_Q}^{-1}(\bar{Y_G})\cap\mathfrak{F}_G)=\dim(A_G)\})
\end{array}
\]
\normalsize
and
\small
\[
\begin{array}{cc}
\Sigma(A^\natural):=\{g\in Q(\R)|g^{-1}\mathfrak{F}^\natural\cap A^\natural\neq\emptyset\} \\
(\text{resp. }\Sigma_G(A_G):=\{g\in G_Q(\R)|g^{-1}\mathfrak{F}_G\cap A_G\neq\emptyset\})
\end{array}.
\]
\normalsize
Then $\Sigma(A^\natural)$ and $\Sigma_G(A_G)$ are by definition definable.

\begin{lemma}\label{the two Sigma-sets coincide}
$\Sigma^{\prime}(A^\natural)\cap\Gamma_Q=\Sigma(A^\natural)\cap\Gamma_Q$ (resp. $\Sigma^{\prime}_G(A_G)\cap\Gamma_{G_Q}=\Sigma_G(A_G)\cap\Gamma_{G_Q}$).
\begin{proof} First $\Sigma(A^\natural)\cap\Gamma_Q\subset\Sigma^{\prime}(A^\natural)\cap\Gamma_Q$ by definition. Conversely for any $\gamma\in\Sigma^{\prime}(A^\natural)\cap\Gamma_Q$, the set $\gamma^{-1}\mathfrak{F}^\natural\cap A^\natural$ contains an open subspace of $A^\natural$ since $\mathfrak{F}^\natural$ is by choice open in $\cX^{\natural+}|_{\cY^+}$. Hence $\gamma A^\natural\cap \unif^{\natural-1}(Y^\natural)\cap\mathfrak{F}^\natural=\gamma A^\natural\cap\mathfrak{F}^\natural$ contains an open subspace of $\gamma A^\natural$ which must be of dimension $\dim(A^\natural)$. Hence $\gamma\in\Sigma(A^\natural)\cap\Gamma_Q$. The proof for $A_G$ is the same.
\end{proof}
\end{lemma}

This lemma implies
\vspace{-2mm}
\small
\begin{equation}\label{relation between Sigma of curve and Sigma of Z}
\begin{array}{cc}
\Sigma(C^\natural)\cap\Gamma_Q=\Sigma^{\prime}(C^\natural)\cap\Gamma_Q\subset
\Sigma^{\prime}(\tilde{Z}^{\natural})\cap\Gamma_Q=\Sigma(\tilde{Z}^{\natural})\cap\Gamma_Q \\
(\text{resp. }\Sigma_G(C_G)\cap\Gamma_{G_Q}=\Sigma^{\prime}_G(C_{G_i})\cap\Gamma_{G_Q}\subset\Sigma^{\prime}_G(\bar{\tilde{Z}^{\natural}_G})\cap\Gamma_{G_Q}=\Sigma(\bar{\tilde{Z}^{\natural}_G})\cap\Gamma_{G_Q})
\end{array}.
\end{equation}
\normalsize

For $T>0$, define
\[
\Theta_G(C_G,T):=\{\gamma_G\in\Gamma_{G_Q}\cap\Sigma_G(C_G)|H(\gamma_G)\leq  T\}.
\]

\begin{lemma}\label{projection of theta is theta}
\begin{enumerate}
\item[(i)] We have $\pi^\natural(\Gamma_Q\cap\Sigma(C^\natural))=\Gamma_{G_Q}\cap\Sigma_G(C_G)$.
\item[(ii)] There exists a constant $c'>0$ such that the following statement holds. For any $\gamma \in \Gamma_Q\cap\Sigma(C^\natural)$ such that $\pi^{\natural}(\gamma) \in \Theta_G(C_G,T)$, we have that $H(\gamma) \le c'T$.
\end{enumerate}
\begin{proof}
By Lemma~\ref{the two Sigma-sets coincide}, it suffices to prove
$\pi^\natural(\Gamma\cap\Sigma^{\prime}(C^\natural))=\Gamma_{G_Q}\cap\Sigma^{\prime}_G(C_G)$.
The inclusion $\subset$ is clear by definition. For the other inclusion, $\forall\gamma_G\in\Gamma_{G_Q}\cap\Sigma^{\prime}_G(C_G)$, $\exists c_G\in C_G$ such that $\gamma_G\cdot c_G\in\mathfrak{F}_G$.

Take a point $c^\natural\in C^\natural$ such that $\pi^\natural(c^\natural)=c_G$ and define $c_G:=\pi^\natural(c^\natural)\in\cY_{G_Q}^+$.

Let $\gamma_G\in\Gamma_{G_Q}$ be such that $\gamma_G\cdot c^\natural\in\pi^{\natural-1}(\mathfrak{F}_G)$.
Therefore there exist $\gamma_V\in\Gamma_{V_Q},~\gamma_U\in\Gamma_{U_Q}$ such that $(\gamma_U,\gamma_V,\gamma_G)c^\natural\in\mathfrak{F}^\natural$. 
Denote by $\gamma=(\gamma_U,\gamma_V,\gamma_G)$, then $\gamma\in\Gamma_Q\cap\Sigma^{\prime}(C^\natural)$ and $\pi^\natural_i(\gamma)=\gamma_G$. This proves (i).

For (ii), recall that the algebraic structure on $\cX^{\natural+}$ is given by the inclusion $\cX^{\natural+} \cong W(\C) \times \cX_G^+ \subset W(\C) \times \cX_G^\vee$. We furthermore decompose $W(\C) \cong U(\C) \times V(\C)$ as varieties. We shall use the $\ell^{\infty}$ norms on the complex vector spaces $U(\C)$ and $V(\C)$. 

Denote by $\bar{C_G}$ the Zariski closure of $C_G$ in $\cX_G^\vee$. Then $C_G$ is a complex analytic irreducible component of $\bar{C_G} \cap \cX_G$. But $\cX_G$ is a compact subset of $\cX_G^\vee$ via the Harish-Chandra realization, and $C^{\natural} \rightarrow C_G$ is a finite map. Hence for any point $c = (c_U,c_V,c_G) \in C^{\natural}$, we have that $|| c_U ||$ and $|| c_V ||$ are uniformly bounded only in terms of $C^{\natural}$. Call this bound $c'-1$ where $c'>1$ is a number.

Now let $\gamma = (\gamma_U,\gamma_V,\gamma_G) \in \Gamma_Q$ be such that $\gamma \cdot c \in \mathfrak{F}^{\natural}$. We have $\gamma \cdot c = (\gamma_U+\gamma_G \cdot c_U, \gamma_V+ \gamma_G \cdot c_V, \gamma_G\cdot c_G)$. If $H(\gamma_G) \le T$, then
\[
H(\gamma_U) \le H(\gamma_G)||c_U|| + 1 \le c'T,~ H(\gamma_V) \le H(\gamma_G) ||c_V|| + 1 \le c'T.
\]
\end{proof}
\end{lemma}

\begin{prop}\label{theta for the base big enough}
There exists a constant $\delta>0$ such that for all $T\gg0$, $|\Theta_G(C_G,T)|\ge  T^\delta$.
\begin{proof} This follows directly from \cite[Theorem~1.3]{KlinglerThe-Hyperbolic-} applied to \small$((G_Q,\cX_{G_Q}^+), S_G, \tilde{Z}_G)$.
\end{proof}
\end{prop}
\normalsize
Let us prove how these facts imply $H_i<G_{H_{\tilde{Z}^{\natural}}}$. Take a faithful representation $G_Q\hookrightarrow\GL_n$ which sends $\Gamma_{G_Q}$ to $\GL_n(\Z)$. Consider the definable set $\Sigma_G(C_G)$. By the theorem of Pila-Wilkie \cite[Theorem~3.6]{PilaO-minimality-an}, there exist $J=J(\delta)$ definable block families
\[
B^j\subset\Sigma_G(C_G)\times\R^l,\qquad j=1,...,J
\]
and $c=c(\delta)>0$ such that for all $T\gg0$, $\Theta_G(C_G,T^{1/2n})$ is contained in the union of at most $cT^{\delta/4n}$ definable blocks of the form $B^j_y$ ($y\in\R^l$). By Proposition~\ref{theta for the base big enough}, there exist a $j\in\{1,...,J\}$ and a block $B_G:=B^j_{y_0}$ of $\Sigma_G(C_G)$
containing at least $T^{\delta/4n}$ elements of $\Theta_G(C_G,T^{1/2n})$.

Let $\Sigma:=\Sigma(C^\natural)\cap\Sigma(\tilde{Z}^{\natural})$, which is by definition a definable set. Consider $X^j:=(\pi^\natural_i\times 1_{\R^l})^{-1}(B^j)\cap(\Sigma\times\R^l)$, which is a definable family since $\pi^\natural$ is algebraic. By \cite[Ch. 3, 3.6]{DriesTame-Topology-a}, there exists a number $n_0>0$ such that each fibre $X^j_y$ has at most $n_0$ connected components.
So the definable set $\pi^{\natural-1}(B_G)\cap\Sigma$ has at most $n_0$ connected components. 
Now
\small
\[
\pi^\natural(\pi^{\natural-1}(B_G)\cap\Sigma\cap\Gamma_Q)=B_G\cap\pi^\natural(\Sigma(C^\natural)\cap\Gamma_Q)=B_G\cap\Sigma_G(C_G)\cap\Gamma_{G_Q}=B_G\cap\Gamma_{G_Q}
\]
\normalsize
by \eqref{relation between Sigma of curve and Sigma of Z} and Lemma~\ref{projection of theta is theta}(i). So there exists a connected component $B$ of $\pi^{\natural-1}(B_G)\cap\Sigma$ such that $\pi^\natural(B\cap\Gamma_Q)$ contains at least $T^{\delta/4n}/n_0$ elements of $\Theta_G(C_G,T^{1/2n})$. By Lemma~\ref{projection of theta is theta}(ii) $B$ contains at least $T^{\delta/4n}/n_0$ elements of height $\le c'T^{1/2n}$. Again by Pila-Wilkie \cite[Theorem~3.6]{PilaO-minimality-an}, there exists a number $n_1>0$ and a block $B'$ in $B$ such that $\pi^{\natural}(B'\cap \Gamma_Q)$ contains at least $T^{\delta/4n}/n_1$ elements of $\Theta_G(C_G,T^{1/2n})$.

We have $B'\tilde{Z}^{\natural}\subset \unif^{\natural-1}(Y^\natural)$ since $\Sigma(\tilde{Z}^{\natural})\tilde{Z}^{\natural}\subset \unif^{\natural-1}(Y^\natural)$ by analytic continuation, and $\tilde{Z}^{\natural}\subset\sigma^{-1}B'\tilde{Z}^{\natural}$ for any $\sigma\in B'\cap\Gamma_Q$. But $B'$ is connected, and therefore $\sigma^{-1}B'\tilde{Z}^{\natural}=\tilde{Z}^{\natural}$ by maximality of $\tilde{Z}^{\natural}$ and \cite[Lemma~4.1]{PilaAbelianSurfaces}. So $B'\subset\sigma\stab_{Q(\R)}(\tilde{Z}^{\natural})$ for any $\sigma\in B'\cap\Gamma_Q$.

Fix a $\gamma_0\in B'\cap\Gamma_Q$ such that $\pi^\natural(\gamma_0)\in\Theta_G(C_G,T^{1/2n})$. We have already shown that $\pi^\natural(B'\cap\Gamma_Q)$ contains at least $T^{\delta/4n}/n_1$ elements of $\Theta_G(C_G,T^{1/2n})$. For any $\gamma^\prime_G\in\pi^\natural(B'\cap\Gamma_Q)\cap\Theta_G(C_G,T^{1/2n})$, let $\gamma^\prime$ be one of its pre-images in $B\cap\Gamma_Q$. By \eqref{EqConditionCG}, both $\pi^\natural(\gamma_0)$ and $\gamma'_G$ satisfy the following property: the images are $1$ under the projections $G_Q \rightarrow G_{H_{\tilde{Z}}}^{\mathrm{ad}}$ and $G_Q \rightarrow G^{\prime,\mathrm{ad}}$ induced by almost direct product $G_Q = G_{H_{\tilde{Z}^{\natural}}}G_0 G'$.

Now $\gamma:=\gamma^{\prime-1}\gamma_0$ is an element of $\Gamma_Q\cap\stab_{Q(\R)}(\tilde{Z}^{\natural}) = \Gamma_{\tilde{Z}^{\natural}}$ such that $H(\pi^\natural(\gamma))\ll T^{1/2}$. 
Therefore for $T\gg0$, $\pi^\natural(\Gamma_{\tilde{Z}^{\natural}})$ contains at least $T^{\delta/4n}/n_1$ elements $\gamma_G$ such that $H(\gamma_G)\leq  T$. Moreover under the projections $G_Q \rightarrow G_{H_{\tilde{Z}}}^{\mathrm{ad}}$ and $G_Q \rightarrow G^{\prime,\mathrm{ad}}$ induced by the almost direct product $G_Q = G_{H_{\tilde{Z}^{\natural}}}G_0 G'$, the images of these $\gamma_G$ are $1$. 

Hence under the projection $G_Q \rightarrow G_0$ induced by the almost direct product $G_Q = G_{H_{\tilde{Z}^{\natural}}}G_0 G'$, the image of $\pi^\natural(H_{\tilde{Z}^{\natural}})$ is of positive dimension because it contains infinitely many rational points. But this contradicts the maximality of $H_{\tilde{Z}^{\natural}}$. Hence $G_0=1$ and we are done.

\subsection{Proof for the Fiber}
In this subsection we prove Ax-Lindemann for the fiber of $S^\natural \rightarrow S_G$. More precisely the following statement.
\begin{thm}\label{ThmALFiber}
Let $S^\natural$ be a connected enlarged mixed Shimura variety associated with $(P,\cX^{\natural+})$. Let $E^\natural$ be a fiber of $S^\natural \rightarrow S_G$. Let $Y^\natural$ be an irreducible closed subvariety of $E^\natural$. Assume $\tilde{Z}^\natural$ is an algebraic subset of $\cX^{\natural+}$ contained in $\unif^{\natural -1}(Y^\natural)$, maximal for this property. Then $\tilde{Z}^\natural$ is quasi-linear.
\end{thm}

As before we may and do assume that $Y^\natural = \mathrm{unif}^{\natural}(\tilde{Z}^\natural)^{\mathrm{Zar}}$. 
Let $N$ be the connected algebraic monodromy group of $Y^\natural$. Let $H_{\tilde{Z}^\natural}$ be as defined above Remark~\ref{RmkEMSVSMVAL}. Note that in this case, $N = W_N$ and $H_{\tilde{Z}^\natural}$ is also a unipotent group, which we denote by $W_0$. Again we have $W_0 \lhd W_N$ by Lemma~\ref{LemmaStabilizerNormalInN}.

Let us fix some extra notations for the proof. These notations will only be used in this subsection.
\begin{equation}\label{EqNotationFiberBigDiag}
\xymatrix{
W(\C) \ar[r]^{\pi^\natural} \ar[d]_{\unif^\natural} & V(\C) \ar[r]^-{\pi^\sharp_V} \ar[d]_{\unif^\natural_V} & V(\C)/F^0V(\C) \ar[d]^{\unif_V}, & \tilde{K}^\natural \ar@{|->}[r] \ar@{|->}[d] & \tilde{K}^\natural_V \ar@{|->}[r] \ar@{|->}[d] & \tilde{K}_V \ar@{|->}[d] \\
E^\natural \ar[r] & A^\natural \ar[r] & A & K^\natural \ar@{|->}[r] & K^\natural_V \ar@{|->}[r] & K_V
}
\end{equation}
Fix an isomorphism $\Gamma_V \cong \Z^{2n}$, which induces an isomorphism $V(\C)/F^0V(\C) \cong \R^{2n}$. Let $\mathfrak{F}_V := (-1,1)^{2n}$. Then $\mathfrak{F}_V$ is a fundamental set for $\unif_V$. Let $\mathfrak{F}^\natural_V := (\pi^{\sharp}_V)^*\mathfrak{F}_V$. Then $\mathfrak{F}^\natural_V$ is a fundamental set for $\unif^\natural_V$.

As algebraic varieties we have $W(\C) \cong U(\C) \times V(\C)$. Fix an isomorphism $\Gamma_U \cong \Z^m$, which induces an isomorphism $U(\C) \cong \C^m$. Hence $W(\C) \cong \C^m \times V(\C)$. Let 
\[
\mathfrak{F}^\natural = \{z=(z_i) \in \C^m : |\mathrm{Re}(z_i)| < 1\} \times \mathfrak{F}^\natural_V.
\]
Then $\mathfrak{F}^\natural$ is a fundamental set for $\unif^\natural$.

Let $\tilde{Z}_V^\natural = \pi^\natural(\tilde{Z}^\natural)$. Let $U_0 = W_0 \cap U$ and $V_0 = W_0/U_0$.

\begin{prop}\label{PropositionUPartIsWeaklySpecial}
Let $v^\natural \in \tilde{Z}_V^\natural$, and take any irreducible component $\tilde{K}^\natural$ of the fiber $(\tilde{Z}^\natural)_{v^{\natural}}$. Then the fiber $\tilde{K}^\natural = U'(\C)+\tilde{z}^\natural$ for some $U' < U$ and $\tilde{z}^\natural \in \tilde{Z}^\natural_{v^{\natural}}$.
\begin{proof}
Define
\[
\Xi^\natural_U:=\{u\in U(\R)|\dim \left( (u + \tilde{K}^\natural) \cap \unif^{\natural -1}(Y^\natural) \cap \mathfrak{F}^\natural \right)=\dim \tilde{Z}^{\natural}_{v^\natural}\}.
\]
Then $\Xi^\natural_U$ is definable and
\begin{equation}\label{EquationXiCapUEqualsXiUcapU}
\Xi^\natural_U\cap\Gamma_U=\{\gamma_U\in\Gamma_U|(\gamma_U+\mathfrak{F}^\natural_{v^\natural})\cap \tilde{K}^\natural \neq\emptyset\}.
\end{equation}
For any integer $M>0$, let
\[
\Theta^\natural_U(\tilde{K}^\natural,M):=\{\gamma_U\in\Gamma_U\cap\Xi^\natural_U|H(\gamma_U)\leq  M\}.
\]
\begin{claim}\label{ClaimThetaULargeEnough}
Assume $\dim \tilde{K}^\natural >0$. Then $|\Theta^\natural_U(\tilde{K}^\natural,M)|\gg M$.
\begin{proof} Define the norm of $x=(x_1,...,x_m)\in U(\C)$ to be $\parallel x \parallel:=\max(|x_1|,...,|x_m|)$. It is clear that there exists a constant $c_1>0$ such that
\begin{equation}\label{EquationLatticeHeightSmallEnoughForU}
H(\gamma_U)\leq  c_1 \parallel x \parallel
\end{equation}
for any $x\in U(\C)$ and any $\gamma_U\in\Gamma_U$ such that $\gamma_U+x\in\mathfrak{F}^\natural_{v^\natural}$. Let $\omega:=\bigwedge^{\dim \tilde{K}^\natural}(dx_1\wedge d\bar{x}_1+...+dx_m\wedge d\bar{x}_m)$ and let $\tilde{K}^\natural(M):=\{\tilde{u}\in\tilde{K}^\natural|\parallel \tilde{u} \parallel \leq  M\}$. Then there exists a constant $c_2^\prime$ such that
\begin{align*}
\int_{\tilde{K}^\natural(M)\cap(\gamma_U+\mathfrak{F}^\natural_{v^\natural})}\omega =\int_{(\tilde{K}^\natural(M)-\gamma_U)\cap\mathfrak{F}^\natural_{v^\natural}}\omega &\leq \sum_{I\subset\{1,...,m\}, |I|=\dim \tilde{K}^\natural}\deg(p_I|_{(\tilde{K}^\natural-\gamma_U)\cap\mathfrak{F}^\natural_{v^\natural}})\int_{p_I(\{x\in\mathfrak{F}^\natural_{v^\natural}|\text{Im}(x)\leq  M\})}\omega \\
&=c_2^\prime M\sum_{I\subset\{1,...,m\}, |I|=\dim \tilde{K}^\natural}\deg(p_I|_{(\tilde{K}^\natural -\gamma_U)\cap\mathfrak{F}^\natural_{v^\natural}}).
\end{align*}
The function $\Xi^\natural_U \rightarrow\Z$, $u\mapsto \deg(p_I|_{(\tilde{K}^\natural -\gamma_U)\cap\mathfrak{F}^\natural_{v^\natural}})$ is a definable function with value in $\Z$, and hence is uniformly bounded. Therefore there exists a constant $c_2$ such that
\begin{equation}\label{EquationVolumeSmallEnoughForU}
\int_{\tilde{K}^\natural(M)\cap(\gamma_U+\mathfrak{F}^\natural_{v^\natural})}\omega \leq  c_2M.
\end{equation}
By \cite[Theorem~0.1]{HwangVolumes-of-comp} there exists a constant $c_3>0$ such that
\begin{equation}\label{EquationVolumeLargeEnoughForU}
\int_{\tilde{K}^\natural(M)}\omega\ge  c_3M^{\dim \tilde{K}^\natural}\ge  c_3M^2.
\end{equation}
Now we have
\begin{align*}
\tilde{K}^\natural(M) &=\bigcup_{\substack{\gamma_U\in\Gamma_U \\ (\gamma_U+\mathfrak{F}^\natural_{v^\natural})\cap \tilde{K}^\natural \neq\emptyset}}(\gamma_U+\mathfrak{F}^\natural_{v^\natural})\cap \tilde{K}^\natural(M) &\subset \bigcup_{\gamma_U\in\Theta^\natural_U(\tilde{K}^\natural,2M)}(\gamma_U+\mathfrak{F}^\natural_{v^\natural})\cap \tilde{K}^\natural(M)
\end{align*}
by \eqref{EquationLatticeHeightSmallEnoughForU} and $H(\gamma_U)\leq  M\Rightarrow \gamma_U+\tilde{K}^\natural(M)\subset(\gamma_U+\tilde{K}^\natural)(2M)$. Integrating both sides with respect to $\omega$ we can conclude by \eqref{EquationVolumeSmallEnoughForU} and \eqref{EquationVolumeLargeEnoughForU}.
\end{proof}
\end{claim}

Assume $\dim \tilde{K}^\natural >0$. By Pila-Wilkie \cite[Theorem~3.6]{PilaO-minimality-an}, there exists a positive dimensional connected semi-algebraic subset $K\subset \Xi^\natural_U$ containing arbitrarily many points $\gamma_U\in\Gamma_U$. Fix a $\sigma_U\in K\cap\Gamma_U$. We have $K\subset\stab_{W(\R)}(\tilde{K}^\natural)\sigma_U$. So $(K\cap\Gamma_U)\sigma_U^{-1}\subset\stab_{W(\R)}(\tilde{K}^\natural)\cap\Gamma_U$. Therefore the $\Q$-group
\[
U^\prime_{v^\natural}:=(\stab_{W(\R)}(\tilde{K}^\natural)\cap\Gamma_U)^{\Zar},
\]
which is the largest subgroup of $U$ such that $\tilde{K}^\natural$ is stable under $U^\prime(\C)$, is of positive dimension. This group is normal in $P$ by Proposition~\ref{PropositionEMSDWithGenericMTGroupAlwaysExists}(2), so we can consider $\lambda_{v^\natural}\colon(P,\cX^{\natural+})\rightarrow(P,\cX^{\natural+})/U^\prime_{v^\natural}$. Note that $\tilde{K}^\natural = \lambda_{v^\natural}^{-1}(\lambda_{v^\natural}(\tilde{K}^\natural))$.
\end{proof}
\end{prop}

\begin{cor}\label{CorollaryFiberOverVpartMSV}
For the quotient $\lambda \colon (P,\cX^{\natural+}) \rightarrow (P,\cX^{\natural+})/U_0$, we have that each fiber of $\lambda(\tilde{Z}^\natural)$ over $\tilde{Z}^\natural_V$ is a point.
\begin{proof} 
It suffices to prove the following statement: There exists a subgroup $U' < U$ such that the following properties hold.
\begin{enumerate}
\item[(i)] The subvariety $\tilde{Z}^\natural$ is stable under $U'(\C)$.
\item[(ii)] For the quotient $\lambda \colon (P,\cX^{\natural+}) \rightarrow (P,\cX^{\natural+})/U'$, we have that each fiber of $\lambda(\tilde{Z}^\natural)$ over $\tilde{Z}^\natural_V$ is a point.
\end{enumerate}

For any $v^\natural\in \tilde{Z}^\natural_V$, recall our assumption that $\tilde{Z}^{\natural}$ is algebraic. So there exists an integer $n > 0$ such that $\tilde{Z}_{v^\natural}$ has $\le n$ irreducible components for each $v^\natural$. Generic flatness of algebraic varieties says that $\dim \tilde{Z}^\natural = \dim \tilde{Z}^\natural_V + \dim \tilde{Z}^\natural_{v^\natural}$ for a generic $v^\natural \in \tilde{Z}^\natural_V$. We have that $\dim \tilde{Z}^\natural_{v^\natural}$ equals the dimension of one of its irreducible components, and these irreducible components can be chosen such that their union is connected. Apply Proposition~\ref{PropositionUPartIsWeaklySpecial} to each $v^\natural$ and the irreducible component of $\tilde{Z}^\natural_{v^\natural}$ chosen above, we obtain a subgroup $U'_{v^\natural}$ 
of $U$.

Now let $\tilde{C}^\natural_V$ be any complex analytic irreducible curve in $\tilde{Z}^\natural_V$. There are uncountably many points in $\tilde{C}^\natural_V$ and there are only countably many subgroups of $U$, hence there exists a subgroup $U^\prime$ of $U$ such that $U^\prime_{v^\natural} = U^\prime$ for uncountably many $v^\natural \in\tilde{C}^\natural_V$. Replacing $(P,\cX^{\natural+})$ by $(P,\cX^{\natural+})/U^\prime$ (this can be done since $U^\prime\lhd P$ by Proposition~\ref{PropositionEMSDWithGenericMTGroupAlwaysExists}(2)) and every other object accordingly, we see that $U^\prime_{v^\natural, i}\subset U^\prime$ for all $i$ except for countably many $v^\natural \in\tilde{C}^\natural_V$ by generic flatness (in the complex analytic geometry sense) and dimension reasons. But $\dim U^\prime_{v^\natural}$ is constant except for countably many $v^\natural \in\tilde{C}^\natural_V$ by generic flatness (in the complex analytic geometry sense) and dimension reasons. Hence $U^\prime_{v^\natural}=U^\prime$ except for countably many $v^\natural \in\tilde{C}^\natural_V$. But then we must have $U^\prime_{v^\natural}=U^\prime$ for all $v^\natural \in\tilde{C}^\natural_V$ (this can be seen by, for example, considering the smallest complex analytic irreducible subvariety containing $\bigcup_{v^\natural \in\tilde{C}^\natural_V,~U^\prime_{v^\natural}=U^\prime}\tilde{Z}^\natural_{v^\natural}$). By varying $\tilde{C}^\natural_V$ we get $U^\prime_{v^\natural}=U^\prime$ for all $v^\natural\in\tilde{Z}^\natural_V$.

Hence we are done since $\tilde{Z}^\natural$ is irreducible.
\end{proof}
\end{cor}

Now we replace $(P,\cX^{\natural+})$ by $(P,\cX^{\natural+})/U_0$ and $S^\natural$ accordingly. Hence we may and do assume that every fiber of $\tilde{Z}^\natural \rightarrow \tilde{Z}^\natural_V$ is a point and that $U_0=1$. Thus the natural projection $W_0 \rightarrow V_0$ is an isomorphism. But we will still distinguish $W_0$ and $V_0$ to make precise which group we are considering.

\begin{prop}\label{PropositionStabilizerVpartBigForEnatural}
We have $\tilde{Z}^\natural_V = V_N(\C) + v_0^\natural + L^\natural$ for some $v^\natural \in V(\C)$ and $L^\natural \subset F^0V(\C)$. In particular $V_0 = V_N$.
\begin{proof}
For any algebraic subvariety $\tilde{K}^\natural$ of $W(\C)$, define
Define
\[
\Xi^\natural(\tilde{K}^\natural):=\{v\in W(\R): \dim \left( w\tilde{K}^\natural \cap \unif^{\natural-1}(Y^\natural) \cap \mathfrak{F}^\natural \right)=\dim \tilde{K}^\natural\}.
\]
Then we have
\begin{equation}\label{EqXiTwoWritingForIntegralPoints}
\Xi^\natural(\tilde{K}^\natural) \cap\Gamma = \{\gamma \in\Gamma : \gamma \mathfrak{F}^\natural\cap \tilde{K}^\natural \neq\emptyset\}.
\end{equation}
Now consider $\tilde{Z}^\natural$. Let $\tilde{C}^\natural$ be any algebraic curve in $\tilde{Z}^\natural$. Then
\[
\Xi^\natural(\tilde{C}^\natural) \cap\Gamma \subset \Xi^\natural(\tilde{Z}^\natural) \cap\Gamma
\]
by \eqref{EqXiTwoWritingForIntegralPoints}.

For any integer $M>0$, let
\[
\Theta^{\natural}(\tilde{C}^\natural,M)=\{\gamma\in\Gamma\cap\Xi^{\natural}(\tilde{C}^\natural) : H(\gamma)\leq  M\}.
\]

\begin{lemma}\label{LemmaEstimatesOnThetaVNatural}
If $\dim \pi_V^\sharp(\tilde{C}^\natural_V) > 0$, then $|\Theta^{\natural}(\tilde{C}^\natural,M)|\gg M$.
\begin{proof} 
This follows from the choice of $\mathfrak{F}_V^\natural$, the fact that $V$ is a unipotent group, and the assumption that $U_0 = 1$.
\end{proof}
\end{lemma}

Let us fix a decomposition $V = V_0 \bigoplus V_2$ of $\Q$-groups. Then we have $\tilde{Z}^\natural_V = V_0(\C) \times \tilde{Z}^\natural_{V,2}$. If $\pi^\sharp(\tilde{Z}^\natural_{V,2}) > 0$, then we can take an algebraic curve $\tilde{C}^\natural \subset \tilde{Z}^\natural$ such that $\tilde{C}^\natural_V = \{\text{pt}\} \times \tilde{C}^\natural_2$ and $\dim \pi^\sharp(\tilde{C}^\natural_V) = 1$. Then again by Lemma~\ref{LemmaEstimatesOnThetaVNatural} and Pila-Wilkie, we have that $\pi^\natural(W_0)$ contains a positive dimensional subgroup of $V_2$ because $W_0$ is the largest subgroup of $W$ that stabilizes $\tilde{Z}^\natural$. But this contradicts $V_2 \cap V_0 = 0$. So $\tilde{Z}^\natural_{V,2} \subset F^0V(\C)$. So we have $\tilde{Z}_V^\natural = V_0(\C) + v_0^\natural + L^\natural$ for some $v_0^\natural \in V_N(\C)$ and $L^\natural \subset F^0V(\C)$.

Denote by $\tilde{Z} = \pi^\sharp(\tilde{Z}^\natural)$, $Y = [\pi^\sharp](Y^\natural)$ and the notations in \eqref{EqNotationFiberBigDiag}.

Next by Remark~\ref{RmkEMSVSMVAL}, we have $\tilde{Z}_V = V_0(\R) + v$. But $\tilde{Z}_V$ is complex analytic, so $V_0(\R)$ is complex in $V(\R) \cong V(\C)/F^0V(\C)$. Hence $V_0$ is a $G_Q$-module. Then $V_0$ gives rise to an abelian subvariety $A_0 := \mathrm{unif}_V(V_0(\R))$ of $A$.

Thus $Y_V = \mathrm{unif}_V(\tilde{Z}_V)^{\mathrm{Zar}} = \mathrm{unif}_V(\tilde{Z}_V)$ is the translate of $A_0$ by a point. Hence $V_N = V_0$.
\end{proof}
\end{prop}

Now let us finish the proof of Theorem~\ref{ThmALFiber}. Recall that each fiber of $\tilde{Z}^\natural \rightarrow \tilde{Z}^\natural_V$ is a point. We will use $A_0^\natural$ to denote the universal vector extension of $A_0$ as a subvariety of $A^\natural$.

We start with the case $V_N = 0$. In this case we have that $\mathrm{unif}_V^\natural|_{L^\natural}$ is the identity map. So $Z^\natural$ is closed in the usual topology. It is complex analytic since $\tilde{Z}^\natural$ is. So $Z^\natural$ is a holomorphic section of the $T$-torsor $E^\natural|_{a_0^\natural+L^\natural} \rightarrow a_0^\natural+L^\natural$. Thus this torsor is trivial. So $\tilde{Z}^\natural$ is the image of an algebraic map $\rho \colon L^\natural \rightarrow U(\C) \cong \mathbb{G}_a^k(\C)$, and $Z^\natural$ is the image of the composite of $\rho$ with $\exp \colon U(\C) \cong \mathbb{G}_a^k(\C) \rightarrow T(\C) \cong \mathbb{G}_m^k(\C)$. Now Ax-Lindemann for algebraic tori implies that $((\exp\circ\rho)(L^\natural))^{\mathrm{Zar}}$ is bi-algebraic. Thus $Y^\natural = (Z^\natural)^{\Zar}$ is bi-algebraic. Hence by the maximality of $\tilde{Z}^\natural$, we have that $\tilde{Z}^\natural$ is bi-algebraic. In particular $Z^\natural$ is a constant section.

Let us go back to the general case. For each $l^\natural \in L^\natural$, consider
\[
\tilde{Z}^\natural_{l^\natural} := \tilde{Z}^\natural \cap (\pi^\natural)^{-1}(V_N(\C)+v_0^\natural+l^\natural).
\]
Recall that each fiber of $\tilde{Z}^\natural \rightarrow \tilde{Z}^\natural_V$ is a point. Moreover $\tilde{Z}^\natural$ is stable under $V_N(\C)$.\footnote{Here we identify $V_N = V_0$ with $W_0 \cong V_0$.} Hence Proposition~\ref{PropositionStabilizerVpartBigForEnatural} implies that $\tilde{Z}^\natural_{l^\natural}$ is a $V_N(\C)$-orbit. As $V_N$ is a $\Q$-group, we have that $\mathrm{unif}^\natural(\tilde{Z}^\natural_{l^\natural})$ is closed in the usual topology. But then it is a holomorphic section of the $T$-torsor $E^\natural|_{A_0^\natural+a_0^\natural+l^\natural} \rightarrow A_0^\natural+a_0^\natural+l^\natural$, thus making the $T$-torsor trivial. On the other hand $\tilde{Z}_{l^\natural}:=\pi^\sharp(\tilde{Z}^\natural_{l^\natural})$ is a $V_N(\R)$-orbit, and hence $\mathrm{unif}(\tilde{Z}_{l^\natural})$ is a holomorphic section of the $T$-torsor $E|_{A_0+a_0} \rightarrow A_0 + a_0$. Thus $\mathrm{unif}(\tilde{Z}_{l^\natural})$ gives rise to a holomorphic morphism $A_0+a_0 \rightarrow T$, which must be trivial. So $\mathrm{unif}(\tilde{Z}_{l^\natural})$ is a constant section. Back to $\tilde{Z}^\natural_{l^\natural}$, we then have that $\mathrm{unif}^\natural(\tilde{Z}^\natural_{l^\natural})$ is a constant section. 

This argument applies to all $l^\natural \in L^\natural$. So we can take the quotient of the $T$-torsor $E^\natural|_{A_0^\natural+a_0^\natural+L^\natural} \rightarrow A_0^\natural+a_0^\natural+L^\natural$ by $A_0^\natural$, namely we have the following Cartesian diagram
\[
\xymatrix{
E^\natural|_{A_0^\natural+a_0^\natural+L^\natural} \ar[r]^-{[p]} \ar[d] & \bar{E}^\natural \ar[d] \\
A_0^\natural + a_0^\natural + L^\natural \ar[r] & L^\natural
}
\]
with the right morphism being a $T$-torsor, and the bottom map being the natural projection. Thus we can conclude by the previous paragraph and by the case $V_N = 0$.

\section{The Ax-Schanuel conjecture}\label{SectionAxSchanuel}

\subsection{Formulation of the conjecture}
For mixed Shimura varieties we have the following Ax-Schanuel conjecture: let $S$ be a connected mixed Shimura variety associated with $(P,\cX^+)$ and let $\unif\colon\cX^+\rightarrow S$ be the uniformization. Let $pr_{\cX^+}\colon\cX^+\times S\rightarrow\cX^+$ and $pr_S\colon\cX^+\times S\rightarrow S$ be the natural projections.

\begin{conj}[Ax-Schanuel for mixed Shimura varieties]\label{ConjectureAxSchanuelMSV}
Let $\Delta\subset\cX^+\times S$ be the graph of $\unif$. Let $\mathbscr{Z}=\mathrm{graph}(\tilde{Z}\xrightarrow{\unif}Z)$ be a complex analytic irreducible subvariety of $\Delta$ and let $\mathbscr{B}$ be its Zariski closure in $\cX^+\times S$. Let $F$ be the smallest bi-algebraic (\textit{i.e.}  weakly special) subvariety of $S$ which contains $Z=pr_S(\mathbscr{Z})$. Then
\[
\dim\mathbscr{B}-\dim\mathbscr{Z}\ge \dim F.
\]
\end{conj}

Note that $Z$ is \textit{a priori} a disastrous set. But $F$ is well-defined: it is the smallest bi-algebraic subvariety of $S$ which contains $Z^{\Zar}$.

We explain this conjecture. First replace $\mathbscr{Z}$ by a complex analytic irreducible component of $\mathbscr{B}\cap\Delta$. Then denote by $\tilde{X}:=pr_{\cX^+}(\mathbscr{B})$ and $Y:=pr_S(\mathbscr{B})$. We have $\tilde{X} \subset \tilde{F}$, $Y \subset F$ and $\mathbscr{B}\subset\tilde{X}\times Y$. Therefore Conjecture~\ref{ConjectureAxSchanuelMSV} implies
\begin{equation}\label{EquationExplainAS1}
\dim\tilde{X}+\dim Y-\dim\tilde{Z}\ge \dim\mathbscr{B}-\dim\mathbscr{Z}\ge \dim F.
\end{equation}
On the other hand let $\tilde{Y}$ (resp. $\tilde{F})$ be the complex analytic irreducible component of $\unif^{-1}(Y)$ (resp. of $\unif^{-1}(F)$) containing $\tilde{Z}$, then $\tilde{Z}$ is a complex analytic irreducible component of $\tilde{X}\cap\tilde{Y}$ since $\mathbscr{Z}$ is a complex analytic irreducible component of $\mathbscr{B}\cap\Delta$. Hence we always have
\begin{equation}\label{EquationExplainAS2}
\dim\tilde{Z}\ge \dim\tilde{X}+\dim\tilde{Y}-\dim\tilde{F}.
\end{equation}
Now \eqref{EquationExplainAS1} and \eqref{EquationExplainAS2} together imply
\[
\dim\mathbscr{B}=\dim\tilde{X}+\dim Y,~\dim\tilde{Z}=\dim\tilde{X}+\dim\tilde{Y}-\dim\tilde{F},
\]
so Conjecture~\ref{ConjectureAxSchanuelMSV} is equivalent to:
\begin{itemize}
\item $\mathbscr{B}=\tilde{X}\times Y$;
\item $\tilde{X}$ and $\tilde{Y}$ intersect properly in $\tilde{F}$.
\end{itemize}
Moreover by the first bullet point above, $\mathbscr{B}$ being Zariski closed in $\cX^+ \times S$ implies that $\tilde{X}$ is Zariski closed in $\cX^+$ and $Y$ is Zariski closed in $S$. Hence $\tilde{X} = \tilde{Z}^{\Zar}$ and $Y = Z^{\Zar}$.

As we shall see, logarithmic Ax and Ax-Lindemann for mixed Shimura varieties are special cases of this conjecture. We wish to make a similar conjecture for enlarged mixed Shimura varieties. However the naive guess is NOT always true: if $\tilde{Z}^\natural$ is contained in a fiber of $\cX^{\natural+}\rightarrow\varphi(\cX^+)$, then $\unif^\natural|_{\tilde{Z}^\natural}$ is the identity map for some $\C^n$, and so $\mathbscr{Z}^\natural:=\mathrm{graph}(\tilde{Z}^\natural\xrightarrow{\unif^\natural}Z^\natural)$ is algebraic. So $(\mathbscr{Z}^\natural)^{\Zar}=\mathbscr{Z}^\natural\neq\tilde{Z}^\natural\times Z^\natural$ and $\dim(\mathbscr{Z}^\natural)^{\Zar}-\dim\mathbscr{Z}=0$.

Counterexamples to the naive guess as above exist due to the existence of those $F^\natural$ such that $(\Delta^\natural\cap(\tilde{F}^\natural\times F^\natural))^{\Zar}\neq\tilde{F}^\natural\times F^\natural$, and the existence of such $F^\natural$ is due to the non-linear part of $F^\natural$. However evidences suggest that this should be the only obstacle. So we make the following conjecture: let $S^\natural$ be a connected enlarged mixed Shimura variety associated with $(P,\cX^{\natural+})$ and let $\unif^\natural\colon\cX^{\natural+}\rightarrow S^\natural$ be the uniformization.

\begin{conj}[Ax-Schanuel for enlarged mixed Shimura varieties]\label{ConjectureAxSchanuel}
Let $\Delta^\natural\subset\cX^{\natural+}\times S^\natural$ be the graph of $\unif^\natural$. Let $\mathbscr{Z}^\natural=\mathrm{graph}(\tilde{Z}^\natural\xrightarrow{\unif^\natural}Z^\natural)$ be a complex analytic irreducible subvariety of $\Delta^\natural$. Let $F^\natural$ (resp. $\tilde{F}^\natural$) be the smallest bi-algebraic, or equivalently quasi-linear, subvariety of $S^\natural$ (resp. subset of $\cX^{\natural+}$) which contains $Z^\natural$ (resp. $\tilde{Z}^\natural$). In particular $F^\natural=\unif^\natural(\tilde{F}^\natural)$.
\begin{enumerate}
\item Let $\tilde{X}^\natural:=(\tilde{Z}^\natural)^{\Zar}$ and $Y^\natural:=(Z^\natural)^{\Zar}$. Then
\[
\dim\tilde{X}^\natural+\dim Y^\natural-\dim\tilde{Z}^\natural\ge \dim F^\natural.
\]
\item Let $\mathbscr{B}^\natural:=(\mathbscr{Z}^\natural)^{\Zar}\subset\cX^{\natural+}\times S^\natural$. Let $[pr]^{\mathrm{ws}}_{F^\natural}$ and $pr^{\mathrm{ws}}_{\tilde{F}^\natural}$ be as in \eqref{EquationWeaklySpecialAndQuasiLinear}. Denote for simplicity by $\mathbf{pr}^{\mathrm{ws}}_{F^\natural}:=(pr^{\mathrm{ws}}_{\tilde{F}^\natural},[pr]^{\mathrm{ws}}_{F^\natural})\colon\tilde{F}^\natural\times F^\natural\rightarrow(\tilde{F}^\natural)^{\mathrm{ws}}\times(F^\natural)^{\mathrm{ws}}$. Then
\[
\dim\mathbf{pr}^{\mathrm{ws}}_{F^\natural}(\mathbscr{B}^\natural)-\dim\mathbf{pr}^{\mathrm{ws}}_{F^\natural}(\mathbscr{Z}^\natural)\ge \dim(F^\natural)^{\mathrm{ws}}
\]
\end{enumerate}
\end{conj}
\begin{rmk}
The two parts of Conjecture~\ref{ConjectureAxSchanuel} do not imply each other: the first part implies logarithmic Ax (Theorem~\ref{TheoremAxlogarithmic}) and Ax-Lindemann (Theorem~\ref{AxLindemann}) while the second part does not, and the second part implies Conjecture~\ref{ConjectureAxSchanuelMSV} while the first part does not.
\end{rmk}

\subsection{Ax-Schanuel (Conjecture~\ref{ConjectureAxSchanuel}) implies logarithmic Ax (Theorem~\ref{TheoremAxlogarithmic})}
\begin{thm}\label{TheoremAxSchanuelAndAxlogarithmic}
\begin{enumerate}
\item The Ax-Schanuel conjecture implies the logarithmic Ax theorem.
\item The Ax-Schanuel conjecture is true if $Z^\natural$ is algebraic: it is implied by logarithmic Ax.
\end{enumerate}
\begin{proof}
For (1): Let $Z^\natural$ be an irreducible subvariety of $S^\natural$ and let $\tilde{Z}^\natural$ be a complex analytic irreducible component of $\unif^{\natural-1}(Z^\natural)$. We want to prove that $\tilde{X}^\natural=\tilde{Z}^{\natural,\Zar}$ is bi-algebraic assuming Conjecture~\ref{ConjectureAxSchanuel}. But the first part of Conjecture~\ref{ConjectureAxSchanuel} implies (since $Y^\natural=Z^\natural$)
\[
\dim\tilde{X}^\natural=\dim\tilde{X}^\natural+\dim Y^\natural-\dim\tilde{Z}^\natural\ge \dim\tilde{F}^\natural.
\]
Therefore $\tilde{X}^\natural=\tilde{F}^\natural$ is bi-algebraic since $\tilde{X}^\natural\subset\tilde{F}^\natural$.

For (2): logarithmic Ax implies $\tilde{X}^\natural=\tilde{F}^\natural$. Hence the first part is proven. Now it suffices to prove $\mathbf{pr}^{\mathrm{ws}}_{F^\natural}(\mathbscr{B}^\natural)=(\tilde{F}^\natural)^{\mathrm{ws}}\times [pr]^{\mathrm{ws}}_{F^\natural}(Z^\natural)$.

The group $P(\R)W(\C)$ acts on $\cX^{\natural+}\times S^\natural$ by its action on the first factor. Let $\Gamma_{Z^\natural}:=\im(\pi_1(Z^\natural)\rightarrow\pi_1(S^\natural)=\Gamma)$ be the monodromy group of $Z^\natural$ and let $N:=(\Gamma_{Z^\natural}^{\Zar})^{\circ}$. Then for any point $\tilde{z}^\natural\in\tilde{Z}^\natural$, we have $\Gamma_{Z^\natural}\cdot\tilde{z}^\natural\subset\mathbscr{Z}^\natural$. So $N(\R)^+W_N(\C)\tilde{z}^\natural\subset\mathbscr{B}^\natural$. So $\mathbf{pr}^{\mathrm{ws}}_{F^\natural}(\mathbscr{B}^\natural)_{[pr]^{\mathrm{ws}}_{F^\natural}(z^\natural)}:=\mathbf{pr}^{\mathrm{ws}}_{F^\natural}(\mathbscr{B}^\natural\cap pr_{S^\natural}^{-1}(z^\natural))$ is bi-algebraic, and hence equals $(\tilde{F}^\natural)^{\mathrm{ws}}$, for any $z^\natural\in Z^\natural$ by (the last paragraph of the proof of) logarithmic Ax. Now we are done.
\end{proof}
\end{thm}

\subsection{Ax-Schanuel (Conjecture~\ref{ConjectureAxSchanuel}) implies Ax-Lindemann (Theorem~\ref{AxLindemann})}
\begin{thm}\label{TheoremAxSchanuelAndAxLindemann}
\begin{enumerate}
\item The Ax-Schanuel conjecture implies the Ax-Lindemann theorem.
\item The Ax-Schanuel conjecture is true if $\tilde{Z}^\natural$ is algebraic: it is implied by Ax-Lindemann.
\end{enumerate}
\begin{proof}
For (1): Let $\tilde{Z}^\natural$ be an irreducible algebraic subset of $\cX^{\natural+}$ and let $Z^\natural:=\unif^\natural(\tilde{Z}^\natural)$. We want to prove that $Y^\natural=Z^{\natural,\Zar}$ is bi-algebraic. But the first part of Conjecture~\ref{ConjectureAxSchanuel} implies (since $\tilde{X}^\natural=\tilde{Z}^\natural$)
\[
\dim Y^\natural=\dim\tilde{X}^\natural+\dim Y^\natural-\dim\tilde{Z}^\natural\ge \dim F^\natural,
\]
and therefore $Y^\natural=F^\natural$ is bi-algebraic since $Y^\natural\subset F^\natural$.

For (2): Ax-Lindemann implies $Y^\natural=F^\natural$. Hence the first part is proven. Now it suffices to prove $\mathbf{pr}^{\mathrm{ws}}_{F^\natural}(\mathbscr{B}^\natural)=pr^{\mathrm{ws}}_{\tilde{F}^\natural}(\tilde{Z}^\natural)\times(F^\natural)^{\mathrm{ws}}$.

We start by the case where $pr^{\mathrm{ws}}_{\tilde{F}^\natural}(\tilde{Z}^\natural)$ is a curve. The Ax-Lindemann theorem implies $Z^{\natural,\Zar}=F^\natural$. So $\mathbf{pr}^{\mathrm{ws}}_{F^\natural}(\mathbscr{B}^\natural)\subset pr^{\mathrm{ws}}_{\tilde{F}^\natural}(\tilde{Z}^\natural)\times(F^\natural)^{\mathrm{ws}}$ and surjects to $pr^{\mathrm{ws}}_{\tilde{F}^\natural}(\tilde{Z}^\natural)$ and $(F^\natural)^{\mathrm{ws}}$. Therefore either $\mathbf{pr}^{\mathrm{ws}}_{F^\natural}(\mathbscr{B}^\natural)=pr^{\mathrm{ws}}_{\tilde{F}^\natural}(\tilde{Z}^\natural)\times(F^\natural)^{\mathrm{ws}}$ or $\mathbf{pr}^{\mathrm{ws}}_{F^\natural}(\mathbscr{B}^\natural)\rightarrow (F^\natural)^{\mathrm{ws}}$ is quasi-finite. Suppose that we are in the latter case. Up to replacing $S^\natural$ by a finite cover we may assume that $\mathbf{pr}^{\mathrm{ws}}_{F^\natural}(\mathbscr{B}^\natural)\rightarrow(F^\natural)^{\mathrm{ws}}$ is a bijection. Let $U^\natural$ be a non-empty open subvariety of $(F^\natural)^{\mathrm{ws}}$ such that $pr^{\mathrm{ws}}_{\tilde{F}^\natural}(\tilde{Z}^\natural)|_{U^\natural}:=pr^{\mathrm{ws}}_{\tilde{F}^\natural}(\tilde{Z}^\natural)\cap\unif^{\natural-1}(U^\natural)$ is smooth. Now we define $i\colon U^\natural\rightarrow pr^{\mathrm{ws}}_{\tilde{F}^\natural}(\tilde{Z}^\natural)|_{U^\natural}$, $s^\natural\mapsto\mathbf{pr}^{\mathrm{ws}}_{F^\natural}(\mathbscr{B}^\natural)_{s^\natural}$. This map is algebraic and bijective, and hence is an isomorphism of algebraic varieties since $pr^{\mathrm{ws}}_{\tilde{F}^\natural}(\tilde{Z}^\natural)|_{U^\natural}$ is smooth. But $\unif^\natural|_{pr^{\mathrm{ws}}_{\tilde{F}^\natural}(\tilde{Z}^\natural)|_{U^\natural}}$, which is not algebraic, is the inverse of $i$. This is a contradiction. Hence we must have $\mathbf{pr}^{\mathrm{ws}}_{F^\natural}(\mathbscr{B}^\natural)=pr^{\mathrm{ws}}_{\tilde{F}^\natural}(\tilde{Z}^\natural)\times(F^\natural)^{\mathrm{ws}}$ when $pr^{\mathrm{ws}}_{\tilde{F}^\natural}(\tilde{Z}^\natural)$ is a curve.

Now we turn to $pr^{\mathrm{ws}}_{\tilde{F}^\natural}(\tilde{Z}^\natural)$ of arbitrary dimension. Let $\tilde{z}^\natural\in pr^{\mathrm{ws}}_{\tilde{F}^\natural}(\tilde{Z}^\natural)$ and denote by $z^\natural:=\unif^\natural(\tilde{z}^\natural)$. Let $\tilde{C}^\natural$ be any algebraic curve in $pr^{\mathrm{ws}}_{\tilde{F}^\natural}(\tilde{Z}^\natural)$ passing through $\tilde{z}^\natural$. By the last paragraph we have $\tilde{C}^\natural\subset\mathbf{pr}^{\mathrm{ws}}_{F^\natural}(\mathbscr{B}^\natural)_{z^\natural}:=\mathbf{pr}^{\mathrm{ws}}_{F^\natural}(\mathbscr{B}^\natural\cap pr_{S^\natural}^{-1}(z^\natural))$. By varying $\tilde{C}^\natural$ we get $pr^{\mathrm{ws}}_{\tilde{F}^\natural}(\tilde{Z}^\natural)\subset\mathbf{pr}^{\mathrm{ws}}_{F^\natural}(\mathbscr{B}^\natural)_{z^\natural}$. By varying $\tilde{z}^\natural$ we get $pr^{\mathrm{ws}}_{\tilde{F}^\natural}(\tilde{Z}^\natural)\times [pr]^{\mathrm{ws}}_{F^\natural}(Z^\natural)\subset\mathbf{pr}^{\mathrm{ws}}_{F^\natural}(\mathbscr{B}^\natural)$. So
\[
pr^{\mathrm{ws}}_{\tilde{F}^\natural}(\tilde{Z}^\natural)\times(F^\natural)^{\mathrm{ws}}=(pr^{\mathrm{ws}}_{\tilde{F}^\natural}(\tilde{Z}^\natural)\times [pr]^{\mathrm{ws}}_{F^\natural}(Z^\natural))^{\Zar}\subset\mathbf{pr}^{\mathrm{ws}}_{F^\natural}(\mathbscr{B}^\natural)\subset pr^{\mathrm{ws}}_{\tilde{F}^\natural}(\tilde{Z}^\natural)\times(F^\natural)^{\mathrm{ws}}
\]
and we are done.
\end{proof}
\end{thm}

\section{Special subvarieties}\label{SectionSpecialSubvarieties}
Let $S^\natural$ be a connected enlarged mixed Shimura variety associated with $(P,\cX^{\natural+})$ and let $\unif^\natural\colon\cX^{\natural+}\rightarrow S=\Gamma\backslash\cX^{\natural+}$ be the uniformization. Use notation of $\mathsection$\ref{SubsectionNotationEMSD}.
\begin{defn}\label{DefinitionSpecialSubvariety}
\begin{enumerate}
\item A subset $\tilde{Y}^\natural$ of $\cX^{\natural+}$ is said to be \textbf{special} if it is the underlying space of some connected enlarged mixed Shimura subdatum $(Q,\cY^{\natural+})\hookrightarrow(P,\cX^{\natural+})$ (in the category \underline{$\mathbf{\cE\cM\cS\cD}$}). \textbf{Special points} of $\cX^{\natural+}$ are precisely the special subsets of dimension $0$.
\item A subset $Y^\natural$ of $S^\natural$ is called a \textbf{special subvariety} if it is the image of some subset of $\cX^{\natural+}$ under $\unif^\natural$ (Proposition~\ref{PropositionMorphismsBetweenEMSV}(1) implies that any such defined subset of $S^\natural$ is a closed algebraic subvariety of $S^\natural$). \textbf{Special points} of $S^\natural$ are precisely the special subvarieties of dimension $0$.
\end{enumerate}
\end{defn}

\begin{rmk}\label{RemarkSpecialPointsBijectionEMSDandMSD}
The definition of morphisms \underline{$\mathbf{\cE\cM\cS\cD}$} (Definition~\ref{DefinitionMorphismsInTheCategoryOfEMSD}) yields the following direct corollary: under the geometric comparison $\pi^\sharp\colon\cX^{\natural+}\rightarrow\varphi(\cX^+)$ (see $\mathsection$\ref{SubsectionNotationEMSD}), there is a bijection
\[
\{\text{special subsets of }\cX^{\natural+}\}\xrightarrow{\sim}\{\text{special subsets of }\varphi(\cX^+)\},\quad\tilde{Y}^\natural\mapsto\tilde{Y}:=\pi^\sharp(\tilde{Y}^\natural),
\]
which induces $\{\text{special points of }\cX^{\natural+}\}\xrightarrow{\sim}\{\text{special points of }\varphi(\cX^+)\}$.
\end{rmk}

\begin{thm}\label{TheoremSpecialPointsZariskiDenseInSpecialSubvarieties}
Let $Y^\natural$ be a special subvariety of $S^\natural$. Then the set of special points contained in $Y^\natural$ is Zariski dense.
\begin{proof} Use Notation~\ref{NotationSubsetsOfEMSD}. Then $Y_G$ is a special subvariety of $S_G$ and, by the theory of Shimura varieties, the set of special points of $S_G$ contained in $Y_G$ is Zariski dense. Because $Y^\natural$ is special, we have that $Y^\natural$ is a torsor over the universal vector extension of the abelian scheme $Y_{P/U}/Y_G$, and hence the fibers of $Y^\natural\rightarrow Y_G$ are of constant dimension, say $n$.

Let $Z^\natural\subset Y^\natural$ be the Zariski closure of the set of special points of $S^\natural$ contained in $Y^\natural$. Let $y_G\in Y_G$ be a special point of $S_G$, then the set of special points of $S^\natural$ contained in the fiber $Y^\natural_{y_G}$ is Zariski dense in $Y^\natural_{y_G}$: to see this it suffices to prove that the set of special points of $S^\natural_{P/U}$ contained in $Y^\natural_{P/U,y_G}$ is Zariski dense, which is true because $Y^\natural_{P/U,y_G}$ is the universal vector extension of the abelian variety $Y_{P/U,y_G}$. Hence $\dim Z^\natural_{y_G}=\dim Y^\natural_{y_G}=n$ for any special point $y_G\in Y_G$. However the semi-continuity implies that the set
\[
\Sigma:=\{y_G\in Y_G|\dim Z^\natural_{y_G}\ge  n\},
\]
which contains all special points of $Y_G$ by the argument above, is Zariski closed in $Y_G$. Hence $\Sigma=Y_G$. Therefore $Z^\natural=Y^\natural$.
\end{proof}
\end{thm}

The converse of Theorem~\ref{TheoremSpecialPointsZariskiDenseInSpecialSubvarieties} is an ``Andr\'{e}-Oort'' type conjecture.
\begin{conj}[Andr\'{e}-Oort for enlarged mixed Shimura varieties]\label{ConjectureAOforEMSV}
Let $Y^\natural$ be an irreducible subvariety of a connected enlarged mixed Shimura variety $S^\natural$. If the set of special points of $S^\natural$ contained in $Y^\natural$ is Zariski dense in $Y^\natural$, then $Y^\natural$ is a special subvariety.
\end{conj}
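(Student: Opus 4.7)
The plan is to follow the Pila--Zannier o-minimal strategy, reducing Conjecture~\ref{ConjectureAOforEMSV} to known cases of Andr\'e--Oort for (mixed) Shimura varieties together with a fiberwise analysis using the Deligne--Pink language developed in the paper. By Remark~\ref{RemarkSpecialPointsBijectionEMSDandMSD}, every special point of $S^\natural$ projects under $[\pi^\sharp]$ and $[\pi^\natural]$ to a special point of $S$ and of $S_G$ respectively. So the projections $Y:=[\pi^\sharp](Y^\natural)\subset S$ and $Y_G:=[\pi^\natural](Y^\natural)\subset S_G$ each contain a Zariski dense set of special points. By the (mixed) Andr\'e--Oort conjecture, assumed to hold in the cases we need (it is a theorem for pure $S_G$ by Pila--Shankar--Tsimerman et al.), $Y_G$ is special in $S_G$ and $Y$ is special in $S$. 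After replacing $(P,\cX^{\natural+})$ by a smaller connected enlarged mixed Shimura subdatum, we may therefore assume $Y=S$ and $Y_G=S_G$, i.e.\ $Y^\natural$ dominates both $S$ and $S_G$, and that $(P,\cX^{\natural+})$ has generic Mumford--Tate group.

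The new content concerns the vertical direction $[\pi^\sharp]\colon S^\natural\to S$. Its fibers over $S$ are affine spaces (copies of $W(\C)/F^0\cV$-type quotients), and Theorem~\ref{TheoremAxSchanuelFiberOfEMSV} gives us an Ax--Schanuel statement on each fiber. I would then combine this with Theorem~\ref{AxLindemann} (Ax--Lindemann) and the characterization of bi-algebraic subvarieties (Theorem~\ref{TheoremIntroductionCharacterizationOfBiAlgebraicSubvarieties}): the Pila--Wilkie counting theorem applied to a fundamental set $\mathfrak{F}^\natural$ on which $\unif^\natural$ is definable produces, from a hypothetical Zariski dense set of special points in $Y^\natural$, either (i) a positive-dimensional semi-algebraic block in $\unif^{\natural,-1}(Y^\natural)$, whose image under $\unif^\natural$ is then a positive-dimensional quasi-linear subvariety of $Y^\natural$ by Ax--Lindemann, or (ii) an arithmetic contradiction via lower bounds on Galois orbits. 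In case (i), iterating the argument (splitting $Y^\natural$ into the quasi-linear foliation it contains and a transverse piece) cuts down the dimension until one forces $Y^\natural$ itself to be quasi-linear, and then the Zariski density of special points together with the rigidity of the universal vector extension (Remark~\ref{RemarkRigidityOfUniversalVectorExtension}) forces $Y^\natural$ to be actually special rather than merely quasi-linear.

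The hard part will be the Galois orbit lower bound for special points of $S^\natural$. For pure Shimura varieties this is the deepest ingredient and is by now known through the work of Tsimerman, Pila--Shankar--Tsimerman and the averaged Colmez formula; for mixed Shimura varieties analogous bounds are known in various cases (Daw--Orr, Gao, Pila). For $S^\natural$ a genuinely new difficulty appears in the weight $-1$ and weight $-2$ fiber directions: a special point of $S^\natural$ corresponds (via the arithmetic bi-algebraic description sketched in $\mathsection$1.1, using W\"ustholz' analytic subgroup theorem) to a torsion point of the universal vector extension of an abelian scheme at a CM base point. One must bound the height of such a point in terms of its order and control the degree of its field of definition over the reflex field. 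This is the main obstacle: it requires combining the pure-Shimura Galois bound on $Y_G$, the Masser--W\"ustholz / David--Hindry type lower bound on isogeny and torsion degrees in the abelian scheme direction, and a careful analysis of the Galois action on the vector extension direction.

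Once these three ingredients (Ax--Lindemann, fiberwise Ax--Schanuel, and a Galois lower bound) are in place, the Pila--Zannier argument closes as follows. Assume $Y^\natural$ is not special. Using Theorem~\ref{TheoremAxLogarithmique} and Theorem~\ref{AxLindemann} one bounds the number of special points in $Y^\natural$ of complexity $\leqslant T$ polynomially in $T$, because any positive-dimensional semi-algebraic block in $\unif^{\natural,-1}(Y^\natural)$ would produce a quasi-linear subvariety of $Y^\natural$, which is either strictly smaller (in which case one inducts on $\dim Y^\natural$) or equal to $Y^\natural$ (in which case the rigidity argument forces $Y^\natural$ to be special, a contradiction). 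On the other hand, the Galois orbit lower bound forces super-polynomial growth. The contradiction yields the conjecture.
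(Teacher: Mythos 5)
There is a genuine gap, and it sits exactly where you place it yourself: the Galois orbit lower bound for special points of $S^\natural$ is announced as ``the main obstacle'' and then never supplied, so the Pila--Zannier machine you set up cannot close. More importantly, the entire vertical analysis (fiberwise Ax--Schanuel, counting in the $W(\C)$-direction, a new height/degree bound in the vector-extension direction) is unnecessary, because you have overlooked the one fact that trivializes the problem: by Remark~\ref{RemarkSpecialPointsBijectionEMSDandMSD} the map $[\pi^\sharp]$ induces a \emph{bijection} between the special points of $S^\natural$ and the special points of $S$ (a $0$-dimensional special subdatum has underlying group a torus, so its unipotent part is trivial and nothing happens in the fiber direction). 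Consequently there is no new counting problem on $S^\natural$ at all, and no new Galois bound is needed.

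The paper's own argument (Lemma~\ref{LemmaAOEquivalentForEMSVandMSV}) is purely formal given Conjecture~\ref{ConjectureAOmixed}: the set $\Sigma^\natural$ of special points of $Y^\natural$ maps to a Zariski dense set $\Sigma$ of special points of $Y:=[\pi^\sharp](Y^\natural)$, so $Y$ is special by mixed Andr\'e--Oort; let $Z^\natural$ be the special subvariety of $S^\natural$ corresponding to $Y$ under the bijection on special subvarieties; since $[\pi^\sharp]$ is a bijection on special points, the special points of $Z^\natural$ are exactly $\Sigma^\natural$, and by Theorem~\ref{TheoremSpecialPointsZariskiDenseInSpecialSubvarieties} they are Zariski dense in $Z^\natural$; hence $Y^\natural=(\Sigma^\natural)^{\Zar}=Z^\natural$ is special. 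Note that Theorem~\ref{TheoremSpecialPointsZariskiDenseInSpecialSubvarieties} is where the vector-extension fibers are handled, and there the density of torsion-type special points in the universal vector extension of a CM abelian variety does the work --- no transcendence or o-minimality input is required beyond what is already packaged into the mixed case. Your first paragraph starts down the right road (project to $S$ and $S_G$ and invoke known Andr\'e--Oort results), but by not using the injectivity of $[\pi^\sharp]$ on special points you manufacture a hard open problem where none exists.
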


Conjecture~\ref{ConjectureAOforEMSV} implies directly the mixed Andr\'{e}-Oort conjecture:
\begin{conj}[mixed Andr\'{e}-Oort]\label{ConjectureAOmixed}
Let $Y$ be an irreducible subvariety of a connected mixed Shimura variety $S$. If the set of special points of $S$ contained in $Y$ is Zariski dense in $Y$, then $Y$ is a special subvariety of $S$.
\end{conj}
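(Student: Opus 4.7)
The plan is to deduce Conjecture~\ref{ConjectureAOmixed} from Conjecture~\ref{ConjectureAOforEMSV} by lifting the problem to the enlarged world. Given an irreducible subvariety $Y$ of a connected mixed Shimura variety $S$ in which the special points are Zariski dense, consider the associated connected enlarged mixed Shimura variety $S^\natural$ (coming from the enlarged mixed Shimura datum associated with that of $S$, via Lemma~\ref{LemmaCategoricalComparisonBetweenEMSDandMSD}) together with the projection $[\pi^\sharp]\colon S^\natural\to S$ of Theorem~\ref{TheoremEMSVCanonicalModel}. Let $Y^\natural:=[\pi^\sharp]^{-1}(Y)$. Since $[\pi^\sharp]$ has irreducible (vector-extension-type) fibers of constant dimension over $S$, the subvariety $Y^\natural$ is irreducible.

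The key step is to verify that the special points of $S^\natural$ contained in $Y^\natural$ are Zariski dense. For every special point $s\in Y$ corresponding to a connected mixed Shimura subdatum $(Q,\cY^+)\hookrightarrow(P,\cX^+)$ with $\dim\cY^+=0$, the associated enlarged mixed Shimura subdatum $(Q,\cY^{\natural+})\hookrightarrow(P,\cX^{\natural+})$ (via Lemma~\ref{LemmaCategoricalComparisonBetweenEMSDandMSD}) exhibits the whole fiber $[\pi^\sharp]^{-1}(s)$ as a special subvariety of $S^\natural$. By Theorem~\ref{TheoremSpecialPointsZariskiDenseInSpecialSubvarieties} applied to this fiber, the special points of $S^\natural$ in $[\pi^\sharp]^{-1}(s)$ are Zariski dense. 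A semi-continuity argument modelled on the proof of Theorem~\ref{TheoremSpecialPointsZariskiDenseInSpecialSubvarieties}, namely considering $Z^\natural\subset Y^\natural$ the Zariski closure of the set of special points of $S^\natural$ in $Y^\natural$ and observing via upper semi-continuity of fiber dimension that $\{y\in Y\mid \dim Z^\natural_y\geqslant n\}$ (with $n$ the fiber dimension of $[\pi^\sharp]$) is Zariski closed in $Y$ and contains every special point, forces $Z^\natural=Y^\natural$.

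Now Conjecture~\ref{ConjectureAOforEMSV} applied to $Y^\natural\subset S^\natural$ yields a connected enlarged mixed Shimura subdatum $(Q,\cZ^{\natural+})\hookrightarrow(P,\cX^{\natural+})$ such that $Y^\natural=\unif^\natural(\cZ^{\natural+})$. By the categorical equivalence of Lemma~\ref{LemmaCategoricalComparisonBetweenEMSDandMSD} this corresponds to a connected mixed Shimura subdatum $(Q,\cZ^+)\hookrightarrow(P,\cX^+)$ with $\pi^\sharp(\cZ^{\natural+})=\varphi(\cZ^+)\cong\cZ^+$. Hence $Y=[\pi^\sharp](Y^\natural)=\unif(\cZ^+)$ is the image in $S$ of the mixed Shimura subdatum $(Q,\cZ^+)$, and is therefore a special subvariety of $S$.

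The principal technical point is the Zariski density of the special points of $S^\natural$ in $Y^\natural$: one cannot merely pull back the dense set of special points of $S$ in $Y$, because the $[\pi^\sharp]$-preimage of a single special point of $S$ is a positive-dimensional fiber most of whose points are not special in $S^\natural$. The correct input is that each such fiber is itself a special subvariety of $S^\natural$, so that Theorem~\ref{TheoremSpecialPointsZariskiDenseInSpecialSubvarieties} applies fiberwise and the density on the total space follows from the semi-continuity argument described above. Once this density is established, Conjecture~\ref{ConjectureAOforEMSV} and the categorical comparison finish the proof almost formally.
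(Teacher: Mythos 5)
There is a genuine gap in the claim that special points of $S^\natural$ are Zariski dense in $Y^\natural:=[\pi^\sharp]^{-1}(Y)$, and the step that would justify it is simply false.

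You assert that for a special point $s\in Y$ corresponding to a connected mixed Shimura subdatum $(Q,\cY^+)$ with $\dim\cY^+=0$, the associated enlarged mixed Shimura subdatum exhibits the whole fiber $[\pi^\sharp]^{-1}(s)$ as a special subvariety of $S^\natural$. This cannot happen. Since $\dim\cY^+=0$, the centralizer argument (Lemma~\ref{LemmaCentralizerOfh}) forces $W_Q=0$, so $Q$ is a torus; then under the categorical comparison Lemma~\ref{LemmaCategoricalComparisonBetweenEMSDandMSD} the associated $(Q,\cY^{\natural+})$ is again a single point (it is the $Q(\R)W_Q(\C)=Q(\R)$-orbit, which is trivial because a torus acts trivially by conjugation). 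More generally, any connected enlarged mixed Shimura subdatum $(Q',\cY'^{\natural+})$ whose image lies in the fiber $[\pi^\sharp]^{-1}(s)$ must satisfy $\pi^\sharp(\cY'^{\natural+})=\{\tilde{s}\}$, and since $\pi^\sharp$ is fiberwise $W(\C)\to W(\C)/F^0W(\C)$, this forces $V_{Q'}=0$ and then $\cY'^{\natural+}$ is again a point. Consequently the fiber $[\pi^\sharp]^{-1}(s)$ contains exactly one special point of $S^\natural$, namely the unique lift of $s$ given by Remark~\ref{RemarkSpecialPointsBijectionEMSDandMSD}. So the set of special points of $S^\natural$ inside $[\pi^\sharp]^{-1}(Y)$ is exactly the set $\Sigma^\natural$ of lifted special points, whose Zariski closure has dimension at most $\dim Y$, strictly smaller than $\dim[\pi^\sharp]^{-1}(Y)$ whenever the fibers are positive-dimensional. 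Your semi-continuity argument then fails because $\dim Z^\natural_s=0$, not $n$, for every special $s\in Y$; this is precisely where the analogy with the proof of Theorem~\ref{TheoremSpecialPointsZariskiDenseInSpecialSubvarieties} breaks down: there one works fiberwise over $S_G$, where the fiber of $Y^\natural\to Y_G$ is (a torsor over) the universal vector extension of a CM abelian variety and does have dense special points, whereas your fiber is over a point of $S$ and is merely a coset of a vector group.

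The correct implication Conjecture~\ref{ConjectureAOforEMSV} $\Rightarrow$ Conjecture~\ref{ConjectureAOmixed} (which the paper labels ``direct'' without spelling out) should instead use the bijection of Remark~\ref{RemarkSpecialPointsBijectionEMSDandMSD} to lift $\Sigma$ to $\Sigma^\natural$, take $Y^\natural$ to be a suitable irreducible component of $(\Sigma^\natural)^{\Zar}$ with $[\pi^\sharp](\Sigma^\natural\cap Y^\natural)$ Zariski dense in $Y$, apply Conjecture~\ref{ConjectureAOforEMSV} to conclude $Y^\natural$ is special, and then use the bijection of special subvarieties and the fact that special subvarieties are closed to obtain $[\pi^\sharp](Y^\natural)=Y$. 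Your third paragraph is essentially this final step, and it is fine; the error is confined to the choice $Y^\natural=[\pi^\sharp]^{-1}(Y)$ and the density claim for it.
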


But in fact, we have
\begin{lemma}\label{LemmaAOEquivalentForEMSVandMSV}
Conjecture~\ref{ConjectureAOforEMSV} and Conjecture~\ref{ConjectureAOmixed} are equivalent.
\begin{proof} We only need to prove Conjecture~\ref{ConjectureAOforEMSV} assuming Conjecture~\ref{ConjectureAOmixed}. Let $Y^\natural$ be as in Conjecture~\ref{ConjectureAOforEMSV} and let $\Sigma^\natural$ be the set of special points of $S^\natural$ contained in $Y^\natural$. Use Notation~\ref{NotationSubsetsOfEMSD}. Then $\Sigma:=[\pi^\sharp](\Sigma)$ is a set of special points of $S$ such that $\Sigma^{\mathrm{Zar}}=Y$. Hence $Y$ is a special subvariety of $S$ by Conjecture~\ref{ConjectureAOmixed}.

Let $Z^\natural$ be the special subvariety of $S^\natural$ which maps to $Y$ under the bijection
\[
\{\text{special subvarieties of }S^\natural\}\xrightarrow{\sim}\{\text{special subvarieties of }S\}
\]
given by Remark~\ref{RemarkSpecialPointsBijectionEMSDandMSD}. But $[\pi^\sharp] \colon \Sigma^\natural \rightarrow \Sigma$ is a bijection by Remark~\ref{RemarkSpecialPointsBijectionEMSDandMSD}. So the set of special points of $S^\natural$ contained in $Z^\natural$ is $\Sigma^\natural$. Now Theorem~\ref{TheoremSpecialPointsZariskiDenseInSpecialSubvarieties} implies that $(\Sigma^\natural)^{\mathrm{Zar}}=Z^\natural$. Hence $Y^\natural=Z^\natural$ is special.
\end{proof}
\end{lemma}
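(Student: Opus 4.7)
The plan is to exploit two correspondences: the bijection (Remark~\ref{RemarkSpecialPointsBijectionEMSDandMSD}) between special points of an enlarged mixed Shimura variety $S^\natural$ and special points of its associated mixed Shimura variety $S$, induced by $[\pi^\sharp] \colon S^\natural \to S$; and the fact (Theorem~\ref{TheoremSpecialPointsZariskiDenseInSpecialSubvarieties}) that special points are Zariski dense in every special subvariety of an enlarged mixed Shimura variety. With these two tools, each implication reduces to tracing special subvarieties through $[\pi^\sharp]$.

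For Conjecture~\ref{ConjectureAOmixed} $\Rightarrow$ Conjecture~\ref{ConjectureAOforEMSV}, I would start with an irreducible $Y^\natural \subset S^\natural$ whose set $\Sigma^\natural$ of special points is Zariski dense. Set $Y := [\pi^\sharp](Y^\natural)$ and $\Sigma := [\pi^\sharp](\Sigma^\natural)$. Because $[\pi^\sharp]$ is closed (Proposition~\ref{PropositionMorphismsBetweenEMSV}), $Y$ is irreducible and closed in $S$, and $\Sigma$ is Zariski dense in $Y$. Conjecture~\ref{ConjectureAOmixed} then makes $Y$ special; under the bijection let $Z^\natural \subset S^\natural$ denote the special subvariety corresponding to $Y$. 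Since the bijection on special points is induced by $[\pi^\sharp]$, the full set $\Sigma^\natural$ lies in $Z^\natural$, giving $Z^\natural \supset (\Sigma^\natural)^{\Zar} = Y^\natural$. Conversely, Theorem~\ref{TheoremSpecialPointsZariskiDenseInSpecialSubvarieties} forces $Z^\natural = (\Sigma^\natural)^{\Zar} \subset Y^\natural$, whence $Y^\natural = Z^\natural$ is special.

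The reverse direction is symmetric. Given an irreducible $Y \subset S$ with Zariski dense special points $\Sigma$, I would lift via the bijection to $\Sigma^\natural \subset S^\natural$ and choose an irreducible component $Y^\natural$ of $(\Sigma^\natural)^{\Zar}$ whose image under $[\pi^\sharp]$ is $Y$. Such a component exists: the irreducibility of $Y$ forces it to coincide with the image of one of the finitely many irreducible components of $(\Sigma^\natural)^{\Zar}$, each of which maps to a closed subvariety of $Y$. By construction $Y^\natural$ is irreducible with Zariski dense special points, so Conjecture~\ref{ConjectureAOforEMSV} yields that $Y^\natural$ is special; pushing down along $[\pi^\sharp]$, which carries sub-EMSDs to sub-MSDs by functoriality of the categorical comparison Lemma~\ref{LemmaCategoricalComparisonBetweenEMSDandMSD}, shows $Y = [\pi^\sharp](Y^\natural)$ is special.

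The argument is essentially formal once the bijection between special objects on the two sides is in hand. The only mildly delicate points are the irreducibility bookkeeping in the descent and the verification that $[\pi^\sharp]$ sends special subvarieties to special subvarieties; neither presents a real obstacle, since both follow immediately from the categorical comparison and the fact that images of sub-EMSDs under EMSD-morphisms are again sub-EMSDs.
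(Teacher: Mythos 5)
Your proof of the substantive direction (Conjecture~\ref{ConjectureAOmixed} $\Rightarrow$ Conjecture~\ref{ConjectureAOforEMSV}) follows exactly the same route as the paper: push $\Sigma^\natural$ down by $[\pi^\sharp]$, invoke Conjecture~\ref{ConjectureAOmixed} to make $Y = [\pi^\sharp](Y^\natural)$ special, pull back to the special subvariety $Z^\natural$ via the bijection of Remark~\ref{RemarkSpecialPointsBijectionEMSDandMSD}, and then use Theorem~\ref{TheoremSpecialPointsZariskiDenseInSpecialSubvarieties} to force $Y^\natural = Z^\natural$. The paper dismisses the converse as ``direct,'' whereas you spell it out by choosing an irreducible component of $(\Sigma^\natural)^{\Zar}$ surjecting onto $Y$; that is a correct and slightly more careful account of the easy direction, but it does not change the argument's substance. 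The only stylistic quibble: in your line ``$Z^\natural = (\Sigma^\natural)^{\Zar} \subset Y^\natural$,'' the inclusion is redundant since the hypothesis already gives $(\Sigma^\natural)^{\Zar} = Y^\natural$; and like the paper you silently use the (asserted) identification of $\Sigma^\natural$ with the full set of special points of $S^\natural$ lying in $Z^\natural$, so you should be aware that this step deserves a word of justification via the inclusion-preserving property of the bijection on special subvarieties.
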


\begin{rmk}\label{RemarkWhyTheTwoAOareEquivalent}
A key point to prove Lemma~\ref{LemmaAOEquivalentForEMSVandMSV} is that the bijection in Remark~\ref{RemarkSpecialPointsBijectionEMSDandMSD} preserves $0$-dimensional special subvarieties. In general this is not true: an $r$-dimensional special subvariety of $S^\natural$ is often mapped to a special subvariety of $S$ of smaller dimension.
\end{rmk}

Conjecture~\ref{ConjectureAOmixed} has been intensively studied. It is proven unconditionally for any mixed Shimura variety of abelian type (\textit{i.e.}  its pure part is of abelian type) by Pila-Tsimerman \cite{PilaAxLindemannAg}, Tsimerman \cite{TsimermanA-proof-of-the-} and Gao \cite{GaoTowards-the-And}. It is also proven under the generalized Riemann Hypothesis (for CM fields) for all mixed Shimura varieties by Klingler-Ullmo-Yafaev \cite{KlinglerThe-Hyperbolic-}, Ullmo \cite{UllmoQuelques-applic}, Ullmo-Yafaev \cite{UllmoNombre-de-class}, Daw-Orr \cite{DawHeights-of-pre-} and Gao \cite{GaoAbout-the-mixed}. Then Lemma~\ref{LemmaAOEquivalentForEMSVandMSV} implies the corresponding results for enlarged mixed Shimura varieties.

Taking into account of not only special points but also special subvarieties of higher dimensions, we have a ``Zilber-Pink'' type conjecture.
\begin{conj}[Zilber-Pink for enlarged mixed Shimura varieties]\label{ConjectureZPforEMSV}
Let $Y^\natural$ be an irreducible subvariety of a connected enlarged mixed Shimura variety $S^\natural$. Assume that $S^\natural$ has no proper special subvariety which contains $Y^\natural$. Then
\[
\bigcup_{\substack{S^{\natural\prime}\text{ special}, \\ \dim(S^{\natural\prime})<\codim(Y^\natural)}}S^{\natural\prime}\cap Y^\natural
\]
is not Zariski dense in $Y^\natural$.
\end{conj}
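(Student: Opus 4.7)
The plan is to follow the Pila-Zannier strategy, combining three ingredients: the Ax-Schanuel conjecture for enlarged mixed Shimura varieties (Conjecture~\ref{ConjectureIntroductionAxSchanuel}), definability of the uniformization $\unif^\natural$ restricted to a fundamental set, and a Galois-theoretic lower bound on degrees of orbits of special points. First I would reduce to the case where $(P,\cX^{\natural+})$ has generic Mumford-Tate group via Proposition~\ref{PropositionEMSDWithGenericMTGroupAlwaysExists}; the assumption that no proper special subvariety of $S^\natural$ contains $Y^\natural$ then becomes a genericity condition on the ambient enlarged mixed Shimura datum itself.

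Second, to each atypical intersection component $Z^\natural\subset S^{\natural\prime}\cap Y^\natural$, I would apply the Ax-Schanuel conjecture to the graph of $\unif^\natural$ restricted to a complex analytic component of $(\unif^\natural)^{-1}(Z^\natural)$. The defect inequality together with the characterization Theorem~\ref{TheoremIntroductionCharacterizationOfBiAlgebraicSubvarieties} forces $Z^\natural$ to lie in a quasi-linear subvariety $F^\natural$ of $S^\natural$ of controlled (small) dimension, reducing the problem to counting quasi-linear subvarieties that contain special points and meet $Y^\natural$ atypically. Using the definability of $\unif^\natural$ on a fundamental domain $\mathfrak{F}^\natural$ (obtained from the $W(\C)$-torsor structure described in $\mathsection$\ref{SubsectionStructureOfUnderlyingSpace} layered over the Klingler-Ullmo-Yafaev definable fundamental set for the pure part), Pila-Wilkie yields positive-dimensional semi-algebraic blocks of pre-special parameters; Ax-Lindemann (Theorem~\ref{TheoremIntroductionAxLindemann}) upgrades such blocks to quasi-linear subvarieties of $S^\natural$, and a lower bound on Galois orbits of the associated special points contradicts the polynomial counting unless the union in the statement is contained in a proper subvariety of $Y^\natural$.

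One is tempted to reduce the whole problem to a Zilber-Pink statement on the associated mixed Shimura variety $S$ via $[\pi^\sharp]\colon S^\natural\rightarrow S$, imitating Lemma~\ref{LemmaAOEquivalentForEMSVandMSV}. However, as pointed out in Remark~\ref{RemarkWhyTheTwoAOareEquivalent}, the bijection of Remark~\ref{RemarkSpecialPointsBijectionEMSDandMSD} does not preserve dimensions of special subvarieties: an $r$-dimensional special subvariety of $S^\natural$ can project to one of strictly smaller dimension on $S$, so the notion of atypicality does not transfer. The counting argument must therefore genuinely take place on $S^\natural$, and the fibre-wise contribution from the universal vector extension must be handled separately; in particular, the extra $\dim U+\dim V$ coordinates must be tracked individually.

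The main obstacle is twofold. First, the full Ax-Schanuel Conjecture~\ref{ConjectureIntroductionAxSchanuel} is not yet available: Theorem~\ref{TheoremAxSchanuelKnownCases} covers only the unipotent-fibre case and the two boundary cases reducing to Ax logarithmique and Ax-Lindemann, so unconditionally only a restricted Zilber-Pink statement inside a fibre of $S^\natural\rightarrow S_G$ should be accessible with current tools. Second, the Galois-theoretic input requires genuinely new height estimates: a special point of $S^\natural$ carries both CM data on its projection to $S_G$ and $\bar{\Q}$-rational coordinates in the vector-extension fibre (cf.\ the discussion in $\mathsection$1.1 around $A^\natural$), and one needs to combine Tsimerman-type lower bounds on Galois orbits of CM points with W\"{u}stholz-type control of the fibre data to beat the Pila-Wilkie polynomial count. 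Any unconditional progress on Zilber-Pink for enlarged mixed Shimura varieties will, I expect, first materialize as a fibre-wise statement obtained by combining Theorem~\ref{TheoremAxSchanuelKnownCases}(3) with the o-minimal bounds known for the universal vector extension of an abelian scheme.
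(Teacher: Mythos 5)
This is a \emph{conjecture} in the paper, not a theorem: the paper states Conjecture~\ref{ConjectureZPforEMSV} and immediately remarks that ``there is no immediate equivalence... between Conjecture~\ref{ConjectureZPforEMSV} and the Zilber-Pink conjecture for mixed Shimura varieties'' and that ``in general little is known,'' citing only partial results in special cases. There is therefore no proof in the paper to compare against, and you were right not to manufacture one.

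What you have offered is a programme rather than a proof, and as a programme it is sound and consistent with the paper's discussion. Your three ingredients (Ax-Schanuel for $S^\natural$, definability of $\unif^\natural|_{\mathfrak{F}^\natural}$, and Galois lower bounds for special points of $S^\natural$) are exactly the triad the Pila-Zannier strategy would require. You correctly flag that one cannot simply push the problem down to the associated mixed Shimura variety $S$ via $[\pi^\sharp]$, and you correctly locate the reason in the paper's own Remark~\ref{RemarkWhyTheTwoAOareEquivalent}: the bijection of Remark~\ref{RemarkSpecialPointsBijectionEMSDandMSD} preserves zero-dimensional special subvarieties but not dimensions in general, so atypicality is not preserved under $[\pi^\sharp]$. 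This is precisely why the paper can prove Lemma~\ref{LemmaAOEquivalentForEMSVandMSV} for Andr\'{e}-Oort but not its Zilber-Pink analogue. You also correctly identify the two genuine obstacles: Theorem~\ref{TheoremAxSchanuelKnownCases} covers only the unipotent-fibre case and the two degenerate cases, not the full Ax-Schanuel needed; and special points of $S^\natural$ carry extra $\bar{\Q}$-rational fibre data beyond the CM data on $S_G$ (cf.\ the $A^\natural$ discussion in $\mathsection$1.1), so the required Galois lower bound is not available from the pure or mixed theory. Your closing prediction that first unconditional progress will come from combining Theorem~\ref{TheoremAxSchanuelKnownCases}(3) with fibre-wise o-minimal bounds is a reasonable extrapolation, but it remains speculation, and you should not present this as even a conditional proof: the full argument would still need, at minimum, a precise quantitative form of Ax-Schanuel (not merely the qualitative Conjecture~\ref{ConjectureIntroductionAxSchanuel}) and a bound on the complexity of the quasi-linear $F^\natural$ produced, both of which are not addressed.

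In short: your assessment of the state of the art is accurate, your identification of obstacles matches the paper's own commentary, and your refusal to claim a proof is the correct call. There is no gap to point out because you did not claim to close one.
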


The Zilber-Pink conjecture is naturally a generalization of the Andr\'{e}-Oort conjecture. Unlike the Andr\'{e}-Oort conjecture, there is no immediate equivalence (as given by Lemma~\ref{LemmaAOEquivalentForEMSVandMSV}) between Conjecture~\ref{ConjectureZPforEMSV} and the Zilber-Pink conjecture for mixed Shimura varieties (see Remark~\ref{RemarkWhyTheTwoAOareEquivalent}). There are some results about this conjecture in some special cases (Habegger-Pila \cite{HabeggerSome-unlikely-i, HabeggerO-minimality-an}, Orr \cite{OrrFamilies-of-abe}, Gao \cite{GaoA-special-point}), but in general little is known.

We finish this paper by the following lemma, which explains the relation between special and weakly special subvarieties (Definition~\ref{DefinitionWeaklySpecialSubvariety}).

\begin{lemma}\label{LemmaCharacterizationOfSpecialSubsets}
A subset $\tilde{Y}^\natural$ of $\cX^{\natural+}$ is special if and only if $\tilde{Y}^\natural$ is weakly special and contains a special point. Equivalently, a subvariety $Y^\natural$ of $S^\natural$ is special if and only if it is weakly special and contains a special point.
\begin{proof} The ``only if'' part is clear. We prove the ``if'' part. Let $(Q,\cY^{\natural+})$ be as in Definition \ref{DefinitionWeaklySpecialSubvariety}(1) defining $\tilde{Y}^\natural$ and let $N:=\ker\varphi$. Let $(T,\tilde{y}^\natural)$ be the connected enlarged mixed Shimura datum giving a special point contained in $\tilde{Y}^\natural$. Let $Q^\prime$ be the subgroup of $P$ generated by $i(N)$ and $T$. Then $(Q^\prime,Q^\prime(\R)^+W_{Q^\prime}(\C)\tilde{y}^\natural)$ defines a connected enlarged mixed Shimura subdatum of $(P,\cX^{\natural+})$ (in the category \underline{$\mathbf{\cE\cM\cS\cD}$}), and $\tilde{Y}^\natural=Q^\prime(\R)^+W_{Q^\prime}(\C)\tilde{y}^\natural$. Hence $\tilde{Y}^\natural$ is special.
\end{proof}
\end{lemma}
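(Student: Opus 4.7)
\smallskip

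\noindent\textbf{Proof proposal.} The "only if" direction is essentially tautological once the right comparison is made: if $\tilde{Y}^\natural$ is special, say $\tilde{Y}^\natural = Q(\R)^+ W_Q(\C)\tilde{y}^\natural$ arising from a connected enlarged mixed Shimura subdatum $i\colon(Q,\cY^{\natural+})\hookrightarrow(P,\cX^{\natural+})$, then the diagram $(Q,\cY^{\natural+})/Q \xleftarrow{\varphi} (Q,\cY^{\natural+}) \xrightarrow{i} (P,\cX^{\natural+})$ (where the quotient datum is trivial) exhibits $\tilde{Y}^\natural$ as weakly special in the sense of Definition~\ref{DefinitionWeaklySpecialSubvariety}. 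That $\tilde{Y}^\natural$ contains a special point amounts to finding a torus Mumford-Tate point in $\cY^{\natural+}$. I would invoke the bijection of Remark~\ref{RemarkSpecialPointsBijectionEMSDandMSD} between special points of $\cY^{\natural+}$ and those of its associated mixed Shimura datum, and then use the classical fact (cf.\ Pink \cite[$\mathsection$11]{PinkThesis}) that every connected mixed Shimura datum contains special points, coming from maximal $\R$-anisotropic tori of $G_Q$ lifted to $Q$.

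For the "if" direction, I would follow the construction outlined in the paper. Let $(R,\cZ^{\natural+}) \xleftarrow{\varphi} (Q,\cY^{\natural+}) \xrightarrow{i} (P,\cX^{\natural+})$ be a diagram in \underline{$\mathbf{\cE\cM\cS\cD}$} together with a point $\tilde{z}^\natural \in \cZ^{\natural+}$ such that $\tilde{Y}^\natural$ is a connected component of $i(\varphi^{-1}(\tilde{z}^\natural))$. Set $N := \ker\varphi$, a normal subgroup of $Q$, and let $\tilde{y}^\natural \in \tilde{Y}^\natural$ be a special point given by a homomorphism $h_{\tilde{y}^\natural}\colon \S_\C\to T_\C$ factoring through a torus $T \hookrightarrow P$. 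Define $Q'$ to be the subgroup of $P$ generated by $i(N)$ and $T$. Then I claim $\tilde{Y}^\natural = Q'(\R)^+ W_{Q'}(\C)\tilde{y}^\natural$ and that $(Q', Q'(\R)^+ W_{Q'}(\C)\tilde{y}^\natural, h)$ defines a connected enlarged mixed Shimura subdatum of $(P,\cX^{\natural+})$.

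The main obstacle is the verification that the latter pair actually satisfies the five axioms of Definition~\ref{DefinitionEnlargedMixedShimuraDatum}. Conditions (1)-(3) on the Hodge-theoretic structure follow because $h_{\tilde{y}^\natural}$ already satisfies them as a point of $\cX^{\natural+}$, and the weight filtration on $\lie Q'$ is induced from that on $\lie P$ via the adjoint action; since $i(N) \lhd Q'$ with $Q'/i(N)$ a torus, the unipotent radical of $Q'$ equals $\cR_u(i(N))$, so condition (3) pins down $U_{Q'}$ correctly. For condition (4), the Cartan involution property is inherited from $Q$ at the level of $N/\cR_u(N)$, and on the torus quotient it is automatic; condition (5) is automatic because the center of $G_{Q'}$ acts on $U_{Q'}\subset U_Q$ through a subtorus of the one acting on $U_Q$. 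The equality of sets $\tilde{Y}^\natural = Q'(\R)^+ W_{Q'}(\C)\tilde{y}^\natural$ is checked by noting that $N(\R)^+ W_N(\C)\tilde{y}^\natural = \tilde{Y}^\natural$ (which is the structure of the fiber of $\varphi$ through $\tilde{y}^\natural$), while $T(\R)$ stabilizes $\tilde{y}^\natural$, so adjoining $T$ to $i(N)$ does not enlarge the orbit. The equivalent statement for $Y^\natural \subset S^\natural$ follows at once from the uniformization, since $\unif^\natural$ carries weakly special (resp.\ special) subsets to weakly special (resp.\ special) subvarieties and conversely every weakly special (resp.\ special) subvariety has all preimage components of the corresponding type.
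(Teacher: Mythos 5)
Your proposal follows the paper's proof essentially verbatim for the "if" direction (taking $Q'$ generated by $i(N)$ and the Mumford--Tate torus $T$ of a special point, then showing the orbit equals $\tilde{Y}^\natural$), and you supply extra detail verifying the axioms of Definition~\ref{DefinitionEnlargedMixedShimuraDatum} that the paper leaves implicit. The "only if" direction is also handled as the paper does (the paper says it is clear); your argument is correct.
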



\end{document}